%%%%%%%%%%%%%%%%%%%%%%%%%%%%% - TEMPLATE 76 cols. - %%%%%%%%%%%%%%%%%%%%%%%%
% Number of points of singular varieties
% --------------------------------------
%
% Version June 2008
% -----------------

\documentclass[10pt,a4paper,oneside]{amsart}

\usepackage{latexsym}

\usepackage{amsfonts}

\usepackage{hyperref}

\usepackage{euscript}

% More Format specifications
% --------------------------

\usepackage{amscd}

\usepackage{graphicx}

% \parindent=0pt

% Number of points of singular varieties : macros
% -----------------------------------------------

% Font files
% ----------

\ProvidesFile{Umsa.fd}
\ProvidesFile{Umsb.fd}
\ProvidesFile{Ulasy.fd}
\ProvidesFile{Ueuf.fd}
\ProvidesFile{Ueus.fd}

% Macros:Numbering (AMS Standards)
% --------------------------------

\swapnumbers
\newtheorem{theorem}{Theorem}[section]
\newtheorem{proposition}[theorem]{Proposition}
\newtheorem{lemma}[theorem]{Lemma}
\newtheorem{corollary}[theorem]{Corollary}
\newtheorem{conjecture}[theorem]{Conjecture}
\newtheorem*{theorem*}{Theorem}
\newtheorem*{lemma*}{Lemma}
\newtheorem*{claim*}{Claim}
\newtheorem*{ConjLemma*}{Conjugation Lemma}

\theoremstyle{remark}
\newtheorem{remark}[theorem]{Remark}
\newtheorem{remarks}[theorem]{Remarks}
\newtheorem{example}[theorem]{Example}
\newtheorem{examples}[theorem]{Examples}

% Macros:Typography
% ------------------

\newcommand{\MathRoman}[1]{\mathop{\mathrm{#1}}\nolimits}
\newcommand{\mathcan}[1]{\mathbb{#1}}

% \show\AA
\newcommand{\Aff}{\mathcan{A}}
\newcommand{\CC}{\mathcan{C}}
\newcommand{\FF}{\mathcan{F}}

\newcommand{\NN}{\mathcan{N}}
\newcommand{\LL}{\mathcan{L}}
\newcommand{\PP}{\mathcan{P}}
\newcommand{\QQ}{\mathcan{Q}}

\newcommand{\ZZ}{\mathcan{Z}}

\DeclareMathOperator{\Alb}{Alb}
\DeclareMathOperator{\codim}{codim}
\DeclareMathOperator{\haut}{ht}

\DeclareMathOperator{\Gal}{Gal}
\DeclareMathOperator{\GL}{\mathrm{GL}}
\DeclareMathOperator{\Hom}{Hom}
\DeclareMathOperator{\Jac}{Jac}

\DeclareMathOperator{\Pic}{Pic}
\DeclareMathOperator{\ProjFunc}{Proj}
\DeclareMathOperator{\reg}{Reg}
\DeclareMathOperator{\red}{red}
\DeclareMathOperator{\sing}{Sing}
\DeclareMathOperator{\spec}{Spec}
\DeclareMathOperator{\Tr}{Tr}

\newcommand{\AlbUniv}{\MathRoman{(\mathbf{Alb})}}
\newcommand{\abs}[1]{\Bigl\lvert\,#1\,\Bigr\rvert}
\newcommand{\set}[2]{\left\{#1\,\mid\,#2\right\}}
\newcommand{\card}[1]{\lvert#1\rvert}
\newcommand{\Fq}{\mathop{\FF_{q}}\nolimits}
\newcommand{\idest}{\textit{i.\,e.}}
\newcommand{\indep}{\mathbf{M}}
\newcommand{\isom}{\stackrel{\sim}{\longrightarrow}}
\newcommand{\loccit}{\textit{loc.~cit.}}
\newcommand{\PicS}{\underline{\MathRoman{Pic}}}
\newcommand{\ProjCan}{\PP^{N}}
\newcommand{\ProjDual}{\widehat{\PP}^{N}}
\newcommand{\qedbox}{\hfill $\ \Box$}
\newcommand{\resol}{\MathRoman{(\mathbf{R}_{n,p})}}
\newcommand{\resolTwo}{\MathRoman{(\mathbf{RS2})}}
\newcommand{\smallbinom}[2]{\textstyle{\binom{#1}{#2}}}
\newcommand{\TotDeg}{\MathRoman{tot.deg}}

\renewcommand{\mod}{\MathRoman{mod}}
\renewcommand{\Im}{\MathRoman{Im}}

% Notations
% ---------
% ideals             : \mathfrak{I}
% ideal sheaves      : \EuScript{I}
% sheaves            : \EuScript{F}
% structural sheaves : \mathcal{O}_{x}(X)

\begin{document}

% Title
% -----

\title{\'Etale cohomology, Lefschetz Theorems \\
and Number of Points of Singular Varieties \\
over Finite Fields}

\renewcommand{\rightmark}
{\textsc\large{SINGULAR VARIETIES OVER FINITE FIELDS}}

\dedicatory{
Dedicated to Professor Yuri Manin for his 65th birthday$^{*}$: \\
\includegraphics[height = 13mm]{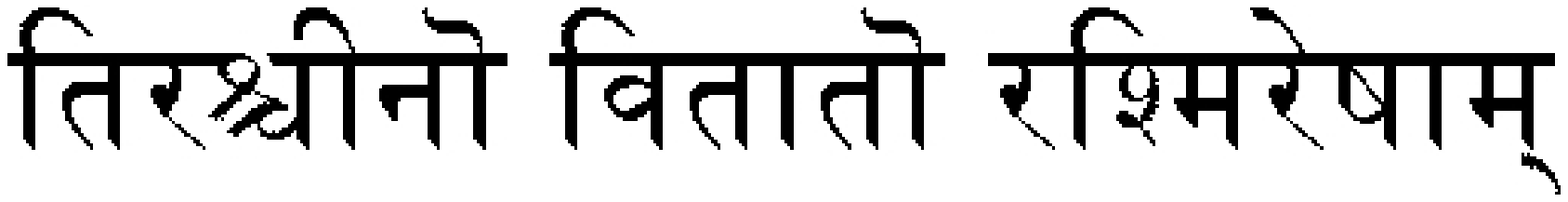}\\
}
\thanks{$^{*}$ `` tira\'sc\={\i}no vitato ra\'smir e\d{s}\={a}m "
(Their cord was extended across) (\d{R}g Veda X.129)}

\author{Sudhir R. Ghorpade}
\thanks{$\dagger$ The first author is partially supported by a `Career
Award' grant from AICTE, New Delhi and an IRCC grant from IIT Bombay.}
\address{S.R.G.: Department of Mathematics
\newline \indent
Indian Institute of Technology, Bombay
\newline \indent
Powai, Mumbai 400076 - INDIA}
\email{srg@math.iitb.ac.in}

\author{Gilles Lachaud}
\address{G.L.: Institut de Math\'ematiques de Luminy
\newline \indent
CNRS
\newline \indent
Luminy Case 907, 13288 Marseille Cedex 9 - FRANCE}
\email{lachaud@iml.univ-mrs.fr}

\subjclass{11G25, 14F20, 14G15, 14M10}

\date{August 6, 2008. This is a corrected, revised and updated version of 
the paper 
published in  \emph{Mosc. Math. J.}  {\bfseries{2}}  (2002), 589--631. \MR{1988974 (2004d:11049a})}

\begin{abstract}
We prove a general inequality for estimating the number of points of
arbitrary complete intersections over a finite field. This extends a result
of Deligne for nonsingular complete intersections. For normal complete
intersections, this inequality generalizes also the classical Lang-Weil
inequality. Moreover, we prove the Lang-Weil inequality for affine as well
as projective varieties with an explicit description and a bound for the
constant appearing therein. We also prove a conjecture of Lang and Weil
concerning the Picard varieties and \'etale cohomology spaces of projective
varieties. The general inequality for complete intersections may be viewed
as a more precise version of the estimates given by Hooley and Katz. The
proof is primarily based on a suitable generalization of the Weak Lefschetz
Theorem to singular varieties together with some Bertini-type arguments and
the Grothendieck-Lefschetz Trace Formula. We also describe some auxiliary
results concerning the \'etale cohomology spaces and Betti numbers of
projective varieties over finite fields and a conjecture along with some
partial results concerning the number of points of projective algebraic sets
over finite fields.
\end{abstract}

\keywords{Hyperplane Sections, \'Etale Cohomology, Weak Lefschetz Theorems,
Complete Intersections, Varieties over Finite Fields, Trace Formula, Betti
numbers, Zeta Functions,  Motives, Lang-Weil Inequality, Albanese Variety.}

\maketitle

\newpage

~\bigskip \bigskip \bigskip \bigskip

\tableofcontents

\newpage

% body of the article
% -------------------

\section*{Introduction}
\label{Intro}

This paper has roughly a threefold aim. The first is to prove the following
inequality for estimating the number of points of complete intersections (in
particular, hypersurfaces) which may possibly be singular:
\begin{equation}
\label{Neweq:1}
\abs{\card{X(\Fq)} - \, \pi_{n}} \leq b'_{n - s - 1}(N - s - 1,\mathbf{d})
\, q^{(n + s + 1)/2} + C_{s}(X) q^{(n + s)/2}
\end{equation}
(cf. Theorem \ref{MainThm}). Here, $X$ denotes a complete intersection in
$\ProjCan$ defined over the finite field $k = \Fq$ of $q$ elements, $n$ the
dimension of $X$, $\mathbf{d} = (d_{1},\dots, d_{r})$ the multidegree of $X$
and $s$ an integer such that $\dim \sing X \leq s \leq n - 1$. Note that
$r = N - n$. Moreover, $\pi_{n}$ denotes the number of points of
$\PP^n(\Fq)$,
viz., $\pi_{n} = q^{n} + q^{n - 1} + \dots + 1$, and for any nonnegative
integers $j$ and $M$ with $M - j = r$, we denote by $b'_{j}(M, \mathbf{d})$
the primitive
$j$-th Betti number of a \emph{nonsingular} complete intersection in
$\PP^{M}$
of dimension $j$. This primitive Betti number is explicitly given by the
formula
\begin{equation}
\label{Neweq:2}
b'_{j}(M, \mathbf{d}) = (- 1)^{j + 1} (j + 1) + (- 1)^{N}
\sum_{c = r}^{N} (- 1)^{c} \binom{N + 1}{c + 1}
\sum_{\mathbf{\nu} \in M(c)} \ \mathbf{d}^{\mathbf{\nu}}
\end{equation}
where $M(c)$ denotes the set of $r$-tuples $\nu = (\nu_1 , \dots , \nu_r)$
of
positive integers such that $\nu_1 + \dots + \nu_r = c$ and
$\mathbf{d}^{\mathbf{\nu}} = {d_1}^{\nu_1}\cdots {d_r}^{\nu_r}$ for any such
$r$-tuple $\nu$. If we let $\delta = \max(d_{1}, \dots, d_{r})$ and $d =
d_{1} \cdots d_{r} = \deg X$, then it will be seen that
\begin{equation}
\label{Neweq:2bis}
b'_{j}(M, \mathbf{d}) \le (- 1)^{j + 1} (j + 1) + d \binom{M + 1}{j}
(\delta + 1)^{j} \leq \binom{M + 1}{j} (\delta + 1)^{M}.
\end{equation}
Lastly, $C_{s}(X)$ is a constant which is independent of $q$ (or of $k$). We
can take $C_s(X) = 0$ if $s= -1$, \idest, if $X$ is nonsingular, and in
general, we have
\begin{equation}
\label{Neweq:3}
C_{s}(X) \leq 9 \times 2^{r} \times (r \delta + 3)^{N + 1}.
\end{equation}
where $\delta$ is as above. The inequalities \eqref{Neweq:1},
\eqref{Neweq:2bis} and \eqref{Neweq:3} are proved in sections \ref{NbPtsCI},
\ref{CentralBetti} and \ref{TraceForm} respectively.

Our second aim is to discuss and elucidate a number of results related to
the \emph{conjectural statements} of Lang and Weil \cite{LangWeil}. These
concern the connections between the various \'etale cohomology spaces
(especially, the first and the penultimate) and the Picard (or the Albanese)
varieties of normal projective varieties defined over $\Fq$. For example, if
$X$ is any projective variety of dimension $n$, and $P^{+}_{2n - 1}(X,T)$ is
the characteristic polynomial of the piece of maximal filtration of $H^{2n -
1}(X, \QQ_{\ell})$ and $f_c(\Alb_{w} X, T)$ is the characteristic polynomial
of the Albanese-Weil variety $\Alb_{w} X$ of $X$, then we show that
\begin{equation}
\label{Neweq:4} P^{+}_{2n - 1}(X, T) = q^{-g}f_{c}(\Alb_{w} X, q^{n}T)
\end{equation}
where $g = \dim \Alb_{w} X$. In particular, the ``$(2n-1)$-th virtual Betti
number'' of $X$ is twice the dimension of $\Alb_{w} X$, and is independent
of $\ell$. These results are discussed in details in sections
\ref{Penultimate}, \ref{RelationAlbanese} and \ref{LangWeilConj}.

The third aim is to prove an effective version of the Lang-Weil inequality
\cite{LangWeil}. Recall that the Lang-Weil inequality states that if
$X \subset \ProjCan$ is any projective variety defined over $\Fq$ of
dimension $n$ and degree $d$, then with $\pi_n$ as above, we have
\begin{equation}
\label{Neweq:5}
\abs{\card{X(\Fq)} - \pi_{n}} \leq
(d - 1)(d - 2) q^{n - (1/2)} + C q^{(n -1)},
\end{equation}
where $C$ is a constant depending only on $N$, $n$ and $d$. The said
effective version consists in providing computable bounds for the constant
$C$ appearing in \eqref{Neweq:5}. For example, if $X$ is defined by the
vanishing of $m$ homogeneous polynomials in $N+1$ variables of degrees
$d_1, \dots , d_m$ and $\delta = \max(d_{1}, \dots, d_{m})$, then we show
that the constant $C$ in \eqref{Neweq:5} may be chosen such that
\begin{equation}
\label{Neweq:6}
C \leq 9 \times 2^{m} \times (m \delta + 3)^{N + 1}.
\end{equation}
We also prove an analogue of the Lang-Weil inequality for affine
varieties. These inequalities are proved in section \ref{LWIneq}. Lastly, in
section \ref{AlgSets}, we describe some old, hitherto unpublished, results
concerning certain general bounds for the number of points of projective
algebraic sets, as well as a conjecture related to the same.

We shall now describe briefly the background to these results and some
applications. For a more leisurely description of the background and
an expository account of the main results of this paper, we refer
to \cite{Chandigarh}.

In \cite{Weil48}, Weil proved a bound for the number of points of a
nonsingular curve over a finite field $\Fq$, namely that it differs from
$\pi_1 = q + 1$ by at most $2 g q^{1/2}$, where $g$ is the genus of the
curve. In \cite{Weil49}, he formulated the conjectures about the number of
points of varieties of arbitrary dimension. Before these conjectures became
theorems, Lang and Weil \cite{LangWeil} proved the inequality
\eqref{Neweq:5} in 1954.  In 1974, Deligne \cite{Deligne2} succeeded in
completing the proof of Weil Conjectures by establishing the so called
Riemann hypothesis for nonsingular varieties of any dimension. Using this
and
the Lefschetz Trace Formula, also conjectured by Weil, and proved by
Grothendieck, he obtained a sharp inequality for the number of
points of a nonsingular complete intersection. Later, in 1991, Hooley
\cite{Ho} and Katz \cite{Katz1} proved that if $X$ is a complete
intersection with a singular locus of dimension $s$, then
$$\card{X(\Fq)} - \pi_{n} = O(q^{(n + s + 1)/2}).$$
The inequality \eqref{Neweq:1} which we prove here may be regarded as a
more precise version of this estimate. In effect, we explicitly obtain the
coefficient of the first term and a computable bound for the coefficient of
the second term in the asymptotic expansion of the difference $\card{X(\Fq)}
-\pi_{n}$. When $s=-1$, \idest, when $X$ is nonsingular, then
\eqref{Neweq:1} is precisely the inequality proved by Deligne
\cite{Deligne2}. On the other hand, if $X$ is assumed normal, then we can
take $s=n-2$ and \eqref{Neweq:1} implies the Lang-Weil inequality for normal
complete intersections. The explicit formula \eqref{Neweq:2} is a
consequence of the work of Hirzebruch \cite{Hirzebruch} and Jouanolou
\cite{Jouanolou2} on nonsingular complete intersections, while the
computable bound \eqref{Neweq:3} is obtained using some work of Katz
\cite{Katz4} on the sums of Betti numbers. Corollaries of
\eqref{Neweq:1} include some results of Shparlinski\u{\i} and Skorobogatov
\cite{Sh-S} for complete intersections with at most isolated singularities
as well as some results of Aubry and Perret \cite{AubryPerret} for singular
curves. It may be remarked that the recent article \cite{Katz3} of Katz has
a more general purpose than that of Section \ref{NbPtsCI} of this article,
since it provides bounds for exponential sums defined over singular
varieties. However, as far as the number of points of singular varieties are
concerned, the bounds presented here are more precise than those obtained by
specializing the results of \cite{Katz3}.

After proving \eqref{Neweq:5}, Lang and Weil \cite{LangWeil} observed that
if $K$ is an algebraic function field of dimension $n$ over $k= \Fq$,  then
there is a constant $\gamma$ for which \eqref{Neweq:5} holds with  $(d -
1)(d - 2)$ replaced by $\gamma$, for any model $X$ of $K/k$, and moreover, 
the smallest such constant $\gamma$ is a birational invariant. They also 
noted that the zeros and poles of the zeta function $Z(X,T)$ in the open 
disc $\vert T \vert < q^{-(n-1)}$ are birational invariants, and that in  the
smaller disc $\vert T \vert < q^{-(n - 1/2)}$ there is exactly one pole  of
order $1$ at $T = q^{-n}$. Then they wrote: {\it about the behaviour of
$Z(X,T)$ for $\vert T \vert \ge q^{-(n-1/2)}$, we can only make the
following  conjectural statements, which complement the conjectures of
Weil}.  These statements are to the effect that when $X$ is complete and
nonsingular, the quotient
$$\frac{Z(X,T)( 1 - q^{n}T)}{f_{c}(P, T)}$$
has no zeros or poles inside $\vert T \vert < q^{-(n-1)}$ and at least one
pole on $\vert T \vert=q^{-(n-1)}$, where $P$ denotes the Picard variety of
$X$. Moreover, with $K/k$ and $\gamma$ as above, we have $\gamma = 2 \dim
P$. Lang and Weil \cite{LangWeil} proved that these statements are valid in
the case of complete nonsingular curves, using the Riemann hypothesis for
curves over finite fields. As remarked by  Bombieri and Sperber \cite[p.
333]{BS}, some of the results conjectured by Lang and Weil are apparently
known to the experts but one is unable to locate formal  proofs in the
literature.  Some confusion is also added by the fact that there are, in
fact, two notions of the Picard variety of a projective variety $X$. These
notions differ when $X$ is singular  and for one of them, the analogue of
\eqref{Neweq:4} is false. Moreover, the proof in \cite{BS} of a part of the
Proposition on p. 133 appears to be incomplete (in dimensions $\ge 3$). In
view of this, we describe in some details results such as \eqref{Neweq:4}
which together with the Grothendieck-Lefschetz trace formula, prove the
conjecture of Lang and Weil, as well as several related results.  These
include the abovementioned Proposition of Bombieri and Sperber \cite[p. 
333]{BS}, which is proved here using a different method. An equality such as
\eqref{Neweq:4} can also be of interest as a ``motivic'' result in the sense
indicated in an early letter of Grothendieck \cite{Groth0}.

For the Lang-Weil inequality \eqref{Neweq:5}, it is natural to try to give a
proof using the trace formula. Some auxiliary results are still needed but
many of these are obtained in the course of proving \eqref{Neweq:1} and
\eqref{Neweq:4}. This, then, leads to the `effective version' and an affine
analogue. The latter yields, for example, a version of a lower bound due to
Schmidt \cite{Schmidt} for the number of points on affine hypersurfaces. For
varieties of small codimension, we obtain an improved version of the
Lang-Weil inequality.

\section{Singular Loci and Regular Flags}
\label{SingLoc}

We first settle notations and terminology. We also state some preliminary
results, and the proofs are omitted. Let $k$ be a field of any
characteristic $p \geq 0$ and $\overline{k}$ the algebraic closure of $k$. We
denote by $S = k[X_{0}, \dots, X_{N}]$ the graded algebra of polynomials in
$N + 1$ variables and by $\ProjCan = \ProjCan_{k} = \ProjFunc S$ the
projective space of dimension $N$ over $k$. By an \emph{algebraic variety
over $k$} we mean a separated scheme of finite type over $k$ which is
geometrically irreducible and reduced, \idest, geometrically integral, and by
a \emph{curve} we mean an algebraic variety of dimension one. In this paper,
we use the word \emph{scheme} to mean a scheme of finite type over $k$.

Recall that a point $x$ in a scheme $X$ is \emph{regular} if the local ring
$\mathcal{O}_{x}(X)$ is a regular local ring and \emph{singular} otherwise.
The \emph{singular locus} $\sing X$ of $X$ is the set of singular points of
$X$ ; this is is a closed subset of $X$  \cite[Cor. 6.12.5, p. 166]{EGA42}.
We denote by $\reg X$ the complementary subset of $\sing X$ in
$X$. Let $m \in \NN$ with $m \leq \dim X$. One says that $X$ is
\emph{regular in codimension} $m$ if it satisfies the following equivalent
conditions:
\begin{enumerate}
\item
\label{Rm}
every point $x \in X$ with $\dim \mathcal{O}_{x}(X) \leq m$ is regular.
\item
$\dim X - \dim \sing X \geq m + 1.$
\end{enumerate}
Condition \eqref{Rm} is called \emph{condition} $(R_{m})$ \cite[D\'ef.
5.8.2, p. 107]{EGA42}. A scheme is reduced if and only if it has no embedded
components and satisfies condition $(R_{0})$ \cite[5.8.5, p. 108]{EGA42}. A
scheme $X$ of dimension $n$ is \emph{regular} if it satisfies condition
$(R_{n})$, hence $X$ is regular if and only if $\dim \sing X = - 1$ (with
the convention $\dim \emptyset = - 1$).

For any scheme $X$, we denote by $\bar{X} = X \otimes_{k} \overline{k}$
the scheme deduced from $X$ by base field extension from $k$ to
$\overline{k}$. A variety $X$ is \emph{nonsingular} if $\bar{X}$ is regular.
If $k$ is perfect, then the canonical projection from $\bar{X}$ to $X$ sends
$\sing \bar{X}$ onto $\sing X$ \cite[Prop. 6.7.7, p. 148]{EGA42}. Hence,
$\bar{X}$ is regular in codimension $m$ if and only if $X$ has the same
property, and $\dim \sing \bar{X} = \dim \sing X$.

Let $R$ be a regular noetherian local ring, $A = R/\mathfrak{I}$ a quotient
subring of $R$, and $r$ the minimum number of generators of $\mathfrak{I}$.
Recall that $A$ is a \emph{complete intersection in $R$} if $\haut
\mathfrak{I} = r$. A closed subscheme $X$ of a regular noetherian scheme $V$
over a field $k$ is a \emph{local complete intersection at a point} $x \in
X$ if the local ring $\mathcal{O}_{x}(X)$ is a complete intersection in
$\mathcal{O}_{x}(V)$. The subscheme $X$ of $V$ is a \emph{local complete
intersection} if it is a local complete intersection at every closed point;
in this case it is a local complete intersection at every point, since the
set of $x \in X$ such that $X$ is a local complete intersection at $x$ is
open. A regular subscheme of $V$ is a local complete intersection. A
connected local complete intersection is Cohen-Macaulay \cite[Prop. 8.23,
p.186]{Ha}, hence equidimensional since all its connected components have
the same codimension $r$.

Let $X$ and $Y$ be a pair of closed subschemes of $\ProjCan$ with $\dim X
\geq \codim Y$, and $x$ a closed point of $X \cap Y$. We say that $X$ and
$Y$ \emph{meet transversally} at $x$ if
\begin{equation}
\label{Trans}
x \in \reg X \cap \reg Y \quad \text{and} \quad
\dim T_{x}(X) \cap T_{x}(Y) = \dim T_{x}(X) - \codim T_{x}(Y),
\end{equation}
and that they \emph{intersect properly} at $x$ if
\begin{equation}
\label{Proper}
\dim \mathcal{O}_{x}(X \cap Y) = \dim \mathcal{O}_{x}(X) - \codim
\mathcal{O}_{x}(Y).
\end{equation}
If $X$ and $Y$ are equidimensional, then they intersect properly at every
point of an irreducible component $Z$ if and only if
$$\dim Z = \dim X - \codim Y.$$
If this is fulfilled, one says that $Z$ is a \emph{proper component} of
$X \cap Y$ or that $X$ and $Y$ \emph{intersect properly} at $Z$. If every
irreducible component of $X \cap Y$ is a proper component, one says in this
case that $X$ and $Y$ \emph{intersect properly}. For instance, if $X$ is
irreducible of dimension $\geq 1$ and if $Y$ is a hypersurface in $\ProjCan$
with $X_{\red} \not\subset Y_{\red}$, then $X$ and $Y$ intersect properly by
Krull's Principal Ideal Theorem.

If $Y$ is a local complete intersection, then $X$ and $Y$ meet
trans\-versally at $x$ if and only if $x \in \reg (X \cap Y)$ and they
intersect properly at $x$.

\begin{lemma}
\label{IneqSing}
Assume that $X$ is equidimensional, $Y$ is a local complete intersection,
and $X$ and $Y$ intersect properly.
\begin{enumerate}
\item
\label{IneqSing1}
If $N(X,Y)$ is the set of closed points of $\reg X \cap \reg Y$ where $X$
and $Y$ do not meet transversally, then
$$
\sing (X \cap Y) = (Y \cap \sing X) \cup (X \cap \sing Y) \cup N(X,Y).
$$
\item
\label{IneqSing3}
If $X \cap Y$ satisfies condition $(R_{m})$, then $X$ and $Y$ satisfy it as
well.
\hfill \qedbox
\end{enumerate}
\end{lemma}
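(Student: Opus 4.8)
The plan is to derive part \eqref{IneqSing1} from the transversality criterion stated just before the lemma, after upgrading the global ``proper intersection'' hypothesis to a pointwise one, and to deduce part \eqref{IneqSing3} from \eqref{IneqSing1} together with the projective dimension theorem.

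\emph{Part \eqref{IneqSing1}.} Since $Y$ is a local complete intersection it is Cohen--Macaulay, hence equidimensional; put $r = \codim Y$ and $n = \dim X$, so that $Y$ is pure of dimension $N-r$ and $X$ is pure of dimension $n$. The hypothesis that $X$ and $Y$ intersect properly then says exactly that $X\cap Y$ is pure of dimension $n-r$. Hence for \emph{every} closed point $x\in X\cap Y$ we have $\dim\mathcal{O}_{x}(X\cap Y)=n-r$, $\dim\mathcal{O}_{x}(X)=n$ and $\codim\mathcal{O}_{x}(Y)=r$, so that $X$ and $Y$ intersect properly at $x$ in the sense of \eqref{Proper}. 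Now fix such an $x$. If $x\in\reg X\cap\reg Y$, then the equivalence stated just before the lemma applies (as $Y$ is a local complete intersection and the intersection is proper at $x$) and gives: $x\in\reg(X\cap Y)$ if and only if $X$ and $Y$ meet transversally at $x$; equivalently, $x\in\sing(X\cap Y)$ if and only if $x\in N(X,Y)$, which is precisely the claimed dichotomy in this case, since $x$ lies in neither $Y\cap\sing X$ nor $X\cap\sing Y$. If instead $x\in\sing X\cup\sing Y$, then $x$ belongs to the right-hand side $(Y\cap\sing X)\cup(X\cap\sing Y)$; and $x\in\sing(X\cap Y)$ as well, for otherwise $x\in\reg(X\cap Y)$ would force, via the same criterion, that $X$ and $Y$ meet transversally at $x$, hence $x\in\reg X\cap\reg Y$ by \eqref{Trans}, a contradiction. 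Running this through all closed points of $X\cap Y$ shows that the two displayed sets have the same closed points, and hence coincide.

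\emph{Part \eqref{IneqSing3}.} Assume $X\cap Y$ satisfies $(R_{m})$; then $m\le\dim(X\cap Y)=n-r$ and $\dim\sing(X\cap Y)\le (n-r)-m-1$. Suppose, for contradiction, that $X$ fails $(R_{m})$, i.e.\ $\dim\sing X\ge n-m$, and pick an irreducible component $V$ of $\sing X$ with $\dim V\ge n-m$. Then $V$ and $Y$ are closed subvarieties of $\ProjCan$ with $\dim V+\dim Y\ge (n-m)+(N-r)=N+(n-r-m)\ge N$, so by the projective dimension theorem (see \cite{Ha}) $V\cap Y\ne\emptyset$, and every irreducible component $Z$ of $V\cap Y$ satisfies $\dim Z\ge\dim V+\dim Y-N\ge n-r-m$. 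But $Z\subseteq(\sing X)\cap Y\subseteq\sing(X\cap Y)$ by part \eqref{IneqSing1}, whence $n-r-m\le\dim\sing(X\cap Y)\le(n-r)-m-1$, which is absurd. Thus $X$ satisfies $(R_{m})$. The argument for $Y$ is symmetric: if $W$ is an irreducible component of $\sing Y$ with $\dim W\ge(N-r)-m$, then $\dim X+\dim W\ge N+(n-r-m)\ge N$, so $X\cap W\ne\emptyset$, every component $Z'$ of $X\cap W$ has $\dim Z'\ge\dim X+\dim W-N\ge n-r-m$, and $Z'\subseteq X\cap\sing Y\subseteq\sing(X\cap Y)$ by part \eqref{IneqSing1}, again contradicting $(R_{m})$ for $X\cap Y$.

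The one step that needs care is the opening reduction in part \eqref{IneqSing1}: it is the equidimensionality of $X$ and of the (Cohen--Macaulay) local complete intersection $Y$, together with the global proper-intersection hypothesis, that makes $X$ and $Y$ intersect properly \emph{at every closed point} of $X\cap Y$, and this pointwise properness is exactly what is needed to invoke the transversality criterion preceding the lemma. After that, everything is bookkeeping with dimensions, the projective dimension theorem supplying both the non-emptiness of the auxiliary intersections and the dimension bound used in part \eqref{IneqSing3}.
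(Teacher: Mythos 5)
Your proof is correct. The paper itself omits the proof of this lemma (Section~\ref{SingLoc} opens with the remark that the preliminary results are stated with proofs omitted), so there is no argument in the text to compare against; your derivation is a natural and complete one. For part~\eqref{IneqSing1}, the reduction from the global proper-intersection hypothesis to pointwise properness at every closed point is exactly the right preparatory step, and it is legitimate because $X$ is equidimensional, $Y$ (being a local complete intersection, hence Cohen--Macaulay) is equidimensional, and proper intersection then forces $X\cap Y$ to be pure of dimension $n-r$; after that, the case split and the use of the transversality criterion stated just before the lemma (valid since $Y$ is a local complete intersection) is airtight, including the observation that transversality at $x$ already entails $x\in\reg X\cap\reg Y$ via~\eqref{Trans}. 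For part~\eqref{IneqSing3}, the contradiction argument via the projective dimension theorem is correct: note that the ambient space is $\ProjCan$ (as set up in the paragraph preceding the lemma), that $m\le\dim(X\cap Y)=n-r$ by the definition of~$(R_m)$, and that $n\geq r$ from the standing assumption $\dim X\geq\codim Y$, so the inequality $\dim V+\dim Y\geq N$ holds and the theorem applies; the inclusion $Z\subseteq Y\cap\sing X\subseteq\sing(X\cap Y)$ from part~\eqref{IneqSing1} then forces a dimension contradiction, and the argument for $Y$ is indeed symmetric.
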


Let $X$ be a subscheme of $\ProjCan$. We say a subscheme $Z$ of $X$ is a
\emph{proper linear section} of $X$ if there is a linear subvariety $E$ of
$\ProjCan$ properly intersecting $X$ such that $X \cap E = Z$, in such a way
that $\dim Z = \dim X - \codim E$. If $\codim E = 1$, we say $Z$ is a
\emph{proper hyperplane section} of $X$. One sees immediately from Lemma
\ref{IneqSing} that if $X$ is an equidimensional subscheme of $\ProjCan$,
and \emph{if there is a nonsingular proper linear section of dimension $m$
of $X$ $(0 \leq m \leq \dim X)$, then condition $(R_{m})$ holds for $X$}. In
fact, these two conditions are equivalent, as stated in Corollary
\ref{Equiv}.

\emph{From  now on we assume that $k = \overline{k}$ is algebraically
closed}. We state here a version of Bertini's Theorem and some of its
consequences ; an early source for this kind of results is, for instance,
\cite[Sec. I ]{Weil54}. Let $\ProjDual$ be the variety of hyperplanes of
$\ProjCan$. Let $X$ be a closed subvariety in $\ProjCan$ and
$\EuScript{U}_{1}(X)$ be the set of $H \in \ProjDual$ satisfying the
following conditions:
\begin{enumerate}
\item
\label{Bt1}
$X \cap H$ is a proper hyperplane section of $X$.
\item
\label{Bt2}
$X \cap H$ is reduced if $\dim X \geq 1$.
\item
\label{Bt3}
$X \cap H$ is irreducible if $\dim X \geq 2$.
\item
\label{Bt4}
$\dim \sing X \cap H =
\begin{cases}
\dim \sing X - 1 & \text{if} \quad \dim \sing X \geq 1,\\
- 1              & \text{if} \quad \dim \sing X \leq 0.
\end{cases}$
\item
\label{Bt5}
$\deg X \cap H = \deg X$ if $\dim X \geq 1$.
\end{enumerate}

We need the following version of Bertini's Theorem. A proof of this result
can be obtained as a consequence of \cite[Cor.  6.11, p. 89]{Jouanolou1} and 
\cite[Lemma 4.1]{Skoro}. 

\begin{lemma}
\label{Bertini}
Let $X$ be a closed subvariety in $\ProjCan$. Then $\EuScript{U}_{1}(X)$
contains a non\-empty Zariski open set of $\ProjDual$.
\hfill \qedbox
\end{lemma}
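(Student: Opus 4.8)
The plan is to treat the five defining conditions \eqref{Bt1}--\eqref{Bt5} of $\EuScript{U}_{1}(X)$ one at a time, showing for each that it holds for all $H$ in a nonempty Zariski-open subset of $\ProjDual$, and then to intersect these sets; since $\ProjDual$ is irreducible, a finite intersection of nonempty open subsets is again nonempty and open, which will finish the proof. At the outset I would assume $X \neq \ProjCan$ (the case $X = \ProjCan$ being trivial) and record that the set $\EuScript{W}$ of hyperplanes $H$ with $X \not\subseteq H$ is nonempty and Zariski-open, being the complement in $\ProjDual$ of the linear subvariety $\set{H}{X \subseteq H}$, a proper subvariety since $X \neq \emptyset$. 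All the open sets below will be taken inside $\EuScript{W}$.

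Conditions \eqref{Bt1} and \eqref{Bt5} are the easy ones. For $H \in \EuScript{W}$ a local equation of $H$ is a nonzerodivisor in $\mathcal{O}_{x}(X)$ at every point $x \in X$, so Krull's principal ideal theorem shows that $X \cap H$ has pure dimension $\dim X - 1$ and is a proper hyperplane section, which is \eqref{Bt1}; and once $X \cap H$ is known (from \eqref{Bt2}) to be reduced of pure dimension $\dim X - 1$, B\'ezout's theorem gives $\deg(X \cap H) = \deg X \cdot \deg H = \deg X$ when $\dim X \geq 1$, which is \eqref{Bt5}. The content of \eqref{Bt2} (reducedness, when $\dim X \geq 1$) and of \eqref{Bt3} (geometric irreducibility, when $\dim X \geq 2$) is precisely what the quoted results supply: \cite[Cor.~6.11]{Jouanolou1} provides a nonempty open subset of $\ProjDual$ over which $X \cap H$ is geometrically irreducible, and \cite[Lemma 4.1]{Skoro} a nonempty open subset over which $X \cap H$ is reduced, both conditions being vacuous in the excluded small-dimensional cases.

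For \eqref{Bt4} I would argue directly from Lemma \ref{IneqSing}. Since a hyperplane $H$ is regular, hence a local complete intersection, the first part of Lemma \ref{IneqSing}, applied with $Y = H$ for $H \in \EuScript{W}$, yields
$$
\sing(X \cap H) \;=\; (\sing X \cap H) \;\cup\; N(X, H),
$$
where $N(X,H) = \set{x \in \reg X}{T_{x}X \subseteq H}$ is, as one checks from \eqref{Trans} and the observation preceding Lemma \ref{IneqSing}, precisely the tangency locus of $H$ along $\reg X$. Two further shrinkings then control the right-hand side. Intersecting $\EuScript{W}$ with the nonempty open set of $H$ not containing any irreducible component of $\sing X$ when $\dim \sing X \geq 1$ --- respectively, not meeting the finite set $\sing X$ when $\dim \sing X \leq 0$ --- forces $\dim(\sing X \cap H) = \dim \sing X - 1$, respectively $\sing X \cap H = \emptyset$; and intersecting further with the complement of the Zariski closure of the image in $\ProjDual$ of the incidence variety $\set{(x,H)}{x \in \reg X,\ T_{x}X \subseteq H}$ forces $N(X,H) = \emptyset$. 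This last complement is nonempty and open because $T_{x}X$ is a linear subspace of dimension $\dim X$, the hyperplanes containing it form a linear subspace of $\ProjDual$ of dimension $N - 1 - \dim X$, so the incidence variety has dimension $\dim X + (N - 1 - \dim X) = N - 1 < N = \dim \ProjDual$ and hence does not dominate $\ProjDual$. For $H$ in the resulting nonempty open subset of $\EuScript{W}$, the dimension of $\sing(X \cap H)$ takes exactly the value prescribed in \eqref{Bt4}, and intersecting the finitely many nonempty open sets built for \eqref{Bt1}--\eqref{Bt5} gives the asserted open subset of $\ProjDual$.

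The step I expect to be the real obstacle is \eqref{Bt2}: in positive characteristic there is no generic smoothness, so reducedness of $X \cap H$ cannot simply be read off from a smoothness statement, and ruling out \emph{embedded} components of a generic hyperplane section --- as opposed to merely controlling its behaviour at the generic points of its irreducible components --- is genuinely more than a dimension count. This is the substance hidden in \cite[Lemma 4.1]{Skoro}, and it is the use of this result (and of Jouanolou's characteristic-free irreducibility theorem) in place of the classical form of Bertini's theorem that makes the argument valid over a field of arbitrary characteristic. Everything else is routine once the set-up above is in place: \eqref{Bt1} from Krull, \eqref{Bt3} from Jouanolou, \eqref{Bt4} from Lemma \ref{IneqSing} together with the tangent-space dimension count, and \eqref{Bt5} from B\'ezout.
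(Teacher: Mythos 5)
Your proof is correct and matches the paper's intended approach: the paper gives no argument for Lemma \ref{Bertini} beyond the citation to \cite[Cor.~6.11]{Jouanolou1} and \cite[Lemma 4.1]{Skoro}, which are exactly the two characteristic-free ingredients you invoke for the nontrivial conditions \eqref{Bt3} and \eqref{Bt2}; your Krull, B\'ezout, and conormal-variety dimension count for \eqref{Bt1}, \eqref{Bt5}, and \eqref{Bt4} (the last via Lemma \ref{IneqSing}) simply fill in the routine steps the authors leave unspoken.
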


Let $r \in \NN$ with $0 \leq r \leq N$. We denote by $\mathcan{G}_{r,N}$ the
Grassmannian of linear varieties of dimension $r$ in $\ProjCan$. Let
$\EuScript{F}_{r}(\ProjCan)$ be the projective variety consisting of
sequences
$$
(E_{1}, \dots, E_{r}) \in \mathcan{G}_{N - 1,N} \times \dots \times
\mathcan{G}_{N - r,N}
$$
making up a \emph{flag} of length $r$:
$$
\ProjCan = E_{0} \supset E_{1} \supset \dots \supset E_{r}
$$
with $\codim E_{m} = m$ for $0 \leq m \leq r$, so that $E_{m}$ is a
hyperplane of $E_{m - 1}$. Let $X$ be a subvariety of $\ProjCan$ and
$(E_{1}, \dots, E_{r})$ a flag in $\EuScript{F}_{r}(\ProjCan)$. We associate
to these data a descending chain of schemes
$X_{0} \supset X_{1} \supset \dots \supset X_{r}$
defined by
$$
X = X_{0}, \qquad X_{m} = X_{m - 1} \cap E_{m} \qquad (1 \leq m \leq r).
$$
Note that $X_{m} = X \cap E_{m}$ for $0 \leq m \leq r$. We say that $(E_{1},
\dots, E_{r})$ is a \emph{regular flag} of length $r$ for $X$ if the
following
conditions hold for $1 \leq m \leq r$:
\begin{enumerate}
\item
\label{Rf1}
$X_{m}$ is a proper linear section of $X$
(and hence, $\dim X_{m} = \dim X - m$ if $\dim X \geq m$ and $X_{m}$ is
empty otherwise).
\item
\label{Rf2}
$X_{m}$ is reduced if $\dim X \geq m$.
\item
\label{Rf3}
$X_{m}$ is irreducible if $\dim X \geq m + 1$.
\item
\label{Rf5}
$
\dim \sing X_{m} =
\begin{cases}
\dim \sing X - m & \text{if} \quad \dim \sing X \geq m,\\
- 1              & \text{if} \quad \dim \sing X \leq m - 1.
\end{cases}
$
\item
\label{Rf4}
$\deg X_{m} = \deg X$ if $\dim X \geq m$.
\end{enumerate}
If $\dim X \geq r$, we denote by $\EuScript{U}_{r}(X)$ the set of $E \in
\mathcan{G}_{N - r,N}$ such that there exists a regular flag $(E_{1},
\dots, E_{r})$ for $X$ with $E_{r} = E$.

By lemma \ref{Bertini} and induction we get:

\begin{proposition}
\label{BertiniFlag}
Let $X$ be a closed subvariety in $\ProjCan$. Then $\EuScript{U}_{r}(X)$
contains a nonempty Zariski open set of $\mathcan{G}_{N - r,N}$.
\hfill \qedbox
\end{proposition}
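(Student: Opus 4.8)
The plan is to prove, by induction on $r$, the slightly stronger statement that the set $\EuScript{V}_{r}(X)$ of \emph{all} regular flags of length $r$ for $X$ contains a nonempty Zariski open subset of the flag variety $\EuScript{F}_{r}(\ProjCan)$. This suffices: $\EuScript{F}_{r}(\ProjCan)$ is irreducible, the projection $p_{r}\colon\EuScript{F}_{r}(\ProjCan)\to\mathcan{G}_{N-r,N}$ onto the last term of the flag is surjective, and by construction $\EuScript{U}_{r}(X)=p_{r}\bigl(\EuScript{V}_{r}(X)\bigr)$; so if $\EuScript{V}_{r}(X)$ contains a nonempty (hence dense) open set, then $\EuScript{U}_{r}(X)$ is a dense constructible subset of the irreducible variety $\mathcan{G}_{N-r,N}$ (Chevalley's theorem), and a dense constructible subset of an irreducible variety contains a dense open set.

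Two preliminary observations are needed. First, $\EuScript{V}_{r}(X)$ is constructible in $\EuScript{F}_{r}(\ProjCan)$: conditions \ref{Rf1}--\ref{Rf4} in the definition of a regular flag impose, at each level $m$, requirements on the dimension, the reducedness, the (geometric, since $k=\overline{k}$) irreducibility, the dimension of the singular locus, and the degree of the fibres of the universal linear section $\{(\mathbf{F},x):x\in X\cap E_{m}\}\to\EuScript{F}_{r}(\ProjCan)$, and each such requirement defines a constructible locus in the base by the standard constructibility and semicontinuity theorems for morphisms of finite type; $\EuScript{V}_{r}(X)$ is the finite intersection of these loci over $1\le m\le r$. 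Second, since $X_{m}=X\cap E_{m}$ for every $m$, the conditions at level $m$ involve only $E_{m}$, and one checks directly --- the case analysis in the singular-locus condition working out precisely because that condition already holds at level $r-1$ --- that $(E_{1},\dots,E_{r})\in\EuScript{V}_{r}(X)$ if and only if $(E_{1},\dots,E_{r-1})\in\EuScript{V}_{r-1}(X)$ and, viewing $E_{r}$ as a hyperplane of $E_{r-1}$, the section $X_{r-1}\cap E_{r}$ satisfies conditions \ref{Bt1}--\ref{Bt5} \emph{for the variety $X_{r-1}$}.

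The base case $r=1$ is exactly Lemma \ref{Bertini}, since $\EuScript{F}_{1}(\ProjCan)=\mathcan{G}_{N-1,N}=\ProjDual$ and $\EuScript{V}_{1}(X)=\EuScript{U}_{1}(X)$ is the Bertini set of that lemma. For the inductive step, suppose $r\ge2$ and that $\EuScript{V}_{r-1}(X)$ contains a nonempty open $W'\subseteq\EuScript{F}_{r-1}(\ProjCan)$, which is then dense and irreducible. Let $\phi\colon\EuScript{F}_{r}(\ProjCan)\to\EuScript{F}_{r-1}(\ProjCan)$ be the morphism forgetting $E_{r}$; it is a projective bundle with fibres of dimension $N-r+1$, so $\phi^{-1}(W')$ is an irreducible open subset of $\EuScript{F}_{r}(\ProjCan)$. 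For each $\mathbf{E}=(E_{1},\dots,E_{r-1})\in W'$, conditions \ref{Rf2} and \ref{Rf3} together with $k=\overline{k}$ make $X_{r-1}=X\cap E_{r-1}$ a variety of dimension $\dim X-(r-1)\ge1$ lying in the projective space $E_{r-1}$ of dimension $N-r+1$; applying Lemma \ref{Bertini} to $X_{r-1}$ inside $E_{r-1}$ and invoking the equivalence recorded above, we see that $\EuScript{V}_{r}(X)\cap\phi^{-1}(\mathbf{E})$ contains a nonempty --- hence dense --- open subset of the fibre $\phi^{-1}(\mathbf{E})$.

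It remains to deduce a statement over the whole base $W'$ from this fibrewise one, and this is the step that needs the most care. The set $S=\EuScript{V}_{r}(X)\cap\phi^{-1}(W')$ is constructible, and by the previous paragraph its complement $\phi^{-1}(W')\setminus S$ is a constructible set meeting every fibre of $\phi$ over $W'$ in a proper closed subset. Stratifying $W'$ into finitely many locally closed subsets over which the fibre dimension of $\phi^{-1}(W')\setminus S\to W'$ is constant (Chevalley), one obtains $\dim\bigl(\phi^{-1}(W')\setminus S\bigr)\le\dim W'+(N-r)<\dim\phi^{-1}(W')$; hence $\phi^{-1}(W')\setminus S$ is not dense in the irreducible variety $\phi^{-1}(W')$, so $S$ contains a nonempty open subset of $\phi^{-1}(W')$ and therefore of $\EuScript{F}_{r}(\ProjCan)$. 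Since $S\subseteq\EuScript{V}_{r}(X)$, the induction is complete. The substantial ingredients beyond Lemma \ref{Bertini} are thus the constructibility of $\EuScript{V}_{r}(X)$ --- resting on reducedness, geometric irreducibility, degree, and dimension of the singular locus being constructible conditions in a finite-type family --- and the theorem on the dimension of the fibres of a morphism; both I would quote rather than reprove.
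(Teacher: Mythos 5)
Your argument is correct and follows exactly the route the paper intends: the paper's entire proof is the phrase ``By Lemma \ref{Bertini} and induction,'' and you have supplied the missing details, in particular the constructibility of the set of regular flags and the fibre-dimension argument needed to pass from the fibrewise application of Bertini to a global open set. One small caution: you reuse the symbol $\EuScript{V}_{r}(X)$ for ``the set of regular flags of length $r$,'' whereas the paper already reserves $\EuScript{V}_{r}(X)$ for the set of $E\in\mathcan{G}_{N-r,N}$ bounding a \emph{semi}-regular flag, so a different symbol should be used to avoid a clash.
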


\begin{corollary}
\label{Equiv}
Let $X$ be a closed subvariety in $\ProjCan$ of dimension $n$, and let
$s \in \NN$ with $0 \leq s \leq n - 2$. Then the following conditions are
equivalent:
\begin{enumerate}
\item
\label{Equiv1}
There is a proper linear section of codimension $s + 1$ of $X$ which is a
nonsingular variety.
\item
\label{Equiv2}
$\dim \sing X \leq s$.
\hfill \qedbox
\end{enumerate}
In particular, $X$ is regular in codimension one if and only if there is a
nonsingular proper linear section $Y$ of dimension $1$ of $X$.
\hfill \qed
\end{corollary}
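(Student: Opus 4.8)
The plan is to prove the two implications separately, invoking only the Bertini-type results already established and the remark following Lemma~\ref{IneqSing}. Throughout one uses that $X$, being a variety over the algebraically closed field $k$, is irreducible, reduced and equidimensional of dimension $n$, and that over such a field \emph{regular} and \emph{nonsingular} coincide while an irreducible reduced $k$-scheme is automatically a variety.

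First I would handle \eqref{Equiv1} $\Rightarrow$ \eqref{Equiv2}. Given a nonsingular proper linear section $Y$ of $X$ of codimension $s+1$, one has $\dim Y = n-s-1$, and $0 \le n-s-1 \le n$ because $0 \le s \le n-2$. The remark stated just after Lemma~\ref{IneqSing} (applied with $m = n-s-1$) then yields that $X$ satisfies condition $(R_{n-s-1})$; by the equivalent characterization $\dim X - \dim \sing X \ge m+1$ of condition $(R_m)$, this reads $\dim X - \dim \sing X \ge n-s$, i.e. $\dim \sing X \le s$, which is \eqref{Equiv2}.

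For the converse \eqref{Equiv2} $\Rightarrow$ \eqref{Equiv1}, set $r = s+1$. Since $s \le n-2$ we have $r < n = \dim X$, so Proposition~\ref{BertiniFlag} guarantees $\EuScript{U}_{s+1}(X) \ne \emptyset$; pick $E$ in it together with a regular flag $(E_{1}, \dots, E_{s+1})$ for $X$ with $E_{s+1} = E$, and put $Y = X \cap E = X_{s+1}$. Property \eqref{Rf1} says $Y$ is a proper linear section of $X$ of codimension $s+1$. The key point is that the hypothesis $\dim \sing X \le s = (s+1)-1$ places us in the second branch of \eqref{Rf5} with $m = s+1$, whence $\dim \sing Y = -1$, i.e. $Y$ is regular, hence nonsingular since $k = \overline{k}$; moreover $Y$ is reduced by \eqref{Rf2} and irreducible by \eqref{Rf3} (the latter using $n \ge s+2$), so $Y$ is a nonsingular variety, establishing \eqref{Equiv1}.

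Finally, the ``in particular'' clause comes out by specializing to $s = n-2$ (so $n \ge 2$): then \eqref{Equiv2} becomes $\dim \sing X \le n-2$, which is precisely condition $(R_1)$, i.e. regularity in codimension one, while \eqref{Equiv1} becomes the existence of a nonsingular proper linear section of codimension $n-1$, that is, of dimension one. Since all the substantive work is already carried out in Lemma~\ref{IneqSing} and Proposition~\ref{BertiniFlag}, I do not expect a genuine obstacle here; the only place demanding care is the bookkeeping of dimensions and codimensions, in particular matching the hypothesis $\dim \sing X \le s$ with the correct case of the regular-flag condition \eqref{Rf5}.
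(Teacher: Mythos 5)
Your proof is correct and follows exactly the route the paper intends (the paper omits the proof but the remark after Lemma~\ref{IneqSing} handles \eqref{Equiv1}$\Rightarrow$\eqref{Equiv2}, while placement right after Proposition~\ref{BertiniFlag} signals that \eqref{Equiv2}$\Rightarrow$\eqref{Equiv1} comes from a regular flag, using case \eqref{Rf5} exactly as you did). The bookkeeping of dimension versus codimension and the matching of the hypothesis $\dim\sing X\le s$ to the second branch of \eqref{Rf5} are all handled correctly.
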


Following Weil \cite[p. 118]{Weil54}, we call $Y$ a \emph{typical curve
on $X$} if $X$ and $Y$ satisfy the above conditions.

\section{Weak Lefschetz Theorem for Singular Varieties}
\label{WeakLefschetz}

We assume now that $k$ is a perfect field of characteristic $p \geq 0$. Let
$\ell \neq p$ be a prime number, and denote by ${\QQ}_{\ell}$ the field of
$\ell$-adic numbers. Given an algebraic variety $X$ defined
over $k$, by $H^{i}(\bar{X}, \QQ_{\ell})$ we denote the
\emph{\'etale $\ell$-adic cohomology space} of $\bar{X}$ and by
$H^{i}_{c}(\bar{X}, {\QQ}_{\ell})$ the corresponding cohomology spaces with
compact support. We refer to the book of Milne \cite{Milne} and to the
survey of Katz \cite{Katz2} for the definitions and the fundamental theorems
on this theory. These are finite dimensional vector spaces over
${\QQ}_{\ell}$, and they vanish for $i < 0$ as well as for $i > 2 \dim X$.
If $X$ is proper the two cohomology spaces coincide. The \emph{$\ell$-adic
Betti numbers} of $X$ are
$$ b_{i,\ell}(\bar{X}) = \dim H^{i}_{c}(\bar{X},{\QQ}_{\ell})
\quad (0 \leq i \leq 2n).
$$
If $X$ is a nonsingular projective variety, these numbers are independent
of the choice of $\ell$ \cite[p. 27]{Katz2}, and in this case we set
$b_{i}(\bar{X}) = b_{i,\ell}(\bar{X})$. It is conjectured that this is
true for any separated scheme $X$ of finite type.

These spaces are endowed with an action of the Galois group $\mathbf{g} =
\Gal(\overline{k}/k)$. We call a map of such spaces
\emph{$\mathbf{g}$-equivariant} if it commutes with the action of
$\mathbf{g}$. For $c \in \ZZ$, we can consider the \emph{Tate twist} by
$c$ \cite[pp. 163-164]{Milne}. Accordingly, by
$$
H^{i}(\bar{X},{\QQ}_{\ell}(c)) =
H^{i}(\bar{X},{\QQ}_{\ell}) \otimes \QQ_{\ell}(c)
$$
we shall denote the corresponding twisted copy of
$H^{i}(\bar{X},{\QQ}_{\ell})$.

In this section, we shall prove a generalization to singular varieties of the
classical \emph{Weak Lefschetz Theorem} for cohomology spaces of high degree
(cf. \cite[Thm. 7.1, p. 253]{Milne}, \cite[Thm. 7.1, p.  318]{Jouanolou2}).
It seems worthwhile to first review the case of nonsingular varieties, 
which is discussed below. 

\begin{theorem}
\label{LefschetzSGA}
Let $X$ be an irreducible projective scheme of dimension $n$, and
$Y$ a proper linear section of codimension $r$ in $X$ which is a nonsingular
variety. Assume that both $X$ and $Y$ are defined over $k$. Then for each $i \geq n + r$, the closed immersion $\iota : Y
\longrightarrow X$ induces a canonical $\mathbf{g}$-equivariant linear map
$$
\iota_{*} : H^{i - 2r}(\bar{Y},\QQ_{\ell}(- r)) \longrightarrow
H^{i}(\bar{X},\QQ_{\ell}),
$$
called the \emph{Gysin map}, which is an isomorphism for $i \geq n + r + 1$
and a surjection for $i = n + r$.
\end{theorem}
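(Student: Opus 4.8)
The plan is to deduce the theorem from classical cohomological purity applied on the \emph{smooth} locus of $X$, together with a Mayer--Vietoris computation that replaces the affine Lefschetz theorem once $r>1$. Since \'etale cohomology with $\QQ_{\ell}$-coefficients is insensitive to nilpotents I may assume $X$ reduced, hence an integral projective variety of dimension $n$. Fix linear forms $\ell_{1},\dots,\ell_{r}$ over $k$ cutting out the linear space $E$ with $X\cap E=Y$, and put $s_{i}=\ell_{i}|_{X}\in\Gamma(X,\mathcal{O}_{X}(1))$, so $Y=Z(s_{1},\dots,s_{r})$; each $s_{i}\neq 0$, for otherwise $Y$ would be a linear section of codimension $<r$, contradicting $\dim Y=n-r$. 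The key preliminary point is that $Y\subseteq\reg X$: for a closed point $y\in Y$ one has $\dim\mathcal{O}_{X,y}=n$ and $\dim\mathcal{O}_{Y,y}=n-r$ (the intersection being proper), while $\mathcal{O}_{Y,y}=\mathcal{O}_{X,y}/(s_{1},\dots,s_{r})$ with the $s_{i}$ in the maximal ideal; comparing Zariski cotangent spaces gives
\[
\dim_{k}\mathfrak{m}_{Y,y}/\mathfrak{m}_{Y,y}^{2}\ \geq\ \dim_{k}\mathfrak{m}_{X,y}/\mathfrak{m}_{X,y}^{2}-r\ \geq\ \dim\mathcal{O}_{X,y}-r\ =\ \dim\mathcal{O}_{Y,y},
\]
and since $Y$ is nonsingular the extreme terms agree, so $\mathcal{O}_{X,y}$ is regular; as localizations of regular local rings are regular, $Y\subseteq\reg X=:X^{\circ}$.

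Thus $\iota$ factors through the open smooth subvariety $X^{\circ}$ of dimension $n$ as a closed immersion of smooth varieties of codimension $r$. Because $R\iota^{!}$ of a closed immersion depends only on a neighbourhood of the closed subscheme, cohomological purity for the smooth pair $(X^{\circ},Y)$ furnishes a canonical $\mathbf{g}$-equivariant isomorphism $R\iota^{!}\QQ_{\ell,X}\cong\QQ_{\ell,Y}(-r)[-2r]$. I would then \emph{define} the Gysin map as the composite
\[
H^{i-2r}(\bar{Y},\QQ_{\ell}(-r))=H^{i}\bigl(\bar{Y},R\iota^{!}\QQ_{\ell,X}\bigr)=H^{i}\bigl(\bar{X},\iota_{*}R\iota^{!}\QQ_{\ell,X}\bigr)\longrightarrow H^{i}(\bar{X},\QQ_{\ell}),
\]
the last arrow induced by the adjunction $\iota_{*}R\iota^{!}\to\mathrm{id}$; it is canonical and $\mathbf{g}$-equivariant because all the data are defined over $k$. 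Applying $R\Gamma(\bar{X},-)$ to the distinguished triangle $\iota_{*}R\iota^{!}\QQ_{\ell,X}\to\QQ_{\ell,X}\to Rj_{*}\QQ_{\ell,U}\xrightarrow{+1}$, with $j\colon U:=X\setminus Y\hookrightarrow X$, produces the exact sequence
\[
\cdots\to H^{i-2r}(\bar{Y},\QQ_{\ell}(-r))\xrightarrow{\iota_{*}}H^{i}(\bar{X},\QQ_{\ell})\to H^{i}(\bar{U},\QQ_{\ell})\to H^{i+1-2r}(\bar{Y},\QQ_{\ell}(-r))\to\cdots.
\]

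The one remaining ingredient is the vanishing $H^{i}(\bar{U},\QQ_{\ell})=0$ for $i\geq n+r$. Here $U=X\setminus Y=\bigcup_{i=1}^{r}D(s_{i})$ with $D(s_{i})=X\setminus Z(s_{i})$, and each finite intersection $D(s_{i_{0}})\cap\cdots\cap D(s_{i_{p}})=X\cap(\ProjCan\setminus Z(\ell_{i_{0}}\cdots\ell_{i_{p}}))$ is a closed subscheme of the affine variety $\ProjCan\setminus Z(\ell_{i_{0}}\cdots\ell_{i_{p}})$, hence itself affine of dimension $\leq n$; by the affine Lefschetz theorem its cohomology vanishes in degrees $>n$. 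Since the cover has only $r$ members, the Mayer--Vietoris spectral sequence $E_{1}^{p,q}=\bigoplus_{\lvert I\rvert=p+1}H^{q}(\bar{D}_{I},\QQ_{\ell})\Rightarrow H^{p+q}(\bar{U},\QQ_{\ell})$ lives in $0\leq p\leq r-1$, $0\leq q\leq n$, which gives the claim. Plugging this into the exact sequence: for $i\geq n+r+1$ both $H^{i-1}(\bar{U},\QQ_{\ell})$ and $H^{i}(\bar{U},\QQ_{\ell})$ vanish, so $\iota_{*}$ is an isomorphism; for $i=n+r$ the vanishing of $H^{i}(\bar{U},\QQ_{\ell})$ alone forces surjectivity.

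The step I expect to be the genuine obstacle is this last vanishing: for $r>1$ the set $U$ is not affine (already $\ProjCan\setminus\PP^{N-r}$ is not), so one cannot merely quote affine Lefschetz and must instead exploit the specific affine cover by the $D(s_{i})$ and keep the length of the associated complex under control. The preliminary fact $Y\subseteq\reg X$ is likewise essential and not a formality: if a linear section met $\sing X$, the purity isomorphism --- and with it the very definition of the Gysin map used here --- would break down, so the hypothesis that $Y$ is nonsingular is doing real work in guaranteeing that the whole classical formalism is available. (The cited references prove closely related statements, possibly via a regular flag and induction on $r$; the route sketched above is self-contained once one has \'etale purity and affine Lefschetz.)
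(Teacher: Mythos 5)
Your proof is correct and follows essentially the same line as the paper's: excise to the smooth locus, invoke cohomological purity for the smooth pair, and use the affine Lefschetz theorem to kill $H^{i}(\bar U)$ in degrees $\geq n+r$. The differences are presentational rather than structural. You phrase the Gysin map via the adjunction $\iota_{*}R\iota^{!}\to\mathrm{id}$ and the triangle $\iota_{*}R\iota^{!}\QQ_{\ell}\to\QQ_{\ell}\to Rj_{*}\QQ_{\ell,U}$, which is the derived-category incarnation of the paper's long exact sequence for cohomology with support in $Y$; the two sequences are the same. For $Y\subseteq\reg X$ the paper cites its Lemma~\ref{IneqSing}(i), while you give a direct cotangent-space estimate, valid after reducing to $X$ integral; both establish the same point, and yours has the merit of being self-contained. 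Finally, the paper asserts $H^{i}(\bar U,\QQ_{\ell})=0$ for $i\geq n+r$ because $U$ is a union of $r$ affine opens of dimension $n$, citing Milne's affine Lefschetz theorem; you supply the Mayer--Vietoris spectral sequence calculation that justifies that step, which the paper leaves implicit. In short: same argument, with you filling in two verifications the paper compresses into citations.
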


\begin{proof}
There is a diagram
$$
\begin{CD} Y & @>{\iota}>> & X & @<<< & U
\end{CD}
$$
where $U = X \setminus Y$. The corresponding long exact sequence in cohomology with
support in $Y$ \cite[Prop. 1.25, p. 92]{Milne} will be as follows.
\begin{equation*}
\label{support}
\cdots \longrightarrow H^{i - 1}(\bar{U},\QQ_{\ell})
\longrightarrow H^{i}_{\bar{Y}}(\bar{X}, \QQ_{\ell})
\longrightarrow H^{i}(\bar{X}, \QQ_{\ell}) \longrightarrow
H^{i}(\bar{U}, \QQ_{\ell}) \longrightarrow \cdots
\end{equation*}
Since $U$ is a scheme of finite type of dimension $n$ which is the union
of $r$ affine schemes, we deduce from Lefschetz Theorem on the cohomological
dimension of affine schemes \cite[Thm. 7.2, p. 253]{Milne} that
$H^{i}(\bar{U},\QQ_{\ell}) = 0$ if $i \geq n + r$.
Thus, the preceding exact sequence induces a surjection
$$
H^{n + r}_{\bar{Y}}(\bar{X}, \QQ_{\ell}) \longrightarrow
H^{n + r}(\bar{X}, \QQ_{\ell}) \longrightarrow 0,
$$
and isomorphisms
$$
H^{i}_{\bar{Y}}(\bar{X}, \QQ_{\ell}) \isom
H^{i}(\bar{X}, \QQ_{\ell}) \quad (i \geq n + r + 1).
$$
The cohomology groups with support in $\bar{Y}$ can be calculated by
excision in an \'etale neighbourhood of $\bar{Y}$ \cite[Prop. 1.27, p.
92]{Milne}. Let $X' = \reg X$ be the smooth locus of $X$. Since $Y$ is
nonsingular, we find $E \cap \sing X = \emptyset$ by Lemma
\ref{IneqSing}\eqref{IneqSing1}, and hence, $Y \subset X'$, that is, X is
nonsingular in a neighbourhood of $Y$. Thus we obtain isomorphisms
$$
H^{i}_{\bar{Y}}(\bar{X}, \QQ_{\ell}) \isom H^{i}_{\bar{Y}}(\bar{X'},
\QQ_{\ell}), \quad
\textrm{for all }i \geq 0. 
$$
Now $(Y, X')$ is a \emph{smooth pair of $k$-varieties} of codimension $r$ as
defined in \cite[VI.5, p. 241]{Milne}. By the Cohomological Purity
Theorem \cite[Thm. 5.1, p. 241]{Milne}, there are canonical isomorphisms
\begin{equation*}
H^{i - 2 r}(\bar{Y}, \QQ_{\ell}(- r)) \isom H^{i}_{Y}(\bar{X'}, \QQ_{\ell}), \quad 
\textrm{for all }i \geq 0. 
\end{equation*}
This yields the desired results. 
\end{proof}

\begin{corollary}
\label{BettiNb}
Let $X$ be a subvariety of dimension $n$ of $\ProjCan$ with $\dim \sing X
\leq s$ and let $Y$ be a proper linear section of codimension $s + 1 \leq n
- 1$ of $X$ which is a nonsingular variety. Then:
\begin{enumerate}
\item
\label{BettiNb1}
$b_{i,\ell}(\bar{X}) = b_{i - 2s - 2}(\bar{Y})$ \text{if } $i \geq n + s
+ 2$.
\item
\label{BettiNb2}
$b_{n + s + 1,\ell}(\bar{X}) \leq b_{n - s - 1}(\bar{Y})$.
\hfill \qed
\end{enumerate}
\end{corollary}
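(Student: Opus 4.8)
The plan is to deduce both assertions directly from Theorem \ref{LefschetzSGA} applied with $r = s+1$, together with the comparison of compactly supported and ordinary cohomology for proper schemes. First I would observe that $X$, being a closed subvariety of $\ProjCan$, is an irreducible projective scheme of dimension $n$, and that $Y$ is by hypothesis a proper linear section of codimension $r = s+1 \le n-1$ which is a nonsingular variety; moreover both are defined over $k$ (which we may take algebraically closed, or else work over a perfect field and base-change, since Betti numbers do not change). So Theorem \ref{LefschetzSGA} applies and gives the Gysin map $\iota_*\colon H^{i-2r}(\bar Y,\QQ_\ell(-r)) \to H^i(\bar X,\QQ_\ell)$, which is an isomorphism for $i \ge n+r+1 = n+s+2$ and a surjection for $i = n+r = n+s+1$.

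Next, since $X$ and $Y$ are proper, $H^i_c(\bar X,\QQ_\ell) = H^i(\bar X,\QQ_\ell)$ and $H^i_c(\bar Y,\QQ_\ell) = H^i(\bar Y,\QQ_\ell)$, so the $\ell$-adic Betti numbers in the statement are just the dimensions of the ordinary cohomology spaces. The Tate twist $\QQ_\ell(-r)$ is one-dimensional over $\QQ_\ell$, so $\dim H^{i-2r}(\bar Y,\QQ_\ell(-r)) = \dim H^{i-2r}(\bar Y,\QQ_\ell) = b_{i-2r,\ell}(\bar Y)$, and since $Y$ is a nonsingular projective variety this equals $b_{i-2r}(\bar Y)$, independent of $\ell$. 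For $i \ge n+s+2$ the Gysin map is an isomorphism, hence $b_{i,\ell}(\bar X) = b_{i-2s-2}(\bar Y)$, which is assertion \eqref{BettiNb1}. For $i = n+s+1$ the Gysin map is a surjection, so $b_{n+s+1,\ell}(\bar X) \le \dim H^{n-s-1}(\bar Y,\QQ_\ell(-(s+1))) = b_{n-s-1}(\bar Y)$, which is assertion \eqref{BettiNb2}.

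I do not anticipate any genuine obstacle here: the corollary is essentially a dictionary translation of Theorem \ref{LefschetzSGA} into the language of Betti numbers. The only points requiring a word of care are the identification $r = s+1$ (so that $i \ge n+r+1$ becomes $i \ge n+s+2$ and $i-2r$ becomes $i-2s-2$), the use of properness to pass from $H^i_c$ to $H^i$, and the remark that twisting by $\QQ_\ell(-r)$ does not change dimensions — all of which are immediate. One should also note that the hypothesis $s+1 \le n-1$ guarantees $\dim Y = n - s - 1 \ge 1$, but this is not actually needed for the cohomological statement; it merely ensures we are in a nontrivial range and is consistent with the existence of such a $Y$ via Corollary \ref{Equiv}.
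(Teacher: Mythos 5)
Your proof is correct and is precisely the argument the paper leaves implicit (the corollary is stated with a bare \qed): set $r=s+1$ in Theorem \ref{LefschetzSGA}, use that $X$ and $Y$ are proper so $H^i_c=H^i$, note the Tate twist preserves dimension, and read off the Betti-number (in)equalities from the isomorphism/surjection ranges of the Gysin map.
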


\begin{remark}
In view of Corollary \ref{Equiv}, relation \eqref{BettiNb1} implies that the
Betti numbers $b_{i,\ell}(\bar{X})$ are independent of $\ell$ for $i \geq n
+ s + 2$.
\end{remark}

Now let $X$ be a irreducible closed subscheme of dimension $n$ of
$\ProjCan$. If $0 \leq r \leq n$, let
$$
\ProjCan = E_{0} \supset E_{1} \supset \dots \supset E_{r}
$$
be a flag in $\EuScript{F}_{r}(\ProjCan)$ and
$$
X = X_{0}, \qquad X_{m} = X_{m - 1} \cap E_{m} \qquad (1 \leq m \leq r)
$$
the associated chain of schemes. We say that $(E_{1}, \dots, E_{r})$ is a
\emph{semi-regular flag} of length $r$ for $X$ if the schemes $X_{1},
\dots, X_{r - 1}$ are irreducible. We denote by $\EuScript{V}_{r}(X)$ the
set of $E \in \mathcan{G}_{N - r,N}$ such that there exists a semi-regular
flag for $X$ with $E_{r} = E$. Since $\EuScript{U}_{r}(X) \subset
\EuScript{V}_{r}(X)$, the set $\EuScript{V}_{r}(X)$ contains a nonempty
Zariski open set in $\mathcan{G}_{N - r,N}$. A \emph{semi-regular pair} is a
couple $(X, Y)$ where $X$ is an irreducible closed subscheme of $\ProjCan$
and $Y$ is a proper linear section $Y = X \cap E$ of codimension $r$ in $X$,
with $E \in \EuScript{V}_{r}(X)$. Hence, if $r = 1$, a semi-regular pair is
just a couple $(X, X \cap H)$ where $X$ is irreducible and $X \cap H$ is a
proper hyperplane section of $X$.

The generalization to singular varieties of Theorem \ref{LefschetzSGA} that
we had alluded to in the beginning of this section is the following.

\begin{theorem}
\label{LefschetzHigh}
\emph{(General Weak Lefschetz Theorem, high degrees)}.
Let $(X, Y)$ be a semi-regular pair with $\dim X = n$, $Y$ of codimension
$r$ in $X$ and $\dim \sing Y = \sigma$. 
Assume that both $X$ and $Y$ are defined over $k$.
Then for each $i \geq n + r + \sigma
+ 1$ there is a canonical $\mathbf{g}$-equivariant linear map
$$
\iota_{*} : H^{i - 2r}(\bar{Y},\QQ_{\ell}(- r)) \longrightarrow
H^{i}(\bar{X},\QQ_{\ell})
$$
which is an isomorphism for $i \geq n + r + \sigma + 2$ and a surjection for
$i = n + r + \sigma + 1$.
If $X$ and $Y$ are nonsingular, and if there is a regular flag $(E_{1},
\dots, E_{r})$ defined over $k$ for $X$ with $E_{r} \cap X = Y$, then $\iota_{*}$ is the
Gysin map induced by the immersion $\iota : Y \longrightarrow X$.
\end{theorem}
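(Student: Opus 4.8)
The plan is first to reduce the theorem to the case $r=1$ by means of the semi-regular flag, and then to treat that case by induction on $\sigma=\dim\sing Y$, the base case $\sigma=-1$ being exactly Theorem~\ref{LefschetzSGA}. To construct $\iota_{*}$, choose a semi-regular flag $\ProjCan=E_{0}\supset E_{1}\supset\dots\supset E_{r}$ with $E_{r}\cap X=Y$ and let $X=X_{0}\supset X_{1}\supset\dots\supset X_{r}=Y$ be the associated chain. For $1\le m\le r$ the scheme $X_{m-1}$ is irreducible and $X_{m}$ is a proper hyperplane section of it, so $X_{m}\hookrightarrow X_{m-1}$ is the inclusion of an effective Cartier divisor; such a divisor carries a fundamental class in $H^{2}_{\bar X_{m}}(\bar X_{m-1},\QQ_{\ell}(1))$, giving a canonical map
\[
g_{m}\colon H^{j-2}(\bar X_{m},\QQ_{\ell}(-1))\longrightarrow H^{j}_{\bar X_{m}}(\bar X_{m-1},\QQ_{\ell})\longrightarrow H^{j}(\bar X_{m-1},\QQ_{\ell}).
\]
Composing the $g_{m}$ with the appropriate Tate twists produces $\iota_{*}\colon H^{i-2r}(\bar Y,\QQ_{\ell}(-r))\to H^{i}(\bar X,\QQ_{\ell})$. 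Since the fundamental class of a codimension-$r$ local complete intersection subscheme is canonical, this $\iota_{*}$ does not depend on the flag, and when $X$ and $Y$ are nonsingular and the flag is regular it is the Gysin map of Theorem~\ref{LefschetzSGA}.

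\textbf{Matching the ranges.} The numerical input needed is the elementary fact that a proper linear section $W$ of codimension $c$ of an irreducible scheme $V$ satisfies $\dim\sing V\le\dim\sing W+c$: at a point $y\in\sing V\cap W$ the embedding dimension of $\mathcal O_{y}(W)$ is at least that of $\mathcal O_{y}(V)$ minus $c$, hence at least $\dim\mathcal O_{y}(V)+1-c=\dim\mathcal O_{y}(W)+1$, so $\sing V\cap W\subseteq\sing W$; a dimension count over the components of $\sing V$ (according to whether they lie in the given linear subspace) then gives the inequality. Applied to $X_{m-1}\supset Y$ this yields $\dim\sing X_{m}\le\sigma+r-m$. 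Granting the case $r=1$—that for a pair $(X_{m-1},X_{m})$ of this type $g_{m}$ is an isomorphism for $j\ge\dim X_{m-1}+\dim\sing X_{m}+3$ and a surjection for $j=\dim X_{m-1}+\dim\sing X_{m}+2$—one checks, using $\dim X_{m-1}=n-m+1$ and the bound above, that at the degree at which $g_{m}$ enters the composite both conditions become $i\ge n+r+\sigma+2$ (resp. $i\ge n+r+\sigma+1$), \emph{uniformly in $m$}. Thus $\iota_{*}$ is an isomorphism for $i\ge n+r+\sigma+2$ and a surjection for $i=n+r+\sigma+1$, as required. (That the composite agrees with the Gysin map in the nonsingular case is functoriality of Gysin maps for regular immersions.)

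\textbf{The case $r=1$.} Write $Y=X\cap H$, $Z=\sing Y$, $\sigma=\dim Z$. By the embedding-dimension argument, $\reg Y\subseteq\reg X$, so $X$ is smooth in a neighbourhood of $\reg Y$. Since $U=X\setminus Y$ is affine of dimension $n$, Artin's theorem on the cohomological dimension of affine schemes \cite[Thm. 7.2, p. 253]{Milne} gives $H^{i}(\bar U,\QQ_{\ell})=0$ for $i\ge n+1$, and the long exact sequence for cohomology with support in $Y$ yields $H^{i}_{\bar Y}(\bar X,\QQ_{\ell})\isom H^{i}(\bar X,\QQ_{\ell})$ for $i\ge n+2$ and a surjection for $i=n+1$; as $n+\sigma+3\ge n+2$ it suffices to study $\iota_{*}$ as a map into $H^{i}_{\bar Y}(\bar X,\QQ_{\ell})$. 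Viewing the fundamental class of the Cartier divisor $Y$ as a morphism of $\ell$-adic complexes $\eta\colon\QQ_{\ell}(-1)[-2]\to i^{!}\QQ_{\ell}$, cohomological purity \cite[Thm. 5.1, p. 241]{Milne} applied on a smooth neighbourhood of $\reg Y$ shows $\eta$ is an isomorphism over $\reg Y$; hence its cone $\mathcal C$ is a constructible complex supported on $Z$, and the long exact sequence of the triangle $\QQ_{\ell}(-1)[-2]\xrightarrow{\eta}i^{!}\QQ_{\ell}\to\mathcal C\to{}$ reduces everything to the vanishing
\[
\mathbb H^{i}(\bar Y,\mathcal C)=0\qquad\text{for }i\ge n+\sigma+2.
\]

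\textbf{Main obstacle.} This vanishing is the heart of the argument, and the naive estimate is useless: $\mathcal C$ is supported on a variety of dimension $\le\sigma$ whose complex cohomology sheaves can a priori sit in degrees up to $2n$, giving vanishing only for $i>2n+2\sigma$. One must use that $\mathcal C$ records the failure of purity for $(X,Y)$ \emph{along} $\sing Y$, not merely its support. The plan is to prove the vanishing by induction on $\sigma$: for $\sigma=-1$ one has $\mathcal C=0$ (Theorem~\ref{LefschetzSGA}), and for $\sigma\ge 0$ one intersects with a hyperplane $H'$ general enough (Lemma~\ref{Bertini}, applied to $X$, to $Y$, and to $Z$) so that $\dim\sing(Y\cap H')=\sigma-1$, $\dim\sing(X\cap H')=\dim\sing X-1$ and $\dim(Z\cap H')=\sigma-1$; comparing the triangle for $(X,Y)$ with the one for $(X\cap H',Y\cap H')$ through the Gysin and restriction maps attached to these general sections, and using the bound $\dim\sing X\le\sigma+1$ to keep the auxiliary codimension-one pair $(X,X\cap H')$ within the reach of the induction, one propagates the vanishing from the section back to $(X,Y)$. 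The genuinely delicate point is the bookkeeping: tracking the singular loci of $X$ and of $Y$ simultaneously under hyperplane section and checking that the inductive hypothesis covers every auxiliary pair that appears; once this is in place, the remaining steps are formal manipulations of long exact sequences together with Artin's theorem, cohomological purity, and Bertini.
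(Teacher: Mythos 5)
Your reduction from general codimension $r$ to $r=1$ matches the paper's argument almost exactly: choose a semi-regular flag, note $\dim X_{m} = n-m$, bound $\dim\sing X_{m} \le \sigma + r - m$ (the paper gets this from Lemma~\ref{IneqSing}; your embedding-dimension argument is an equivalent route), and compose the codimension-one Gysin maps. Your check that at the degree where $g_{m}$ enters the composite the threshold is $i \ge n+r+\sigma+2$ (resp.\ $i \ge n+r+\sigma+1$) \emph{uniformly in $m$} is the same arithmetic the paper does with $\alpha(m) = n+r-2m+\sigma+1$; it is correct.

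The gap is at $r=1$, which is the base case you are iterating. The paper does not prove this case; it invokes it as a theorem of Skorobogatov \cite[Cor.~2.2]{Skoro}. You attempt to prove it from scratch. Your reduction is sound and well organized: factor through $H^{\bullet}_{\bar{Y}}(\bar{X},\QQ_{\ell})$ via Artin's affine cohomological dimension theorem, view the fundamental class of the Cartier divisor as a morphism $\eta\colon \QQ_{\ell}(-1)[-2]\to i^{!}\QQ_{\ell}$, note by purity on $\reg X$ that the cone $\mathcal{C}$ is constructible supported on $\sing Y$, and reduce to $\mathbb{H}^{i}(\bar{Y},\mathcal{C})=0$ for $i\ge n+\sigma+2$. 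You are also right that a naive support estimate is worthless. But at this point — which is, as you say, ``the heart of the argument'' — the proof stops. You sketch an induction on $\sigma$ by cutting with a general hyperplane $H'$ and ``comparing the triangle for $(X,Y)$ with the one for $(X\cap H', Y\cap H')$,'' and then defer the actual work: ``the genuinely delicate point is the bookkeeping \ldots\ once this is in place, the remaining steps are formal.'' That bookkeeping is precisely what has to be proved. One must produce a morphism of distinguished triangles relating $\mathcal{C}$ on $Y$ to the analogous cone on $Y\cap H'$, establish its compatibility with the Gysin map of $H'$, control the third objects in those triangles, and show that the assumed vanishing on $Y\cap H'$ forces the vanishing on $Y$ in the stated range; none of these are routine manipulations, and this is exactly the content of Skorobogatov's theorem. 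As written, the $r=1$ case is asserted rather than proved, and since the remainder of the argument is a (correct) formal reduction to it, the proof as a whole is incomplete.
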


We call $\iota_{*}$ the \emph{Gysin map}, since it generalizes the classical
one when $X$ and $Y$ are nonsingular.

\begin{proof}
For hyperplane sections, \idest \ if $r = 1$, this is a result of
Skorobogatov \cite[Cor. 2.2]{Skoro}. Namely, he proved that if $X \cap H$ is
a proper hyperplane section of $X$ and if
$$\alpha = \dim X + \dim \sing (X \cap H),$$
then, for each $i \geq 0$, there is a $\mathbf{g}$-equivariant linear map
\begin{equation}
\label{LH1}
H^{\alpha + i}(\bar{X} \cap \bar{H},\QQ_{\ell}(- 1)) \longrightarrow
H^{\alpha + i + 2}(\bar{X},\QQ_{\ell})
\end{equation}
which is a surjection for $i = 0$ and an isomorphism for $i > 0$. Moreover
he proved also that if $X$ and $X \cap H$ are nonsingular then $\iota_{*}$ is
the Gysin map. In the general case we proceed by iteration. Let
$(E_{1}, \dots, E_{r})$ be a semi-regular flag of
$\EuScript{F}_{r}(\ProjCan)$ such that $Y = X \cap E_{r}$ and let
$$
X = X_{0}, \qquad X_{m} = X_{m - 1} \cap E_{m} \qquad (1 \leq m \leq r).
$$
be the associated chain of schemes. First,
\begin{equation}
\label{LH2}
\dim X_{m} = n - m \quad \text{for} \quad 1 \leq m \leq r.
\end{equation}
In fact, let $\eta_{m} = \dim X_{m - 1} - \dim X_{m}$. Then
$0 \leq \eta_{m} \leq 1$ by Krull's Principal Ideal Theorem. But
$$\eta_{1} + \dots + \eta_{r} = \dim X_{0} - \dim X_{r} = r.$$
Hence $\eta_{m} = 1$ for $1 \leq m \leq r$, which proves \eqref{LH2}. This
relation implies that $X_{m}$ is a proper hyperplane section of $X_{m - 1}$.
Since $X_{m - 1}$ is irreducible by hypothesis, the couple $(X_{m - 1},
X_{m})$ is a semi-regular pair for $1 \leq m \leq r$ and we can apply the
Theorem in the case of codimension one. From Lemma \ref{IneqSing} we deduce
$$
\dim X_{m} - \dim \sing X_{m} \geq \dim Y - \dim \sing Y = n - r -
\sigma,
$$
and hence, $\dim \sing X_{m} \leq r - m + \sigma$. Then
$$\dim X_{m - 1} + \dim \sing X_{m} \leq \alpha(m),$$
where $\alpha(m) = n + r - 2 m + \sigma + 1$. We observe that $\alpha(m - 1)
= \alpha(m) + 2$. Hence from \eqref{LH1} we get a map
$$
H^{\alpha(m) + i}(\bar{X}_{m},\QQ_{\ell}(- m)) \longrightarrow
H^{\alpha(m - 1) + i}(\bar{X}_{m - 1},\QQ_{\ell}(- m + 1))
$$
which is a surjection for $i = 0$ and an isomorphism for $i > 0$.
The composition of these maps gives a map
$$
\iota_{*} : H^{\alpha(r) + i}(\bar{Y},\QQ_{\ell}(- r)) \longrightarrow
H^{\alpha(0) + i}(\bar{X},\QQ_{\ell})
$$
which the same properties. Since
$$\alpha(r) = n - r + \sigma + 1, \quad \alpha(0) = n + r + \sigma + 1,$$
Substituting $j = \alpha(0) + i = \alpha(r) + 2 r + i$, we get
$$
\iota_{*} : H^{j - 2r}(\bar{Y},\QQ_{\ell}(- r)) \longrightarrow
H^{j}(\bar{X},\QQ_{\ell}),
$$
fulfilling the required properties. Now recall from \cite[Prop. 6.5(b),
p. 250]{Milne} that if
$$
X_{2} \stackrel{\iota_{2}}{\longrightarrow} X_{1}
\stackrel{\iota_{1}}{\longrightarrow} X_{0}
$$
is a so-called \emph{smooth triple} over $k$, then the Gysin map for
$\iota_{1} \circ \iota_{2}$ is the composition of the Gysin maps for
$\iota_{1}$ and $\iota_{2}$, which proves the last assertion of the
Theorem.
\end{proof}

The following Proposition gives a criterion for a pair to be semi-regular.

\begin{proposition}
\label{LHReg1}
Let $X$ be an irreducible closed subscheme of $\ProjCan$ and  $Y$ a
proper linear section of $X$ of dimension $\geq 1$ which is regular in
codimension one. Assume that both $X$ and $Y$ are defined over $k$.
Then $(X, Y)$ is a semi-regular pair. Moreover $Y$ is
irreducible.
\end{proposition}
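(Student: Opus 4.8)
The plan is to prove the two assertions in turn: first that $Y$ is irreducible, and then, using this, that $E$ lies in $\EuScript{V}_{r}(X)$, the latter by induction on $r$. One may assume $k=\overline{k}$, the statement being invariant under base change to $\overline{k}$. Write $n=\dim X$ and $d=\dim Y=n-r\geq 1$. Since $X$ is irreducible and $E$ is cut out by $r$ hyperplanes, Krull's Principal Ideal Theorem forces every irreducible component of $X\cap E$ to have dimension $\geq n-r$; as $\dim Y=n-r$, the scheme $Y$ is equidimensional of dimension $d$. Being regular in codimension one, $Y$ also satisfies $(R_{0})$, so its generic points are regular, and $\dim\sing Y\leq d-2$.

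\textbf{Step 1: $Y$ is irreducible.} I would deduce this from a connectedness statement. Let $\nu\colon\widetilde{X}\to X$ be the normalization (of $X_{\mathrm{red}}$); the $r$ hyperplanes cutting out $E$ pull back to $r$ sections of an ample invertible sheaf on the normal projective variety $\widetilde{X}$, which has dimension $n\geq r+1$. By Grothendieck's connectedness theorem for ample divisors on a normal projective variety, their common zero scheme $\widetilde{Y}=\nu^{-1}(Y)$ is connected in dimension $\geq n-r-1=d-1$; since $\nu$ restricts to a finite surjection $\widetilde{Y}\to Y$, the scheme $Y$ is connected in dimension $\geq d-1$ as well. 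Because $\dim\sing Y\leq d-2<d-1$, the open set $\reg Y=Y\setminus\sing Y$ is still connected. On the other hand, every regular point of $Y$ lies on a unique irreducible component, so $\reg Y$ is the disjoint union of the open sets $\reg Y\cap\bigl(Y_{i}\setminus\bigcup_{j\neq i}Y_{j}\bigr)$ as $Y_{i}$ runs over the components of $Y$, each of which is nonempty since it contains the (regular) generic point of $Y_{i}$. Connectedness then forces $Y$ to have a single component. (Alternatively, one could use a nonsingular one-dimensional linear section of $Y$, which automatically avoids $\sing Y$ and, by the projective dimension theorem, meets every component of $Y$, contradicting its irreducibility if $Y$ were reducible; but the existence of such a section must then be established by hand, since Corollary \ref{Equiv} already presupposes that $Y$ is a variety.)

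\textbf{Step 2: $(X,Y)$ is a semi-regular pair.} I would induct on $r$, uniformly in the ambient projective space. For $r=1$ there is nothing to prove, a semi-regular flag of length one having no intermediate terms. Let $r\geq 2$ and let $\Lambda\cong\PP^{r-1}$ be the linear system of hyperplanes of $\ProjCan$ containing $E$. For general $H\in\Lambda$ one has $X\not\subseteq H$ (otherwise $X\subseteq\bigcap_{H\in\Lambda}H=E$), so $X_{1}:=X\cap H$ is equidimensional of dimension $n-1$ by Krull. The crux of the induction step is that for general $H\in\Lambda$ the scheme $X_{1}$ is in fact irreducible. Indeed, linear projection away from $E$ is a dominant rational map $\pi_{E}\colon X\longrightarrow\PP^{r-1}$ whose general fibre, being $X$ intersected with a general codimension-$(r-1)$ linear space through $E$, has dimension $n-r+1$; for general $H$, since $X\cap H$ has no component of dimension $<n-1$, its reduction equals the closure $\overline{\pi_{E}^{-1}(\overline{H})}$ of the preimage of the hyperplane $\overline{H}\subseteq\PP^{r-1}$ corresponding to $H$. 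If $r\geq 3$, this map has image of dimension $r-1\geq 2$, so by Bertini's irreducibility theorem for linear systems (cf.\ \cite{Jouanolou1}) the general member $X_{1}$ is irreducible. If $r=2$, let $X\longrightarrow T$ followed by $f\colon T\to\PP^{1}$ be the Stein factorization (after resolving the indeterminacy of $\pi_{E}$), with $T$ an irreducible curve, $f$ finite of degree $e$, and $X\longrightarrow T$ having connected general fibres; then the general $X_{1}$ is irreducible exactly when $e=1$. If $e\geq 2$, then $Y$ — irreducible by Step 1, and, being the base locus of $\Lambda|_{X}$, contained in $X\cap H$ for every $H\in\Lambda$ — lies in a single sheet of the fibre over each general point of $\PP^{1}$; this defines a rational, hence regular, section of the degree-$e$ map $\widetilde{T}\to\PP^{1}$ obtained by normalizing $T$. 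But a section of a finite map is a closed immersion, whose image must be all of the irreducible curve $\widetilde{T}$, forcing $e=1$, a contradiction. Hence in every case a general $H\in\Lambda$ gives an irreducible $X_{1}=X\cap H$ of dimension $n-1$.

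Now fix such an $H$, put $E_{1}=H$, and view $Y=X_{1}\cap E$ inside $E_{1}\cong\PP^{N-1}$: there $E$ has codimension $r-1$, and $\dim Y=(n-1)-(r-1)$ together with the equidimensionality of $Y$ shows that $Y$ is a proper linear section of $X_{1}$; moreover $X_{1}$ is irreducible, $\dim Y=n-r\geq 1$, and $Y$ is still regular in codimension one. The induction hypothesis applied to $(X_{1},Y)$ yields a flag $E_{1}=F_{0}\supset F_{1}\supset\cdots\supset F_{r-1}=E$ in $E_{1}$, with $\codim_{E_{1}}F_{j}=j$, for which $X_{1}\cap F_{1},\dots,X_{1}\cap F_{r-2}$ are irreducible. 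Setting $E_{m}=F_{m-1}$ for $1\leq m\leq r$ produces a flag $\ProjCan=E_{0}\supset E_{1}\supset\cdots\supset E_{r}=E$ with $\codim E_{m}=m$ and $X\cap E_{m}=X_{1}\cap F_{m-1}$, irreducible for $1\leq m\leq r-1$. Thus $(E_{1},\dots,E_{r})$ is a semi-regular flag for $X$ with $E_{r}=E$, so $E\in\EuScript{V}_{r}(X)$ and $(X,Y)$ is a semi-regular pair. The main obstacle is precisely the irreducibility assertion in Step 2 — a Bertini-type statement relative to the fixed subspace $E$. For $r\geq 3$ it reduces to the usual Bertini theorem once one knows that the projection from $E$ maps $X$ onto a variety of dimension $\geq 2$; but for $r=2$ the relevant linear system is only a pencil, where Bertini gives nothing, and one must genuinely use the irreducibility of $Y$ as the base locus of the pencil to preclude the general member from splitting, equivalently to prevent the pencil from factoring through a nontrivial cover of $\PP^{1}$. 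The connectedness input of Step 1 is a secondary technical point, best handled by Grothendieck's theorem, since the alternative ``typical curve'' approach runs into the well-known failures of Bertini's smoothness theorem in positive characteristic.
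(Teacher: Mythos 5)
Your proof is correct in substance, but it is a genuinely different argument from the one in the paper. The paper's proof is cohomological and leverages machinery already in place: it first treats $r=1$, where semi-regularity is automatic, and establishes irreducibility of $Y$ by noting $\sigma=\dim\sing Y\leq\dim Y-2\leq n-3$, so that Theorem \ref{LefschetzHigh} with $i=2n$ gives an isomorphism $H^{2n-2}(\bar Y,\QQ_{\ell}(-1))\isom H^{2n}(\bar X,\QQ_{\ell})$; since $\dim H^{2n}$ counts top-dimensional components and $X$ is irreducible, the equidimensional $Y$ must be irreducible. The general case then ``iterates'': pick \emph{any} flag $(E_1,\dots,E_r)$ with $E_r=E$; Lemma \ref{IneqSing}\eqref{IneqSing3} propagates $(R_1)$ from $X_r=Y$ up to every $X_m$, so the $r=1$ case applies successively to $(X_{m-1},X_m)$ and shows each $X_m$ is irreducible. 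In particular the paper's route shows every flag ending at $E$ is semi-regular. You instead avoid cohomology entirely: Step 1 gets irreducibility of $Y$ from normalization plus Grothendieck's connectedness theorem (removing $\sing Y$, of dimension $\leq d-2$, keeps $Y$ connected, and connected plus $(R_0)$ forces a single component), and Step 2 builds one specific semi-regular flag via Bertini irreducibility for $r\geq 3$ and a Stein-factorization/pencil argument for $r=2$. Each approach buys something: yours is self-contained modulo classical theorems and makes the ``topological'' content explicit, while the paper's is shorter, exploits its own Weak Lefschetz (Theorem \ref{LefschetzHigh}) and Lemma \ref{IneqSing}\eqref{IneqSing3}, and yields the stronger statement that \emph{all} intermediate linear sections are irreducible. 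One caution on your side: the $r=2$ pencil step is stated a bit loosely — ``$Y$ lies in a single sheet'' producing ``a rational, hence regular, section'' of $\widetilde T\to\PP^1$ is morally right, but to make it rigorous you should work with the incidence family $W=\{(x,t):x\in X\cap H_t\}\subset X\times\PP^1$ (or the graph closure $\Gamma$ of the projection, which contains $Y\times\PP^1$ since $Y$ has dimension $<n-1$ and $X\cap H_t$ is pure of dimension $n-1$), and then use that the composite $\{y_0\}\times\PP^1\to\Gamma\to T$ is a curve morphism agreeing with the identity on $T\to\PP^1$ generically, forcing $e=1$.
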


\begin{proof}
As in the proof of Theorem \ref{LefschetzHigh}, we first prove the
proposition if $Y$ is a regular hyperplane section of $X$. In that case
$(X, Y)$ is a semi-regular pair, as we pointed out, and the only statement
to prove is that $Y$ is irreducible. If $Y$ is regular in codimension one,
then
$$\sigma = \dim \sing Y \leq \dim Y - 2 \leq n - 3.$$ This implies that $2n
\geq n + \sigma + 3$ and we can take $i = 2n$ in Theorem \ref{LefschetzHigh}
in order to get an isomorphism
$$H^{2n - 2}(\bar{Y},\QQ_{\ell}(- 1)) \isom H^{2n}(\bar{X},\QQ_{\ell}).$$
Now for any scheme $X$ of dimension $n$, the dimension of
$H^{2n}(\bar{X},\QQ_{\ell})$ is equal to the number of irreducible
components of $X$ of dimension $n$, as can easily be deduced from the
Mayer-Vietoris sequence \cite[Ex. 2.24, p. 110]{Milne}. Since $Y$ is a
proper linear section, it is equidimensional and the irreducibility of $Y$
follows from the irreducibility of $X$. The general case follows by
iteration.
\end{proof}

Observe that Theorem \ref{LefschetzSGA} is an immediate consequence of
Theorem \ref{LefschetzHigh} and Proposition \ref{LHReg1}.

\begin{remark}[Weak Lefschetz Theorem, low degrees]
\label{BigLefschtezLow}
We would like to point out the following result, although we shall not use
it. Assume that the resolution of singularities is possible, that is, the
condition $\resol$ stated in section \ref{CentralBettiSingular} below,
holds.
Let $X$ be a closed subscheme of dimension $n$ in $\ProjCan$ defined over $k$, and let there
be given a diagram
$$
\begin{CD} Y & @>{\iota}>> & X & @<<< & U
\end{CD}
$$
where $\iota$ is the closed immersion of a proper linear section $Y$ of
$X$ of codimension $r$ defined over $k$, and where $U = X \setminus Y$.
If $U$ is a local complete intersection, then the canonical $\mathbf{g}$-equivariant
linear map
$$
\iota^{*} : H^{i}(\bar{X},\QQ_{\ell}) \longrightarrow
H^{i}(\bar{Y},\QQ_{\ell})
$$
is an isomorphism if $i \leq n - r - 1$ and an injection if $i = n - r$.

This theorem is a consequence of the Global Lefschetz Theorem of
Grothendieck for low degrees \cite[Cor. 5.7 p. 280]{Raynaud}. Notice that no
hypotheses of regularity are put on $Y$ in that statement. Moreover, if $U$
is nonsingular, the conclusions of the theorem are valid without assuming
condition $\resol$ \cite[Thm. 7.1, p. 318]{Jouanolou2}.
\end{remark}

\begin{remark}[Poincar\'e Duality]
\label{PoincDual}
By combining Weak Lefschetz Theorem for high and low degrees, we get a weak
version of Poincar\'e Duality for singular varieties. Namely, assume that
$\resol$ holds and let $X$ be a closed subvariety of dimension $n$ in
$\ProjCan$ defined over $k$ which is a local complete intersection such that
$\dim \sing X \leq s$. Assume that there is a proper linear section
of codimension $s + 1$ of $X$ defined over $k$. Then, for $0 \leq i \leq n - s - 2$ there is a
nondegenerate pairing
$$ H^{i}(\bar{X},\QQ_{\ell}) \times H^{2n - i}(\bar{X},\QQ_{\ell}(n))
\longrightarrow \QQ_{\ell}.
$$
Furthermore, if we denote this pairing by $(\xi, \eta)$, then
$$
(g.\xi, g.\eta) =
(\xi, \eta) \quad \text{for every } g \in \mathbf{g}.
$$
In particular,
$$ b_{i, \ell}(\bar{X}) = b_{2n - i}(\bar{X}) \quad
\text{for} \quad 0 \leq i \leq n - s - 2,
$$
and these numbers are independent of $\ell$.
\end{remark}

\section{Cohomology of Complete Intersections}
\label{Cohci}

Let $k$ be a perfect field. A closed subscheme $X$ of $\ProjCan = \ProjCan_{k}$
of codimension
$r$ is a \emph{complete intersection} if $X$ is the closed subscheme
determined by an ideal $\mathfrak{I}$ generated by $r$ homogeneous
polynomials $f_{1}, \dots, f_{r}$.

A complete intersection is a local complete intersection, and in particular
$X$ is Cohen-Macaulay and equidimensional. Moreover, if $\dim X \geq 1$,
then $X$ is connected, hence $X$ is integral if it is regular in codimension
one.

The multidegree $\mathbf{d} = (d_{1}, \dots, d_{r})$ of the system
$f_{1}, \dots, f_{r}$, usually labelled so that $d_{1} \geq \dots \geq
d_{r}$, depends only on $\mathfrak{I}$ and not of the chosen system of
generators $f_{1},\dots, f_{r}$, since the \emph{Hilbert series} of the
homogeneous coordinate ring of $X$ equals
$$
H(T) =
\frac{(1 - T^{d_{1}}) (1 - T^{d_{2}}) \cdots
(1 - T^{d_{r}})}{(1 - T)^{N + 1}}
$$
\cite[Ex. 7.15, p. 350]{Northcott} or \cite[Prop. 6, p. AC VIII.50]{BkiAC8}.
This implies
$$\deg X = d_{1} \cdots d_{r}.$$
First of all, recall that if $X = \PP^{n}$, and if $0 \leq i \leq 2n$, then
\cite[Ex. 5.6, p. 245]{Milne}:
\begin{equation}
\label{HomZero}
H^{i}(\bar{X},\QQ_{\ell}) =
\begin{cases}
\QQ_{\ell}(-i/2) & \text{if $i$ is even}\\
0                & \text{if $i$ is odd}.
\end{cases}
\end{equation}
Consequently, $\dim H^{i}(\PP^{n}_{\bar{k}},\QQ_{\ell}) = \varepsilon_{i}$ for
$0
\leq i \leq 2n$, where we set
\begin{equation*}
\label{varepsilon}
\varepsilon_{i} =
\begin{cases}
1 & \text{if $i$ is even} \\
0 & \text{if $i$ is odd}.
\end{cases}
\end{equation*}
Now let $X$ be a nonsingular projective subvariety of $\ProjCan$ of
dimension $n \geq 1$. For any $i \geq 0$, the image of the canonical morphism
$H^{i}(\ProjCan_{\bar{k}},{\QQ}_{\ell}) \longrightarrow
H^{i}(\bar{X},\QQ_{\ell})$ is isomorphic to
$H^{i}(\PP^{n}_{\bar{k}},\QQ_{\ell})$. Hence, one obtains a short exact
sequence
\begin{equation}
\label{ExactSeq}
0 \longrightarrow H^{i}(\PP^{n}_{\bar{k}},\QQ_{\ell}) \longrightarrow
H^{i}(\bar{X},\QQ_{\ell}) \longrightarrow P^{i}(\bar{X},\QQ_{\ell})
\longrightarrow 0
\end{equation}
where $P^{i}(\bar{X},\QQ_{\ell})$ is the cokernel of the canonical morphism,
called the \emph{primitive part} of $H^{i}(\bar{X},\QQ_{\ell})$. If $i$ is
even, the image of $H^{i}(\PP^{n}_{\bar{k}},\QQ_{\ell})$ is the
one-dimensional vector space generated by the cup-power of order $i/2$ of the
cohomology class of a hyperplane section. Hence, if we define the
\emph{primitive $i$-th Betti number} of a \emph{nonsingular} projective
variety $X$ over $k$ as
\begin{equation*}
\label{bprime}
b_{i,\ell}'(X) = \dim P^{i}(\bar{X},\QQ_{\ell}),
\end{equation*}
then by \eqref{HomZero} and \eqref{ExactSeq}, we have
\begin{equation*}
\label{epsilon}
b_{i,\ell}(X) = b'_{i,\ell}(X) + \varepsilon_{i}.
\end{equation*}

\begin{proposition}
\label{BettiIC}
Let $X$ be a nonsingular complete intersection in $\ProjCan$ of codimension
$r$, of multidegree $\mathbf{d}$, and let $\dim X = n = N - r$.
\begin{enumerate}
\item
\label{BettiIC1}
If $i \neq n$, then $P^{i}(\bar{X},\QQ_{\ell}) = 0$ for $0 \leq i \leq 2n$.
Consequently, $X$ satisfies \eqref{HomZero} for these values of $i$.
\item
\label{BettiIC2}
The $n$-th Betti number of $X$ depends only on $n$, $N$ and $\mathbf{d}$.
\end{enumerate}
\end{proposition}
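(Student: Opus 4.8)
The plan is to deduce both parts from the Weak Lefschetz theory developed in Section~\ref{WeakLefschetz} together with an induction on the codimension $r$. For part~\eqref{BettiIC1}, the key observation is that a nonsingular complete intersection $X$ of dimension $n$ in $\ProjCan$ arises as a sequence of proper hyperplane sections: writing $X = V(f_1,\dots,f_r)$, one can interpret $X$ as a hyperplane section of a nonsingular complete intersection $X'$ of dimension $n+1$ inside a higher-dimensional projective space (via a Veronese-type embedding that turns the hypersurface $f_r = 0$ into a genuine hyperplane). Since $X$ is nonsingular, $\sigma = \dim\sing X = -1$, so Theorem~\ref{LefschetzSGA} (equivalently Theorem~\ref{LefschetzHigh} with $r=1$, $\sigma=-1$) applies and gives that the Gysin map $\iota_* : H^{i-2}(\bar X,\QQ_\ell(-1)) \to H^{i}(\bar{X'},\QQ_\ell)$ is an isomorphism for $i \geq n+2$ and a surjection for $i = n+1$; dually (or by the corresponding low-degree statement coming from the exact sequence) the restriction $H^i(\bar{X'},\QQ_\ell) \to H^i(\bar X,\QQ_\ell)$ is an isomorphism for $i \leq n-1$. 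Combining these with the compatibility of the Gysin and restriction maps with the hyperplane class, one transports the vanishing of the primitive cohomology from $\PP^N$ down through the chain. The base case is projective space itself, where \eqref{HomZero} gives that all primitive parts vanish. An induction on $r$ then shows $P^i(\bar X,\QQ_\ell) = 0$ for $i \neq n$ and $0 \le i \le 2n$; the statement that $X$ satisfies \eqref{HomZero} for those $i$ is then immediate from the exact sequence \eqref{ExactSeq}.

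For part~\eqref{BettiIC2}, once \eqref{BettiIC1} is known, the $n$-th Betti number is determined by the Euler characteristic. Indeed $b_{i,\ell}(\bar X) = \varepsilon_i$ for $i \neq n$, so
$$
\chi(\bar X) = \sum_{i=0}^{2n} (-1)^i b_{i,\ell}(\bar X) = (-1)^n b_{n,\ell}(\bar X) + \sum_{\substack{0 \le i \le 2n \\ i \neq n}} (-1)^i \varepsilon_i,
$$
and the second sum depends only on $n$. Hence it suffices to show that $\chi(\bar X)$ depends only on $n$, $N$ and $\mathbf d$. This is a standard consequence of the fact that the Euler characteristic is additive and multiplicative for fibrations and behaves well under the adjunction/normal-bundle exact sequence for a smooth hypersurface section; concretely, one can compute $\chi$ of a nonsingular hypersurface of degree $d$ in $\PP^{m}$ by the relation $\chi(\text{hypersurface}) = (m+1) + $ (a correction term depending polynomially on $d$ and $m$) coming from $c_m$ of the tangent bundle via the normal bundle sequence $0 \to T_X \to T_{\PP^m}|_X \to \mathcal O_X(d) \to 0$, and iterate. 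Alternatively, invoke the explicit formula of Hirzebruch cited in the introduction. Either way, $\chi(\bar X)$ — and therefore $b_{n,\ell}(\bar X)$ — is a universal function of $n$, $N$, $\mathbf d$.

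The main obstacle I anticipate is making the inductive step of part~\eqref{BettiIC1} clean: one must realise an arbitrary nonsingular complete intersection as a hyperplane section of another \emph{nonsingular} complete intersection of one higher dimension, and verify that the linear section is proper and that the resulting pair is of the type to which Theorem~\ref{LefschetzSGA} applies (so that both the Gysin isomorphism in high degrees and the restriction isomorphism in low degrees hold). The standard trick — embed $\ProjCan$ by the Veronese of degree $d_r$ so that the last hypersurface becomes a hyperplane section, then take the cone or an ambient complete intersection — needs care to ensure nonsingularity of the auxiliary variety, and one should check the Bertini-type genericity ($E_r \cap X$ avoiding $\sing$) is automatic here because $X$ is already nonsingular. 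Once the geometry is set up, the cohomological bookkeeping (tracking which $H^i$ survive, and that the surjection at $i = n+1$ does not enlarge the primitive part in the wrong degree) is routine. The secondary point to be careful about in part~\eqref{BettiIC2} is simply citing or reproving the polynomiality of $\chi$ in $(n,N,\mathbf d)$; this is classical (Hirzebruch, Jouanolou) and I would state it with reference rather than grind through the Chern-class computation.
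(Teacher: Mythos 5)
Your proposal reconstructs the paper's own (one-line) sketch essentially verbatim: induction on the codimension $r$, the Veronese embedding to convert the last defining form into a hyperplane, the Weak Lefschetz theorem (Theorem~\ref{LefschetzSGA}, equivalently Theorem~\ref{LefschetzHigh} with $\sigma=-1$) for high degrees, and Poincar\'e Duality to supply the dual low-degree statement; for part~(ii), your Euler-characteristic detour is a mild rephrasing of the paper's direct appeal to Theorem~\ref{ICFormula}, since the universal dependence of $\chi(\bar X)$ on $(n,N,\mathbf d)$ is precisely what that theorem's Chern-class proof encodes.

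The one gap you flag in the inductive step --- ensuring that the auxiliary codimension-$(r-1)$ variety can be taken nonsingular --- is real, and the paper's terse proof does not address it either. Two standard repairs. (a) By smooth proper base change, $b_{i,\ell}(\bar X)$ is locally constant over the open locus of the coefficient space parametrising nonsingular complete intersections of type $(N,\mathbf d)$; one may therefore deform to a generic tuple $(f_1,\dots,f_r)$ for which every partial intersection $V(f_1,\dots,f_j)$ is nonsingular (Bertini), after which the induction runs unobstructed. (b) More economically, drop the induction and the Veronese altogether: Proposition~\ref{LefschtezLow}, whose proof (affine Lefschetz on $\ProjCan\setminus X$, Poincar\'e Duality on that smooth open complement, and the excision sequence) is logically independent of Proposition~\ref{BettiIC} and needs no hypothesis on $\sing X$, already gives $H^i(\ProjCan_{\bar k},\QQ_{\ell})\isom H^i(\bar X,\QQ_{\ell})$ for $i\le n-1$, so $P^i(\bar X,\QQ_{\ell})=0$ for $i<n$. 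Then Poincar\'e Duality on the nonsingular $X$ closes the remaining degrees: the hyperplane class $L$ satisfies $L^{n}\neq 0$ in $H^{2n}(\bar X,\QQ_{\ell})$, so $L^{j/2}\neq 0$ and the restriction $H^{j}(\ProjCan_{\bar k},\QQ_{\ell})\to H^{j}(\bar X,\QQ_{\ell})$ is injective for even $j\le 2n$, and the duality $\dim H^{j}(\bar X,\QQ_{\ell})=\dim H^{2n-j}(\bar X,\QQ_{\ell})=\varepsilon_{j}$ (for $j\ge n+1$) forces $P^{j}(\bar X,\QQ_{\ell})=0$. Route~(b) sidesteps the nonsingularity worry entirely.
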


\begin{proof}
Statement \eqref{BettiIC1} is easily proved by induction on $r$, if we use
the Veronese embedding, Weak Lefschetz Theorem \ref{LefschetzHigh} and
Poincar\'e Duality in the case of nonsingular varieties. Statement
\eqref{BettiIC2} follows from Theorem \ref{ICFormula} below.
\end{proof}

In view of Proposition \ref{BettiIC}, we denote by $b'_{n}(N,
\mathbf{d})$ the primitive $n$-th Betti number of any nonsingular complete
intersection in $\ProjCan$ of codimension $r = N - n$ and of multidegree
$\mathbf{d}$. It will be described explicitly in Section \ref{CentralBetti}.

The cohomology in lower degrees of general (possibly singular) complete
intersections can be calculated with the help of the following simple
result.

\begin{proposition}
\label{LefschtezLow}
Let $X$ be a closed subscheme of $\ProjCan$ defined by the vanishing of
$r$ forms defined over $k$. Then the $\mathbf{g}$-equivariant restriction map
\begin{equation*}
\label{LowCanon}
\iota^{*} : H^{i}(\ProjCan_{\bar k},\QQ_{\ell}) \longrightarrow
H^{i}(X,\QQ_{\ell})
\end{equation*}
is an isomorphism for $i \leq N - r - 1$, and is injective for $i = N - r$.
\end{proposition}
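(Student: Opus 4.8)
The plan is to exploit the open complement $U = \ProjCan \setminus X$, together with the fact that it is a union of $r$ affine opens, and then to run the long exact sequence of the closed--open decomposition $(X, U)$ of $\ProjCan$. Writing $X = V(f_{1}, \dots, f_{r})$ for the given forms (all defined over $k$), one has $U = \bigcup_{j=1}^{r} D_{+}(f_{j})$, and every multiple intersection $D_{+}(f_{j_{0}}) \cap \dots \cap D_{+}(f_{j_{p}}) = D_{+}(f_{j_{0}} \cdots f_{j_{p}})$ is again affine of dimension $\le N$. So, just as in the proof of Theorem \ref{LefschetzSGA}, the \v{C}ech spectral sequence of this covering together with the Lefschetz bound on the cohomological dimension of affine schemes \cite[Thm. 7.2, p. 253]{Milne} gives $H^{m}(\bar U, \QQ_{\ell}) = 0$ for $m \ge N + r$. (If some $f_{j}$ vanishes identically one simply discards it; if $r > N$ the asserted range $i \le N - r$ is empty and there is nothing to prove.)

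Next I would invoke Poincar\'e duality: since $U$ is an open subscheme of the smooth variety $\ProjCan$, it is smooth of pure dimension $N$, so $H^{m}_{c}(\bar U, \QQ_{\ell}) \cong H^{2N - m}(\bar U, \QQ_{\ell})^{\vee}(-N)$, whence $H^{m}_{c}(\bar U, \QQ_{\ell}) = 0$ for $m \le N - r$. Then I would feed this into the long exact sequence obtained from the distinguished triangle $j_{!}\QQ_{\ell} \to \QQ_{\ell} \to \iota_{*}\QQ_{\ell}$ on $\ProjCan$ (extension by zero, followed by restriction), where $j \colon U \hookrightarrow \ProjCan$ and $\iota \colon X \hookrightarrow \ProjCan$ are the two immersions: applying $R\Gamma(\ProjCan_{\bar k}, -)$ and using that $\ProjCan_{\bar k}$ is proper (so that $H^{\bullet}(\ProjCan_{\bar k}, j_{!}\QQ_{\ell}) = H^{\bullet}_{c}(\bar U, \QQ_{\ell})$) produces the $\mathbf{g}$-equivariant long exact sequence
$$
\cdots \longrightarrow H^{i}_{c}(\bar U, \QQ_{\ell}) \longrightarrow H^{i}(\ProjCan_{\bar k}, \QQ_{\ell}) \stackrel{\iota^{*}}{\longrightarrow} H^{i}(\bar X, \QQ_{\ell}) \longrightarrow H^{i+1}_{c}(\bar U, \QQ_{\ell}) \longrightarrow \cdots
$$
whose middle arrow --- being $R\Gamma$ of the adjunction unit $\QQ_{\ell} \to \iota_{*}\iota^{*}\QQ_{\ell}$ --- is exactly the restriction map $\iota^{*}$ of the statement. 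From here the conclusion is immediate: for $i \le N - r - 1$ both $H^{i}_{c}(\bar U, \QQ_{\ell})$ and $H^{i+1}_{c}(\bar U, \QQ_{\ell})$ vanish, so $\iota^{*}$ is an isomorphism, while for $i = N - r$ one still has $H^{i}_{c}(\bar U, \QQ_{\ell}) = 0$, so $\iota^{*}$ is injective. The $\mathbf{g}$-equivariance is automatic because the whole construction descends to $k$.

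I expect the only genuinely delicate point to be the affine-covering vanishing $H^{m}(\bar U, \QQ_{\ell}) = 0$ for $m \ge N + r$: one must keep track that $U$ is covered by exactly $r$ affine opens (so the \v{C}ech complex has length $r$) and that every multiple intersection is affine of dimension $\le N$, so that the relevant first-quadrant spectral sequence vanishes above the line $p + q = N + r - 1$. Everything else is formal --- indeed the same bookkeeping is already carried out in the proof of Theorem \ref{LefschetzSGA}.
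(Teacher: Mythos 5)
Your argument is correct and follows exactly the paper's own proof: affine Lefschetz vanishing on the $r$-fold affine cover of $U = \ProjCan \setminus X$, Poincar\'e duality on the smooth open $U$, and the excision long exact sequence in compact cohomology. The only differences are that you spell out the \v{C}ech spectral sequence explicitly and note a couple of degenerate cases; the underlying approach is identical.
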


\begin{proof}
The scheme $U = \ProjCan \setminus X$ is a scheme of finite type of dimension
$N$ which is the union of $r$ affine open sets. From Affine Lefschetz Theorem
\cite[Thm. 7.2, p. 253]{Milne}, we deduce
$$H^{i}(\bar{U},\QQ_{\ell}) = 0 \quad \text{if} \ i \geq N + r.$$
Since $U$ is smooth, by Poincar\'e Duality we get
$$H^{i}_{c}(\bar{U},\QQ_{\ell}) = 0 \quad \text{if} \ i \leq N - r.$$
Now, the excision long exact sequence in compact cohomology
\cite[Rem. 1.30, p. 94]{Milne} gives:
\begin{equation}
\label{excision}
\cdots \longrightarrow H^{i}_{c}(\bar{U},\QQ_{\ell})
\longrightarrow H^{i}(\ProjCan, \QQ_{\ell})
\longrightarrow H^{i}(\bar{X}, \QQ_{\ell}) \longrightarrow
H^{i + 1}_{c}(\bar{U}, \QQ_{\ell}) \longrightarrow \cdots
\end{equation}
from which the result follows.
\end{proof}

We shall now study the cohomology in higher degrees of general complete
intersections.

\begin{proposition}
\label{Bsci}
Let $X$ be a complete intersection in $\ProjCan$ of dimension $n \geq 1$
and multidegree $\mathbf{d}$ with $\dim \sing X \leq s$. Then:
\begin{enumerate}
\item
\label{Bsci1}
Relation \eqref{HomZero} holds, and hence, $b'_{i}(X) = 0$, if $n + s + 2
\leq i \leq 2n$.
\item
\label{Bsci2}
$b_{n + s + 1,\ell}(X) \leq b_{n - s - 1}(N - s - 1, \mathbf{d}).$
\item
\label{Bsci3}
Relation \eqref{HomZero} holds, and hence, $b'_{i}(X) = 0$,
if $0 \leq i \leq n - 1$.
\end{enumerate}
\end{proposition}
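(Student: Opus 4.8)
The plan is to prove the three parts of Proposition \ref{Bsci} by relating the complete intersection $X$ to a suitable nonsingular linear section and invoking the Weak Lefschetz machinery already developed. Parts \eqref{Bsci1} and \eqref{Bsci2} should come almost directly from Corollary \ref{BettiNb}, while part \eqref{Bsci3} follows from Proposition \ref{LefschtezLow}. The one subtlety is that Corollary \ref{BettiNb} requires a \emph{nonsingular} proper linear section of codimension $s+1$; by Corollary \ref{Equiv} such a section exists precisely because $\dim \sing X \le s$ (and because $s \le n-2$ — if $s = n-1$ the statements \eqref{Bsci1}, \eqref{Bsci2} are vacuous or trivial since $n+s+2 > 2n$, so we may assume $s \le n-2$). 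We should note that the coefficient field is perfect but \emph{not} assumed algebraically closed here, so the linear section must be chosen over $\bar k$ and then one observes the Betti numbers are computed after base change to $\bar k$ anyway; alternatively, since a nonempty Zariski open set of codimension-$(s+1)$ linear subspaces works by Proposition \ref{BertiniFlag}, and such an open set has a point rational over a finite extension, base-changing does no harm to the Betti number statements. Actually the cleanest route: the quantities $b_{i,\ell}(X)$ depend only on $\bar X$, so replace $k$ by $\bar k$ at the outset.

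Here is the step-by-step structure. First I would reduce to $k$ algebraically closed and $s \le n-2$, as explained. Second, invoke Corollary \ref{Equiv} to obtain a nonsingular proper linear section $Y$ of $X$ of codimension $s+1$, so $\dim Y = n - s - 1 \ge 1$. Third, for part \eqref{Bsci1}: for $i \ge n+s+2$, Corollary \ref{BettiNb}\eqref{BettiNb1} gives $b_{i,\ell}(\bar X) = b_{i-2s-2}(\bar Y)$; but $Y$ is a nonsingular complete intersection in a linear $\PP^{N-s-1}$ of dimension $n-s-1$, so by Proposition \ref{BettiIC}\eqref{BettiIC1} its primitive cohomology vanishes away from the middle dimension $n-s-1$, and since $i - 2s - 2$ ranges over $n - s \le i-2s-2 \le 2(n-s-1) = 2n-2s-2$, which lies strictly above $n-s-1$, we get $b_{i-2s-2}(\bar Y) = \varepsilon_{i-2s-2} = \varepsilon_i$, i.e. \eqref{HomZero} holds for $X$ in this range. (Here I need $i - 2s-2 \ge n-s-1 + 1$, i.e. $i \ge n+s+2$, which is exactly our hypothesis; and one should separately treat $i = 2n$ using that $X$ is irreducible — but $X$ irreducible holds since a complete intersection of dimension $\ge 1$ that is regular in codimension one is integral, and here regularity in codimension one follows from $s \le n-2$.) Fourth, for part \eqref{Bsci2}: Corollary \ref{BettiNb}\eqref{BettiNb2} gives $b_{n+s+1,\ell}(\bar X) \le b_{n-s-1}(\bar Y)$, and the right side is the $(n-s-1)$-th Betti number of a nonsingular complete intersection of codimension $r = (N-s-1) - (n-s-1)$ and multidegree $\mathbf d$ in $\PP^{N-s-1}$, which by Proposition \ref{BettiIC}\eqref{BettiIC2} depends only on $n-s-1$, $N-s-1$, $\mathbf d$; this equals $b_{n-s-1}(N-s-1,\mathbf d)$ by definition. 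Fifth, for part \eqref{Bsci3}: apply Proposition \ref{LefschtezLow} with the $r$ forms cutting out $X$, giving that $H^i(\ProjCan_{\bar k},\QQ_\ell) \to H^i(\bar X,\QQ_\ell)$ is an isomorphism for $i \le N - r - 1 = n-1$; combined with \eqref{HomZero} for projective space this yields \eqref{HomZero} for $X$ in degrees $0 \le i \le n-1$, and hence $b'_i(X) = 0$ there.

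The step I expect to require the most care is the \emph{matching of indices and degenerate cases} rather than any deep input: verifying that when $s \le n-2$ one indeed has the typical linear section of the right dimension, that $X$ is irreducible (so that Corollary \ref{BettiNb} and the identification of $H^{2n}$ apply), and that the ranges "$i \ge n+s+2$", "$0 \le i \le n-1$", and the complementary middle band $n \le i \le n+s+1$ partition $\{0,\dots,2n\}$ correctly so that parts \eqref{Bsci1}, \eqref{Bsci2}, \eqref{Bsci3} together describe all of the cohomology except the central piece of width $s+2$. I would also double-check the boundary behaviour at $i = 2n$ in part \eqref{Bsci1}: there Corollary \ref{BettiNb}\eqref{BettiNb1} gives $b_{2n,\ell}(\bar X) = b_{2n-2s-2}(\bar Y) = b_{2\dim Y}(\bar Y) = 1$ since $Y$ is an irreducible projective variety, consistent with \eqref{HomZero}. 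No genuine obstacle is anticipated; this proposition is essentially a bookkeeping corollary of Theorem \ref{LefschetzHigh}, Corollary \ref{BettiNb}, Proposition \ref{BettiIC} and Proposition \ref{LefschtezLow}.
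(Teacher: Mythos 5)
Your proof is correct and follows essentially the same route as the paper: both obtain a nonsingular proper linear section $Y$ of codimension $s+1$ via Bertini (the paper invokes Proposition~\ref{BertiniFlag}, you use Corollary~\ref{Equiv}), apply the Weak Lefschetz Theorem~\ref{LefschetzSGA} (you route through its immediate Corollary~\ref{BettiNb}) to transfer high-degree cohomology from $X$ to $Y$, identify $Y$ as a nonsingular complete intersection of multidegree $\mathbf{d}$ in $\PP^{N-s-1}$ whose Betti numbers are governed by Proposition~\ref{BettiIC}, and derive part~(iii) from Proposition~\ref{LefschtezLow}. Your extra bookkeeping for the reduction to $\bar{k}$ and the edge case $s=n-1$ is more explicit than the paper's terse proof but does not change the substance of the argument.
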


\begin{proof}
Thanks to Proposition \ref{BertiniFlag}, there are regular flags for $X$;
let $(E_{1}, \dots, E_{s + 1})$ be one of them, and let $Y = X \cap E_{s +
1}$. Then $Y$ is a nonsingular variety by definition of regular flags, and a
complete intersection. Applying the Weak Lefschetz Theorem
\ref{LefschetzSGA}, we deduce that the Gysin map
$$
\iota_{*} : H^{i - 2s - 2}(\bar{Y},\QQ_{\ell}(- s - 1)) \longrightarrow
H^{i}(\bar{X},\QQ_{\ell})
$$
is an isomorphism for $i \geq n + s + 2$ and a surjection for $i = n + s +
1$, which proves \eqref{Bsci1} and \eqref{Bsci2} in view of Proposition
\ref{BettiIC}. Assertion \eqref{Bsci3} follows from Proposition
\ref{LefschtezLow}.
\end{proof}

\begin{remarks}
\label{ThreeRemarks}
(i) The case $s = 0$ of this theorem is a result of I.E. Shparlinski\u{\i}
and A.N. Skorobogatov \cite[Thm. 2.3]{Sh-S}, but the proof of (a) and (b) in
that Theorem is unclear to us: in any case their proof use results which are
valid only if the characteristic is $0$ or if one assumes the resolution of
singularities (condition $\resol$ in section \ref{CentralBettiSingular}
below).\\
(ii) In \cite[proof of Thm. 1]{Katz1}, N. Katz proves \eqref{Bsci1} with the
arguments used here to prove Proposition \ref{BsciRnp} below.
\end{remarks}

\begin{remark}
\label{Even}
For further reference, we note that if $s \geq 0$ and if $i = n + s + 1$ is
even, then $H^{i}(X,\QQ_{\ell})$ contains a subspace isomorphic to
$\QQ_{\ell}(- i/2)$. In fact, with the notations of the proof of Proposition
\ref{Bsci}, the map
$$
\iota_{*} : H^{n - s - 1}(\bar{Y},\QQ_{\ell}(- s - 1)) \longrightarrow
H^{n + s + 1}(\bar{X},\QQ_{\ell})
$$
is a surjection.
\end{remark}

\section{The Central Betti Number of Complete Intersections}
\label{CentralBetti}

We now state a well-known consequence of the Riemann-Roch-Hirzebruch Theorem
\cite[Satz 2.4, p. 136]{Hirzebruch}, \cite[Cor. 7.5]{Jouanolou2}; see
\cite[Cor 4.18]{AdolSper2} for an alternative proof. If $\nu = (\nu_{1},
\dots, \nu_{r}) \in \NN^{r}$, and if
$\mathbf{d} = (d_{1},\dots, d_{r}) \in (\NN^{\times})^{r}$, we define
$$\mathbf{d}^{\nu} = d_{1}^{\nu_{1}} \dots d_{r}^{\nu_{r}}.$$
If $c \geq 1$, let
$$
M(c) = \set{(\nu_{1}, \dots, \nu_{r}) \in \NN^{r}}
{\nu_{1} + \dots + \nu_{r} = c
\ \text{and} \ \nu_{i} \geq 1 \ \text{for} \ 1 \leq i \leq r}.
$$

\begin{theorem}
\label{ICFormula}
The primitive $n$-th Betti number of any nonsingular complete intersection
in $\ProjCan$ of codimension $r = N - n$ and of multidegree $\mathbf{d}$ is
\begin{equation*}
\label{BettiForm}
b'_{n}(N, \mathbf{d}) = (- 1)^{N - r + 1} (N - r + 1) +
(- 1)^{N} \sum_{c = r}^{N} (- 1)^{c} \binom{N + 1}{c + 1}
\sum_{\mathbf{\nu} \in M(c)} \ \mathbf{d}^{\mathbf{\nu}}.
\rlap \qedbox
\end{equation*}
\end{theorem}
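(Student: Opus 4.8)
The plan is to compute the Euler--Poincar\'e characteristic $\chi(X) = \sum_{i=0}^{2n}(-1)^i b_{i,\ell}(\bar X)$ of a nonsingular complete intersection $X \subset \ProjCan$ of dimension $n = N - r$ and multidegree $\mathbf{d}$ in two ways and compare. On the one hand, by Proposition \ref{BettiIC}\eqref{BettiIC1} together with \eqref{HomZero}, the cohomology of $X$ agrees with that of $\PP^n$ in every degree except the middle degree $n$; since there are $n+1$ even integers in $[0,2n]$, this gives
$$\chi(X) = (n+1) + (-1)^n\, b'_n(N,\mathbf{d}), \qquad\text{that is,}\qquad b'_n(N,\mathbf{d}) = (-1)^n\bigl(\chi(X) - n - 1\bigr).$$
As $(-1)^{n+1}(n+1) = (-1)^{N-r+1}(N-r+1)$, this already accounts for the first term of the asserted formula, and it remains to identify $(-1)^n\chi(X)$ with the double sum.

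On the other hand, $\chi(X)$ is computed by the algebraic Gauss--Bonnet formula, a standard consequence of the Riemann--Roch--Hirzebruch theorem (\cite{Hirzebruch}, \cite{Jouanolou2}): $\chi(X) = \deg\bigl(c_n(T_X)\cap[X]\bigr)$, the degree of the top Chern class of the tangent sheaf of $X$. The Euler sequence $0\to\mathcal{O}_X\to\mathcal{O}_X(1)^{N+1}\to T_{\ProjCan}|_X\to 0$ and the normal bundle sequence $0\to T_X\to T_{\ProjCan}|_X\to\bigoplus_{i=1}^{r}\mathcal{O}_X(d_i)\to 0$ yield, with $h$ the class of a hyperplane section of $X$,
$$c(T_X) = \frac{(1+h)^{N+1}}{\prod_{i=1}^{r}(1+d_i h)}.$$
Since $\deg(h^n\cap[X]) = d_1\cdots d_r =: d$, this gives $\chi(X) = d\cdot a_n$, where $a_n$ denotes the coefficient of $t^n$ in the formal power series $(1+t)^{N+1}/\prod_{i=1}^{r}(1+d_i t)$.

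It then remains to rewrite this coefficient. Set $g(t) = \prod_{i=1}^{r}\frac{d_i t}{1-d_i t} = \frac{d\,t^{r}}{\prod_{i=1}^{r}(1-d_i t)}$; its coefficient of $t^c$ is exactly $\sum_{\nu\in M(c)}\mathbf{d}^{\nu}$, which vanishes for $c < r$. Replacing $t$ by $-t$ gives $(-1)^n a_n = [t^n]\,(1-t)^{N+1}/\prod_i(1-d_i t)$; multiplying by $d$ and using $d/\prod_i(1-d_i t) = g(t)/t^{r}$ together with $n+r=N$ turns the right-hand side into $[t^{N}]\bigl((1-t)^{N+1}g(t)\bigr)$. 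Expanding $(1-t)^{N+1}$ by the binomial theorem, extracting the coefficient of $t^N$, re-indexing the convolution sum, and using $\binom{N+1}{N-c} = \binom{N+1}{c+1}$ then produces precisely $(-1)^{N}\sum_{c=r}^{N}(-1)^{c}\binom{N+1}{c+1}\sum_{\nu\in M(c)}\mathbf{d}^{\nu}$, which is $(-1)^n\chi(X)$. Combined with the first step, this proves the theorem.

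The only genuinely nonformal ingredient is the algebraic Gauss--Bonnet identity $\chi(X) = \deg c_n(T_X)$ over a general perfect field --- one wants to avoid invoking Proposition \ref{BettiIC}\eqref{BettiIC2}, which itself rests on the present theorem. This can be quoted directly from \cite{Jouanolou2}; alternatively, one first observes by a standard specialization argument that nonsingular complete intersections of a fixed type $(N,\mathbf{d})$ have $\ell$-adic Betti numbers independent of the base field, reducing to $X$ defined over $\CC$, where classical Gauss--Bonnet and Hirzebruch's computation apply verbatim. Everything else is the routine Chern-class and generating-function bookkeeping sketched above, where the only care needed is with signs.
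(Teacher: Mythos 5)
The paper does not prove Theorem~\ref{ICFormula}; it cites it as a known consequence of Riemann--Roch--Hirzebruch \cite{Hirzebruch}, \cite{Jouanolou2}, \cite{AdolSper2}. Your proof correctly reconstructs exactly that standard argument: the Euler and normal-bundle sequences give $c(T_X)=(1+h)^{N+1}/\prod_i(1+d_ih)$, algebraic Gauss--Bonnet identifies $\chi_\ell(X)$ with $d\cdot[t^n]\bigl((1+t)^{N+1}/\prod_i(1+d_it)\bigr)$, and the generating-function manipulation (via $g(t)=\prod_i d_it/(1-d_it)$ and the substitution $t\mapsto -t$) together with $\chi(X)=(n+1)+(-1)^nb'_n$ from Proposition~\ref{BettiIC}\eqref{BettiIC1} yields the stated formula after reindexing with $\binom{N+1}{N-c}=\binom{N+1}{c+1}$. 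You are also right to flag and avoid the circular use of Proposition~\ref{BettiIC}\eqref{BettiIC2}, and to note that the only non-formal input is the $\ell$-adic Gauss--Bonnet theorem, which in positive characteristic is supplied by \cite{Jouanolou2}. The computation and signs check out.
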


We now give estimates on the primitive $n$-th Betti number of a nonsingular
complete intersection.

\begin{proposition}
\label{BettiNbBound}
Let $\mathbf{d} = (d_{1},\dots, d_{r}) \in (\NN^{\times})^{r}$ and
$b'_{n}(N, \mathbf{d})$ be the $n^{th}$ primitive Betti number of a
nonsingular complete intersection in $\ProjCan$ of dimension $n = N - r$ and
multidegree $\mathbf{d}$. Let $\delta = \max(d_{1}, \dots, d_{r})$ and
$d = \deg X = d_{1} \cdots d_{r}$. Then, for $r \geq 1$, we have
$$
b'_{n}(N, \mathbf{d}) \leq
(- 1)^{n + 1} (n + 1) + d \binom{N + 1}{n} (\delta + 1)^{n}.
$$
In particular,
$$
b'_{n}(N, \mathbf{d}) \leq \binom{N + 1}{n} (\delta + 1)^{N}.
$$
\end{proposition}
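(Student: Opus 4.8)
The plan is to derive both inequalities directly from the closed formula of Theorem \ref{ICFormula}, by replacing the alternating sum that appears there with a crude non-negative upper bound and then comparing two sums of products of binomial coefficients term by term.

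First I would set $a_c = \sum_{\nu\in M(c)}\mathbf{d}^{\nu}\ge 0$ and note that, since $N-r+1 = n+1$, Theorem \ref{ICFormula} reads $b'_n(N,\mathbf{d}) = (-1)^{n+1}(n+1) + S$, where $S = (-1)^{N}\sum_{c=r}^{N}(-1)^{c}\binom{N+1}{c+1}a_c$. Because $\binom{N+1}{c+1}\ge 0$ and $a_c\ge 0$, the triangle inequality gives $S\le\card{S}\le\sum_{c=r}^{N}\binom{N+1}{c+1}a_c$, so the first assertion follows once I show $\sum_{c=r}^{N}\binom{N+1}{c+1}a_c\le d\binom{N+1}{n}(\delta+1)^{n}$.

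Next I would bound $a_c$: writing $\nu_i = 1+\mu_i$ with $\mu_i\ge 0$ gives $\mathbf{d}^{\nu} = d\prod_i d_i^{\mu_i}\le d\,\delta^{c-r}$, while $\card{M(c)} = \binom{c-1}{r-1}$, hence $a_c\le d\binom{c-1}{r-1}\delta^{c-r}$. Re-indexing by $j = N-c$ (so $c-r = n-j$) and using $\binom{N+1}{c+1} = \binom{N+1}{j}$ and $\binom{c-1}{r-1} = \binom{r+n-j-1}{r-1}$, the required estimate becomes
\[
\sum_{j=0}^{n}\binom{N+1}{j}\binom{r+n-j-1}{r-1}\,\delta^{n-j}\ \le\ \binom{N+1}{n}\sum_{j=0}^{n}\binom{n}{j}\,\delta^{n-j},
\]
the right-hand side being $\binom{N+1}{n}(\delta+1)^{n}$. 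I would prove this term by term, i.e.\ show $\binom{N+1}{j}\binom{r+n-j-1}{r-1}\le\binom{N+1}{n}\binom{n}{j}$ for $0\le j\le n$. Dividing by $\binom{n}{j}$ and using $\binom{N+1}{j}\big/\binom{n}{j} = \prod_{i=0}^{r}(n+1+i)\big/\prod_{i=0}^{r}(n+1-j+i)$ together with $\binom{N+1}{n} = \binom{N+1}{r+1} = \prod_{i=0}^{r}(n+1+i)/(r+1)!$, this reduces to $(r+1)!\binom{r+n-j-1}{r-1}\le\prod_{i=0}^{r}(n+1-j+i)$; both sides carry the common factor $\prod_{i=1}^{r-1}(n-j+i)$, and cancelling it leaves exactly $r(r+1)\le(n-j+r)(n-j+r+1)$, which is clear since $n-j\ge 0$.

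Finally, for the coarser ``in particular'' inequality I would use $(-1)^{n+1}(n+1)\le n+1$ and $d\le\delta^{r}$ to get $b'_n(N,\mathbf{d})\le (n+1) + \delta^{r}\binom{N+1}{n}(\delta+1)^{n}$, and then check that $(n+1)\le\binom{N+1}{n}(\delta+1)^{n}\bigl((\delta+1)^{r}-\delta^{r}\bigr)$, so that the right-hand side is $\le\binom{N+1}{n}(\delta+1)^{N}$; this holds because $(\delta+1)^{n}\ge 1$, $(\delta+1)^{r}-\delta^{r}\ge 1$, and $\binom{N+1}{n} = \binom{n+r+1}{n}\ge\binom{n+2}{2}\ge n+1$. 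The only real subtlety is the first step, passing to the non-negative bound via the triangle inequality; the point is that the target $d\binom{N+1}{n}(\delta+1)^{n}$ has enough room to absorb that loss, after which everything collapses to the elementary inequality $r(r+1)\le(n-j+r)(n-j+r+1)$.
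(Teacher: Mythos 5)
Your proof is correct, and it reaches the same final bound as the paper's. The first stage (bounding $\mathbf{d}^{\nu}\le d\,\delta^{c-r}$, counting $\card{M(c)}=\binom{c-1}{r-1}$, and absorbing the signs in the alternating sum) coincides with the paper. Where you diverge is the combinatorial core: the paper factors $\binom{N+1}{c+1}=\frac{N(N+1)}{c(c+1)}\binom{N-1}{c-1}\le\frac{N(N+1)}{r(r+1)}\binom{N-1}{c-1}$ and then applies the subset-of-a-subset identity $\binom{N-1}{c-1}\binom{c-1}{r-1}=\binom{N-1}{r-1}\binom{n}{c-r}$, so that the whole sum collapses \emph{exactly} to $\binom{N-1}{r-1}(\delta+1)^n$ and the only slack is in the factored-out coefficient. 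You instead re-index by $j=N-c$, expand the target $(\delta+1)^n$ by the binomial theorem, and prove the inequality \emph{termwise}, cancelling the common product $\prod_{i=1}^{r-1}(n-j+i)$ to reduce to $r(r+1)\le(n-j+r)(n-j+r+1)$. The two routes are honestly different presentations of the same estimate (both are tight exactly at $c=r$, i.e.\ $j=n$), and they yield the identical constant $d\binom{N+1}{n}(\delta+1)^n$. Your version is a bit more computational but more self-contained — it avoids invoking the $\binom{m}{c}\binom{c}{r}=\binom{m}{r}\binom{m-r}{c-r}$ identity — whereas the paper's is slicker because the identity collapses the sum in one stroke. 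For the ``in particular'' inequality, you use $d\le\delta^{r}$ and $(\delta+1)^{r}-\delta^{r}\ge 1$ where the paper uses $d<(\delta+1)^{r}$ and $d(\delta+1)^{n}\le(\delta+1)^{N}-1$; both ultimately reduce to $n+1\le\binom{N+1}{n}$, and your argument conveniently handles $n=0$ without a separate case.
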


\begin{proof}
Observe that for any $c \geq r$ and $\nu \in M(c)$, we have
$$
\mathbf{d}^{\mathbf{\nu}} \leq d \delta^{c - r}
\quad \text{and} \quad \card{M(c)} = \binom{c - 1}{r - 1}.
$$
Hence, by Theorem \ref{ICFormula},
$$
b'_{n}(N, \mathbf{d}) - (- 1)^{n + 1} (n + 1) \leq
\sum_{c = r}^{N}
\binom{N + 1}{c + 1} \binom{c - 1}{r - 1} d \delta^{c - r}.
$$
Now, for $c \geq r \geq 1$, we have
$$
\binom{N + 1}{c + 1} = \dfrac{N(N + 1)}{c(c + 1)} \binom{N - 1}{c - 1}
\leq \dfrac{N(N + 1)}{r(r + 1)} \binom{N - 1}{c - 1},
$$
and moreover,
$$
\sum_{c = r}^{N}
\binom{N - 1}{c - 1} \binom{c - 1}{r - 1} \delta^{c - r} =
\binom{N - 1}{r - 1} \sum_{c = r}^{N} \binom{n}{c - r} \delta^{c - r} =
\binom{N - 1}{r - 1} (\delta + 1) ^{n}.
$$
It follows that
$$
b'_{n}(N, \mathbf{d}) - (- 1)^{n + 1} (n + 1) \leq
d \, \dfrac{N(N + 1)}{r(r + 1)} \binom{N - 1}{r - 1} (\delta + 1) ^{n} =
d \binom{N + 1}{r + 1} (\delta + 1) ^{n}.
$$
This proves the first assertion. The second assertion is trivially satisfied
if $n = 0$, while if $n \geq 1$, then
$$\smallbinom{N + 1}{n} \geq N + 1 \geq n + 1,$$
and since $d = d_{1} \cdots d_{r} < (\delta + 1)^{r}$, we have $d (\delta +
1)^{n} \leq (\delta + 1)^{N} - 1$ so that
\begin{eqnarray*}
(- 1)^{n + 1} (n + 1) + d \smallbinom{N + 1}{n} (\delta + 1)^{n}
& \leq &
(n + 1) + (\delta + 1)^{N} \smallbinom{N + 1}{n} - \smallbinom{N + 1}{n}
\\ & \leq &
\smallbinom{N + 1}{n} (\delta + 1)^{N},
\end{eqnarray*}
and the second assertion is thereby proved.
\end{proof}

\begin{examples}
\label{BettiEx}
The calculations in the examples below are left to the reader.

\noindent
(i) The primitive $n$-th Betti number of nonsingular hypersurfaces of
dimension $n$ and degree $d$ is equal to:
$$
b'_{n}(N, d) = \frac{d - 1}{d} ((d - 1)^{N} - (- 1)^{N})
\leq (d - 1)^{N} - \varepsilon_{N}.
$$
In particular, if $N = 2$ (plane curves), then $b_{1}(d) = (d - 1)(d - 2)$,
whereas if $N = 3$ (surfaces in $3$-space), then
$$
b'_{2}(3, d) = (d - 1) ((d - 1)(d - 2) + 1) = d^{3} - 4 d^{2} + 6 d - 3.
$$

\noindent (ii)
The Betti number of a nonsingular curve which is a complete intersection
of $r = N - 1$ forms in $\ProjCan$ of multidegree $\mathbf{d}= (d_{1},
\dots, d_{r})$ is equal to
$$
b_{1}(N, \mathbf{d}) = b'_{1}(N, \mathbf{d}) =
(d_{1} \cdots d_{r})(d_{1} + \dots + d_{r} - N - 1) + 2.
$$
Now observe that if $r$ and $d_{1}, \dots, d_{r}$ are any positive integers,
then
$$d_{1} + \dots + d_{r} \leq (d_{1} \cdots d_{r}) + r - 1,$$
and the equality holds if and only if either $r = 1$ or $r > 1$ and at least
$r - 1$ among the numbers $d_{i}$ are equal to $1$.

To see this, we proceed by induction on $r$. The case $r = 1$ is trivial and
if $r = 2$, then the assertion follows easily from the identity
$$d_{1}d_{2} + 1 - (d_{1} + d_{2}) = (d_{1} - 1)(d_{2} - 1).$$
The inductive step follows readily using the assertion for $r = 2$.

For the nonsingular curve above, if we let $d = d_{1} \cdots d_{r}$, then
$d$ is its degree and
$$
b_{1}(N, \mathbf{d}) = d(d_{1} + \dots + d_{r} - r - 2) + 2
\leq d(d - 3) + 2 = (d - 1)(d - 2).
$$
Since for a nonsingular plane curve of degree $d$ the first Betti number is,
as noted in (i), always equal to $(d - 1)(d - 2)$, it follows that in
general,
$$b_{1}(N, \mathbf{d}) \leq (d - 1)(d - 2)$$ and the equality holds if and
only if the corresponding curve is a nonsingular plane curve.

\noindent (iii)
The Betti number of a nonsingular surface which is a complete intersection
of $r = N - 2$ forms in $\ProjCan$ of multidegree $\mathbf{d} = (d_{1},
\dots, d_{r})$ is equal to
$$
b_{2}(N, \mathbf{d}) = b'_{2}(N, \mathbf{d}) + 1
= d \left( \binom{r + 3}{2} - (r + 3) \sum_{1 \leq i \leq r} d_{i}
+ \sum_{1 \leq i,j \leq r} d_{i} d_{j} \right) - 2,
$$
where $d = d_{1} d_{2} \cdots d_{r}$ is the degree of the surface.

\noindent (iv)
The primitive $n$-th Betti number of a complete intersection defined by
$r = 2$ forms of the same degree $d$ is equal to
$$
b'_{n}(N, (d, d)) =
(N - 1) (d - 1)^{N} + 2 \, \frac{d - 1}{d}((d - 1)^{N - 1} + (- 1)^{N}).
$$
\end{examples}

\section{Zeta Functions and the Trace Formula}
\label{TraceForm}

In this section $k = \Fq$ is the finite field with $q$ elements. We denote
by $k_{r}$ the subfield of $\overline{k}$ which is of degree $r$ over $k$.
Let $X$ be a separated scheme of finite type defined over the field $k$. As
before, we denote by $\bar{X} = X \otimes_{k} \overline{k}$ the scheme deduced
from $X$ by base field extension from $k$ to $\overline{k}$ ; note that
$\bar{X}$ remains unchanged if we replace $k$ by one of its extensions. The
\emph{zeta function} of $X$ is
\begin{equation}
\label{DefZeta}
Z(X, T) =
\exp \sum_{r = 1}^{\infty} \frac{T^{r}}{r} \ \card{X(k_{r})},
\end{equation}
where $T$ is an indeterminate. From the work of Dwork and Grothendieck
(see, for example, the Grothendieck-Lefschetz Trace Formula below), we know
that the function  $Z(X, T)$ is a rational function of $T$. This means
that there are two families of complex numbers $(\alpha_{i})_{i \in I}$ and
$(\beta_{j})_{j \in J}$, where $I$ and $J$ are finite sets, such that
\begin{equation}
\label{DworkFracZeta}
Z(X, T) = \frac{
\displaystyle{\prod_{j \in J}}(1 - \beta_{j} T)}
{\displaystyle{\prod_{i \in I}}(1 - \alpha_{i} T)}.
\end{equation}
We assume that the fraction in the right-hand side of the above equality
is irreducible. Thus, the family $(\alpha_{i})_{i \in I}$ (resp.
$(\beta_{j})_{j \in J}$) is exactly the family of poles (resp. of zeroes) of
$Z(X, T)$, each number being enumerated a number of times equal to its
multiplicity. We call the members of the families $(\alpha_{i})_{i \in I}$
and $(\beta_{j})_{j \in J}$ the \emph{characteristic roots} of $Z(X, T)$.
The
\emph{degree} $\deg Z(X, T)$ of $Z(X, T)$ is the degree of its numerator
minus
the degree of its denominator; the \emph{total degree} $\TotDeg Z(X, T)$ of
$Z(X, T)$ is the sum of the degrees of its numerator and of its denominator.
In the usual way, from the above expression of $Z(X, T)$, we deduce
$$
\card{X(k_{r})} = \sum_{i \in I} \alpha_{i}^{r} - \sum_{j \in J}
\beta_{j}^{r}.
$$
In order to simplify the notation, we write now
$$H^{i}_{c}(\bar{X}) = H^{i}_{c}(\bar{X},\QQ_{\ell}),$$
where $\ell $ is a prime number other than $p$; in the case of proper
subschemes, we may use $H^{i}(\bar{X})$ instead. If we denote by $\varphi$
the element of $\mathbf{g}$ given by $\varphi(x) = x^q$, then the
\emph{geometric Frobenius element} $F$ of $\mathbf{g}$ is defined to be the
inverse of $\varphi$. In the twisted space $\QQ_{\ell}(c)$, we have
$$
F.x = q^{- c} \, x \quad \text{for} \quad x \in \QQ_{\ell}(c).
$$
The geometric Frobenius element $F$ canonically induces on
$H^{i}_{c}(\bar{X})$ an endomorphism denoted by $F \mid H^{i}_{c}(\bar{X})$.
A number $\alpha \in \overline{\QQ_{\ell}}$ is \emph{pure of weight $r$} if
$\alpha$ is an algebraic integer and if $\card{\iota(\alpha)} = q^{r/2}$ for
any embedding $\iota$ of $\overline{\QQ_{\ell}}$ into $\CC$.

We recall the following fundamental results.

\begin{theorem*}
Let $X$ be a separated scheme of finite type over $k$, of dimension $n$.
Then the \emph{Grothendieck-Lefschetz Trace Formula} holds:
\begin{equation}
\label{GrTF}
\card{X(k_{r})} =
\sum_{i = 0}^{2 n}(-1)^{i} \ \Tr(F^{r} \mid H^{i}_{c}(\bar{X})),
\end{equation}
and \emph{Deligne's Main Theorem} holds:
\begin{equation}
\label{DeMT}
\text{The eigenvalues of} \ F \mid H^{i}_{c}(\bar{X})
\text{ are pure of weight} \ \leq i.
\end{equation}
\end{theorem*}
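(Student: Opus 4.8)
The plan is to treat \eqref{GrTF} and \eqref{DeMT} as the two deep inputs from $\ell$-adic cohomology, due respectively to Grothendieck (with Artin and Verdier, SGA 4--5) and to Deligne. Neither is elementary, so what follows is an outline of the standard arguments rather than a self-contained proof; in the text one cites the original sources.

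For the trace formula \eqref{GrTF}, I would first establish the cohomological expression of the zeta function,
\[
Z(X,T) = \prod_{i=0}^{2n} \det\bigl(1 - FT \mid H^{i}_{c}(\bar{X})\bigr)^{(-1)^{i+1}},
\]
and then take the logarithmic derivative: applying $T\,\tfrac{d}{dT}\log$ to the right-hand side produces $\sum_{r\ge 1}\bigl(\sum_{i}(-1)^{i}\Tr(F^{r}\mid H^{i}_{c}(\bar{X}))\bigr)T^{r}$, while by \eqref{DefZeta} the left-hand side produces $\sum_{r\ge 1}\card{X(k_{r})}T^{r}$; comparing coefficients gives \eqref{GrTF} for every $r$ (one may even reduce to $r=1$ by replacing $k$ with $k_{r}$, since $\bar{X}$ is unchanged under this base change). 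To prove the displayed product formula I would argue by dévissage: both $Z(X,T)$ and the alternating product are multiplicative for a decomposition of $X$ into a closed subscheme $Z$ and its open complement $U$ --- for $Z(X,T)$ this is immediate from $\card{X(k_{r})}=\card{Z(k_{r})}+\card{U(k_{r})}$, and for the cohomology side it follows from the excision long exact sequence in compact-support cohomology (as in \eqref{excision}) together with multiplicativity of characteristic polynomials of Frobenius along exact sequences. This reduces the statement to smooth varieties, and then, by fibering over a curve and invoking the Leray spectral sequence for $H_{c}$, to smooth curves. The curve case is the heart: there one invokes the Lefschetz--Verdier fixed-point formula, the decisive point being that the graph of the $q$-power Frobenius endomorphism meets the diagonal of $X\times X$ transversally, every fixed point (that is, every rational point) occurring with local intersection multiplicity $1$ because the differential of Frobenius vanishes identically.

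For Deligne's bound \eqref{DeMT}, the plan follows \emph{Weil II}. Using proper base change, the long exact sequences, and induction on $\dim X$ by means of a Lefschetz pencil, one reduces to the essential case: bounding the weights of $H^{1}_{c}$ of a smooth affine curve with coefficients in a lisse $\QQ_{\ell}$-sheaf. The cheap estimate coming from Poincaré duality and the $H^{0}$, $H^{2}$ computations already gives weight $\le i+1$; promoting this to the sharp bound weight $\le i$ is the genuinely hard step. It rests on two ingredients: first, the global monodromy theorem --- the geometric monodromy of the vanishing cohomology in a Lefschetz pencil is as large as the cup-product pairing allows (Kazhdan--Margulis, via the Picard--Lefschetz formulas); and second, the positivity/rationality argument, in which one forms the $L$-function of a high even tensor or symmetric power of the sheaf, observes that its logarithm has nonnegative power-series coefficients, and derives a contradiction from any eigenvalue of excessive weight by a pole-counting argument in a forbidden region, exactly as in Deligne's Rankin--Selberg-type trick.

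\textbf{Main obstacle.} In both cases the crux is not the formal dévissage but the base case. For \eqref{GrTF} it is the transversality input and the Lefschetz--Verdier formalism on curves; for \eqref{DeMT} it is simultaneously the bigness of the monodromy and the positivity argument, which together are essentially the whole content of \emph{Weil II}. Since the present paper uses both results only as black boxes, we do not reproduce these proofs but refer to the work of Grothendieck and Deligne.
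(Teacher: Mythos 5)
Your approach matches the paper's: the result is invoked as a black box, with the trace formula attributed to Grothendieck (the paper cites \cite[Thm.~13.4, p.~292]{Milne}) and the weight bound to Deligne (the paper cites \cite[Thm.~1, p.~314]{Deligne2}). Your outline of the underlying arguments --- d\'evissage to curves via excision and Lefschetz--Verdier for \eqref{GrTF}, and Lefschetz pencils plus monodromy and the Rankin--Selberg positivity trick for \eqref{DeMT} --- is an accurate summary of the standard proofs, but the paper does not reproduce any of it and simply gives the two references.
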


See, for instance, \cite[Thm. 13.4, p. 292]{Milne} for the
Grothendieck-Lefschetz Trace Formula, and \cite[Thm. 1, p. 314]{Deligne2}
for
Deligne's Main Theorem.

The Trace Formula \eqref{GrTF} is equivalent to the equality
\begin{equation}
\label{FracZeta}
Z(X, T) = \dfrac{P_{1,\ell}(X, T) \cdots P_{2n - 1,\ell}(X, T)}
{P_{0,\ell}(X, T) \cdots P_{2n,\ell}(X, T)},
\end{equation}
where $P_{i,\ell}(X, T) = \det(1 - T F \mid H^{i}_{c}(\bar{X}))$. We can
write
\begin{equation}
\label{DefPoly00}
P_{i,\ell}(X, T) = \prod_{j = 1}^{b_{i,\ell}} (1 - \omega_{ij,\ell} T),
\end{equation}
where $\omega_{ij,\ell} \in \overline{\QQ_{\ell}}$, and $b_{i,\ell} =
b_{i,\ell}(\bar{X}) = \deg P_{i,\ell}(X, T)$. The numbers $\omega_{ij,\ell}$
are called the \emph{reciprocal roots} of $P_{i,\ell}(X, T)$. The Trace
Formula \eqref{GrTF} may be written as
$$
\card{X(k)} =
\sum_{i = 0}^{2 n} \ (-1)^{i} \sum_{j = 1}^{b_{i,\ell}} \omega_{ij,\ell},
$$
with the convention that if $b_{i,\ell} = 0$, then the value of the
corresponding sum is zero. The \emph{compact \'etale $\ell$-adic
Euler-Poincar\'e characteristic} of $X$ is
$$
\chi_{\ell}(X) = \sum_{i = 0}^{2n} (-1)^{i} \, b_{i, \ell}(X).
$$
On the other hand, let us define
$$\sigma_{\ell}(X) = \sum_{i = 0}^{2 n} b_{i,\ell}(X).$$
The equality between the right-hand sides of \eqref{DworkFracZeta} and
\eqref{FracZeta} imply
$$\deg Z(X, T) = \chi_{\ell}(X).$$
Hence, the compact \'etale Euler-Poincar\'e characteristic of $X$
is independent of $\ell$, and the number $\deg Z(X, T)$ depends only on
$\bar{X}$. In the same way we find
$$\TotDeg Z(X, T) \leq \sigma_{\ell}(X).$$
Here, the two sides may be different because of a possibility of
cancellations occurring in the right-hand side of \eqref{FracZeta}.

If $k'$ is any field, let us say that a projective scheme $X$ defined over
$k'$ is of \emph{type} $(m, N, \mathbf{d})$ if $X$ is a closed subscheme in
$\ProjCan_{k'}$ which can be defined, scheme-theoretically, by the vanishing
of a system of $m$ nonzero forms with coefficients in $k'$, of multidegree
$\mathbf{d} = (d_{1}, \dots, d_{m})$.

We now state a result of N. Katz \cite[Cor. of Th. 3]{Katz4}, whose proof
is based on a result of Adolphson and Sperber \cite[Th. 5.27]{AdolSper1}.
If one checks the majorations in the proof of \cite[\loccit]{Katz4},
the number $9$ appearing therein can be replaced by the number $8$ in the
inequality below.

\begin{theorem*}[Katz's Inequality]
Let $X$ be a closed subscheme in $\ProjCan$ defined over an algebraically
closed field, and of type $(m, N, \mathbf{d})$. If $\mathbf{d} = (d_{1},
\dots,
d_{m})$, then let $\delta = \max(d_{1}, \dots, d_{m})$. We have
\begin{equation}
\label{KatzIneq}
\sigma_{\ell}(X) \leq 8 \times 2^{m} \times (m \delta + 3)^{N + 1}.
\rlap \qedbox
\end{equation}
\end{theorem*}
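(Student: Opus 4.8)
This is Katz's inequality, proved by Katz in \cite[Cor. of Th. 3]{Katz4}; the plan is to follow his argument while keeping track of the numerical constants, since it is exactly this bookkeeping that allows one to replace $9$ by $8$. There are three ingredients: a d\'evissage reducing the projective estimate to an affine one, a sheaf-theoretic identity turning the affine estimate into a bound for the total cohomology of a single Artin--Schreier sheaf, and the estimate of Adolphson and Sperber \cite[Th. 5.27]{AdolSper1} for that sheaf.

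\emph{D\'evissage to the affine case.} Fix a hyperplane $H\subset\ProjCan$ dividing none of the defining forms of $X$ (a generic one works). Then $X\cap H$ is a closed subscheme of $H\cong\PP^{N-1}$ of type $(m',N-1,\mathbf{d}')$ with $m'\le m$ and all $d'_{i}\le\delta$, while $X\setminus(X\cap H)$ is isomorphic to a closed subscheme of $\Aff^{N}$ defined by $m$ polynomials of degree $\le\delta$. The excision sequence in compact-support cohomology \cite{Milne} gives $\sigma_{\ell}(X)\le\sigma_{\ell}\bigl(X\setminus(X\cap H)\bigr)+\sigma_{\ell}(X\cap H)$, so induction on $N$ reduces the theorem to the following affine bound: \emph{if $V\subset\Aff^{n}$ is defined by $m$ polynomials of degree $\le\delta$, then $\sigma_{\ell}(V)\le 6\times 2^{m}(m\delta+3)^{n+1}$}. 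Indeed, granting this, summing the affine bounds over the affine strata (one in each $\Aff^{j}$, $0\le j\le N$, and with parameters at most $(m,\delta)$) as a geometric series, and using $m\delta\ge 1$, gives
\[
\sigma_{\ell}(X)\ \le\ 6\cdot\frac{m\delta+3}{m\delta+2}\cdot 2^{m}(m\delta+3)^{N+1}\ \le\ 8\times 2^{m}(m\delta+3)^{N+1}.
\]
With a slightly coarser input at the next step, this chain of inequalities produces $9$ instead of $8$.

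\emph{The affine bound as an exponential sum.} Assume first that $V$ is defined over a finite field $\Fq$ (the general algebraically closed base is dealt with at the end). Let $f_{1},\dots,f_{m}$ define $V$, put $g=\sum_{i=1}^{m}t_{i}f_{i}(x)$ on $\Aff^{n+m}=\Aff^{n}_{x}\times\Aff^{m}_{t}$, and let $\pi\colon\Aff^{n+m}\to\Aff^{n}_{x}$ be the first projection. Since $\sum_{t}\psi\bigl(g(x,t)\bigr)$ equals $q^{m}$ when $x\in V$ and $0$ otherwise, proper base change identifies $R\pi_{!}\,\mathcal{L}_{\psi}(g)$ with $\QQ_{\ell,V}(-m)[-2m]$, whence by the Leray spectral sequence $H^{i}_{c}\bigl(\Aff^{n+m},\mathcal{L}_{\psi}(g)\bigr)\cong H^{i-2m}_{c}(V,\QQ_{\ell})(-m)$ for every $i$, and in particular $\sigma_{\ell}(V)=\sum_{i}\dim H^{i}_{c}\bigl(\Aff^{n+m},\mathcal{L}_{\psi}(g)\bigr)$. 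The Newton polyhedron at infinity $\Delta_{\infty}(g)\subset\mathbb{R}^{n+m}$ is the convex hull of the origin and the points $(\alpha,\mathbf{e}_{i})$ with $\alpha\in\NN^{n}$, $|\alpha|\le d_{i}$, $1\le i\le m$; because $g$ has degree one in each $t_{i}$ this polyhedron is thin in the $t$-directions, and a direct computation gives $(n+m)!\,\mathrm{vol}\,\Delta_{\infty}(g)\le\binom{n+m-1}{m-1}\delta^{n}$. Combining the Adolphson--Sperber estimate \cite[Th. 5.27]{AdolSper1} for $\sum_{i}\dim H^{i}_{c}(\Aff^{n+m},\mathcal{L}_{\psi}(g))$ with this volume computation, elementary binomial inequalities, and $\delta\ge 1$, one obtains $\sigma_{\ell}(V)\le 6\times 2^{m}(m\delta+3)^{n+1}$. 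For a general algebraically closed base field one descends to a finite field in characteristic $p$, and in characteristic $0$ one spreads out over a finitely generated subring of $\ZZ[1/\ell]$ and specialises, using that the $\ell$-adic Betti numbers are constructible and unchanged by such base change.

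\emph{Main obstacle.} The delicate step is the invocation of \cite[Th. 5.27]{AdolSper1}: one must ensure that it (or the form of it used by Katz) bounds the \emph{total} Betti number $\sum_{i}\dim H^{i}_{c}$ — not merely an Euler characteristic or the middle-degree cohomology — and does so for the typically \emph{degenerate} phase $g=\sum t_{i}f_{i}$, with an explicit constant; once that is in hand, the volume estimate and the geometric-series reassembly are routine, but must be carried through with every constant kept optimal, and it is precisely there that Katz's majorations have the slack one removes to pass from $9$ to $8$. A minor point in the d\'evissage: choosing $H$ generically keeps the number of equations and their degrees from increasing, so the affine bound is only ever applied with parameters at most $(m,\delta)$.
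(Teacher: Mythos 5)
The paper does not actually prove this statement: it is quoted as an external result of Katz \cite[Cor.\ of Th.\ 3]{Katz4}, and the only ``proof'' content the authors supply is the parenthetical remark that on checking Katz's majorations the constant $9$ appearing there can be tightened to $8$. So there is no in-paper argument for you to be compared against; what you have done is attempt a reconstruction of Katz's own proof, with the bookkeeping adjusted.

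As a reconstruction of Katz's argument your outline has the right shape, and it is what Katz actually does. The stratification of $\ProjCan$ into affine pieces via generic hyperplane sections, the excision inequality $\sigma_{\ell}(X)\le\sigma_{\ell}(X\setminus X\cap H)+\sigma_{\ell}(X\cap H)$, the Artin--Schreier device $g=\sum t_{i}f_{i}$ with $R\pi_{!}\mathcal{L}_{\psi}(g)\simeq\QQ_{\ell,V}(-m)[-2m]$, and the invocation of Adolphson--Sperber \cite[Th.\ 5.27]{AdolSper1} are all faithful to \cite{Katz4}. Your volume computation $(n+m)!\,\mathrm{vol}\,\Delta_{\infty}(g)\le\binom{n+m-1}{m-1}\delta^{n}$ is correct, and the geometric-series reassembly with $\frac{m\delta+3}{m\delta+2}\le\frac{4}{3}$ (valid since $m\delta\ge 1$) does convert an affine constant $6$ into a projective constant $8$, consistent with the paper's assertion (and with the affine constant the paper itself quotes in Remark \ref{AffineLangWeil}).

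The one genuine gap is the one you flag yourself and then leave open: the step from Adolphson--Sperber to the claimed affine bound $\sigma_{\ell}(V)\le 6\times 2^{m}(m\delta+3)^{n+1}$. This requires knowing that \cite[Th.\ 5.27]{AdolSper1} controls the \emph{total} dimension $\sum_{i}\dim H^{i}_{c}\bigl(\Aff^{n+m},\mathcal{L}_{\psi}(g)\bigr)$ --- i.e.\ the total degree of the $L$-function, not merely an Euler characteristic or the generic-phase nondegenerate case --- and then requires the explicit binomial manipulations that produce exactly the factor $6\cdot 2^{m}$ from $\binom{n+m-1}{m-1}$ and the Adolphson--Sperber constants. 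That is precisely where the $9$-versus-$8$ distinction lives, and your proposal says ``one obtains'' without carrying it out. Since the paper is silent here too (it simply asserts the improvement), I cannot check your constant against theirs; but as written your proposal has not proved the affine input, so the argument is a correct skeleton rather than a complete proof.
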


We would like to compare the number of points of $X$ and that of the
projective space of dimension equal to that of $X$. Hence, if $\dim X = n$,
we introduce the rational function
\begin{equation}
\label{GenSeries}
\dfrac{Z(X, T)}{Z(\PP^{n}, T)} =
\exp \sum_{r = 1}^{\infty} \frac{T^{r}}{r} \
\left( \ \card{X(k_{r})} - \card{\PP^{n}(k_{r})} \ \right) \, .
\end{equation}
where
$$
Z(\PP^{n}, T) = \dfrac{1}{(1 - T) \dots (1 - q^{n} T)}.
$$

\begin{proposition}
\label{SchKatz}
Given any projective scheme $X$ of dimension $n$ defined over $k$, let
$$
\tau(X) = \TotDeg \dfrac{Z(X, T)}{Z(\PP^{n}, T)} \, .
$$
Also, given any nonnegative integers $m$, $N$, and $\mathbf{d} =
(d_{1},\dots, d_{m}) \in (\NN^{\times})^{m}$, let
$$
\tau_{k}(m, N, \mathbf{d}) = \sup_{X} \ \tau(X),
$$
where the supremum is over projective schemes $X$ defined over $k$, and of
type $(m, N, \mathbf{d})$. If $\delta = \max(d_{1}, \dots, d_{m})$, then
$$
\tau_{k}(m, N, \mathbf{d}) \leq 9 \times 2^{m} \times (m \delta + 3)^{N +
1},
$$
and, in particular, $\tau_{k}(m, N, \mathbf{d})$ is bounded by a constant
independent of the field $k$.
\end{proposition}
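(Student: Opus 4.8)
The plan is to bound $\tau(X)$ for an arbitrary projective scheme $X$ of type $(m, N, \mathbf{d})$ by relating the total degree of $Z(X,T)/Z(\PP^n,T)$ to the sum of Betti numbers $\sigma_\ell(X)$, and then to apply Katz's Inequality \eqref{KatzIneq}. The starting point is the factorization \eqref{FracZeta} together with the explicit form $Z(\PP^n, T)^{-1} = (1-T)(1-qT)\cdots(1-q^n T)$. Writing
$$
\frac{Z(X,T)}{Z(\PP^n, T)} = \frac{P_{1,\ell}(X,T)\cdots P_{2n-1,\ell}(X,T)}{P_{0,\ell}(X,T)\cdots P_{2n,\ell}(X,T)} \cdot \prod_{i=0}^{n}(1 - q^i T),
$$
we see that $\tau(X)$ is at most the total degree of the left-hand numerator and denominator taken together, which is $\sigma_\ell(X)$, plus a contribution of at most $n+1$ from the factors $\prod_{i=0}^n (1-q^i T)$. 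So a first crude bound is $\tau(X) \le \sigma_\ell(X) + (n+1)$, valid before any cancellation is taken into account. The point, however, is that the factors $(1-q^i T)$ for $0 \le i \le n$ cancel against the even-degree polynomials $P_{2i,\ell}(X,T)$ in the denominator of $Z(X,T)$: indeed $H^{2i}_c(\bar X)$ always contains the class dual to a linear section (for $X$ projective of dimension $n$ this is visible on each top-dimensional component), so $P_{2i,\ell}(X,T)$ is divisible by $(1-q^i T)$. Hence those $n+1$ extra factors are absorbed and one gets simply
$$
\tau(X) \le \sigma_\ell(X) \le 8 \times 2^m \times (m\delta+3)^{N+1} \le 9 \times 2^m \times (m\delta+3)^{N+1}
$$
by Katz's Inequality. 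Taking the supremum over all $X$ of type $(m,N,\mathbf{d})$ defined over $k$ gives $\tau_k(m,N,\mathbf{d}) \le 9 \times 2^m \times (m\delta+3)^{N+1}$, a bound manifestly independent of $k$.

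The steps, in order, are: (1) substitute \eqref{FracZeta} and the formula for $Z(\PP^n,T)$ into the definition of $\tau(X)$; (2) observe that the numerator and denominator of $Z(X,T)/Z(\PP^n,T)$, after clearing the factors $(1-q^iT)$, are built out of the $P_{i,\ell}(X,T)$, so that $\TotDeg$ of the ratio is bounded by $\sum_i \deg P_{i,\ell}(X,T) = \sigma_\ell(X)$; (3) justify the divisibility of $P_{2i,\ell}(X,T)$ by $(1-q^iT)$ for $0\le i\le n$, e.g.\ by noting that the cycle class of an $(n-i)$-dimensional linear section of $X$ gives a nonzero Frobenius-eigenvector of eigenvalue $q^{-i}$ (so reciprocal root $q^i$) in $H^{2i}_c(\bar X)$ — this is where one uses that $X$ is projective and nonempty; (4) invoke \eqref{KatzIneq} and pad $8$ to $9$; (5) take the supremum. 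The conclusion about independence of $k$ is then immediate since the right-hand side involves only $m$, $N$, $\delta$.

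The main obstacle is step (3): one must be careful that the cancellation really occurs for \emph{every} $i$ with $0 \le i \le n$ and for a possibly reducible, non-equidimensional, non-reduced scheme $X$. For $i = 0$ and $i = n$ this is standard ($\dim H^0_c$ counts connected components of the support, $\dim H^{2n}_c$ counts $n$-dimensional components, and Frobenius acts by $1$, resp.\ $q^n$, on the relevant classes). For intermediate $i$ one wants the hyperplane class (restricted from $\PP^N$) to have nonzero image in $H^{2i}_c(\bar X)$ after $i$-fold cup product; on an $n$-dimensional component this follows from the projective embedding, and the weight of that class is exactly $2i$ with Frobenius eigenvalue $q^{-i}$ in the Tate-twisted picture. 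An alternative, cleaner route that sidesteps delicate cohomological bookkeeping is to bound $\tau(X)$ directly by $\sigma_\ell(X) + (n+1)$ and then absorb the $n+1$ into the gap between $8$ and $9$ in Katz's estimate — but one should check that $n+1 \le 2^m (m\delta+3)^{N+1}$, which is clear since $n \le N$ and $m\delta + 3 \ge 4$. I would present the argument via this second route if the divisibility claim turns out to require more care than is warranted here, and otherwise use the sharper cancellation argument.
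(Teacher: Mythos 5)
Your proposal is correct, and the ``cleaner route'' you describe at the end is exactly the paper's own proof: the paper's argument is the one-line observation that $\tau(X) \leq \sigma_\ell(X) + n$, after which Katz's inequality and the $8 \to 9$ padding finish the job. So the crude version of your argument coincides with the published one.

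Your sharper cancellation argument, giving $\tau(X) \leq \sigma_\ell(X)$ outright, is a genuine refinement of what the paper does, and it is essentially sound, but two points need tightening. First, a sign slip: the reciprocal root of $P_{2i,\ell}(X,T) = \det(1 - TF \mid H^{2i}_c(\bar X))$ supplied by the $i$-th power $h^i$ of the hyperplane class is the Frobenius eigenvalue $q^{i}$ on $H^{2i}_c(\bar X, \QQ_\ell)$, not $q^{-i}$ (the twist $\QQ_\ell(i)$ kills the eigenvalue, so in the untwisted space it is $q^i$). Second, and more importantly, divisibility of $P_{2i,\ell}$ by $(1 - q^iT)$ does not by itself guarantee that the corresponding factor of $1/Z(\PP^n,T)$ cancels in the lowest-terms form of the ratio; Deligne's theorem bounds weights on $H^j_c$ only from above, so $q^i$ could also be a reciprocal root of some odd $P_{2j+1,\ell}$ with $j \geq i$, making the multiplicity bookkeeping nontrivial. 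The clean way to make the cancellation rigorous is the one that avoids this bookkeeping entirely: write
$$
\frac{Z(X,T)}{Z(\PP^n,T)} = \frac{\prod_{j=0}^{n-1} P_{2j+1,\ell}(X,T)}{\prod_{j=0}^{n} \widetilde P_{2j,\ell}(X,T)}, \qquad \widetilde P_{2j,\ell}(X,T) := \frac{P_{2j,\ell}(X,T)}{1 - q^jT},
$$
which is a ratio of honest polynomials once the divisibility is established, and observe that $\TotDeg$ of a ratio is at most the sum of the degrees of any numerator/denominator representatives, so $\tau(X) \leq \sigma_\ell(X) - (n+1) \leq \sigma_\ell(X)$. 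The divisibility itself follows from the nonvanishing of $h^n$ on an $n$-dimensional irreducible component $Z$ of $\bar X_{\red}$ (e.g.\ via a finite linear projection $Z \to \PP^n$, for which $\pi_*\pi^* = \deg\pi$ shows $\pi^*$ injective on $H^{2n}$), together with $h^n = h^i \cup h^{n-i}$ and the restriction $H^{2i}(\bar X) \to H^{2i}(\bar Z)$. That said, since the paper only needs the weaker inequality $\tau(X) \leq \sigma_\ell(X) + n$, the crude route you already sketched is entirely sufficient and is what the authors use.
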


\begin{proof}
Follows from Katz's Inequality \eqref{KatzIneq}, since $\tau (X) \leq
\sigma_{\ell}(X) + n.$
\end{proof}

\section{Number of Points of Complete Intersections}
\label{NbPtsCI}

In this section $k = \Fq$. We now state our main Theorem on the number of
points of complete intersections. The number $b'_{n}(N,\mathbf{d})$ is
defined in Theorem \ref{ICFormula}.

\begin{theorem}
\label{MainThm}
Let $X$ be an irreducible complete intersection of dimension $n$ in
$\ProjCan_{k}$, defined by $r = N - n$ equations, with multidegree
$\mathbf{d} = (d_{1}, \dots, d_{r})$, and choose an integer $s$ such that
$\dim \sing X \leq s \leq n - 1$. Then
$$
\abs{\card{X(k)} - \, \pi_{n}} \leq
b'_{n - s - 1}(N - s - 1,\mathbf{d}) \, q^{(n + s + 1)/2} +
C_{s}(X) q^{(n + s)/2},
$$
where $C_{s}(X)$ is a constant independent of $k$. If $X$ is nonsingular,
then
$C_{- 1}(X) = 0$. If $s \geq 0$, then
$$
C_{s}(X) = \sum_{i = n}^{n + s} b_{i,\ell}(\bar{X}) + \varepsilon_{i}
$$
and upon letting $\delta = \max(d_{1}, \dots, d_{r})$, we have
$$
C_{s}(X) \leq \tau(X) \leq \tau(r, N, \mathbf{d}) \leq
9 \times 2^{r} \times (r \delta + 3)^{N + 1}.
$$
\end{theorem}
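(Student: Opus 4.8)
The strategy is to apply the Grothendieck-Lefschetz Trace Formula \eqref{GrTF} to $X$ and compare term by term with the same formula applied to $\PP^n$, controlling each cohomology contribution by weight via Deligne's Main Theorem \eqref{DeMT} together with the cohomological information already gathered in Proposition \ref{Bsci}. First I would write
$$
\card{X(k)} - \pi_n = \sum_{i=0}^{2n} (-1)^i \Tr(F \mid H^i(\bar X)) - \sum_{i=0}^{2n}(-1)^i \Tr(F \mid H^i(\PP^n_{\bar k})),
$$
using that $X$ is proper so compact support cohomology coincides with ordinary cohomology. By Proposition \ref{Bsci}\eqref{Bsci3}, the restriction map $H^i(\PP^n_{\bar k},\QQ_\ell) \to H^i(\bar X,\QQ_\ell)$ is an isomorphism for $0 \le i \le n-1$, so all those terms cancel exactly in the difference. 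The term $i=2n$ contributes $q^n$ on both sides (since $X$ is irreducible, $H^{2n}(\bar X)$ is one-dimensional, spanned by the fundamental class, on which $F$ acts by $q^n$), and the odd-degree terms of $\PP^n$ vanish by \eqref{HomZero}; so what survives is exactly
$$
\card{X(k)} - \pi_n = \sum_{i=n}^{2n-1} (-1)^i \Tr(F \mid H^i(\bar X)) - \sum_{\substack{n \le i \le 2n-1 \\ i \text{ even}}} q^{i/2}.
$$

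**Estimating the surviving terms.** The key point is the weight estimate: by Deligne \eqref{DeMT}, every eigenvalue of $F$ on $H^i(\bar X)$ has absolute value $\le q^{i/2}$ under any complex embedding, so $|\Tr(F \mid H^i(\bar X))| \le b_{i,\ell}(\bar X) \, q^{i/2}$. For $i$ in the range $n+s+2 \le i \le 2n$, Proposition \ref{Bsci}\eqref{Bsci1} shows $H^i(\bar X)$ reduces to the image of $\PP^n$ (relation \eqref{HomZero} holds), so these also cancel against the corresponding $\PP^n$ terms — leaving no net contribution. Thus the only genuinely uncancelled terms come from $n \le i \le n+s+1$. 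For $i = n+s+1$ the bound $b_{n+s+1,\ell}(\bar X) \le b'_{n-s-1}(N-s-1,\mathbf d)$ from Proposition \ref{Bsci}\eqref{Bsci2} gives the main term $b'_{n-s-1}(N-s-1,\mathbf d)\, q^{(n+s+1)/2}$; here I should be a little careful, because if $n+s+1$ is even the primitive Betti number bounds only the primitive part while the extra $\QQ_\ell(-i/2)$ summand (see Remark \ref{Even}) must be reckoned with — but that extra summand is precisely matched by the $q^{i/2}$ term appearing in the $\PP^n$ expansion above, so it cancels and the primitive bound is exactly what is needed. The remaining terms $n \le i \le n+s$ are each $\le (b_{i,\ell}(\bar X) + \varepsilon_i)\, q^{i/2} \le (b_{i,\ell}(\bar X)+\varepsilon_i)\, q^{(n+s)/2}$, whose sum is by definition $C_s(X)\, q^{(n+s)/2}$ with $C_s(X) = \sum_{i=n}^{n+s} b_{i,\ell}(\bar X) + \varepsilon_i$. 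When $s = -1$ the range $n \le i \le n+s$ is empty, so $C_{-1}(X) = 0$, recovering Deligne's inequality.

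**The final bound on $C_s(X)$.** It remains to bound $C_s(X)$ by the promised explicit constant. The sum $\sum_{i=n}^{n+s}(b_{i,\ell}(\bar X)+\varepsilon_i)$ is dominated by $\sum_{i=0}^{2n} b_{i,\ell}(\bar X) + n = \sigma_\ell(X) + n$ (generously estimating the $\varepsilon_i$'s by $n$), and in fact more sharply by $\tau(X)$, since the terms that survive in the difference $Z(X,T)/Z(\PP^n,T)$ are exactly the ones not already accounted for by the $\PP^n$-part, and $\tau(X) = \TotDeg(Z(X,T)/Z(\PP^n,T))$ collects them — I would verify this book-keeping by matching factors $P_{i,\ell}(X,T)$ against $P_{i,\ell}(\PP^n,T)$ in \eqref{FracZeta}, using the isomorphisms from Proposition \ref{Bsci}\eqref{Bsci1} and \eqref{Bsci3} to see which $P_{i,\ell}$ factors cancel. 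Then $\tau(X) \le \tau(r,N,\mathbf d)$ trivially since $X$ is of type $(r,N,\mathbf d)$, and $\tau(r,N,\mathbf d) \le 9 \times 2^r \times (r\delta+3)^{N+1}$ by Proposition \ref{SchKatz}. The main obstacle I anticipate is the parity bookkeeping around $i = n+s+1$: one must check that when $n+s+1$ is even the ``extra'' copy of $\QQ_\ell(-(n+s+1)/2)$ guaranteed by Remark \ref{Even} is genuinely cancelled by a matching term on the $\PP^n$ side and is not being double-counted, so that the coefficient of $q^{(n+s+1)/2}$ is exactly the \emph{primitive} Betti number $b'_{n-s-1}(N-s-1,\mathbf d)$ and not that number plus one; similarly one must ensure that the $\varepsilon_i$ corrections for $n \le i \le n+s$ are the right ones for the stated formula for $C_s(X)$.
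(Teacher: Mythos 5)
Your proposal is correct and follows essentially the same route as the paper: apply the Grothendieck--Lefschetz trace formula (equivalently, compare $Z(X,T)$ with $Z(\PP^n,T)$), cancel the terms with $i\notin[n,n+s+1]$ using Proposition~\ref{Bsci}\eqref{Bsci1} and \eqref{Bsci3}, bound the $i=n+s+1$ term via Proposition~\ref{Bsci}\eqref{Bsci2} together with the cancellation of the extra eigenvalue $q^{(n+s+1)/2}$ supplied by Remark~\ref{Even} (or, in the nonsingular case $s=-1$, by Proposition~\ref{BettiIC}), bound the remaining $n\le i\le n+s$ terms by $C_s(X)\,q^{(n+s)/2}$ via Deligne, and finish with $C_s(X)\le\tau(X)$ and Proposition~\ref{SchKatz}. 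The one place you should be slightly more careful is your citation of Proposition~\ref{Bsci}\eqref{Bsci2}: it gives $b_{n+s+1,\ell}(X)\le b_{n-s-1}(N-s-1,\mathbf d)$ (the full, not primitive, Betti number), and it is only after the cancellation of $q^{(n+s+1)/2}$ in the even case that the effective count drops to $b'_{n-s-1}(N-s-1,\mathbf d)$ --- which you do note, so the parity bookkeeping comes out right.
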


\begin{proof}
Equality \eqref{FracZeta} implies
$$
\dfrac{Z(X, T)}{Z(\PP^{n}, T)} =
\dfrac{P_{1,\ell}(X, T) \cdots P_{2n - 1,\ell}(X, T) (1 - T) \dots
(1 - q^{n} T)} {P_{0,\ell}(X, T) \cdots P_{2n,\ell}(X, T)},
$$
and therefore, in view of \eqref{DefPoly00} and \eqref{GenSeries}, we get
\begin{equation}
\label{GrTF3}
\card{X(k)} - \pi_{n} = \sum_{i = 0}^{2 n} \ (- \varepsilon_{i} \,
q^{i/2} + (-1)^{i} \sum_{j = 1}^{b_{i}} \ \omega_{ij,\ell})
\end{equation}
where it may be recalled that $\varepsilon_{i} = 1$ if $i$ is even and
$0$ otherwise. Moreover, when all the cancellations have been performed, the
number of terms of the right-hand side of \eqref{GrTF3} is at most equal to
the total degree $\tau(X)$ of the rational fraction above. Now from
Proposition \ref{Bsci}\eqref{Bsci1} and \eqref{Bsci3} we have
$$
P_{i,\ell}(X, T) = 1 - \varepsilon_{i} q^{i/2} T \quad \text{if} \
i \notin [n, n + s + 1] \, .
$$
Hence, these polynomials cancel if $Z(X, T)/Z(\PP^{n}, T)$ is in
irreducible form. Accordingly, from \eqref{GrTF3} we deduce
$$
\card{X(k)} - \pi_{n} = \sum_{i = n}^{n + s + 1} \ (- \varepsilon_{i} \,
q^{i/2} + (-1)^{i} \sum_{j = 1}^{b_{i}} \ \omega_{ij,\ell}),
$$
and this gives
$$
\abs{\card{X(k)} - \pi_{n}} \leq A + B,
$$
where $A = 0 $ if $s = n - 1$ and
\begin{eqnarray*}
A & = & \abs{- \varepsilon_{n + s + 1} \,
q^{(n + s + 1)/2} + (-1)^{n + s + 1}
\sum_{j = 1}^{b_{n + s + 1}} \ \omega_{(n + s + 1)j,\ell}} \quad
\text{if} \ s < n-1,
\end{eqnarray*}
while
\begin{eqnarray*}
B & = &
\abs{\sum_{i = n}^{n + s} \ (- \varepsilon_{i} \, q^{i/2} + (-1)^{i}
\sum_{j = 1}^{b_{i}} \ \omega_{ij,\ell})} \quad
\text{if} \ s \leq n-1.
\end{eqnarray*}
Now by Deligne's Main Theorem \eqref{DeMT}, the numbers $\omega_{ij,\ell}$
are
pure of weight $\leq i/2$, and thus
$$
B \leq C_{s}(X) \, q^{(n + s)/2} \quad \text{where} \quad C_{s}(X) =
\sum_{i = n}^{n+s} b_{i,\ell}(\bar{X}) + \varepsilon_i \, .
$$
Clearly, $C_s(X)$ is independent of $k$ and is at most equal to the number
of
terms in the right-hand side of \eqref{GrTF3}. Hence by Proposition
\ref{SchKatz},
$$
C_{s}(X) \leq \tau(X) \leq \tau(r, N, \mathbf{d}) \leq
9 \times 2^{r} \times (r \delta + 3)^{N + 1}.
$$
Next, if $s = n - 1$, then $A = 0$. Suppose $s \leq n - 2$. If $n + s + 1$
is odd (in particular, if $s = n-2$), then by Proposition
\ref{Bsci}\eqref{Bsci2}.
$$
A = \abs{\sum_{j = 1}^{b_{n + s + 1}} \ \omega_{(n + s + 1)j,\ell}}
\leq b_{n - s - 1}(N - s - 1,\mathbf{d}) \, q^{(n + s + 1)/2}.
$$
On the other hand, if $0 \leq s \leq n - 3$ and if $n + s + 1$ is even,
then
$$
A =
\abs{\sum_{j = 1}^{b_{n + s + 1}} \ \omega_{(n + s + 1)j,\ell}
- q^{(n + s + 1)/2}}.
$$
But in view of Remark \ref{Even}, $H^{n + s + 1}(X,\QQ_{\ell})$ contains a
subspace which is isomorphic to $\QQ_{\ell}(-(n + s + 1)/2)$. Thus $q^{(n +
s + 1)/2}$ is an eigenvalue of the highest possible weight of $F \mid
H^{i}_{c}(\bar{X})$, and hence it is a reciprocal root of
$P_{n + s + 1}(X, T)$. It follows that
$$
A \leq (b_{n - s - 1}(N - s - 1,\mathbf{d}) - 1)\, q^{(n + s + 1)/2}.
$$
Thus in any case, $A \leq b_{n - s - 1}(N - s - 1,\mathbf{d}) \, q^{(n + s +
1)/2}$. The case when $X$ is non\-singular follows similarly using
Proposition \ref{BettiIC}.
\end{proof}

The case where $X$ is non\-singular is Deligne's Theorem \cite[Thm.
8.1]{Deligne1}. In the opposite, since we always have $s \leq n - 1$,
Theorem \ref{MainThm} implies the following weak version of the Lang-Weil
inequality for complete intersections:
$$
\card{X(k)} - \, \pi_{n} = O\left( q^{n - (1/2)} \right).
$$
We shall obtain a much better result in Theorem \ref{LangWeil}.
Nevertheless, the following Corollary shows that we can obtain the
Lang-Weil inequality (in fact, a stronger result) as soon as some mild
regularity conditions are satisfied.

If $X$ is regular in codimension one, \idest, if $\dim \sing X \leq n - 2$,
then, as stated in the beginning of Section \ref{Cohci}, $X$ is integral,
and so the hypothesis that $X$ is irreducible is automatically fulfilled.
Moreover, notice that for a complete intersection $X$ in $\PP^N$, Serre's
Criterion of Normality \cite[Thm. 5.8.6, p. 108]{EGA42} implies that $X$ is
normal if and only if it is regular in codimension one.

\begin{corollary}
\label{NormalLW}
If $X$ is a normal complete intersection of dimension $n$ in
$\ProjCan_{k}$ with multidegree $\mathbf{d}$, then
$$
\abs{\card{X(k)} - \, \pi_{n}} \leq
b'_{1}(N - n + 1,\mathbf{d}) \, q^{n - (1/2)} +
C_{n-2}(X) q^{n-1},
$$
where $C_{n-2}(X)$ is as in Theorem \ref{MainThm}. Moreover, if $d = \deg
X$,
then
$$
b'_{1}(N - n + 1,\mathbf{d}) \leq (d-1)(d-2),
$$
with equality holding if and only if $X$ is a hypersurface.
\end{corollary}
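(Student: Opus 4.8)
The plan is to derive the first inequality as the special case $s = n-2$ of Theorem \ref{MainThm}, and then to estimate the resulting primitive Betti number $b'_{1}(N-n+1,\mathbf{d})$ by an elementary computation. First I would check that Theorem \ref{MainThm} applies with this choice of $s$. Since $X$ is normal, Serre's Criterion of Normality (recalled just before the statement) shows that $X$ is regular in codimension one, \idest, $\dim \sing X \leq n-2$; and, as noted at the start of Section \ref{Cohci}, a complete intersection of dimension $\geq 1$ is connected, hence integral once it is regular in codimension one, so $X$ is in particular irreducible. Thus the hypotheses of Theorem \ref{MainThm} are met, and $s = n-2$ is an admissible value since $\dim \sing X \leq n-2 \leq n-1$. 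Substituting $s = n-2$ in the conclusion of Theorem \ref{MainThm} gives $(n+s+1)/2 = n-(1/2)$, $(n+s)/2 = n-1$, and $b'_{n-s-1}(N-s-1,\mathbf{d}) = b'_{1}(N-n+1,\mathbf{d})$, which is exactly the displayed bound with the constant $C_{n-2}(X)$ described there.

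It remains to bound $b'_{1}(N-n+1,\mathbf{d})$, which by definition is the first Betti number of a nonsingular complete intersection curve cut out by $r = N-n$ forms of multidegree $\mathbf{d}$ in $\PP^{r+1}$. Here I would quote the explicit formula from Example \ref{BettiEx}(ii) (itself the case $n=1$ of Theorem \ref{ICFormula}): writing $d = d_{1}\cdots d_{r} = \deg X$,
\[
b'_{1}(N-n+1,\mathbf{d}) = d\,(d_{1} + \dots + d_{r} - r - 2) + 2 .
\]
Combining this with the elementary inequality $d_{1} + \dots + d_{r} \leq d_{1}\cdots d_{r} + r - 1$ --- proved by induction in that same example via the identity $d_{1}d_{2} + 1 - (d_{1}+d_{2}) = (d_{1}-1)(d_{2}-1)$ --- yields $b'_{1}(N-n+1,\mathbf{d}) \leq d(d-3) + 2 = (d-1)(d-2)$.

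For the equality clause I would invoke the characterization already recorded in Example \ref{BettiEx}(ii): $b'_{1}(N-n+1,\mathbf{d}) = (d-1)(d-2)$ forces equality in $d_{1} + \dots + d_{r} \leq d + r - 1$, which (with all $d_{i} \geq 2$) holds precisely for $r = 1$, \idest, when $X$ is a hypersurface; conversely, for a normal hypersurface of degree $d$ one has $b'_{1}(2,d) = (d-1)(d-2)$ by Example \ref{BettiEx}(i). I do not anticipate any real obstacle: the argument is a direct specialization of Theorem \ref{MainThm} together with a one-line combinatorial inequality. The only point requiring a little care is the equality clause, where one must rule out (or suitably interpret) presentations in which some defining form is linear --- for such a presentation the ``model curve'' would still be a plane curve, so one should either assume $d_{i} \geq 2$ or phrase the equality condition in terms of the model curve being a nonsingular plane curve, exactly as in Example \ref{BettiEx}(ii).
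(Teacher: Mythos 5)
Your proposal is correct and follows the same route the paper intends: specialize Theorem \ref{MainThm} to $s = n-2$ (after checking that normality forces $X$ to be irreducible and regular in codimension one), then invoke Example \ref{BettiEx}(ii) for the bound $b'_{1}(N-n+1,\mathbf{d}) \leq (d-1)(d-2)$ and the equality analysis. The paper's own proof is the terse one-liner ``Follows from Theorem \ref{MainThm} with $s=n-2$ and the observations in Example \ref{BettiEx}(ii),'' so you have simply expanded the intended argument, and your remark about the equality case (some $d_i = 1$ must be excluded or reinterpreted, as Example \ref{BettiEx}(ii) itself does) is a fair observation about an imprecision in the corollary's phrasing rather than a gap in your proof.
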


\begin{proof}
Follows from Theorem \ref{MainThm} with $s = n - 2$ and the observations in
Example \ref{BettiEx} (ii).
\end{proof}

\begin{remark}
\label{CaseR2}
It is also worthwhile to write down explicitly the particular case of a
complete intersection $X$ of dimension $n$ in $\ProjCan_{k}$ regular in
codimension 2. Namely, if $\dim \sing X \leq n - 3$, then
$$
\abs{\card{X(k)} - \, \pi_{n}} \leq
b'_{2}(N - n - 2,\mathbf{d}) \, q^{n - 1} +
C_{n - 3}(X) q^{n - (3/2)}.
$$
\end{remark}

\section{Complete Intersections with isolated singularities}
\label{CentralBettiSingular}

We now prove an inequality for the central Betti number of complete
intersections with only isolated singularities. Unfortunately, the proof of
this result depends on the following condition:

$\resol$
\emph{The resolution of singularities holds in characteristic $p$ for
excellent local rings of dimension at most $n$}.

This condition means the following. Let $A$ be an excellent local ring of
equal characteristic $p$ of dimension $\leq n$ and $X = \spec A$. Let $U$ be
a regular open subscheme of $X$, and $S = X \setminus U$. Then, there is a
commutative diagram
$$
\begin{CD}
\widetilde{U} & @>>> & \widetilde{X} & @<<< & \widetilde{S} \\
@VVV          &      & @VV{\pi}V     &      &  @VVV         \\
U             & @>>> & X             & @<<< & S
\end{CD}
$$
where $\widetilde{X}$ is a regular scheme and where $\pi$ is a proper
morphism which is a birational isomorphism, and an isomorphism when
restricted to $\widetilde{U}$. Moreover $\widetilde{S} = \widetilde{X} \setminus
\widetilde{U}$ is a \emph{divisor with normal crossings}, that is, a family
of regular schemes of pure codimension $1$ such that any subfamily
intersects properly.

Recall that the resolution of singularities holds in characteristic $0$, and
that $\MathRoman{(\mathbf{R}_{2,p})}$ holds for any $p$. For details, see
the book \cite{Abh} by Abhyankar and the survey \cite{Lipman} by Lipman.

\begin{proposition}
\label{BsciRnp}
Assume that $k = \bar{k}$ and that $\MathRoman{(\mathbf{R}_{n,p})}$ holds. Let
$X$ be a complete intersection in $\ProjCan$ of dimension $n \geq 1$ with $\dim
\sing X = 0$. Then
$$b_{n,\ell}(X) \leq b_{n}(N, \mathbf{d}).$$
\end{proposition}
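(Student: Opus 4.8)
The plan is to reduce the singular case to the nonsingular case treated in Proposition \ref{BettiIC} by resolving the (finitely many) singular points and tracking the cohomology through the blow-up. Since $\dim \sing X = 0$, the singular locus $S$ of $X$ consists of finitely many closed points, and by hypothesis $\MathRoman{(\mathbf{R}_{n,p})}$ we may choose a resolution $\pi : \widetilde{X} \longrightarrow X$ which is a proper birational morphism, an isomorphism over $U = X \setminus S = \reg X$, and such that the exceptional divisor $\widetilde{S} = \pi^{-1}(S)$ is a divisor with normal crossings in the regular scheme $\widetilde{X}$. Here $\widetilde{X}$ is a nonsingular projective variety of dimension $n$ (projective since $X$ is and $\pi$ is projective), so $b_{n,\ell}(\widetilde{X})$ is well-defined; but $\widetilde{X}$ need not be a complete intersection, so I cannot directly invoke Proposition \ref{BettiIC} for it. The key point I would first establish is that $b_{n,\ell}(X) \le b_{n,\ell}(\widetilde{X})$ does \emph{not} obviously hold (blowing up can raise middle Betti numbers), so instead the comparison should go the other way: I want to bound $b_{n,\ell}(X)$ by the middle Betti number of a good \emph{hyperplane section}, exactly as in Proposition \ref{Bsci}, and only use the resolution to control the error coming from the singular point.

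Concretely, here is the sequence of steps. First, choose a regular flag for $X$ by Proposition \ref{BertiniFlag}; since $\dim\sing X = 0$, taking $s = 0$, a single generic hyperplane $H$ gives a proper hyperplane section $Y = X \cap H$ which is a nonsingular complete intersection of dimension $n-1$ in $\PP^{N-1}$, with $\dim \sing Y = -1$ by property \eqref{Rf5}. By the General Weak Lefschetz Theorem \ref{LefschetzHigh} applied to the semi-regular pair $(X,Y)$ (with $r=1$, $\sigma = -1$), the Gysin map
\begin{equation*}
\iota_{*} : H^{n-2}(\bar{Y},\QQ_{\ell}(-1)) \longrightarrow H^{n}(\bar{X},\QQ_{\ell})
\end{equation*}
is already an isomorphism for $i \ge n + 1$ and only a \emph{surjection} for $i = n$; this alone gives $b_{n,\ell}(X) \le b_{n-1}(\bar Y) = b_{n-1}(N-1,\mathbf{d})$, which is the case $s=0$ of Proposition \ref{Bsci}\eqref{Bsci2} but with $n-1$ in place of $n$ — not what is wanted. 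So the resolution must be used to replace the surjection by something sharper: I would compare $H^{n}(\bar{X})$ with $H^{n}(\widetilde{\bar X})$ via the Leray spectral sequence (or the proper-descent / blow-up exact sequence) for $\pi$. Because $\pi$ is an isomorphism over $U$ and $S$ is a finite set of points, $R^{q}\pi_{*}\QQ_{\ell}$ is supported on $S$ for $q \ge 1$, and one gets an exact sequence relating $H^{n}(\bar X,\QQ_\ell)$, $H^{n}(\widetilde{\bar X},\QQ_\ell)$ and the cohomology of the exceptional fibres $H^{*}(\widetilde{\bar S},\QQ_\ell)$; since $\widetilde S$ has dimension $\le n-1$ and is a normal crossings divisor, its cohomology vanishes in degree $n$ and its degree-$(n-1)$ part is controlled. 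The crucial input is Deligne's weight theory: $H^{n}(\bar X)$ is \emph{pure} of weight $n$ in the relevant range? — no, it need not be, and that is precisely the subtlety. What \emph{is} true is that the image of $H^{n}(\bar X) \to H^{n}(\widetilde{\bar X})$ is the weight-$n$ (pure) part, and the kernel is governed by $H^{n-1}(\widetilde{\bar S})$ modulo $H^{n-1}(\widetilde{\bar X})$.

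Putting this together: $b_{n,\ell}(X) \le \dim \bigl(\text{pure weight-}n\text{ part of }H^n(\bar X)\bigr) + (\text{contribution of }\widetilde S)$, and I would argue that the pure part injects into $H^{n}(\widetilde{\bar X})$ while the impure part is killed in the comparison with the nonsingular hyperplane section $Y$ — in effect, $H^n(\bar X)$ sits inside $H^n(\widetilde{\bar X})$ after one discards the part that dies in $\widetilde S$, and $\widetilde X$ itself, being birational to the complete intersection $X$ and dominating it, satisfies a Lefschetz comparison with $Y$ forcing $b_n(\widetilde{\bar X})$-type bounds. Honestly, the cleanest route is probably the one N.~Katz uses (cited in Remark \ref{ThreeRemarks}(ii)): use the resolution to show that the \emph{local} contribution of each isolated singular point to $H^{n}(\bar X)$ compared with the nonsingular model is at most the drop $b_n(N,\mathbf{d}) - b_{n-1}(N-1,\mathbf{d})$ in primitive Betti numbers between dimension $n$ and $n-1$, via the exact sequence of the pair $(\widetilde X, \widetilde S)$ and the known Betti numbers of nonsingular complete intersections. \textbf{The main obstacle} is exactly controlling this local contribution at the singular points — i.e., bounding the rank of $H^{n-1}$ of the exceptional normal-crossings divisor fibre, or equivalently the failure of the Gysin surjection to be an isomorphism in degree $n$ — and it is here that $\MathRoman{(\mathbf{R}_{n,p})}$ is genuinely needed, to have a nice $\widetilde X$ and normal crossings $\widetilde S$ at all, so that weight arguments apply. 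Once that local bound is in hand, combining it with $b_{n-1}(N-1,\mathbf{d}) + (\text{drop}) = b_n(N,\mathbf{d})$ yields the claimed inequality $b_{n,\ell}(X) \le b_n(N,\mathbf{d})$.
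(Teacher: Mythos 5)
Your proposal does not succeed: you yourself flag that ``the main obstacle is exactly controlling this local contribution at the singular points,'' and that is precisely where the argument is missing. The resolution $\pi:\widetilde{X}\to X$ produces a nonsingular projective variety $\widetilde{X}$, but as you observe it is no longer a complete intersection, so nothing pins its middle Betti number to $b_n(N,\mathbf{d})$; and the passage through the Leray spectral sequence (or the blow-up exact sequence) leaves an error term involving $H^{n-1}$ of the exceptional normal-crossings fibres that you have not bounded. Nor does the comparison with a nonsingular hyperplane section $Y$ help, since it yields $b_{n-1}(N-1,\mathbf{d})$, which is the case $s=0$ of Proposition \ref{Bsci}\eqref{Bsci2} rather than the sharper bound claimed here. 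In short, the arrow of comparison in your sketch never connects $H^n(\bar{X})$ to the cohomology of a \emph{nonsingular complete intersection of the same multidegree $\mathbf{d}$ in the same $\ProjCan$}, which is what makes $b_n(N,\mathbf{d})$ the right target.

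The paper's proof takes a completely different route, and it is in fact the one due to Katz (Remark \ref{ThreeRemarks}(ii)): \emph{degeneration}, not resolution. Since $k=\bar{k}$, one chooses a nonsingular complete intersection $Y$ in $\ProjCan$ cut out by forms $(g_1,\dots,g_r)$ of the same multidegree, forms the pencil $T g_j + (1-T)f_j$, and base-changes the resulting flat proper family $\pi_0:Z\to\Aff^1$ to the strictly henselian trait $S=\spec k\{T\}$. The special fibre $Z_s\cong X$ and the geometric generic fibre $Z_{\bar\eta}$ is a nonsingular complete intersection of multidegree $\mathbf{d}$, hence has $n$-th Betti number exactly $b_n(N,\mathbf{d})$. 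The long exact sequence of vanishing cycles
$\cdots\to\phi^{i-1}_{gl}\to H^i(Z_s,\QQ_\ell)\xrightarrow{sp^i} H^i(Z_{\bar\eta},\QQ_\ell)\to\phi^i_{gl}\to\cdots$
together with the Theorem on Sheaves of Vanishing Cycles (which asserts $\phi^i_{gl}=0$ for $i\le n-1$, and whose proof is where $\resol$ is actually needed) makes $sp^n$ an \emph{injection}, giving $b_{n,\ell}(X)\le b_n(N,\mathbf{d})$ directly. This is the idea your proposal lacks: instead of trying to blow up $X$ (which moves you out of the class of complete intersections), you should deform $X$ inside that class and inject its middle cohomology into that of a smooth member of the family.
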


\begin{proof}
Assume that $X$ is defined by the vanishing of the system of forms
$(f_{1}, \dots, f_{r})$. Since $k$ is algebraically closed, there exists a
nonsingular complete intersection $Y$ of $\ProjCan$ of codimension $r$ of
multidegree $\mathbf{d}$. Let $(g_{1}, \dots, g_{r})$ be a system defining
$Y$. The one-parameter family of polynomials
$$
T g_{j}(X_{0}, \dots, X_{N}) +
(1 - T) f_{j}(X_{0}, \dots, X_{N}), \quad (1 \leq j \leq r)
$$
define a scheme $Z$ coming with a morphism
$$
\pi_{0} : Z \longrightarrow \Aff^{1}_{k}.
$$
The morphism $\pi_{0}$ is proper and flat. Let $S = \spec k\{T\}$ where
$k\{T\}$ is the strict henselization of $k[T]$ at the ideal generated by $T$
\cite[p. 38]{Milne}. Then $S$ is the spectrum of a discrete valuation ring
with separably closed residual field, and $S$ has two points: the closed one
$s$, with residual field $k$ and the generic one $\eta$ (such a scheme is
called a \emph{trait strictement local} in French). Let $\mathcal{Z} = Z
\times_{k} S$ the scheme obtained by base change :
$$
\begin{CD}
\mathcal{Z} & @>>> & Z             \\
@V{\pi}VV   &      & @VV{\pi_{0}}V \\
S           & @>>> & \Aff_{k}^{1}
\end{CD}
$$
Then $\pi$ is a proper and flat morphism, and the closed fiber $Z_{s} =
\pi^{-1}(s)$ is isomorphic to $X$. If $\bar{\eta}$ is a geometric point of
$S$
mapping to $\eta$, the geometric fiber $Z_{\bar{\eta}} = \mathcal{Z} \times
\kappa(\bar{\eta})$ is a nonsingular complete intersection of codimension
$r$. We thus get a diagram
$$
\begin{CD}
Z_{\bar{\eta}} & @>>> & \mathcal{Z}  & @<<< & X    \\
@VVV           &      & @V{\pi}VV    &      & @VVV \\
\bar{\eta}     & @>>> & S            & @<<< & s
\end{CD}
$$
By \cite[Eq. 2.6.2, p. 9]{DeligneSGA71}, there is a long exact sequence of
cohomology
$$
\dots
\longrightarrow \phi^{i - 1}_{gl}
\longrightarrow H^{i}(Z_{s},\QQ_{\ell})
\stackrel{sp^{i}}{\longrightarrow} H^{i}(Z_{\bar{\eta}},\QQ_{\ell})
\longrightarrow \phi^{i}_{gl}
\longrightarrow \dots
$$
where $\phi^{i}_{gl}$ is the space of \emph{global vanishing cycles}, and
$sp^{i}$ is the \emph{specialization morphism}. The \emph{Theorem on Sheaves
of Vanishing Cycles} for cohomology of low degree \cite[Thm. 4.5 and Var.
4.8]{DeligneSGA71} states that $\phi^{i}_{gl} = 0$ for $i \leq n - 1$.
Hence $sp^{i}$ is an isomorphism for $i \leq n - 1$ and an injection for
$i = n$ which proves the required result, with the following warning:
according to \cite[4.4, p. 14]{DeligneSGA71}, the proof of the Theorem on
Sheaves of Vanishing Cycles relies on $\resol$.
\end{proof}

From now on let $k = \Fq$. The following corollary is essentially a result
of I.E. Shparlinski\u{\i} and A.N. Skorobogatov \cite{Sh-S}. However, as
noted in Remark \ref{ThreeRemarks}, one needs to assume $\resol$ for proving
such a result.

\begin{corollary}
\label{ShparSkoro}
Assume that $\resol$ holds. If $X$ is a complete intersection of dimension
$n$ in $\ProjCan_{k}$ with multidegree $\mathbf{d}$ with only isolated
singularities, then
$$\abs{\card{X(k)} - \, \pi_{n}} \leq
b'_{n - 1}(N - 1, \mathbf{d}) \, q^{(n + 1)/2} +
\left( b_{n}(N, \mathbf{d}) + \varepsilon_n \right) \, q^{n/2}.
$$
\end{corollary}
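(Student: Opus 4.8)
The plan is to specialize Theorem~\ref{MainThm} to $s=0$ and then insert the bound on the central Betti number provided by Proposition~\ref{BsciRnp}. Since $X$ has only isolated singularities, $\dim\sing X\le 0$, so $s=0$ satisfies $\dim\sing X\le s\le n-1$ and is an admissible choice in Theorem~\ref{MainThm}, provided $X$ is irreducible. For $n\ge 2$ this irreducibility is automatic: $X$ is then regular in codimension one, hence normal by Serre's criterion (as noted before Corollary~\ref{NormalLW}), and being a complete intersection of positive dimension it is connected, hence integral; for $n=1$ we read irreducibility into the hypothesis, as in the original statement of Shparlinski\u{\i} and Skorobogatov. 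With $s=0$, Theorem~\ref{MainThm} gives
$$
\abs{\card{X(k)} - \pi_{n}}
\le b'_{n-1}(N-1,\mathbf{d})\, q^{(n+1)/2} + C_{0}(X)\, q^{n/2},
\qquad C_{0}(X) = b_{n,\ell}(\bar{X}) + \varepsilon_{n}.
$$

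It remains to check that $b_{n,\ell}(\bar{X}) \le b_{n}(N,\mathbf{d})$, in the notation of Proposition~\ref{BsciRnp}. Here I would observe first that $b_{n,\ell}(\bar{X})$ depends only on the $\overline{k}$-scheme $\bar{X}$, which is a complete intersection in $\ProjCan_{\overline{k}}$ of codimension $r$ and multidegree $\mathbf{d}$ with $\dim\sing\bar{X} = \dim\sing X \le 0$, since $k=\Fq$ is perfect. Under the hypothesis $\resol$, Proposition~\ref{BsciRnp} applies to $\bar{X}$ over the algebraically closed field $\overline{k}$ and yields $b_{n,\ell}(\bar{X}) \le b_{n}(N,\mathbf{d})$ when $\dim\sing\bar{X}=0$; if $X$ is nonsingular, the same inequality (with equality, in fact) follows instead from Proposition~\ref{BettiIC}. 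Either way $C_{0}(X) \le b_{n}(N,\mathbf{d}) + \varepsilon_{n}$, and substituting into the displayed inequality yields the claim.

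In effect the corollary is a bookkeeping consequence of Theorem~\ref{MainThm} and Proposition~\ref{BsciRnp}, so I do not expect any genuine obstacle. The only points needing attention are the matching of hypotheses: securing the irreducibility of $X$ required by Theorem~\ref{MainThm} (handled by the normality argument for $n\ge 2$), and recognizing that Proposition~\ref{BsciRnp}, though stated over an algebraically closed base, is precisely what controls $C_{0}(X)$ once one passes from $X$ to $\bar{X}$. No new geometric input is needed, and in particular the reliance on resolution of singularities is entirely inherited from Proposition~\ref{BsciRnp} (whose proof rests on Deligne's theorem on sheaves of vanishing cycles).
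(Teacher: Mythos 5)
Your proposal is correct and is exactly the paper's argument: specialize Theorem~\ref{MainThm} to $s=0$ (so that $C_{0}(X)=b_{n,\ell}(\bar{X})+\varepsilon_{n}$) and then bound $b_{n,\ell}(\bar{X})$ by $b_{n}(N,\mathbf{d})$ via Proposition~\ref{BsciRnp} applied to $\bar{X}$. The extra remarks on irreducibility and on passing to $\bar{X}$ are reasonable bookkeeping that the paper leaves implicit.
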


\begin{proof}
Use Theorem \ref{MainThm} with $s = 0$ and apply Proposition \ref{BsciRnp} to
$\bar{X}$.
\end{proof}

For hypersurfaces, this implies a worse but a particularly simple
inequality.

\begin{corollary}
Assume that $\resol$ holds. If $X$ is a hypersurface in $\PP^{n + 1}_{k}$ of
degree $d$ with only isolated singularities, then
$$
\abs{\card{X(k)} - \, \pi_{n}} \leq (d - 1)^{n + 1} \, q^{(n + 1)/2}.
$$
\end{corollary}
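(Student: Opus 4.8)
The plan is to specialize the argument proving Theorem~\ref{MainThm} (equivalently, Corollary~\ref{ShparSkoro}) to the hypersurface data $N=n+1$, $r=1$, $\mathbf{d}=(d)$, and then to collapse the resulting two-term estimate into the single term $(d-1)^{n+1}q^{(n+1)/2}$ by a Betti-number identity. First I would dispose of the degenerate cases: if $X$ is nonsingular (in particular if $d=1$), Theorem~\ref{MainThm} with $s=-1$ gives $\abs{\card{X(k)}-\pi_n}\le b'_n(n+1,d)\,q^{n/2}$, and $b'_n(n+1,d)\le(d-1)^{n+1}$ by Examples~\ref{BettiEx}(i), so the claim follows since $q^{n/2}\le q^{(n+1)/2}$. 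Hence I assume $\dim\sing X=0$ and rerun the proof of Theorem~\ref{MainThm} with $s=0$: since $P_{i,\ell}(X,T)=1-\varepsilon_i q^{i/2}T$ for $i\notin[n,n+1]$ by Proposition~\ref{Bsci}\eqref{Bsci1} and \eqref{Bsci3}, the expression for $\card{X(k)}-\pi_n$ reduces to the two degrees $i=n$ and $i=n+1$, giving $\abs{\card{X(k)}-\pi_n}\le A+B$, where $A$ collects the $H^{n+1}$-contribution and $B$ the $H^n$-contribution. Exactly as in the proof of Theorem~\ref{MainThm} (using Proposition~\ref{Bsci}\eqref{Bsci2}, and Remark~\ref{Even} when $n+1$ is even) one gets $A\le b'_{n-1}(n,d)\,q^{(n+1)/2}$; and, since $\resol$ holds, Proposition~\ref{BsciRnp} applied to $\bar{X}$ gives $b_{n,\ell}(\bar{X})\le b_n(n+1,d)$.

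The crux is to improve the bound on $B$ from the quantity $b_n(n+1,d)+\varepsilon_n$ that a verbatim application of Theorem~\ref{MainThm} would yield, down to $b'_n(n+1,d)$; this matters precisely when $n$ is even, where $b_n(n+1,d)+\varepsilon_n=b'_n(n+1,d)+2$. Here I would use that the hyperplane class persists in $H^n(\bar{X},\QQ_{\ell})$: by Proposition~\ref{LefschtezLow} with $i=n=N-r$ the restriction $H^n(\PP^{n+1}_{\bar{k}},\QQ_{\ell})\to H^n(\bar{X},\QQ_{\ell})$ is injective, and its source equals $\QQ_{\ell}(-n/2)$, on which the geometric Frobenius $F$ acts by $q^{n/2}$. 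Thus $q^{n/2}$ occurs among the reciprocal roots $\omega_{nj,\ell}$ of $P_{n,\ell}(X,T)$, so in the $i=n$ term $-\varepsilon_n q^{n/2}+(-1)^n\sum_j\omega_{nj,\ell}$ this root cancels the summand $-\varepsilon_n q^{n/2}$ (nonzero only for $n$ even), leaving at most $b_{n,\ell}(\bar{X})-1$ terms of absolute value $\le q^{n/2}$ by Deligne's Main Theorem. Hence $B\le(b_{n,\ell}(\bar{X})-\varepsilon_n)\,q^{n/2}\le(b_n(n+1,d)-\varepsilon_n)\,q^{n/2}=b'_n(n+1,d)\,q^{n/2}$, uniformly in the parity of $n$ (for $n$ odd this is the trivial bound, as $\varepsilon_n=0$).

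Finally I would assemble the estimate $\abs{\card{X(k)}-\pi_n}\le A+B\le\bigl(b'_{n-1}(n,d)+b'_n(n+1,d)\bigr)q^{(n+1)/2}$, using $q^{n/2}\le q^{(n+1)/2}$. By the explicit formula of Examples~\ref{BettiEx}(i), $b'_{n-1}(n,d)=\frac{d-1}{d}\bigl((d-1)^n-(-1)^n\bigr)$ and $b'_n(n+1,d)=\frac{d-1}{d}\bigl((d-1)^{n+1}-(-1)^{n+1}\bigr)$; since $(-1)^{n+1}=-(-1)^n$ the alternating parts cancel and the sum collapses to $\frac{d-1}{d}\,(d-1)^n\,d=(d-1)^{n+1}$, which is the asserted inequality. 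I expect the only real obstacle to be the parity bookkeeping of the middle paragraph, namely checking that the hyperplane-class eigenvalue $q^{n/2}$ is genuinely present in $H^n(\bar{X})$ and genuinely cancels the $-\varepsilon_n q^{n/2}$ contributed by $Z(\PP^n,T)$; every remaining ingredient is a direct specialization of results already established above.
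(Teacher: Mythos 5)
Your proof is correct, and it follows the same broad strategy as the paper -- specialize Theorem~\ref{MainThm} with $s=0$ to the hypersurface case and invoke Proposition~\ref{BsciRnp} and Examples~\ref{BettiEx}(i) -- but you have supplied a refinement that the paper's one-line citation of Corollary~\ref{ShparSkoro} actually requires. Indeed, a direct application of Corollary~\ref{ShparSkoro} with $N=n+1$, $\mathbf{d}=(d)$, followed by $q^{n/2}\le q^{(n+1)/2}$ and the exact formula of Examples~\ref{BettiEx}(i), yields the coefficient
$$b'_{n-1}(n,d)+\bigl(b_{n}(n+1,d)+\varepsilon_{n}\bigr)
= b'_{n-1}(n,d)+b'_{n}(n+1,d)+2\varepsilon_{n}
= (d-1)^{n+1}+2\varepsilon_{n},$$
which overshoots the stated constant by $2$ whenever $n$ is even. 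Your middle paragraph -- observing via Proposition~\ref{LefschtezLow} (with $i=n=N-r$) that the class of $\QQ_{\ell}(-n/2)$ survives in $H^{n}(\bar{X},\QQ_{\ell})$, so that $q^{n/2}$ is a genuine reciprocal root of $P_{n,\ell}(X,T)$ which cancels the $-\varepsilon_{n}q^{n/2}$ coming from $Z(\PP^{n},T)$ -- is precisely the extra step that lowers the $i=n$ contribution from $(b_{n,\ell}(\bar{X})+\varepsilon_{n})q^{n/2}$ to $(b_{n,\ell}(\bar{X})-\varepsilon_{n})q^{n/2}$ and makes the telescoping identity $b'_{n-1}(n,d)+b'_{n}(n+1,d)=(d-1)^{n+1}$ go through cleanly. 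In other words, the same cancellation trick that the paper uses in the proof of Theorem~\ref{MainThm} to sharpen the $A$-term (Remark~\ref{Even}) must also be applied to the $B$-term here, and your write-up does exactly that; the published proof, as stated, only delivers $(d-1)^{n+1}+2\varepsilon_{n}$.
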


\begin{proof}
Follows from Corollary \ref{ShparSkoro} and the inequality in Example
\ref{BettiEx} (i).
\end{proof}

The following result may be thought of as an analogue of the Weil inequality
for certain singular curves and it is a more precise version of a result by
Aubry and Perret \cite[Cor. 2.5]{AubryPerret}.

\begin{corollary}
\label{AubryPerretIneq}
If $X$ is an irreducible curve in $\ProjCan_{k}$, then
$$
\abs{\card{X(k)} - \, (q+1) } \leq b_{1}( \bar{X}) \sqrt{q}.
$$
Moreover, if the curve $X$ is a complete intersection with multidegree
$\mathbf{d}$, and if $d$ denotes the degree of $X$, then
$$b_{1}( \bar{X}) \leq b_{1}(N , \mathbf{d}) \le (d-1)(d-2),$$
and the last inequality is an equality if and only if $X$ is isomorphic
to a nonsingular plane curve.
\end{corollary}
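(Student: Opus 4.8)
The plan is to apply the Grothendieck--Lefschetz Trace Formula \eqref{GrTF} together with Deligne's Main Theorem \eqref{DeMT}, much as in the proof of Theorem \ref{MainThm}, the one new ingredient being a computation of $H^{2}(\bar X,\QQ_{\ell})$ for a possibly singular curve. Since $X$ is a curve (hence geometrically integral) and is projective, $H^{i}_{c}(\bar X)=H^{i}(\bar X)=0$ outside $0\le i\le 2$, and connectedness of $\bar X$ gives $H^{0}(\bar X,\QQ_{\ell})=\QQ_{\ell}$ with $F$ acting trivially. To control $H^{2}$ I would bring in the normalization $\nu\colon\widetilde X\to X$: as $k=\Fq$ is perfect, $\widetilde X$ is a smooth geometrically integral projective curve over $k$ with $\bar{\widetilde X}=\widetilde{\bar X}$, and $\nu$ is finite and birational. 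The exact sequence of \'etale $\QQ_{\ell}$-sheaves on $\bar X$
\[
0\longrightarrow\QQ_{\ell}\longrightarrow\nu_{*}\QQ_{\ell}\longrightarrow\mathcal{F}\longrightarrow 0,
\]
with $\mathcal{F}$ a skyscraper supported on $\sing\bar X$ having stalk $\QQ_{\ell}^{\,r_{P}-1}$ at $P$ (where $r_{P}$ is the number of points of $\bar{\widetilde X}$ above $P$), together with $H^{i}(\bar X,\nu_{*}\QQ_{\ell})=H^{i}(\bar{\widetilde X},\QQ_{\ell})$ ($\nu$ being finite) and $H^{i}(\bar X,\mathcal{F})=0$ for $i>0$, yields a $\mathbf{g}$-equivariant isomorphism $H^{2}(\bar X,\QQ_{\ell})\isom H^{2}(\bar{\widetilde X},\QQ_{\ell})=\QQ_{\ell}(-1)$ (the last equality by Poincar\'e duality on the smooth curve $\widetilde X$) and a short exact sequence
\[
0\longrightarrow H^{0}(\bar X,\mathcal{F})\longrightarrow H^{1}(\bar X,\QQ_{\ell})\longrightarrow H^{1}(\bar{\widetilde X},\QQ_{\ell})\longrightarrow 0 .
\]
Hence $F$ acts on $H^{2}(\bar X,\QQ_{\ell})$ by $q$, so \eqref{GrTF} with $r=1$ gives $\card{X(k)}-(q+1)=-\Tr(F\mid H^{1}(\bar X))$; since $\dim H^{1}(\bar X)=b_{1,\ell}(\bar X)$ and, by \eqref{DeMT}, each eigenvalue of $F$ on $H^{1}(\bar X)$ has absolute value $\le\sqrt q$, the first inequality follows. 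The displayed short exact sequence also shows $b_{1,\ell}(\bar X)=2g(\widetilde X)+\sum_{P}(r_{P}-1)$, visibly independent of $\ell$, so I write $b_{1}(\bar X)$.

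For the refinement when $X$ is a complete intersection, one has $r=N-1$ and Example \ref{BettiEx}(ii) evaluates $b_{1}(N,\mathbf{d})=d(d_{1}+\dots+d_{r}-N-1)+2$, which is exactly $2\,p_{a}(X)$, where $p_{a}(X)$ is the arithmetic genus of $X$ (depending only on $N$ and $\mathbf{d}$, and equal to $g(\widetilde X)$ when $X$ is nonsingular). Combining $b_{1}(\bar X)=2g(\widetilde X)+\sum_{P}(r_{P}-1)$ with the classical relation $p_{a}(X)=g(\widetilde X)+\sum_{P}\delta_{P}$ for the reduced curve $X$ (with $\delta_{P}=\dim_{\bar k}\widetilde{\mathcal{O}}_{P}/\mathcal{O}_{P}$) and the standard bound $\delta_{P}\ge r_{P}-1$, I would obtain
\[
b_{1}(\bar X)=2g(\widetilde X)+\sum_{P}(r_{P}-1)\le 2g(\widetilde X)+\sum_{P}\delta_{P}=g(\widetilde X)+p_{a}(X)\le 2\,p_{a}(X)=b_{1}(N,\mathbf{d}),
\]
while $b_{1}(N,\mathbf{d})\le(d-1)(d-2)$ is again Example \ref{BettiEx}(ii). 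For the equality assertion, if $b_{1}(\bar X)=(d-1)(d-2)$ then every inequality above is an equality: $g(\widetilde X)=p_{a}(X)$ forces $\sum_{P}\delta_{P}=0$, that is, $X$ is nonsingular, and $b_{1}(N,\mathbf{d})=(d-1)(d-2)$ forces, by the analysis in Example \ref{BettiEx}(ii), that $X$ be a plane curve of degree $d$; conversely a nonsingular plane curve of degree $d$ has $b_{1}=(d-1)(d-2)$ by Example \ref{BettiEx}(i).

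I expect the only genuinely delicate point to be the computation of $H^{2}$ of a singular curve, handled above through the normalization; the virtue of this is that normalization of a curve is always available, so --- unlike several of the earlier results in the paper --- this corollary needs no resolution-of-singularities hypothesis. The remaining work is the elementary but slightly fussy bookkeeping relating $b_{1}(\bar X)$, $g(\widetilde X)$, $p_{a}(X)$ and the local invariants $r_{P},\delta_{P}$; both $\delta_{P}\ge r_{P}-1$ and $p_{a}-p_{g}=\sum_{P}\delta_{P}$ hold in arbitrary characteristic, and everything else reduces to a direct application of the Trace Formula and Deligne's theorem.
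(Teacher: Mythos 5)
Your proof is correct; for the first inequality it coincides with the paper's application of the Trace Formula and Deligne's bound, while for the bound $b_{1}(\bar X)\le b_{1}(N,\mathbf{d})$ it takes a genuinely different route. The paper's one-line justification is Proposition \ref{BsciRnp}: degenerate a nonsingular complete intersection of the same multidegree to $X$ and use the specialization map from the theory of vanishing cycles, which is legitimate here because resolution of singularities holds in dimension one. You instead pass to the normalization $\nu\colon\widetilde X\to X$ and exploit the exact sheaf sequence $0\to\QQ_\ell\to\nu_*\QQ_\ell\to\mathcal F\to 0$, with $\mathcal F$ a skyscraper on $\sing\bar X$, to obtain the precise formula $b_{1}(\bar X)=2g(\widetilde X)+\sum_P(r_P-1)$ (and, as a bonus, the computation $H^2(\bar X)\cong\QQ_\ell(-1)$ that makes the Trace Formula step explicit); the local bound $\delta_P\ge r_P-1$, the genus formula $p_a(X)=g(\widetilde X)+\sum_P\delta_P$, and the identity $b_{1}(N,\mathbf{d})=2\,p_a(X)$ then close the argument. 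This is exactly the content and proof of Lemma \ref{ArithGen}\eqref{ArithGen1} supplemented by the observation that for a complete intersection the arithmetic genus is $\tfrac12 b_1(N,\mathbf{d})$; the paper could equally well have cited that lemma in place of Proposition \ref{BsciRnp}. Your route avoids the vanishing-cycle machinery entirely and yields an exact value of $b_1(\bar X)$ rather than only a bound (confirming $\ell$-independence directly), while the deformation argument applies uniformly to higher-dimensional complete intersections with isolated singularities, where no such normalization formula is available. Finally, your reading of the closing ``if and only if'' as characterizing when $b_1(\bar X)=(d-1)(d-2)$, i.e.\ when all three inequalities are simultaneously equalities, is the sensible one, since the equality $b_1(N,\mathbf{d})=(d-1)(d-2)$ constrains only the multidegree and cannot by itself force $X$ to be nonsingular.
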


\begin{proof}
The first statement follows from the Trace Formula \eqref{GrTF} and
Deligne's Main Theorem \eqref{DeMT}. In the second statement, the first
inequality follows from Proposition \ref{BsciRnp} since $\resol$ is true
for $n = 1$. The last assertion is a consequence of the observations in
Example \ref{BettiEx} (ii).
\end{proof}

\begin{example}
\label{ExCone1}
We work out here a simple case which will be used later on in Example
\ref{ExCone2}. Let $C$ be a nonsingular plane curve of genus $g$ and degree
$d$, defined over $k$, and let $X$ be the projective cone in $\PP^{3}_{k}$
over $C$ \cite[Ex. 2.10, p. 13]{Ha}. Thus $X$ is a surface which is a
complete intersection, and with exactly one singular point, namely, the
vertex of the cone. Then
$$\card{X(k_{m})} = q^{m} \card{C(k_{m})} + 1.$$
If $\alpha_{1}, \dots, \alpha_{2g}$ are the roots of $F$ in $H^{1}(C)$, then
\begin{equation}
\label{NbPtsCone}
\card{X(k_{m})} =
q^{m} (q^{m} + 1 - \sum_{j = 1}^{2g} \alpha_{j}^{m}) + 1 =
q^{2m} - \sum_{j = 1}^{2g} (q \alpha_{j})^{m} + q^{m} + 1,
\end{equation}
and therefore
$$\abs{\card{X(k)} - \pi_{2}} \leq (d - 1)(d - 2) \, q^{3/2}.$$
Observe that the equality can occur if $C$ has the maximum number of
points allowed by Weil's inequality. From \eqref{NbPtsCone} and the
definition \eqref{DefZeta} of the zeta function, we get
$$
Z(X, T) = \dfrac{P_{1}(C, qT)}
{(1 - q^{2}T)(1 - qT)(1 - T)}.
$$
We compare this with the expression \eqref{FracZeta} of the zeta function.
We note that $X$ is irreducible, that the eigenvalues of the Frobenius in
$H^{3}(X)$ are pure of weight $3$, and that those of
$H^{1}(X)$ are pure of weight $\leq 1$. Hence
$$
P_{1}(X, T) = 1, \quad P_{2}(X, T) = (1 - qT), \quad
P_{3}(X, T) = P_{1}(C, qT).
$$
By looking at the degrees of these polynomials, we find
$$b_{1}(X) = 0, \quad b_{2}(X) = 1, \quad b_{3}(X) = 2 g(C).$$
\end{example}

\section{The Penultimate Betti Number}
\label{Penultimate}

Let $X$ be a separated scheme of finite type over $k = \Fq$ of dimension
$n$. Denote by $H^{2n - 1}_{+}(\bar{X})$ the subspace of $H^{2n -
1}_{c}(\bar{X})$ generated by the (generalized) eigenvectors of the
Frobenius endomorphism whose eigenvalues are pure of weight exactly equal to
$2n - 1$. This subspace is the component of maximal weight $2n - 1$ in the increasing
filtration of $H^{2n - 1}_{c}(\bar{X})$ induced by the weight. Define
$$
P_{2n - 1}^{+}(X, T) = \det(1 - T \, F \mid H^{2n - 1}_{+}(\bar{X}))
\in \ZZ_{\ell}[T].
$$
The \emph{$2n - 1$-th virtual Betti number} of Serre \cite[p. 28]{Katz2} is
$$
b^{+}_{2n - 1}(X) = \deg P_{2n - 1}^{+}(X, T) =
\dim H^{2n - 1}_{+}(\bar{X}).
$$
Recall that two schemes are \emph{birationally equivalent} if they have
isomorphic dense open sets.

\begin{proposition}
\label{BirInv}
Let $X$ be a separated scheme of dimension $n$ defined over $k$.
\begin{enumerate}
\item
\label{BirInv1}
The polynomial $P_{2n - 1}^{+}(X, t)$ has coefficients independent of
$\ell$.
\item
\label{BirInv2}
The space $H^{2n - 1}_{+}(\bar{X})$ is a birational invariant. More
precisely, if $U$ is a dense open set in $X$, then the open immersion $j : U
\longrightarrow X$ induces an isomorphism
$$
j^{*} : H^{2n - 1}_{+}(\bar{U}) \longrightarrow H^{2n - 1}_{+}(\bar{X}).
$$
\item
\label{BirInv3}
The polynomial $P_{2n - 1}^{+}(X, T)$ and the virtual Betti number
$b^{+}_{2n - 1}(X)$ are birational invariants.
\end{enumerate}
\end{proposition}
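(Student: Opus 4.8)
The plan is to treat (ii) as the technical heart, to deduce (iii) from it at once, and to prove (i) separately by isolating $P^{+}_{2n-1}$ inside the zeta function via a weight estimate. For (i), I would start from the factorization \eqref{FracZeta}, $Z(X,T) = \prod_{i \text{ odd}} P_{i,\ell}(X,T) \big/ \prod_{i \text{ even}} P_{i,\ell}(X,T)$, and recall that, by definition, $P^{+}_{2n-1}(X,T)$ is the product of those factors $(1 - \omega_{(2n-1)j,\ell}T)$ of $P_{2n-1,\ell}(X,T)$ whose reciprocal root is pure of weight exactly $2n-1$. By Deligne's Main Theorem \eqref{DeMT} every reciprocal root of $P_{i,\ell}(X,T)$ is pure of weight $\leq i$; moreover $H^{2n}_c(\bar X)$ is spanned by the fundamental classes of the $n$-dimensional geometric components of $\bar X$, on which $F$ acts through a permutation composed with multiplication by $q^{n}$, so $P_{2n,\ell}(X,T)$ is pure of weight exactly $2n$. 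Hence no $P_{i,\ell}(X,T)$ with $i \neq 2n-1$ has a reciprocal root pure of weight $2n-1$, so in the reduced form \eqref{DworkFracZeta} of $Z(X,T)$ no cancellation affects the weight-$(2n-1)$ reciprocal roots, and the zeros of $Z(X,T)$ that are pure of weight $2n-1$ are exactly the reciprocal roots of $P^{+}_{2n-1}(X,T)$. Since $Z(X,T) \in \QQ(T)$ is independent of $\ell$ by Dwork--Grothendieck, its multiset of zeros is $\Gal(\overline{\QQ}/\QQ)$-stable and $\ell$-independent, hence so is the sub-multiset of those pure of weight $2n-1$; as these zeros are algebraic integers, this shows $P^{+}_{2n-1}(X,T) \in \ZZ[T]$ and is independent of $\ell$.

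For (ii), I would set $Z = X \setminus U$; since $U$ is dense, $Z$ contains no irreducible component of $X$, so $\dim Z \leq n-1$ and $H^{i}_c(\bar Z) = 0$ for $i \geq 2n-1$. The excision long exact sequence in compact cohomology \cite[Rem. 1.30, p. 94]{Milne} then yields a $\mathbf{g}$-equivariant exact sequence
$$
H^{2n-2}_c(\bar Z) \longrightarrow H^{2n-1}_c(\bar U) \stackrel{j^{*}}{\longrightarrow} H^{2n-1}_c(\bar X) \longrightarrow H^{2n-1}_c(\bar Z) = 0,
$$
so $j^{*}$ is onto and $\ker j^{*}$ is a quotient of $H^{2n-2}_c(\bar Z)$, whose Frobenius eigenvalues are pure of weight $\leq 2n-2$ by \eqref{DeMT}. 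Decomposing $H^{2n-1}_c(\bar U)$ and $H^{2n-1}_c(\bar X)$ as the direct sum of the weight-$(2n-1)$ part $H^{2n-1}_{+}$ and the sum of the generalized $F$-eigenspaces of weight $\leq 2n-2$, the map $j^{*}$ is diagonal for these splittings; since $\ker j^{*}$ lies in the second summand, $j^{*}$ restricts to an injection $H^{2n-1}_{+}(\bar U) \hookrightarrow H^{2n-1}_{+}(\bar X)$ which is onto because $j^{*}$ is, hence an isomorphism. A birational equivalence between $X$ and a scheme $X'$ is an isomorphism over $k$ between dense opens $U \subseteq X$ and $U' \subseteq X'$, so $\dim X = \dim X'$, and chaining this isomorphism for $X$, the Galois-equivariant isomorphism $H^{2n-1}_{+}(\bar U) \cong H^{2n-1}_{+}(\bar{U'})$, and the corresponding one for $X'$ gives a $\mathbf{g}$-equivariant isomorphism $H^{2n-1}_{+}(\bar X) \cong H^{2n-1}_{+}(\bar{U'})\cong H^{2n-1}_{+}(\bar{X'})$. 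Part (iii) is then immediate, since this last isomorphism commutes with $F$ and therefore identifies $P^{+}_{2n-1}(X,T)$ with $P^{+}_{2n-1}(X',T)$ and $b^{+}_{2n-1}(X)$ with $b^{+}_{2n-1}(X')$.

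The step I expect to need the most care is the weight bookkeeping in (ii): one has to verify that the excision map respects the splitting of $H^{2n-1}_c$ into its top-weight part and the complementary lower-weight eigenspaces, so that it descends to an isomorphism of the $H^{2n-1}_{+}$'s — though once \eqref{DeMT} is available this is just linear algebra with generalized eigenspaces. In (i) the only delicate point is that $P_{2n,\ell}(X,T)$ is pure of weight exactly $2n$, not merely of weight $\leq 2n$; this is what forbids a spurious cancellation at weight $2n-1$ and hence lets one read $P^{+}_{2n-1}(X,T)$ off the $\ell$-independent rational function $Z(X,T)$.
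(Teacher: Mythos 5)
Your proof is correct. Parts \eqref{BirInv2} and \eqref{BirInv3} follow essentially the same route as the paper: use the excision long exact sequence for compactly supported cohomology with $Z = X \setminus U$, observe that $H^{2n-1}_{c}(\bar{Z}) = 0$ since $\dim Z \leq n-1$, identify the kernel of the surjection $H^{2n-1}_{c}(\bar{U}) \to H^{2n-1}_{c}(\bar{X})$ with a subquotient of $H^{2n-2}_{c}(\bar{Z})$ of weight $\leq 2n-2$, and conclude by Frobenius-equivariance of the weight splitting. (The paper uses the slightly stronger fact that $H^{2n-2}_{c}(\bar{Z})$ is pure of weight exactly $2n-2$, being spanned by fundamental classes of $(n-1)$-dimensional components; your weaker bound $\leq 2n-2$ from Deligne's theorem is enough for the argument and is a small economy.)

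For part \eqref{BirInv1}, however, you take a genuinely different route. The paper extracts $P^{+}_{2n-1}(X,T)$ from the $\ell$-independent rational function $Z(X,T)$ by a Jensen-type contour integral over an annulus $q^{-n} < r' < |w| < r < q^{-n+1}$, which isolates the zeros and poles of $Z$ in the radius band corresponding to weight $2n-1$. You instead argue combinatorially on the multiset of reciprocal roots: Deligne's bound puts all roots of $P_{i,\ell}$ at weight $\leq i$, so only $P_{2n-1,\ell}$ can contribute weight $2n-1$, and no cancellation with $P_{2n,\ell}$ is possible because $H^{2n}_{c}(\bar{X})$ is pure of weight exactly $2n$ (being spanned by the classes of top-dimensional components). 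Both arguments pivot on exactly the same two facts -- Deligne's weight bound and the exact purity of $H^{2n}_{c}$ -- but the integral formula has the advantage of producing the polynomial directly from $Z(X,T)$ without any bookkeeping about cancellations, while your version is more elementary and makes the non-cancellation mechanism explicit. Your additional remark that the Galois-stability of the weight-$(2n-1)$ zeros forces $P^{+}_{2n-1}(X,T) \in \ZZ[T]$ is a correct bonus, though the paper only asserts $\ell$-independence here and records integrality later in Proposition \ref{CorrConj}\eqref{CorrConj3}.
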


Assertion \eqref{BirInv3} is reminiscent of \cite[Cor. 6]{LangWeil}.

\begin{proof}
Assertion \eqref{BirInv1} can be easily checked by the following formula, a
particular case of \emph{Jensen's formula}. In the complex plane, let
$\gamma$ be the oriented boundary of an annulus
$$
r' \leq \card{w} \leq r, \quad \text{with} \quad q^{- n} < r' < q^{- n +
(1/2)} < r < q^{- n + 1}.
$$
If $t$ is a complex number with $\card{t} > q^{- n + 1}$, then
$$
P_{2n - 1}^{+}(X, t) = \exp \dfrac{1}{2i\pi}
\int_{\gamma} \log (1 - w^{-1}t) \ \dfrac{Z'(X, w)}{Z(X, w)} \ dw.
$$
Let us prove \eqref{BirInv2}. Set $Z = X \setminus U$ and consider the long exact
sequence of cohomology with compact support \cite[Rem. 1.30, p.
94]{Milne}:
$$
\dots \longrightarrow
H^{2n - 2}_{c}(\bar{Z}) \stackrel{i}{\longrightarrow}
H^{2n - 1}_{c}(\bar{U}) \stackrel{j_{*}}{\longrightarrow}
H^{2n - 1}_{c}(\bar{X}) \longrightarrow
H^{2n - 1}_{c}(\bar{Z}) \longrightarrow
\dots
$$
and recall that the homomorphisms of this exact sequence are
$\mathbf{g}$-equivariant. Now $H^{2n - 1}_{c}(\bar{Z}) = 0$ since $\dim Z
\leq n - 1$. Hence we get an exact sequence
$$
0 \longrightarrow
H^{2n - 2}_{c}(\bar{Z})/ \ker i \stackrel{\tilde{i}}{\longrightarrow}
H^{2n - 1}_{c}(\bar{U}) \stackrel{j_{*}}{\longrightarrow}
H^{2n - 1}_{c}(\bar{X}) \longrightarrow
0
$$
The dimension of $H^{2n - 2}_{c}(\bar{Z})$ is equal to the number of
irreducible components of $\bar{Z}$ of dimension $n - 1$, and all the
eigenvalues of the Frobenius automorphism in this space are pure of weight
$2n - 2$. Hence the eigenvalues of the Frobenius in $\Im \tilde{i}$ are also
pure of weight $2n - 2$. Thus $H^{2n - 1}_{+}(\bar{U}) \cap \Im \tilde{i} =
0$ and the restriction of $j_{*}$ to $H^{2n - 1}_{+}(\bar{U})$ is an
isomorphism. Assertion \eqref{BirInv3} is a direct consequence of
\eqref{BirInv2}.
\end{proof}

The following elementary result will be needed below.

\begin{lemma}
\label{Cauchy}
Let $A$ and $B$ be two finite sets of complex numbers included in the circle
$\card{z} = M$. If, for some $\lambda$ with $0 < \lambda < M$ and for every
integer $s$ sufficiently large,
$$
\sum_{\beta \in B} \beta^{s} - \sum_{\alpha \in A} \alpha^{s} =
O(\lambda^{s}),
$$
then $A = B$.
\end{lemma}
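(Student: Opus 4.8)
The plan is to argue by contradiction, isolating a single element of the symmetric difference of $A$ and $B$ by a Ces\`aro-averaging device. First I would simplify the hypothesis. Write $C = (A \setminus B) \cup (B \setminus A)$ and set $\epsilon_{\gamma} = 1$ if $\gamma \in B \setminus A$ and $\epsilon_{\gamma} = -1$ if $\gamma \in A \setminus B$. Since the contributions of the common elements of $A \cap B$ cancel, the hypothesis reads
$$
h(s) := \sum_{\gamma \in C} \epsilon_{\gamma}\, \gamma^{s} = O(\lambda^{s}) \qquad (s \gg 0),
$$
and it suffices to prove $C = \emptyset$. Note every $\gamma \in C$ satisfies $|\gamma| = M > \lambda > 0$, so $\gamma \neq 0$; fix a constant $c > 0$ and an integer $s_{1}$ with $|h(s)| \leq c\,\lambda^{s}$ for all $s \geq s_{1}$.

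Now suppose $C \neq \emptyset$ and pick $\gamma_{0} \in C$. The elementary fact I would use is that for a complex number $z$ with $|z| = 1$,
$$
\lim_{t \to \infty} \frac{1}{t}\sum_{j=0}^{t-1} z^{j} =
\begin{cases} 1 & \text{if } z = 1,\\[1mm] 0 & \text{if } z \neq 1,\end{cases}
$$
the second case following from the closed form $\frac{1}{t}\cdot\frac{z^{t}-1}{z-1}$. Applying this with $z = \gamma/\gamma_{0}$ for each $\gamma \in C$, for any fixed integer $s_{0} \geq s_{1}$ we get
$$
\frac{1}{t}\sum_{j=0}^{t-1} \frac{h(s_{0} + j)}{\gamma_{0}^{\,s_{0}+j}} = \sum_{\gamma \in C} \epsilon_{\gamma} \Bigl(\frac{\gamma}{\gamma_{0}}\Bigr)^{s_{0}} \cdot \frac{1}{t}\sum_{j=0}^{t-1}\Bigl(\frac{\gamma}{\gamma_{0}}\Bigr)^{j},
$$
and since all $|\gamma/\gamma_{0}| = 1$, only the term $\gamma = \gamma_{0}$ survives in the limit $t \to \infty$, so the left-hand side tends to $\epsilon_{\gamma_{0}}$. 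On the other hand, because $|\gamma_{0}| = M > \lambda$, for each $j \geq 0$ we have $\bigl|h(s_{0}+j)/\gamma_{0}^{\,s_{0}+j}\bigr| \leq c\,(\lambda/M)^{s_{0}+j} \leq c\,(\lambda/M)^{s_{0}}$, so the Ces\`aro averages are bounded in modulus by $c\,(\lambda/M)^{s_{0}}$ uniformly in $t$. Letting $t \to \infty$ gives $1 = |\epsilon_{\gamma_{0}}| \leq c\,(\lambda/M)^{s_{0}}$; since $\lambda/M < 1$ and $s_{0} \geq s_{1}$ is arbitrary, the right-hand side tends to $0$, a contradiction. Hence $C = \emptyset$, \idest, $A = B$.

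I do not anticipate a real obstacle: the only points needing care are that $0 \notin A \cup B$ (guaranteed by $M > \lambda > 0$), so that division by $\gamma_{0}^{\,s_{0}+j}$ is legitimate, and the uniform-in-$t$ bound that licenses passing to the limit. An equally short alternative would be to form the rational generating function $\sum_{s \geq 0} h(s)\, T^{s} = \sum_{\gamma \in C} \epsilon_{\gamma}\,(1 - \gamma T)^{-1}$, observe that the $O(\lambda^{s})$ bound forces this power series to extend holomorphically to the disc $|T| < 1/\lambda$, and note that any $\gamma \in C$ would produce a simple pole at $T = 1/\gamma$ with $|1/\gamma| = 1/M < 1/\lambda$; uniqueness of partial fractions then yields $C = \emptyset$.
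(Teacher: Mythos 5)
Your Ces\`aro-averaging argument is correct, and it takes a genuinely different route from the paper's. The paper's proof is essentially the ``alternative'' you sketch at the end: it forms the rational function $R(z) = \sum_{\alpha \in A}(1-\alpha z)^{-1} - \sum_{\beta \in B}(1-\beta z)^{-1}$, observes that the $O(\lambda^{s})$ bound forces the Taylor series of $R$ at $0$ to have radius of convergence $\geq \lambda^{-1} > M^{-1}$, and notes that if $A \neq B$ then (after interchanging $A$ and $B$ if necessary) some $\beta_{0}\in B\setminus A$ contributes a pole of $R$ at $\beta_{0}^{-1}$, which lies on the circle $|z|=M^{-1}$ --- a contradiction. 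Your main argument avoids any mention of holomorphy or radius of convergence: it relies only on the elementary identity $\tfrac{1}{t}\sum_{j=0}^{t-1}z^{j}\to 0$ for $|z|=1$, $z\neq 1$, and extracts the coefficient $\epsilon_{\gamma_{0}}$ directly by orthogonality rather than reading it off from the location of a pole, so it is somewhat more elementary in its toolkit; the paper's version is marginally more compact. One small observation: your Ces\`aro averages are in fact bounded by $c\,(\lambda/M)^{s_{0}}\,\bigl(t(1-\lambda/M)\bigr)^{-1}$, which tends to $0$ as $t\to\infty$ for each fixed $s_{0}\geq s_{1}$, so the contradiction $\epsilon_{\gamma_{0}}=0$ already appears without having to send $s_{0}\to\infty$; the version you wrote is correct, just slightly less economical.
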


\begin{proof}
Suppose $A \neq B$. By interchanging $A$ and $B$ if necessary, we can
suppose that there is an element $\beta_{0} \in B \ \backslash \ A$. Then
the rational function
$$
R(z) = \sum_{\alpha \in A} \dfrac{1}{1 - \alpha z} -
\sum_{\beta \in B} \dfrac{1}{1 - \beta z}
$$
is holomorphic for $\card{z} < M^{-1}$ and admits the pole $\beta_{0}^{-1}$
on
the circle $\card{z} = M^{-1}$. But the Taylor series of
$R(z)$ at the origin is
$$
R(z) = \sum_{s = 0}^{\infty} \left(
\sum_{\alpha \in A} \alpha^{s} - \sum_{\beta \in B} \beta^{s} \right) z^{s},
$$
and the radius of convergence of this series is $ \geq \lambda^{-1} >
M^{-1}$.
\end{proof}

\begin{lemma}
\label{CritCoh}
Let $X$ be an irreducible scheme of dimension $n$ defined over $k$. The
following are equivalent :
\begin{enumerate}
\item
\label{CritCoh1}
There is a constant $C$ such that
$$
\abs{\card{X(\FF_{q^{s}})} - q^{ns}} \leq C q^{s(n - 1)}
\quad \text{for any } s \geq 1.
$$
\item
\label{CritCoh2}
We have $H^{2n - 1}_{+}(\bar{X}) = 0$.
\end{enumerate}
\end{lemma}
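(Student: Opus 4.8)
The plan is to read the point counts off the Grothendieck--Lefschetz Trace Formula \eqref{GrTF} and to isolate the contribution of $H^{2n-1}_{+}(\bar X)$. For every $s\geq 1$,
\begin{equation*}
\card{X(\FF_{q^{s}})}=\sum_{i=0}^{2n}(-1)^{i}\,\Tr\bigl(F^{s}\mid H^{i}_{c}(\bar X)\bigr).
\end{equation*}
Since $X$ is irreducible of dimension $n$, the top group is one-dimensional, $H^{2n}_{c}(\bar X)\cong\QQ_{\ell}(-n)$ (its dimension counts the $n$-dimensional components of $\bar X$, as in the proof of Proposition \ref{LHReg1}), so the $i=2n$ summand equals $q^{ns}$. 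By Deligne's Main Theorem \eqref{DeMT}, the eigenvalues of $F$ on $H^{i}_{c}(\bar X)$ have absolute value at most $q^{i/2}\leq q^{n-1}$ once $i\leq 2n-2$, hence $\sum_{i=0}^{2n-2}(-1)^{i}\Tr(F^{s}\mid H^{i}_{c}(\bar X))=O(q^{s(n-1)})$, the implied constant being $\leq\sum_{i\leq 2n-2}b_{i,\ell}(\bar X)$, so bounded in terms of $\bar X$ alone.

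It remains to treat the $i=2n-1$ summand. Decompose $H^{2n-1}_{c}(\bar X)$, according to the generalized eigenspaces of $F$, as $H^{2n-1}_{+}(\bar X)\oplus W$, where $W$ collects the generalized eigenspaces whose eigenvalue is pure of weight $\leq 2n-2$; by \eqref{DeMT} these are precisely the eigenvalues of $F$ on $H^{2n-1}_{c}(\bar X)$ other than those defining $H^{2n-1}_{+}(\bar X)$. On $W$ every eigenvalue has absolute value at most $q^{n-1}$, so $\Tr(F^{s}\mid W)=O(q^{s(n-1)})$. Assembling the three contributions to the trace formula gives
\begin{equation*}
\card{X(\FF_{q^{s}})}-q^{ns}=-\Tr\bigl(F^{s}\mid H^{2n-1}_{+}(\bar X)\bigr)+O(q^{s(n-1)})\qquad(s\geq 1),
\end{equation*}
with implied constant depending only on $\bar X$. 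If $H^{2n-1}_{+}(\bar X)=0$, the first term on the right vanishes and $(\ref{CritCoh1})$ follows with $C$ the implied constant; this is $(\ref{CritCoh2})\Rightarrow(\ref{CritCoh1})$.

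For the converse, assume $(\ref{CritCoh1})$ and subtract it from the last display, obtaining $\Tr(F^{s}\mid H^{2n-1}_{+}(\bar X))=O(q^{s(n-1)})$ for all large $s$. Fix an embedding $\overline{\QQ_{\ell}}\hookrightarrow\CC$ and let $\gamma_{1},\dots,\gamma_{m}$ be the distinct eigenvalues of $F$ on $H^{2n-1}_{+}(\bar X)$, with algebraic multiplicities $\mu_{1},\dots,\mu_{m}\geq 1$; by the definition of $H^{2n-1}_{+}(\bar X)$ one has $\card{\gamma_{i}}=q^{(2n-1)/2}$ for each $i$, and $\Tr(F^{s}\mid H^{2n-1}_{+}(\bar X))=\sum_{i=1}^{m}\mu_{i}\gamma_{i}^{s}$, the sum of the $s$-th powers of the eigenvalues counted with multiplicity. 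Since $q^{n-1}<q^{(2n-1)/2}$, I would then invoke Lemma \ref{Cauchy}: to absorb the multiplicities one runs its partial-fraction proof on $R(z)=\sum_{i=1}^{m}\mu_{i}/(1-\gamma_{i}z)$, which by the bound on its Taylor coefficients $\sum_{i}\mu_{i}\gamma_{i}^{s}$ is holomorphic on $\card{z}<q^{-(n-1)}$, while each $\gamma_{i}^{-1}$ lies on the strictly smaller circle $\card{z}=q^{-(2n-1)/2}$ and would be a pole there of residue $-\mu_{i}/\gamma_{i}\neq 0$. Hence $m=0$, that is $H^{2n-1}_{+}(\bar X)=0$, which is $(\ref{CritCoh2})$.

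The argument is essentially a bookkeeping of weights via \eqref{GrTF} and \eqref{DeMT}; the one step deserving care is the last, where Lemma \ref{Cauchy} as stated is about \emph{sets} and must be used with multiplicities. This is harmless, its Cauchy/partial-fraction proof going through verbatim once one notes, as above, that a positive multiplicity still gives a genuine pole; but it is worth flagging, along with the fact that all the ``$O$''-constants above depend only on $\bar X$ --- these two points together are exactly what legitimises the passage between the uniform point-count estimate $(\ref{CritCoh1})$ and the cohomological vanishing $(\ref{CritCoh2})$.
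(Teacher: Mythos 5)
Your proof is correct and follows essentially the same path as the paper's: apply the Grothendieck--Lefschetz trace formula, isolate the $q^{ns}$ term from $H^{2n}_c$, push the low-degree and low-weight contributions into an $O(q^{s(n-1)})$ error, and then kill the weight-$(2n-1)$ eigenvalue sum via Lemma~\ref{Cauchy}. In fact your version is slightly more scrupulous than the paper's: you explicitly split $H^{2n-1}_c$ into $H^{2n-1}_+\oplus W$ so that $\dim W$ is visibly absorbed into the error constant (the paper's $C'(X)$ as written only sums $b_{i,\ell}$ for $i\leq 2n-2$), and you flag and resolve the issue that Lemma~\ref{Cauchy} is stated for sets whereas one needs to sum eigenvalues with multiplicities --- both useful clarifications, though neither changes the substance of the argument.
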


\begin{proof}
Let $A$ be the set of eigenvalues of the Frobenius automorphism in $H^{2n -
1}_{+}(\bar{X})$. By the Trace Formula \eqref{GrTF} and Deligne's Main
Theorem
\eqref{DeMT},
\begin{equation}
\label{RawTF}
\abs{\card{X(\mathbf{F}_{q^{s}})} - c q^{ns} -
\sum_{\alpha \in A} \alpha^{s}} \leq C'(X) q^{s(n - 1)}
\quad \text{for any } s \geq 1,
\end{equation}
where $c = \dim H^{2n}_{c}(\bar{X})$ is the number of irreducible components
of $X$ of dimension $n$ and where
$$C'(X) = \sum_{i = 0}^{2n - 2} b_{i,l}(\bar{X})$$
is independent of $q$. Here $c = 1$ since $X$ is irreducible. Now assume
that
\eqref{CritCoh1} holds. From Formula \eqref{RawTF} above, we deduce
$$
\abs{\sum_{\alpha \in A} \alpha^{s}} \leq (C'(X) + C) q^{s(n - 1)}
\quad \text{for any } s \geq 1.
$$
So Lemma \ref{Cauchy} implies $A = \emptyset$, and hence, $H^{2n -
1}_{+}(\bar{X}) = 0$. The converse implication is an immediate consequence
of
\eqref{RawTF}.
\end{proof}

\begin{lemma}
\label{ArithGen}
Let $K$ be an algebraically closed field, and $X$ an irreducible projective
curve in $\ProjCan_{K}$, with arithmetic genus $p_{a}(X)$. Let $\widetilde{X}$
be a nonsingular projective curve birationally equivalent to $X$, with
geometric genus $g(\widetilde{X})$. Then we have the following.
\begin{enumerate}
\item
\label{ArithGen1}
If $d$ denotes the degree of $X$, then
$$
2 g(\widetilde{X}) \leq b_{1}(X) \leq 2 p_{a}(X) \leq (d - 1)(d - 2).
$$
\item
\label{ArithGen2}
If $K = \bar{k}$, where $k$ is a finite field, and if $X$ is defined over $k$,
then
$$b_{1}^{+}(X) = 2 g(\widetilde{X}),$$
\end{enumerate}
\end{lemma}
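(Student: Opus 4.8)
The plan is to reduce both statements to a comparison between $X$ and its normalization $\nu \colon \widetilde{X} \to X$, which is precisely the nonsingular projective curve appearing in the statement, by means of two parallel short exact sequences: one of \'etale sheaves $\QQ_{\ell}$ and one of coherent sheaves $\mathcal{O}$.

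For \eqref{ArithGen1} I would begin from the exact sequence of \'etale sheaves on $X$
$$
0 \longrightarrow \QQ_{\ell} \longrightarrow \nu_{*}\QQ_{\ell}
\longrightarrow \mathcal{Q} \longrightarrow 0 ,
$$
where $\mathcal{Q}$ is a skyscraper sheaf supported on the finitely many singular points of $X$; since $\nu$ is finite with $\nu^{-1}(x)$ consisting of $m_{x}$ points ($m_{x}$ the number of branches of $X$ at $x$), the stalk of $\mathcal{Q}$ at such a point is $\QQ_{\ell}^{m_{x}-1}$, so $\dim_{K} H^{0}(\bar{X},\mathcal{Q}) = r := \sum_{x}(m_{x}-1)$. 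As $\nu$ is finite, $R^{i}\nu_{*}\QQ_{\ell}=0$ for $i>0$, and $H^{i}(\bar{X},\mathcal{Q})=0$ for $i\ge 1$; since $X$, hence $\widetilde{X}$, is irreducible, $H^{0}(\bar{X},\QQ_{\ell})\to H^{0}(\bar{\widetilde{X}},\QQ_{\ell})$ is an isomorphism. The long exact cohomology sequence therefore collapses to
$$
0 \longrightarrow \QQ_{\ell}^{r} \longrightarrow H^{1}(\bar{X},\QQ_{\ell})
\longrightarrow H^{1}(\bar{\widetilde{X}},\QQ_{\ell}) \longrightarrow 0 ,
$$
whence $b_{1}(X) = r + 2g(\widetilde{X})$, and in particular $2g(\widetilde{X}) \le b_{1}(X)$. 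Running the same argument with the coherent sequence $0 \to \mathcal{O}_{X} \to \nu_{*}\mathcal{O}_{\widetilde{X}} \to \mathcal{Q}' \to 0$ gives $p_{a}(X) = \delta + g(\widetilde{X})$ with $\delta = \sum_{x}\delta_{x}$, $\delta_{x} = \dim_{K}(\widetilde{\mathcal{O}}_{x}/\mathcal{O}_{x})$. The two computations are tied together by the elementary local inequality $\delta_{x} \ge m_{x}-1$: over the algebraically closed field $K$ the integral closure $\widetilde{\mathcal{O}}_{x}$ is a product of $m_{x}$ discrete valuation rings with residue field $K$, hence surjects onto $K^{m_{x}}$, while $\mathcal{O}_{x}$ maps into this product diagonally, so $\dim_{K}(\widetilde{\mathcal{O}}_{x}/\mathcal{O}_{x}) \ge m_{x}-1$. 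Thus $r \le \delta$ and
$$
b_{1}(X) = r + 2g(\widetilde{X}) \le \delta + 2g(\widetilde{X})
\le 2\delta + 2g(\widetilde{X}) = 2 p_{a}(X) .
$$
Finally $2 p_{a}(X) \le (d-1)(d-2)$ is the classical genus bound for a curve of degree $d$: a general linear projection restricts to a birational morphism $\pi$ from $X$ onto a plane curve $X_{0}$ of degree $d$, and from $\mathcal{O}_{X_{0}} \subseteq \pi_{*}\mathcal{O}_{X} \subseteq \nu_{*}\mathcal{O}_{\widetilde{X}}$ (all three with the same normalization $\widetilde{X}$) one gets $\delta_{X} \le \delta_{X_{0}}$, whence $p_{a}(X) = g(\widetilde{X}) + \delta_{X} \le g(\widetilde{X}) + \delta_{X_{0}} = p_{a}(X_{0}) = (d-1)(d-2)/2$.

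For \eqref{ArithGen2} I would observe that the first exact sequence is defined over $k=\Fq$: since $k$ is perfect, the normalization $\widetilde{X}$ and the morphism $\nu$ are defined over $k$ and $\bar{\widetilde{X}}$ is the normalization of $\bar{X}$, so the sequence $0 \to \QQ_{\ell}^{r} \to H^{1}(\bar{X},\QQ_{\ell}) \to H^{1}(\bar{\widetilde{X}},\QQ_{\ell}) \to 0$ is $\mathbf{g}$-equivariant. The subspace $\QQ_{\ell}^{r} = H^{0}(\bar{X},\mathcal{Q})$ is a successive extension of constant sheaves on the geometric singular points, on which the Frobenius acts through a finite group (a subquotient of the permutation module on the branches), hence is pure of weight $0$; and $H^{1}(\bar{\widetilde{X}},\QQ_{\ell})$ is pure of weight $1$, being the $H^{1}$ of a smooth projective curve (Weil's Riemann Hypothesis for curves, a case of \eqref{DeMT} together with Poincar\'e duality). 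Consequently the generalized eigenspaces of $F$ on $H^{1}(\bar{X},\QQ_{\ell})$ attached to eigenvalues of weight exactly $1$ meet $\QQ_{\ell}^{r}$ trivially and map isomorphically onto $H^{1}(\bar{\widetilde{X}},\QQ_{\ell})$, i.e. $H^{1}_{+}(\bar{X}) \isom H^{1}(\bar{\widetilde{X}},\QQ_{\ell})$, so $b_{1}^{+}(X) = 2g(\widetilde{X})$. Alternatively, since $\widetilde{X}$ is $k$-birational to $X$, one may instead invoke Proposition~\ref{BirInv}\eqref{BirInv2} to identify $H^{1}_{+}(\bar{X})$ with $H^{1}_{+}(\bar{\widetilde{X}}) = H^{1}(\bar{\widetilde{X}},\QQ_{\ell})$.

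The routine ingredients are the two long exact sequences and the bookkeeping of stalks; the two facts requiring a short separate (but classical) argument are the local inequality $\delta_{x} \ge m_{x}-1$ and the genus bound $2p_{a}(X) \le (d-1)(d-2)$. The one genuine point of care is the descent in \eqref{ArithGen2}: one must check that the normalization $\widetilde{X}$ and the sequence attached to $\nu$ are defined over $\Fq$ — this is where perfectness of the base field enters — and then track weights carefully so as to isolate the weight-exactly-$1$ part of $H^{1}(\bar{X})$.
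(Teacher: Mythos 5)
Your proof is correct, and it takes a genuinely different and in places more self-contained route than the paper. The paper works with the excision long exact sequence in compactly supported cohomology applied to the regular open subscheme $U\subset X$ and to $\widetilde{U}\subset\widetilde{X}$, then matches the two via the isomorphism $U\cong\widetilde{U}$; it then cites Serre (\emph{Groupes alg\'ebriques et corps de classes}, Prop.~1, p.~68) for the inequality $d(X)\le\delta(X)$, and for the genus bound compares Hilbert polynomials of $X$ and of its generic plane projection $X'$. You instead push forward along the normalization $\nu\colon\widetilde X\to X$ and run the two parallel short exact sequences $0\to\QQ_\ell\to\nu_*\QQ_\ell\to\mathcal Q\to 0$ and $0\to\mathcal O_X\to\nu_*\mathcal O_{\widetilde X}\to\mathcal Q'\to 0$, yielding $b_1(X)=r+2g$ and $p_a(X)=\delta+g$ with $r=\sum(m_x-1)$ and $\delta=\sum\delta_x$; the local inequality $\delta_x\ge m_x-1$, which you prove from scratch, then ties them together. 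Your derivation of the degree bound via $\delta_X\le\delta_{X_0}$ is likewise a direct sheaf-theoretic version of what the paper achieves through Hilbert polynomials (Remark~\ref{BeqMum}). For~\eqref{ArithGen2}, your weight bookkeeping on the $\mathbf{g}$-equivariant exact sequence is exactly the content of the Aubry--Perret formula $P_1(X,T)=P_1(\widetilde X,T)\prod_j(1-\omega_j T)$ that the paper quotes, made explicit; your alternative of invoking Proposition~\ref{BirInv}\eqref{BirInv2} directly is also valid. Two small points to tidy: the penultimate push-forward in your chain $\mathcal O_{X_0}\subseteq\pi_*\mathcal O_X\subseteq\cdots$ should read $(\pi\circ\nu)_*\mathcal O_{\widetilde X}$ on $X_0$, not $\nu_*\mathcal O_{\widetilde X}$; and, in step~\eqref{ArithGen2}, when you say $\QQ_\ell^r=H^0(\bar X,\mathcal Q)$ is ``a successive extension of constant sheaves,'' what you actually mean (and use) is that it is a permutation representation of $\mathbf g$ on the branches over the geometric singular points, so all Frobenius eigenvalues are roots of unity — fine, but worth stating plainly.
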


During the proof of the Lemma, we shall make use of the following
standard construction, when $X$ is a curve. This leads to an inequality between Hilbert polynomials.

\begin{remark}[Comparison of Hilbert polynomials]
\label{BeqMum}
Let $K$ be an algebraically closed field, $X$ a closed subvariety
in $\ProjCan_{K}$ distinct from the whole space, and $r$ an integer such that $\dim X + 1 \leq r \leq N$. Let $\EuScript{C}_{r}(X)$ be the subvariety of $\mathcan{G}_{N - r,N}$ of linear varieties of codimension $r$ meeting $X$. From the properties of the incidence correspondence $\Sigma$ defined by
$$
\Sigma = \set{(x, E) \in \ProjCan  \times \mathcan{G}_{N - r,N}}{x \in E},
$$
it is easy to see that $\EuScript{C}_{r}(X) = \pi_{2}(\pi_{1}^{- 1}(X))$ is irreducible and that the codimension of $\EuScript{C}_{r}(X)$ in $\mathcan{G}_{N - r,N}$ is equal to $r - \dim X$.
Hence, the set of linear subvarieties of codimension $r$ in $\ProjCan_{K}$ disjoint from $X$ is a nonempty open subset $\EuScript{D}_{r}(X)$ of $\mathcan{G}_{N - r,N}$.

If $E$ belongs to $\EuScript{D}_{n + 2}(X)$, where $n = \dim X$, the
projection $\pi$ with center $E$ gives rise to a diagram
$$
\begin{CD}
X               & @>i>>  & \ProjCan_{K} - E   \\
@VV{\pi_{X}}V &        & @VV{\pi}V         \\
X'              & @>i'>> & \PP^{n + 1}_{K}
\end{CD}
$$
such that $X'$ is an irreducible hypersurface with $\deg X' = \deg X$, and
where the restriction $\pi_{X}$ is a finite birational morphism: denoting by
$S(X)$ the homogeneous coordinate ring of $X$, we have an inclusion $S(X')
\subset S(X)$, and $S(X)$ is a finitely generated module over
$S(X')$. Hence, if $P_{X}(T) \in \QQ[T]$ is the \emph{Hilbert polynomial of
$X$} \cite[p. 52]{Ha}, we have
$$
P_{X'}(t) \leq P_{X}(t) \quad \text{if} \quad t \in \NN \quad \text{and} 
\quad t \rightarrow \infty.
$$
\end{remark}

\begin{proof}[Proof of Lemma \ref{ArithGen}]
Let $U$ be a regular open subscheme of $X$. Then, there is a
commutative diagram
$$
\begin{CD}
\widetilde{U} & @>>> & \widetilde{X} & @<<< & \widetilde{S} \\
@VVV          &      & @VV{\pi}V     &      &  @VVV         \\
U             & @>>> & X             & @<<< & S
\end{CD}
$$
where $\widetilde{X}$ is a nonsingular curve, where $\pi$ is a proper
morphism which is a birational isomorphism, and an isomorphism when restricted
to $\widetilde{U}$, and
$$
\sing X \subset S = X \setminus U, \quad
\widetilde{S} = \widetilde{X} \setminus \widetilde{U}.
$$
The excision long exact sequence in compact cohomology \cite[Rem. 1.30, p.
94]{Milne} gives:
$$
\begin{CD}
0
& @>>> & H^{0}_{c}(X)
& @>>> & H^{0}_{c}(S)
& @>>> & H^{1}_{c}(U)
& @>>> & H^{1}_{c}(X)
& @>>> & 0
\end{CD}
$$
and there is another exact sequence if we replace $X, U, S$ by
$\widetilde{X},\widetilde{U},\widetilde{S}$.
This implies
$$
b_{1}(U) = b_{1}(X)  - 1 + \card{S}, \qquad
b_{1}(\widetilde{U}) = b_{1}(\widetilde{X})  - 1 + \card{\tilde{S}},
$$
and since $U$ and $\widetilde{U}$ are isomorphic, we obtain
$$
b_{1}(X) = b_{1}(\widetilde{X}) + d(X) = 2 g(\widetilde{X}) + d(X), 
\quad \text{where} \quad d(X) = \card{\tilde{S}} - \card{S},
$$
since, as is well-known, $b_{1}(\widetilde{X}) = 2g(\widetilde{X})$. Let
$$\delta(X) = p_{a}(X) - g(\widetilde{X}).$$
Then $0 \leq d(X) \leq \delta(X)$ \cite[Prop. 1, p. 68]{Serre0}. Hence
$$
b_{1}(X) = 2g(\widetilde{X}) + d(X)
\leq 2g(\widetilde{X}) + 2\delta(X) = 2 p_{a}(X).
$$
This proves the first and second inequalities of \eqref{ArithGen1}. The
Hilbert polynomial of $X$ is (\cite[p. 54]{Ha}):
$$
P_{X}(T) = d T + 1 - p_{a}(X),
$$
Apply now the construction of Remark \ref{BeqMum} to $X$, and obtain a
morphism $X \longrightarrow X'$, where $X'$ is a plane curve of degree $d$.
From the inequality $P_{X'}(t) \leq P_{X}(t)$ for $t$ large, we get $p_{a}(X)
\leq p_{a}(X')$. Now by Example \ref{BettiEx}(ii),
$$p_{a}(X') = (d - 1)(d - 2)/2,$$
since $X'$ is a plane curve of degree $d$, and so
$$p_{a}(X) \leq (d - 1)(d - 2)/2,$$
and this proves the third inequality of \eqref{ArithGen1}. Now, under the
hypotheses of \eqref{ArithGen2}, we have by \cite[Thm. 2.1]{AubryPerret}:
$$
P_{1}(X, T) = P_{1}(\widetilde{X}, T) \prod_{j = 1}^{d(X)}(1 -
\omega_{j}T),
$$
where the numbers $\omega_{j}$ are roots of unity, 
and this implies the inequality in \eqref{ArithGen2}.
\end{proof}

From now on, suppose $k = \Fq$. If $X$ is a separated scheme of finite type
over $k$, we say that the space $H^{i}_{c}(\bar{X})$ is \emph{pure} of
weight $i$ if all the eigenvalues of the Frobenius automorphism in this
space are pure of weight $i$.

\begin{proposition}
\label{PolyDiv}
Let $X$ be a closed subvariety over $k$ of dimension $n$ in $\ProjCan_{k}$
which is regular in codimension one. Then:
\begin{enumerate}
\item
\label{PolyDiv1}
The space $H^{2n - 1}(\bar{X})$ is pure of weight $2n - 1$.
\item
\label{PolyDiv2}
If $Y$ is a typical curve on $X$ over $k$, then
$P_{2n - 1}(X, T)$ divides $P_{1}(Y, q^{n - 1}T).$
\end{enumerate}
\end{proposition}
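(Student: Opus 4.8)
The plan is to realize $H^{2n-1}(\bar X)$ as a $\mathbf{g}$-equivariant quotient of $H^{1}(\bar Y,\QQ_\ell(-(n-1)))$ for a typical curve $Y$ on $X$, via the General Weak Lefschetz Theorem \ref{LefschetzHigh}, and then read off both assertions from Weil's Riemann Hypothesis for curves. The case $n=1$ is immediate: regularity in codimension one then just says $X$ is a nonsingular projective curve, $Y=X$, and both claims reduce to Weil's theorem applied to $H^{1}$. So assume $n\geq 2$ and set $r=n-1$.

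First I would fix a typical curve $Y$ on $X$. In part \eqref{PolyDiv2} it is given to us, defined over $k$; in part \eqref{PolyDiv1} such a curve exists over $\bar k$ by Corollary \ref{Equiv} and Proposition \ref{BertiniFlag}, and a standard specialization argument (the relevant nonempty open locus in $\mathcan{G}_{N-r,N}$ is defined over $k$) produces one over a finite extension $k'/k$; replacing $k$ by $k'$ changes neither $\bar X$ nor the property of $H^{2n-1}(\bar X)$ being pure of a given weight, so this costs nothing. In either case $\bar Y$ is a nonsingular proper linear section of $\bar X$ of dimension one, hence regular in codimension one, so by Proposition \ref{LHReg1} the pair $(X,Y)$ is semi-regular, $Y$ is irreducible, its codimension in $X$ equals $r$, and $\sigma:=\dim\sing Y=-1$.

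The key step is Theorem \ref{LefschetzHigh} applied to $(X,Y)$: since $n+r+\sigma+1=2n-1$, it produces a $\mathbf{g}$-equivariant surjection
$$
\iota_{*}:H^{1}(\bar Y,\QQ_\ell(-(n-1)))\longrightarrow H^{2n-1}(\bar X,\QQ_\ell).
$$
Because $F$ acts on $\QQ_\ell(-(n-1))$ by multiplication by $q^{n-1}$, the characteristic polynomial $\det(1-TF\mid H^{1}(\bar Y,\QQ_\ell(-(n-1))))$ equals $P_{1}(Y,q^{n-1}T)$; and a $\mathbf{g}$-equivariant surjection sits in a short exact sequence of $F$-modules whose characteristic polynomials multiply, so $P_{2n-1}(X,T)=\det(1-TF\mid H^{2n-1}(\bar X))$ divides $P_{1}(Y,q^{n-1}T)$. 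This proves \eqref{PolyDiv2}. For \eqref{PolyDiv1}: by Weil's theorem the reciprocal roots of $P_{1}(Y,T)$ are pure of weight one, hence those of $P_{1}(Y,q^{n-1}T)$ are pure of weight $2n-1$; since $P_{2n-1}(X,T)$ divides the latter, every eigenvalue of $F$ on $H^{2n-1}(\bar X)$ is pure of weight $2n-1$, i.e.\ $H^{2n-1}(\bar X)$ is pure of weight $2n-1$ — and this descends from $k'$ to $k$ because an algebraic integer $\alpha$ with $\lvert\iota(\alpha^{m})\rvert=q^{m(2n-1)/2}$ for every embedding $\iota$ satisfies $\lvert\iota(\alpha)\rvert=q^{(2n-1)/2}$.

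I expect the only non-formal point to be the base-field descent guaranteeing a typical curve over (a finite extension of) $k$ for part \eqref{PolyDiv1}; everything else is a direct application of Theorem \ref{LefschetzHigh} and Proposition \ref{LHReg1}, the trivial effect of a Tate twist and of passing to a quotient on characteristic polynomials, and the Riemann Hypothesis for curves. As a byproduct, \eqref{PolyDiv1} together with Proposition \ref{BirInv}\eqref{BirInv1} shows $P_{2n-1}(X,T)$ has coefficients independent of $\ell$, which is why $\ell$ is omitted from the notation.
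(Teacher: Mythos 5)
Your argument is correct and follows essentially the same route as the paper's proof: a Gysin surjection from the Tate-twisted $H^{1}$ of a typical curve onto $H^{2n-1}(\bar{X})$, supplied by the Weak Lefschetz Theorem, combined with the Riemann Hypothesis for curves and divisibility of characteristic polynomials under a $\mathbf{g}$-equivariant surjection. The paper invokes Theorem \ref{LefschetzSGA} directly rather than assembling it from Theorem \ref{LefschetzHigh} and Proposition \ref{LHReg1} (an equivalence the paper itself records), and your explicit treatment of the base-field descent for part \eqref{PolyDiv1} and the Tate-twist bookkeeping merely spells out points the paper leaves implicit.
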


\begin{proof}
By passing to finite extension $k'$ of $k$, we can find a  
typical curve $Y$ defined over $k'$. In that case the Gysin map
$$
\iota_{*} : H^{1}(\bar{Y},\QQ_{\ell}(1 - n)) \longrightarrow
H^{2n - 1}(\bar{X}, \QQ_{\ell})
$$
is a surjection by Corollary \ref{LefschetzSGA}, and all the eigenvalues are
pure of the same weight, since $\iota_{*}$ is $\mathbf{g}$-equivariant. This
proves \eqref{PolyDiv1}, and also that $P_{2n - 1}(X, T)$ divides $P_{1}(Y,
q^{n - 1}T)$ if $k = k'$, which proves \eqref{PolyDiv2}.
\end{proof}

In view of \eqref{PolyDiv1}, a natural question is then to ask under which
conditions the space $H^{i}(\bar{X})$ is pure of weight $i$. The following
proposition summarizes the results on this topic that we can state.

\begin{proposition}
\label{PurityGen}
Let $X$ be a projective variety of dimension $n$ defined over $k$ and assume that $\dim \sing X \leq s$.
\begin{enumerate}
\item
\label{PurityGen1}
The space $H^{i}(\bar{X})$ is pure of weight $i$ if $i \geq n + s + 1$.
\end{enumerate}
Assume now that $\resol$ holds, and that $X$ is a complete intersection
with only isolated singularities. Then:
\begin{enumerate}
\setcounter{enumi}{1}
\item
\label{PurityGen2}
The space $H^{n}(\bar{X})$ is pure of weight $n$.
\end{enumerate}
\end{proposition}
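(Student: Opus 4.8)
The plan is to reduce to the nonsingular case by deforming $X$ to a nonsingular complete intersection, as in the proof of Proposition~\ref{BsciRnp}, but now keeping track of the action of Frobenius. If $X$ is nonsingular there is nothing new: $H^{n}(\bar X)$ is then pure of weight $n$ by Deligne's theorem, and this is in any case contained in part~\eqref{PurityGen1} with $s=-1$. So assume $\dim\sing X=0$. Since $X$ is a complete intersection it is proper, so by Deligne's Main Theorem \eqref{DeMT} the eigenvalues of $F$ on $H^{n}(\bar X)$ are pure of weight $\le n$; the whole point is to exclude eigenvalues of weight $<n$.

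First I would set the deformation up over $\Fq$. Replacing $\Fq$ by a finite extension — which changes neither $\bar X$ nor the weights occurring in $H^{n}(\bar X)$, hence is harmless for the statement — there is a nonsingular complete intersection $Y_{0}\subset\ProjCan$ of multidegree $\mathbf d$ over $\Fq$; if $(f_{j})$ and $(g_{j})$ are systems of forms defining $X$ and $Y_{0}$, the forms $Tg_{j}+(1-T)f_{j}$ define a proper flat $\pi\colon\mathcal Z\to\Aff^{1}_{\Fq}$ with $\mathcal Z_{0}\cong X$ and with nonsingular complete-intersection generic fibre. Localising at the origin — base-changing to the henselisation $R$ of $\Fq[T]$ at $(T)$, with residue field $\Fq$, fraction field $K$, geometric generic point $\bar\eta$ (rather than the strict henselisation of Proposition~\ref{BsciRnp}, so as to keep the $\Fq$-structure) — the proof of Proposition~\ref{BsciRnp}, where the Theorem on Sheaves of Vanishing Cycles is applied and $\resol$ is thereby invoked, furnishes a $\Gal(\bar K/K)$-equivariant exact sequence
\[
\cdots\longrightarrow\phi^{n-1}_{gl}\longrightarrow H^{n}(\bar X)\stackrel{sp^{n}}{\longrightarrow}H^{n}(Z_{\bar\eta},\QQ_{\ell})\longrightarrow\phi^{n}_{gl}\longrightarrow\cdots
\]
with $\phi^{n-1}_{gl}=0$, so that $sp^{n}$ is injective. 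As $\bar X$ descends to $\Fq$, the inertia subgroup of $\Gal(\bar K/K)$ acts trivially on $H^{n}(\bar X)$, the quotient acts in the usual way through $\Gal(\overline{\Fq}/\Fq)$, and $sp^{n}$ intertwines the geometric Frobenius on $H^{n}(\bar X)$ with the action on $H^{n}(Z_{\bar\eta},\QQ_{\ell})$ of any lift of it; so $H^{n}(\bar X)$ is identified, compatibly with Frobenius, with a subspace of $H^{n}(Z_{\bar\eta},\QQ_{\ell})$.

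It now suffices to know that $H^{n}(Z_{\bar\eta},\QQ_{\ell})$, for the Frobenius-lift action, is pure of weight $n$: then so is $H^{n}(\bar X)$, which together with the bound of weight $\le n$ already recorded completes the argument. Since $Z_{\bar\eta}$ is a nonsingular complete intersection of dimension $n$, one would like to obtain this from Proposition~\ref{BettiIC} and Deligne's theorem for smooth complete intersections over finite fields.

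I expect this last step to be the main obstacle. The special fibre $\mathcal Z_{0}=X$ being singular, $\pi$ has bad reduction at the origin and $Z_{\bar\eta}$ is a smooth complete intersection only over $K$, whose residue field $\Fq$ is not algebraically closed; so purity of $H^{n}(Z_{\bar\eta},\QQ_{\ell})$ for the Frobenius lift cannot simply be quoted from Deligne, and must be read off from the nearby cycles at the origin. Here $\resol$ would be used a second time, to achieve semistable reduction after a finite totally ramified base change (which affects neither $\bar X$, nor $Z_{\bar\eta}$, nor $sp^{n}$); one then has the monodromy weight filtration on $H^{n}(Z_{\bar\eta},\QQ_{\ell})$, with pure graded pieces, and the task becomes to show that the image of $sp^{n}$ lies in the weight-$n$ piece. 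This is where the hypothesis of \emph{isolated} singularities is essential: the vanishing cohomology is carried by the finite set $\sing X$, and the associated correction terms in the weight spectral sequence of the semistable model must be shown to contribute to the cokernel of $sp^{n}$ (which embeds into $\phi^{n}_{gl}$) rather than to produce classes of weight $<n$ in the image of $sp^{n}$. Making this spectral-sequence analysis precise is the crux.
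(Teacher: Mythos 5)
Your proposal addresses only part~\eqref{PurityGen2}; you do not give a proof of part~\eqref{PurityGen1}. For the record, the paper proves~\eqref{PurityGen1} by choosing (over a finite extension $k'/k$) a nonsingular proper linear section $Y$ of $X$ of codimension $s+1$, invoking the Gysin surjection of Theorem~\ref{LefschetzSGA} in degrees $\geq n+s+1$, and transporting Deligne's purity for the smooth projective $Y$ along the $\Gal(\bar k/k')$-equivariant surjection; one then passes from $k'$ to $k$ because the Frobenius of $k$ has eigenvalues that are $[k':k]$-th roots of those of the Frobenius of $k'$.

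For part~\eqref{PurityGen2} you follow exactly the route of the paper: form the pencil $Tg_j+(1-T)f_j$ over $\Aff^1_{\Fq}$, localize at the origin, use the long exact sequence with the Theorem on Sheaves of Vanishing Cycles (hence $\resol$) to get a Galois-equivariant injection $sp^n\colon H^n(\bar X)\hookrightarrow H^n(Z_{\bar\eta})$, and try to conclude by purity of the target. Your use of the (non-strict) henselization to retain the $\Fq$-structure, and the observation that inertia acts trivially on the source so that $sp^n$ intertwines the geometric Frobenius on $H^n(\bar X)$ with a Frobenius lift on $H^n(Z_{\bar\eta})$, are exactly what the phrase ``$\mathbf{g}$-equivariant'' in the paper's proof must be unpacked into.

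You then flag the key point, and I think you are right to do so: the sentence ``This implies~\eqref{PurityGen2}'' in the paper silently uses that $H^n(Z_{\bar\eta})$ is pure of weight $n$ for the Frobenius-lift action, but $Z_{\bar\eta}$ is a smooth complete intersection over $\bar K$ with $K$ the fraction field of a henselian local ring, not over $\Fq$, and since the special fibre $X$ is singular the inertia at $T=0$ acts nontrivially; so Deligne's purity theorem for smooth proper varieties over a finite field is not directly applicable. What one can say for free is weaker: the image of $sp^n$ lands in the inertia invariants $H^n(Z_{\bar\eta})^{I}$, and by the local monodromy theorem for a pure lisse sheaf on the punctured line (Weil II 1.8.4) those invariants sit inside the filtration step $M_0$, whose graded pieces have weights $\leq n$ --- giving only the upper bound one already has from Deligne's Main Theorem \eqref{DeMT}. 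The lower bound, i.e.\ that no eigenvalue of weight $<n$ can appear in the image of $sp^n$, is exactly the part that needs a separate argument, and you correctly identify it as the crux; the hypothesis of isolated singularities (so that the vanishing cycles are concentrated on a finite set) must enter here. Your sketch via semistable reduction and a weight spectral sequence is a reasonable strategy for filling this in, but you acknowledge you have not carried it out, so your proposal does not complete the proof --- and, as far as I can see, neither does the paper's single-sentence conclusion justify this step.
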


\begin{proof}
By Corollary \ref{Equiv}, we can find a nonsingular proper linear section $Y$ 
of $X$ of codimension $s + 1$ defined over a finite extension $k'/k$.
Since the Gysin maps are equivariant with respect to $\Gal(\bar{k}/k')$,
Corollary \ref{LefschetzSGA} implies that  the eigenvalues of the Frobenius of
$k'$ in $H^{2n - 1}(\bar{X}, \QQ_{\ell}(n))$ are pure, and the same holds for
the eigenvalues of the Frobenius of $k$, since they are roots of the
former. This proves \eqref{PurityGen1}. Finally, as in the proof of
Proposition \ref{BsciRnp}, we deduce from the Theorem on Sheaves of
Vanishing Cycles a $\mathbf{g}$-equivariant exact sequence
$$
0 = \phi^{n - s - 1}_{gl}
\longrightarrow H^{n - s}(\bar{X})
\longrightarrow H^{n - s}(Z_{\bar{\eta}})
\longrightarrow \dots
$$
where $Z_{\bar{\eta}}$ is a nonsingular complete intersection. This implies
\eqref{PurityGen2}.
\end{proof}

\begin{remark}
If $X$ is a complete intersection with only isolated singularities, then the
spaces $H^{n}(\bar{X})$ and $H^{n + 1}(\bar{X})$ are the only ones for which
the non-primitive part is nonzero. Hence, provided $\resol$ holds,
Proposition \ref{PurityGen} shows that
\begin{enumerate}
\setcounter{enumi}{2}
\item
\label{PurityGen3}
$H^{i}(\bar{X})$ is pure of weight $i$ for $0 \leq i \leq 2n$.
\end{enumerate}
Thus for this kind of singular varieties, the situation is the same as for
nonsingular varieties. It is worthwhile recalling that if $X$ is locally the
quotient of a nonsingular variety by a finite group, then
\eqref{PurityGen3} holds without assuming $\resol$, by \cite[Rem. 3.3.11, p.
383]{Deligne2}.
\end{remark}

\section{Cohomology and Albanese Varieties}
\label{RelationAlbanese}

We begin this section with a brief outline of the construction of certain 
abelian varieties associated to a variety, namely the Albanese and Picard
varieties. Later we shall discuss their relation with some \'etale cohomology
spaces. For the general theory of abelian varieties, we refer to \cite{Lang}
and \cite{Mumford2}.

Let $X$ be a variety defined over a perfect field $k$ and assume for simplicity
that $X$ has a $k$-rational nonsingular point $x_{0}$ (by enlarging the base
field if necessary). We say that a rational map $g$ from $X$ to an abelian
variety $B$ is \emph{admissible} if $g$ is defined at $x_{0}$ and if
$g(x_{0}) = 0$. An \emph{Albanese-Weil variety} (resp. an
\emph{Albanese-Serre variety}) of $X$ is an abelian variety $A$ defined over
$k$ equipped with an admissible rational map (resp. an admissible morphism)
$f$ from $X$ to $A$ satisfying the following universal property:

$\AlbUniv$
\emph{Any admissible rational map (resp. any admissible morphism) $g$ from
$X$ to an abelian variety $B$ factors uniquely as $g = \varphi \, _{\circ} f$
for some homomorphism $\varphi : A \longrightarrow B$ of abelian varieties
defined over $k$ :}
$$
\put(25,15){\vector(1,-1){20}}
\put(35,10){\scriptsize \textit{g}}
\begin{CD}
X          &               & \\
@V{f}VV    &               & \\
A          & @>{\varphi}>> &  B
\end{CD}
$$
Assume that $A$ exists. If $U$ is an open subset of $X$ containing $x_{0}$
where $f$ is defined, then the smallest abelian subvariety containing
$f(U)$ is equal to $A$. The abelian variety $A$ is uniquely determined up to
isomorphism. Thus, the canonical map $f$ and the homomorphism $\varphi$ are
uniquely determined.

The Albanese-Serre variety $\Alb_{s} X$, together with a canonical morphism
$$f_{s} : X \longrightarrow \Alb_{s} X$$
exists for any variety $X$ \cite[Thm. 5]{Serre1}.

Let $X$ be a variety, and let $\iota : \widetilde{X} \longrightarrow X$ be
any birational morphism to $X$ from a nonsingular variety $\widetilde{X}$
(take for instance $\widetilde{X} = \reg X$). Since any rational map of a
variety into an abelian variety is defined at every nonsingular point
\cite[Thm. 2, p. 20]{Lang}, any admissible rational map of $X$ into an
abelian variety $B$ induces a morphism of $\widetilde{X}$ into $B$, and
factors through the Albanese-Serre variety $\Alb_{s} \widetilde{X}$ :
$$
\put(54,10){\vector(-1,-1){19}}
\put(45,-4){\scriptsize{$f_{w}$}}
\begin{CD}
\widetilde{X}          & @>{\iota}>>   & X       \\
@V{\tilde{f}_{s}}VV    &               & @VV{g}V \\
\Alb_{s} \widetilde{X} & @>{\varphi}>> & B
\end{CD}
$$
This implies that we can take $\Alb_{w} X = \Alb_{s} \widetilde{X}$, if we
define the canonical map as $f_{w} = \widetilde{f}_{s} \, _{\circ}
\iota^{-1}$. Hence, the Albanese-Weil variety $\Alb_{w} X$, together with a
canonical map
$$f_{w} : X \longrightarrow \Alb_{w} X$$
exists for any variety $X$, and two birationally equivalent varieties have
the same Alba\-nese-Weil variety. These two results have been proved by Weil
\cite[Thm. 11, p. 41]{Lang}, and \cite[ p. 152]{Lang}.
If $X$ is a curve, then $\Jac X = \Alb_{w} X$ by definition, and the dimension
of $\Jac X$ is equal to the genus of a nonsingular projective curve
birationally equivalent to $X$.

We recall now the following result \cite[Th. 6]{Serre1}.

\begin{proposition}[Serre]
\label{Compar}
Let $X$ be a projective variety.
\begin{enumerate}
\item
\label{Compar1}
The canonical map $f_{s} : X \longrightarrow \Alb_{s} X$ factors uniquely as
$f_{s} = \nu \, _{\circ} f_{w}$ where $\nu$ is a surjective homomorphism of
abelian varieties defined over $k$:
$$
\put(40,15){\vector(2,-1){40}}
\put(60,10){\scriptsize $f_{s}$}
\begin{CD}
X           &               & \\
@V{f_{w}}VV &               & \\
\Alb_{w} X  & @>{\nu}>>  &  \Alb_{s} X
\end{CD}
$$
\item
\label{Compar2}
If $X$ is normal, then $\ker \nu$ is connected, and $\nu$ induces an
isomorphism
$$(\Alb_{w} X)/\ker \nu \isom \Alb_{s} X.$$
\item
\label{Compar3}
if $X$ is nonsingular, then $\nu$ is an isomorphism. \hfill \qed
\end{enumerate}
\end{proposition}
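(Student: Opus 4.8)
The plan is to deduce everything from the two universal properties $\AlbUniv$, together with the description $\Alb_w X = \Alb_s\,\widetilde X$ for a birational morphism $\widetilde X \to X$ from a nonsingular variety; it is convenient to take $\widetilde X = \reg X$ with $\iota$ the open immersion, so that $f_w$ restricts on $\reg X$ to the Albanese--Serre morphism of $\reg X$ and is in particular a genuine morphism there. For \eqref{Compar1}, note that $f_s$ is an admissible \emph{rational} map from $X$ to the abelian variety $\Alb_s X$, so the universal property $\AlbUniv$ of $\Alb_w X$ yields a unique $k$-homomorphism $\nu \colon \Alb_w X \to \Alb_s X$ with $f_s = \nu \, _{\circ} f_w$. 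If $U \ni x_0$ is an open set on which $f_w$ is defined, then $\nu(\Alb_w X)$ is an abelian subvariety of $\Alb_s X$ containing $\nu(f_w(U)) = f_s(U)$; since the smallest abelian subvariety of $\Alb_s X$ containing $f_s(U)$ is all of $\Alb_s X$ (the property recorded just after $\AlbUniv$), $\nu$ is surjective. For \eqref{Compar3}, when $X$ is nonsingular one may take $\widetilde X = X$ and $\iota = \mathrm{id}$, so that $\Alb_w X = \Alb_s X$ and $f_w = f_s$; then $\mathrm{id}$ satisfies $f_s = \mathrm{id} \, _{\circ} f_w$, and uniqueness in $\AlbUniv$ forces $\nu = \mathrm{id}$.

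The substance is \eqref{Compar2}. As $\nu$ is surjective it induces an isomorphism $(\Alb_w X)/\ker\nu \isom \Alb_s X$ automatically, so the only point is that $B := \ker\nu$ is connected. First I would record a minimality property: if $\Alb_w X \to (\Alb_w X)/H$ is a quotient for which $\pi_H \, _{\circ} f_w$ is a morphism (it is automatically admissible), then by the universal property of $\Alb_s X$ it factors as $\psi \, _{\circ} f_s$, and comparing the two homomorphisms $\pi_H$ and $\psi \, _{\circ} \nu$ on the generating set $f_w(U)$ gives $\pi_H = \psi \, _{\circ} \nu$, hence $\ker\nu \subseteq H$. Thus $B$ is the smallest closed subgroup scheme $H$ of $\Alb_w X$ for which $\pi_H \, _{\circ} f_w$ is a morphism, and it suffices to prove that $\pi_{B^0} \, _{\circ} f_w$ is already a morphism, where $B^0$ is the identity component of $B$ (then $B \subseteq B^0$, so $B$ is connected).

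To see this, observe that the isogeny $q \colon (\Alb_w X)/B^0 \to (\Alb_w X)/B = \Alb_s X$ has kernel $B/B^0$, which is a finite \'etale group scheme since $k$ is perfect, so $q$ is finite \'etale. The rational map $\tilde g := \pi_{B^0} \, _{\circ} f_w$ satisfies $q \, _{\circ} \tilde g = f_s$ and is defined on $\reg X$, whose complement $\sing X$ has codimension $\geq 2$ in the (integral) variety $X$ because $X$ is normal. Pulling $q$ back along $f_s$ gives a finite \'etale cover $W \to X$, and $\tilde g$ yields a section of $W \to X$ over $\reg X$; letting $W_0$ be the closure of its image, taken with reduced structure, the morphism $W_0 \to X$ is finite (a closed subscheme of $W$, which is finite over $X$) and birational (an isomorphism over the dense open set $\reg X$), hence an isomorphism because $X$ is normal. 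Its inverse extends the section across $\sing X$, so $\tilde g$ extends to a morphism $X \to (\Alb_w X)/B^0$, and \eqref{Compar2} follows.

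I expect this extension step to be the main obstacle, and it is precisely where normality of $X$ is used essentially: the projective cone over an elliptic curve already shows that $\nu$ need not be an isomorphism (there $\Alb_w X$ is the elliptic curve while $\Alb_s X$ is trivial), so one cannot replace \eqref{Compar2} by a stronger statement and cannot avoid the covering-space argument, which in turn rests on the standard fact that a finite birational morphism onto a normal variety is an isomorphism.
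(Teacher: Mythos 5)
The paper does not prove this Proposition; it records the statement with a reference to \cite[Th.\ 6]{Serre1} and no argument. I have therefore checked your proof on its own terms, and it is correct.

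Parts (i) and (iii) are handled correctly. For (i), the image of $\nu$ is an abelian subvariety of $\Alb_s X$ containing $f_s(U)$, and by the property recorded just after $\AlbUniv$ the smallest such subvariety is all of $\Alb_s X$, so $\nu$ is surjective. For (iii), taking $\widetilde X = X$ identifies $\Alb_w X$ with $\Alb_s X$ and $f_w$ with $f_s$, whence $\nu = \mathrm{id}$ by the uniqueness in $\AlbUniv$.

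For (ii), both ingredients you introduce are the right ones. The minimality principle -- that $\ker\nu$ is the smallest subgroup $H \subseteq \Alb_w X$ for which $\pi_H \circ f_w$ is a morphism -- is correctly derived: $\pi_H \circ f_w$ is an admissible morphism and so factors as $\psi \circ f_s = \psi \circ \nu \circ f_w$, and since the homomorphisms $\pi_H$ and $\psi \circ \nu$ agree on the generating set $f_w(U)$ they coincide, giving $\ker\nu \subseteq H$. The covering-space argument then settles the matter: $q\colon (\Alb_w X)/B^{0} \to \Alb_s X$ has finite \'etale kernel $B/B^{0}$ (the component group of the reduced, hence smooth over perfect $k$, group $B$), so $W = X\times_{\Alb_s X}(\Alb_w X)/B^{0} \to X$ is finite \'etale; the closure $W_{0}$ of the graph of $\pi_{B^{0}}\circ f_w$ over $\reg X$ is an integral scheme, finite and birational over the normal variety $X$, hence isomorphic to it, and the inverse extends the section across $\sing X$. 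This is a complete proof, and you correctly identify where normality enters and why (via the cone example) nothing stronger can hold.

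One minor presentational point: you bring up the codimension-$\geq 2$ estimate for $\sing X$ at the start of the extension step, but the finite-birational-onto-normal argument that closes it never uses codimension; density of $\reg X$ plus normality alone suffice. Retaining the mention risks suggesting an appeal to an extension theorem for rational maps into abelian varieties off a codimension-two locus, which -- as you yourself note via the cone over a curve -- fails for merely normal $X$ and is precisely what the finite \'etale cover is there to replace.
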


\begin{example}
\label{ExCone2}
Notice that $\Alb_{s} X$ is not a birational invariant and moreover
that the inequality $\dim \Alb_{s} X < \dim \Alb_{w} X$ can occur. For
instance, let $C$ be a nonsingular plane curve of genus $g$, defined over
$k$, and let $X$ be the normal projective cone in $\PP^{3}_{k}$ over $C$, as
in Example \ref{ExCone1}. Since $X$ is an hypersurface, $\Alb_{s} X$ is
trivial by Remark \ref{PicZero} below. On the other hand, $X$ is birationally
equivalent to the nonsingular projective surface $\widetilde{X} = C \times
\PP^{1}$. Since any rational map from $\Aff^{1}$ to an abelian variety is
constant, the abelian variety $\Alb_{w} X = \Alb_{w} \widetilde{X}$ is equal
to the Jacobian $\Jac C = \Alb_{w} C$ of $C$, an abelian variety of
dimension $g$.
\end{example}

Let $X$ be a normal projective variety defined over $k$.
The \emph{Picard-Serre variety} $\Pic_{s} X$ of $X$ is the dual abelian
variety of $\Alb_{s} X$. The abelian variety $\Pic_{s} X$ should not be
confused with $\Pic_{w} X$, the \emph{Picard-Weil variety} of $X$ \cite[p.
114]{Lang}, \cite{Seshadri}, which is the dual abelian
variety of $\Alb_{w} X$ \cite[Thm. 1, p. 148]{Lang}.

One can also define the Picard-Serre variety from the \emph{Picard scheme}
$\PicS_{X/k}$ of $X$ \cite{Picard}, \cite[Ch. 8]{BLR}, which is a
separated commutative group scheme locally of finite type over $k$. Its
identity component $\PicS_{X/k}^{0}$ is an abelian scheme defined over $k$,
and $\Pic_{s} X = (\PicS^{0}_{X/k})_{\red}$ \cite[Thm. 3.3(iii), p.
237]{Picard}.

\begin{remark}
[complete intersections]
\label{PicZero}
The Zariski tangent space at the origin of $\PicS_{X/k}$ is the coherent
cohomology group $H^{1}(X, \mathcal{O}_{X})$ \cite[p. 236]{Picard}, and
hence,
$$\dim \PicS_{X/k} \leq \dim H^{1}(X, \mathcal{O}_{X}).$$
For instance, if $X$ is a projective normal complete intersection of
dimension $\geq 2$, a theorem of Serre \cite[Ex. 5.5, p. 231]{Ha} asserts
that $H^{1}(X, \mathcal{O}_{X}) = 0$; hence, $\Pic_{s} X$ and $\Alb_{s} X$
are trivial for such a scheme.
\end{remark}

If $\varphi : Y \longrightarrow X$ is a rational map defined over $k$, and
if
$f(Y)$ is reduced, then there exists one and only one homomorphism
$\varphi_{*} : \Alb_{w} Y \longrightarrow \Alb_{w} X$ defined over $k$ such
that the following diagram is commutative :
$$
\begin{CD}
Y                   & @>{\varphi}>>     & X \\
@V{\tilde{g}_{w}}VV &                   & @V{\tilde{f}_{w}}VV \\
\Alb_{w} Y          & @>{\varphi_{*}}>> & \Alb_{w} X
\end{CD}
$$
We use this construction in the following situation. Recall that the set
$\EuScript{U}_{r}(X)$ has been defined in section \ref{SingLoc}.

\begin{proposition}
\label{ChowSerreWeil}
Let $X$ be a projective variety of dimension $n$ embedded in $\ProjCan$.
If $1 \leq r \leq n - 1$, and if $E \in \EuScript{U}_{r}(X)$, let $Y =
X \cap E$ be the corresponding linear section of dimension $n - r$, and let
$\iota : Y \longrightarrow X$ be the canonical closed immersion.
\begin{enumerate}
\item
\label{CSW1}
If $n - r \geq 2$, the set of $E \in \EuScript{U}_{r}(X)$ such that
$\iota_{*}$ is a purely inseparable isogeny contains a nonempty open set of
$\mathcan{G}_{N - r,N}$.
\item
\label{CSW2}
The set of $E \in \EuScript{U}_{n - 1}(X)$ such that $\iota_{*}$ is
surjective contains a nonempty open set of $\mathcan{G}_{N - n - 1,N}$. If
$E$ belongs to this set, then $Y = X \cap E$ is a curve with
$$\dim \Alb_{w} X \leq \dim \Jac Y.$$
\item
\label{CSW3}
If $\deg X = d$, if $Y = X \cap E$ is a curve as in (ii), and 
if $\widetilde{Y}$ is a nonsingular projective curve birationally equivalent
to $Y$, then
$$
\dim \Alb_{w} X \leq g(\widetilde{Y}) \leq \dfrac{(d - 1)(d - 2)}{2}.
$$
\end{enumerate}
\end{proposition}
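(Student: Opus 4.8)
The plan is to handle all three parts through generic linear sections: I reduce (i) to the codimension-one case, deduce (ii) directly from the universal property of the Albanese variety, and obtain (iii) as a formal consequence of (ii) together with Lemma~\ref{ArithGen}. Throughout, Proposition~\ref{BertiniFlag} and the properties of regular flags guarantee that a general section $Y = X \cap E$ is an irreducible variety of the expected dimension with $\deg Y = \deg X$, and I use repeatedly that $\Alb_{w}$ is a birational invariant and that, by the functoriality recorded just before the Proposition, an inclusion of sections induces a homomorphism of Albanese varieties. For (i): pick a regular flag $\ProjCan = E_{0} \supset E_{1} \supset \cdots \supset E_{r} = E$ for $X$ and set $X_{0} = X$, $X_{m} = X \cap E_{m}$, so each $X_{m}$ is an irreducible projective variety of dimension $n - m$; since $n - r \geq 2$, every pair $(X_{m-1}, X_{m})$ consists of an irreducible variety $X_{m-1}$ of dimension $\geq 3$ and a general hyperplane section $X_{m}$ of dimension $\geq 2$. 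Functoriality factors $\iota_{*}$ as the composite
$$
\Alb_{w} X_{r} \longrightarrow \Alb_{w} X_{r-1} \longrightarrow \cdots \longrightarrow \Alb_{w} X_{0},
$$
and a composite of purely inseparable isogenies is one; hence it is enough to prove: if $Z$ is an irreducible projective variety of dimension $m \geq 3$ and $W = Z \cap H$ is a general hyperplane section (an irreducible variety of dimension $m - 1 \geq 2$), then $\iota_{*} : \Alb_{w} W \longrightarrow \Alb_{w} Z$ is a purely inseparable isogeny.

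To prove this, observe first that by Lemma~\ref{IneqSing}\eqref{IneqSing1}, for general $H$ one has $\reg W = (\reg Z \cap H) \setminus N(Z,H) \subseteq \reg Z =: U$, so $\reg W$ is a dense smooth open subscheme of $Z \cap H$, open in the hyperplane section $U \cap H$ of the smooth variety $U$, and $\iota_{*}$ is precisely the map $\Alb_{s}(\reg W) \to \Alb_{s}(\reg Z)$ induced by the inclusion $\reg W \hookrightarrow \reg Z$. Surjectivity of $\iota_{*}$: if $A \subset B := \Alb_{w} Z$ is its image and $q : B \to B/A$ the quotient, then $q \circ f_{w}$ is an admissible rational map from $Z$ to $B/A$ that is constant on $W$; since an admissible rational map constant on a general member of the linear system of hyperplane sections must be constant, $A = B$. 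It then remains to see that $\ker \iota_{*}$ is finite and connected; this is the Lefschetz theorem for the Picard variety. Using the birational invariance of $\Alb_{w}$ I may replace $Z$ by its normalization and $W$ by the corresponding general section (normalization commuting with a general hyperplane section since $\dim W \geq 2$), i.e.\ assume $Z$ normal; the classical Lefschetz theorem then gives that $\Pic^{0} Z \to \Pic^{0} W$ is a purely inseparable isogeny, and dualizing gives the claim. (Alternatively, one may argue through the birational invariance of $H^{2n-1}_{+}$ in Proposition~\ref{BirInv} and the comparison between $H^{2n-1}_{+}$ and the Albanese variety.)

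For (ii), $Y = X \cap E$ is a curve for $E$ in the nonempty set $\EuScript{U}_{n-1}(X)$ (Proposition~\ref{BertiniFlag}). As $Y$ ranges over the curve sections of $X$, the images $\iota_{Y,*}(\Jac Y)$ generate $\Alb_{w} X$, since the corresponding translated cosets cover $f_{w}(\reg X)$ and the latter generates $\Alb_{w} X$. A general curve section realizes the generic, hence maximal, such image $A_{0}$, and $A_{0} = \Alb_{w} X$: otherwise the composite of $f_{w}$ with the quotient $\Alb_{w} X \to \Alb_{w} X / A_{0} \neq 0$ would be a nonconstant admissible rational map constant on every general curve section, which is impossible because a nonconstant morphism is nonconstant on a general member of a family of curve sections. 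Since the dimension of the image of $\iota_{Y,*}$ is upper semicontinuous in $Y$, the locus of $E$ where $\iota_{*}$ is surjective is open, and by the above it is nonempty; for such $E$ one has $\dim \Alb_{w} X \leq \dim \Alb_{w} Y = \dim \Jac Y$. (The failure of $\iota_{*}$ to be an isogeny in general is the expected degenerate behaviour of Lefschetz in the curve-section case, cf.\ Example~\ref{ExCone2}.) Part (iii) is now immediate: the curve $Y = X \cap E$ is integral of degree $\deg Y = \deg X = d$ (definition of a regular flag, Section~\ref{SingLoc}), so Lemma~\ref{ArithGen}\eqref{ArithGen1} gives $2\,g(\widetilde{Y}) \leq (d-1)(d-2)$; since $\dim \Jac Y = g(\widetilde{Y})$, combining with (ii) yields $\dim \Alb_{w} X \leq g(\widetilde{Y}) \leq (d-1)(d-2)/2$.

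The step I expect to be the main obstacle is the finiteness and connectedness of $\ker \iota_{*}$ in part (i): surjectivity is soft, part (ii) uses only the universal property, and part (iii) is purely formal, but the purely-inseparable-isogeny assertion is the genuine Lefschetz content. The technical wrinkle is that $\reg Z$ and $\reg W$ are only quasi-projective, so one cannot invoke the weak Lefschetz theorems of Section~\ref{WeakLefschetz} directly for their cohomology; the reduction to the normalization of $Z$ (where the classical Lefschetz theorem for Picard varieties applies) --- or, alternatively, the route through the birational invariance of $H^{2n-1}_{+}$ --- is what gets around this. A secondary, routine point common to all three parts is verifying that the relevant conditions (``$Y$ a variety of the right degree'', ``$\iota_{*}$ surjective'', and so on) define nonempty Zariski-open subsets of the ambient Grassmannian, which is handled by semicontinuity and the incidence-correspondence arguments already used for Proposition~\ref{BertiniFlag} and Remark~\ref{BeqMum}.
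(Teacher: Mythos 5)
Your proof is, in substance, a reconstruction of the classical arguments the paper cites and does not re-prove (Chow's Theorem, Thm.\ 5, Ch.\ VIII in Lang's \emph{Abelian Varieties}, and Thm.\ 11 in Serre's \emph{Morphismes universels et vari\'et\'es d'Albanese}); the paper's own ``proof'' of \eqref{CSW1} and \eqref{CSW2} is a pointer to those references, and \eqref{CSW3} is derived from \eqref{CSW2} and Lemma \ref{ArithGen}\eqref{ArithGen1} exactly as you do. So you are filling in details the paper delegates rather than taking a genuinely different route, and your overall architecture (flag induction, separate treatment of surjectivity and finiteness of the kernel, semicontinuity for openness) is indeed how the classical proofs go.

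That said, there is a logical gap in your surjectivity argument for (i) as written. You define $A$ to be the image of $\iota_{*}$ for one fixed $W = Z \cap H$, observe that $q \circ f_{w}$ is constant on $W$ --- which holds by construction of $A$ --- and then invoke the principle that an admissible rational map constant on a general member of the linear system must be constant, to conclude $A = B$. But that principle needs $q \circ f_{w}$ to be constant on $W_{H'}$ for \emph{all} $H'$ in a nonempty open set, with the quotient $q$ fixed; \emph{a priori} the image $A_{H'} = \iota_{H',*}(\Alb_{w}(Z \cap H'))$ may vary with $H'$. You need the additional observation that $A_{H'}$ is one and the same abelian subvariety for $H'$ in a dense open subset of $\ProjDual$ (spread out over the generic point of $\ProjDual$ and use that a family of abelian subvarieties of a fixed abelian variety parametrized by an irreducible base is generically constant), and then apply the pencil/chain-of-sections argument. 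The same latent issue recurs, in milder form, in your proof of (ii). Finally, the ``alternative route'' you float for the kernel in (i) --- through the birational invariance of $H^{2n-1}_{+}$ in Proposition \ref{BirInv} and its identification with the Tate module of $\Alb_{w}$ --- would be circular in this paper's structure: Proposition \ref{AlbanToHNormal}, and hence Theorem \ref{AlbanToH}, invoke Proposition \ref{ChowSerreWeil}\eqref{CSW1} in Step 1 of their proof, so the Lefschetz--Chow theorem for Picard varieties (applied, as you do, to the normalization) is the only admissible source here.
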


\begin{proof}
The results \eqref{CSW1} and \eqref{CSW2} are classical. For instance,
assertion \eqref{CSW1} follows from induction using Chow's Theorem, viz.,
Thm. 5, Ch. VIII, p.  210 in Lang's book \cite{Lang} while assertion \eqref{CSW2} 
is stated on p. 43, $\S$ 3, Ch. II in the same book. 
See also Theorem 11 and its proof in \cite[p. 159]{Serre1} for very simple arguments 
to show that $\iota_{*}$ is surjective. 
The first inequality in \eqref{CSW3} is an immediate consequences of
\eqref{CSW2} and the second is Lemma \ref{ArithGen}\eqref{ArithGen1}.
\end{proof}

\begin{remarks}
(i)
Let $X$ be a projective variety regular in codimension one, and $Y$ a
typical curve on $X$. A theorem of Weil \cite[Cor. 1 to Thm. 7]{Weil54}
states that the homomorphism
$$\iota^{*} : \Pic_{w} X \longrightarrow \Jac Y$$
induced by $\iota$ has a finite kernel, which implies \eqref{CSW2} by
duality in this case.
\\ (ii)
Up to isogeny, any abelian variety $A$ appears as the Albanese-Weil
variety of a surface. To see this, it suffices to take a suitable linear section.
\end{remarks}

Let $A$ be an abelian variety defined over $k$, of dimension $g$. For each
integer $m \geq 1$, let $A_{m}$ denote the group of elements $a \in
A(\overline{k})$ such that $ma = 0$. Let $\ell$ be a prime number different
from the characteristic of $k$. The \emph{$\ell$-adic Tate module}
$T_{\ell}(A)$ of $A$ is the projective limit of the groups $A_{\ell^{n}}$,
with respect to the maps induces by multiplication by $\ell$; this is a free
$\ZZ_{\ell}$-module of rank $2g$, and the group $\mathbf{g}$ operates
on $T_{\ell}(A)$. The tensor product
$$V_{\ell}(A) = T_{\ell}(A) \otimes_{\ZZ_{\ell}} \QQ_{\ell}$$
is a vector space of dimension $2g$ over $\QQ_{\ell}$.

We recall the following result \cite[Lem. 5]{KatzLang} and \cite[Cor. 4.19,
p. 131]{Milne}, which gives a \emph{description in purely algebraic terms}
of $H^{1}(\bar{X}, \QQ_{\ell})$ when $X$ is a normal projective variety.

\begin{proposition}
\label{PicToH}
Let $X$ be a normal projective variety defined over $k$. Then, there is a
$\mathbf{g}$-equivariant isomorphism
$$
h_{X} : V_{\ell}(\Pic_{s} X)(-1) \isom H^{1}(\bar{X}, \QQ_{\ell}).
$$
In particular, $b_{1, \ell}(\bar{X}) = 2 \dim \Pic_{s} X$ is independent
of $\ell$.
\hfill \qedbox
\end{proposition}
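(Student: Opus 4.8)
This is essentially \cite[Lem.~5]{KatzLang} and \cite[Cor.~4.19, p.~131]{Milne}, and the plan is to reduce $H^{1}$ with finite coefficients to the Picard group via the Kummer sequence, pass to the $\ell$-adic limit, and identify the resulting Tate module with that of $\Pic_{s} X$. The $\mathbf{g}$-equivariance will be automatic throughout, since every exact sequence used will come from a sequence of sheaves (or group schemes) on $X$ that is only afterwards pulled back to $\bar{X}$.

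First I would invoke the Kummer sequence. Since $\ell \neq p$, for each $n \geq 1$ there is a short exact sequence of \'etale sheaves $0 \to \mu_{\ell^{n}} \to \mathbb{G}_{m} \to \mathbb{G}_{m} \to 0$ on $X$, the right map being multiplication by $\ell^{n}$. As $X$ is a variety, $\bar{X}$ is integral, and being projective it is proper, so $H^{0}(\bar{X}, \mathbb{G}_{m}) = \bar{k}^{\times}$, which is $\ell^{n}$-divisible. The long exact cohomology sequence of $\bar{X}$ then yields a $\mathbf{g}$-equivariant isomorphism $H^{1}(\bar{X}, \mu_{\ell^{n}}) \isom \Pic(\bar{X})[\ell^{n}]$, functorial in $n$.

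Next I would pass to the limit. Taking $\varprojlim_{n}$ along the multiplication-by-$\ell$ transition maps gives $H^{1}(\bar{X}, \ZZ_{\ell}(1)) := \varprojlim_{n} H^{1}(\bar{X}, \mu_{\ell^{n}}) \isom T_{\ell}(\Pic(\bar{X}))$. Now use that $X$ is normal and projective: the Picard functor is representable and $\Pic(\bar{X}) = \PicS_{X/k}(\bar{k})$, and there is an exact sequence $0 \to \PicS_{X/k}^{0}(\bar{k}) \to \Pic(\bar{X}) \to \NS(\bar{X}) \to 0$ with $\NS(\bar{X})$ finitely generated by the theorem of the base. Since $\PicS_{X/k}^{0}(\bar{k})$ is divisible this stays exact on $\ell^{n}$-torsion, and passing to $\varprojlim_{n}$ (the relevant $\varprojlim^{1}$ vanish by Mittag--Leffler, the terms being divisible or finite) gives $T_{\ell}(\Pic(\bar{X})) \isom T_{\ell}(\PicS_{X/k}^{0}(\bar{k}))$, because a finitely generated group has trivial $\ell$-adic Tate module. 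As $\PicS_{X/k}^{0}$ and its reduction $\Pic_{s} X = (\PicS_{X/k}^{0})_{\red}$ have the same $\bar{k}$-points, $T_{\ell}(\Pic(\bar{X})) \isom T_{\ell}(\Pic_{s} X)$; tensoring with $\QQ_{\ell}$ and untwisting by $(-1)$ produces the $\mathbf{g}$-equivariant isomorphism $h_{X} : V_{\ell}(\Pic_{s} X)(-1) \isom H^{1}(\bar{X}, \QQ_{\ell})$. For the numerical statement, $X$ is proper so $H^{1}_{c}(\bar{X},\QQ_{\ell}) = H^{1}(\bar{X},\QQ_{\ell})$, whence $b_{1,\ell}(\bar{X}) = \dim_{\QQ_{\ell}} V_{\ell}(\Pic_{s} X) = 2 \dim \Pic_{s} X$, a quantity not involving $\ell$.

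The only non-formal inputs sit in the third step: Grothendieck's representability of the Picard functor for projective $X$ (so that $\Pic(\bar{X}) = \PicS_{X/k}(\bar{k})$), the finite generation of $\NS(\bar{X})$, and the observation that possible non-reducedness of $\PicS_{X/k}^{0}$ in characteristic $p$ is invisible to $T_{\ell}$ when $\ell \neq p$, since $T_{\ell}$ depends only on $\bar{k}$-points. I expect that step to be the one requiring care; the rest is the Kummer sequence and routine inverse-limit bookkeeping.
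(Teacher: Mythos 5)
Your proposal is correct, and it reconstructs the standard Kummer-sequence argument that the paper merely cites (to Katz--Lang, Lemma 5, and Milne, Cor.\ 4.19) without reproducing; the paper gives no proof of its own, so there is no competing route to compare against. The only point worth making explicit is where normality is actually used: the Kummer step, the limit, and the reduction to $\PicS^{0}_{X/k}(\bar{k})$ via finite generation of $\NS(\bar{X})$ all work for any proper geometrically integral $X$, and normality is needed precisely to ensure $\PicS^{0}_{X/k}$ is proper, so that $\Pic_{s}X = (\PicS^{0}_{X/k})_{\red}$ is an abelian variety and one may invoke $\dim_{\QQ_{\ell}} V_{\ell}(\Pic_{s}X) = 2\dim \Pic_{s}X$ for the final equality $b_{1,\ell}(\bar{X}) = 2\dim \Pic_{s}X$.
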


\begin{remarks}
(i)
If $X$ is a normal complete intersection, then Proposition \ref{PicToH} and
Example \ref{PicZero} imply $H^{1}(\bar{X}, \QQ_{\ell}) = 0$, in accordance
with Proposition \ref{Bsci}\eqref{Bsci3}. \\
(ii)
If $X$ is a normal projective variety, we get from Proposition \ref{PicToH}
a $\mathbf{g}$-equivariant isomorphism
$$H^{1}(\Alb_{s} \bar{X}, \QQ_{\ell}) \isom H^{1}(\bar{X},\QQ_{\ell}).$$
\end{remarks}

\begin{proposition}
\label{AlbanToHNormal}
Let $X$ be a normal projective variety of dimension $n \geq 2$ defined
over $k$ which is regular in codimension $2$. Then there is a
$\mathbf{g}$-equivariant isomorphism
$$
j_{X} : V_{\ell}(\Alb_{w} X) \isom H^{2n - 1}(\bar{X},\QQ_{\ell}(n)).
$$
If $\resol$ holds, the same conclusion is true if one only assumes that $X$ is regular in codimension $1$.
\end{proposition}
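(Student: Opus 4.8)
The plan is to reduce both assertions to the classical case of a nonsingular projective variety. For a nonsingular projective variety $Z$ of dimension $m$ over $k$ there is a canonical $\mathbf{g}$-equivariant isomorphism
$$
j_{Z} : V_{\ell}(\Alb_{w} Z) \isom H^{2m - 1}(\bar{Z},\QQ_{\ell}(m)),
$$
obtained by combining Poincar\'e Duality, which gives $H^{2m - 1}(\bar{Z},\QQ_{\ell}(m)) \cong H^{1}(\bar{Z},\QQ_{\ell})^{\vee}$, with Proposition \ref{PicToH}, which gives $H^{1}(\bar{Z},\QQ_{\ell}) \cong V_{\ell}(\Pic_{s} Z)(- 1)$, the Weil pairing isomorphism $V_{\ell}(A)^{\vee} \cong V_{\ell}(A^{\vee})(- 1)$ (applied to $A = \Pic_{s} Z$, using $(\Pic_{s} Z)^{\vee} = \Alb_{s} Z$), and Proposition \ref{Compar}\eqref{Compar3} (which says $\Alb_{s} Z = \Alb_{w} Z$ for $Z$ nonsingular); every step here is $\mathbf{g}$-equivariant.

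First I would treat the case in which $X$ is regular in codimension two, so $\dim \sing X \leq n - 3$; the case $n = 2$ is exactly the nonsingular case above, so assume $n \geq 3$, and put $r = n - 2$. By Corollary \ref{Equiv} there is a nonsingular proper linear section of codimension $r$ of $X$, and by Proposition \ref{BertiniFlag} together with Proposition \ref{ChowSerreWeil}\eqref{CSW1} (applicable since $n - r = 2 \geq 2$) one may, after possibly replacing $k$ by a finite extension, choose $E$ in a nonempty open subset of $\mathcan{G}_{N - r,N}$ so that $E \in \EuScript{U}_{r}(X)$, so that, writing $S = X \cap E$, the variety $S$ is nonsingular of dimension $2$, and so that the induced homomorphism $\iota_{*} : \Alb_{w} S \to \Alb_{w} X$ is a purely inseparable isogeny. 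Since $2n - 1 \geq n + r + 1$, Theorem \ref{LefschetzSGA} (for the nonsingular section $S$ of codimension $r$) gives a $\mathbf{g}$-equivariant isomorphism $H^{3}(\bar{S},\QQ_{\ell}(2 - n)) \isom H^{2n - 1}(\bar{X},\QQ_{\ell})$, which after a Tate twist by $n$ becomes $H^{3}(\bar{S},\QQ_{\ell}(2)) \isom H^{2n - 1}(\bar{X},\QQ_{\ell}(n))$. Composing with $j_{S} : V_{\ell}(\Alb_{w} S) \isom H^{3}(\bar{S},\QQ_{\ell}(2))$ and with the isomorphism $V_{\ell}(\Alb_{w} X) \isom V_{\ell}(\Alb_{w} S)$ induced by the purely inseparable isogeny $\iota_{*}$ (an isomorphism on Tate modules because $\ell \neq p$) yields $j_{X}$.

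For the second assertion, note that a normal variety is automatically regular in codimension one. Assuming $\resol$, there is a nonsingular projective variety $\widetilde{X}$ of dimension $n$ with a proper birational morphism $\widetilde{X} \to X$. Since the Albanese--Weil variety is a birational invariant, $V_{\ell}(\Alb_{w} X) \cong V_{\ell}(\Alb_{w} \widetilde{X}) \isom H^{2n - 1}(\overline{\widetilde{X}},\QQ_{\ell}(n))$ by the nonsingular case. On the other hand, choosing a common dense open set $U$ of $X$ and of $\widetilde{X}$, Proposition \ref{BirInv}\eqref{BirInv2} gives $H^{2n - 1}_{+}(\overline{\widetilde{X}}) \cong H^{2n - 1}_{+}(\bar{U}) \cong H^{2n - 1}_{+}(\bar{X})$; moreover $H^{2n - 1}(\overline{\widetilde{X}}) = H^{2n - 1}_{+}(\overline{\widetilde{X}})$ because $\widetilde{X}$ is nonsingular and projective, and $H^{2n - 1}(\bar{X}) = H^{2n - 1}_{+}(\bar{X})$ because $X$ is regular in codimension one, so that $H^{2n - 1}(\bar{X})$ is pure of weight $2n - 1$ by Proposition \ref{PolyDiv}\eqref{PolyDiv1}. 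Chaining these $\mathbf{g}$-equivariant isomorphisms gives $j_{X}$.

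The step I expect to be the main obstacle is descent: in the first assertion the good section $S$, hence the isomorphism it produces, may a priori be defined only over a finite extension $k'/k$, so one obtains only $\Gal(\bar{k}/k')$-equivariance. To recover $\mathbf{g}$-equivariance one uses that the geometric Frobenius $F$ acts semisimply both on $V_{\ell}(\Alb_{w} X)$ and on $H^{2n - 1}(\bar{X},\QQ_{\ell})$ — the latter being, by Theorem \ref{LefschetzSGA} applied to a typical curve $Y$ on $X$, a quotient of $H^{1}(\bar{Y},\QQ_{\ell}(1 - n))$, on which $F$ is semisimple — so that it suffices to match the characteristic polynomials of $F$ over $k$ itself. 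This in turn is forced by the purity in Proposition \ref{PolyDiv}\eqref{PolyDiv1}, by the divisibility $P_{2n - 1}(X,T) \mid P_{1}(Y, q^{n - 1}T)$ of Proposition \ref{PolyDiv}\eqref{PolyDiv2}, and by the analogous divisibility for $\Alb_{w} X$ coming from the surjection $\iota_{*} : \Jac Y \to \Alb_{w} X$ of Proposition \ref{ChowSerreWeil}\eqref{CSW2}, for a typical curve $Y$ defined over $k$. (Alternatively the whole argument can be run directly through a typical curve $Y$, combining these two surjections; the obstacle there is to identify their kernels, for which one invokes Weil's theorem that $\iota^{*} : \Pic_{w} X \to \Jac Y$ has finite kernel.)
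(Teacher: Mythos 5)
Your overall structure agrees with the paper's: the nonsingular case via Poincar\'e Duality, Proposition \ref{PicToH}, the Weil pairing, and Proposition \ref{Compar}\eqref{Compar3}; a nonsingular surface section $S$ with $\iota_{*} : \Alb_{w} S \to \Alb_{w} X$ a purely inseparable isogeny from Propositions \ref{BertiniFlag} and \ref{ChowSerreWeil}\eqref{CSW1}, followed by the Gysin isomorphism of Theorem \ref{LefschetzSGA}; and the reduction of the $\resol$ case to the nonsingular one via birational invariance of $\Alb_{w}$ and of $H^{2n - 1}_{+}$ together with the purity of Proposition \ref{PolyDiv}\eqref{PolyDiv1}. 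You also correctly identify descent from a finite extension $k'$ to $k$ as the real difficulty when $k$ is finite.

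However, the way you propose to close the descent gap is not a valid deduction. Semisimplicity of $g_{1}$ and $g_{2}$ does reduce the problem to matching their characteristic polynomials, but these are not ``forced'' by the purity and divisibility you invoke: two distinct monic polynomials of degree $2g$ (with $g = \dim \Alb_{w} X$) can both divide $P_{1}(Y, q^{n - 1}T)$, whose degree is twice the genus of $Y$ and in general much larger than $2g$. Nor does knowing that $g_{1}^{s}$ and $g_{2}^{s}$ have the same characteristic polynomial for the single $s = [k' : k]$ settle the matter, since eigenvalues differing by an $s$-th root of unity have the same $s$-th powers. The parenthetical alternative has the same defect: equal dimensions of the two kernels, plus Weil's finite-kernel theorem, do not identify the kernels as $\mathbf{g}$-submodules. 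What the paper supplies and your sketch lacks are two extra ingredients: (a) a Claim that the open set $U \subset \mathcan{G}_{N - n + 2, N}$ of good sections satisfies $U(k_{s}) \neq \emptyset$ for every $s \geq s_{0}(U)$ (a consequence of Proposition \ref{BoundAlgSet}), so that $g_{1}^{s}$ is conjugate to $g_{2}^{s}$ for infinitely many primes $s$; and (b) the Conjugation Lemma, which via a B\'ezout argument concludes from semisimplicity of $g_{1}$ and conjugacy of $s$-th powers for infinitely many primes $s$ that $g_{1}$ and $g_{2}$ are conjugate. Your extra observation that $g_{2}$ is also semisimple, being a quotient of Frobenius on $H^{1}$ of a nonsingular curve, is correct and would streamline the Conjugation Lemma, but the infinitely-many-primes input is still essential; replacing it by divisibility alone does not work.
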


\begin{proof}
\emph{Step $1$}.
Assume that $X$ is a subvariety in $\mathcan{P}^{N}_{k}$. Since $X$ is regular
in codimension $2$, we deduce from Proposition \ref{BertiniFlag} and Corollary
\ref{Equiv} that $\EuScript{U}_{n - 2}(X)$ contains a nonempty Zariski
open set $U_{0}$ in the Grassmannian $\mathcan{G}_{N - n + 2,N}$.
On the other hand, any open set defined over $\bar{k}$
is defined over a finite extension $k'$, and contains an open set defined over $k$
(take the intersection of the transforms by the Galois group of $k'/k$).
Let $U_{1} \subset U_{0}$ be an open set defined over $k$.
If $E \in U_{1}$, then $Y = X \cap E$ is a \emph{typical surface} on $X$ over $k$, \idest, a
nonsingular proper linear section of dimension $2$ in $X$. For such a typical
surface $Y$, the closed immersion $\iota : Y \longrightarrow X$ induces a
homomorphism $\iota_{*} : \Alb_{w} Y \longrightarrow \Alb_{w} X$.
By Proposition \ref{ChowSerreWeil}\eqref{CSW1}, the set $U$ of linear
varieties $E \in U_{1}$ such that $\iota_{*}$ is a purely inseparable isogeny
contains as well a nonempty open subset $U \subset \mathcan{G}_{N - n + 2,N}$ which is
defined over $k$.

\emph{Step $2$}.
Assume that $U(k)$ is nonempty. If $E \in U(k)$, we get a
$\mathbf{g}$-equivariant isomorphism
$$V_{\ell}(\iota_{*}) : V_{\ell}(\Alb_{w} Y) \isom V_{\ell}(\Alb_{w} X).$$
Since $Y$ is nonsingular, we get from Poincar\'e Duality Theorem for
nonsingular varieties \cite[Cor. 11.2, p. 276]{Milne} a
$\mathbf{g}$-equivariant nondegenerate pairing
\begin{equation*}
H^{1}(\bar{Y},\QQ_{\ell}) \times H^{3}(\bar{Y},\QQ_{\ell}(2))
\longrightarrow \QQ_{\ell},
\end{equation*}
from which we deduce a $\mathbf{g}$-equivariant isomorphism
$$
\psi : \Hom(H^{1}(\bar{Y}, \QQ_{\ell}),\QQ_{\ell}) \longrightarrow
H^{3}(\bar{Y}, \QQ_{\ell}(2)).
$$
Since $(X, Y)$ is a semi-regular pair with $Y$ nonsingular, from Corollary
\ref{LefschetzSGA} we know that the Gysin map
$$
\iota_{*} : H^{3}(\bar{Y},\QQ_{\ell}(2 - n)) \longrightarrow
H^{2n - 1}(\bar{X},\QQ_{\ell})
$$
is an isomorphism. Now a $\mathbf{g}$-equivariant isomorphism of vector spaces
over $\QQ_{\ell}$:
$$
j_{X} : V_{\ell}(\Alb_{w} X) \isom H^{2n - 1}(\bar{X},\QQ_{\ell}(n))
$$
is defined as the isomorphism making the following diagram commutative:
$$
\begin{CD}
\Hom(V_{\ell}(\Pic_{s} Y)(-1),\QQ_{\ell}) & @>{\varpi}>{\sim}>
& V_{\ell}(\Alb_{w} Y) & @>{V_{\ell}(\iota_{*})}>{\sim}>
& V_{\ell}(\Alb_{w} X) \\
@V{^{t}h_{Y}}V{\sim}V & & & & & & @V{j_{X}}VV \\
\Hom(H^{1}(\bar{Y}, \QQ_{\ell}),\QQ_{\ell}) & @>{\psi}>{\sim}>
& H^{3}(\bar{Y}, \QQ_{\ell})(2) & @>{\iota_{*}}>{\sim}>
& H^{2n - 1}(\bar{X}, \QQ_{\ell}(n))
\end{CD}
$$
Here $\varpi$ is defined by the Weil pairing, and $^{t}h_{Y}$ is the
transpose of the map $h_{Y}$ defined in Proposition \ref{PicToH}. Hence, the
conclusion holds if $U(k) \neq \emptyset$.

\emph{Step $3$}.
Assume that $k$ is an infinite field. One checks successively that if $U$ is an
open subset in an affine line, an affine space, or a Grassmannian, then $U(k)
\neq \emptyset$ and the conclusion follows from Step
$2$.

\emph{Step $4$}. Assume that $k$ is a finite field. Then the
following elementary result holds (as a consequence of
Proposition \ref{BoundAlgSet} below, for instance).

\begin{claim*}
\label{FiniteOpen}
Let $U$ be a nonempty Zariski open set in $\mathcan{G}_{r,N}$, defined over
$k$, and $k_{s} = \FF_{q^{s}}$ the extension of degree $s$ of $k = \Fq$. Then
there is an integer $s_{0}(U)$ such that $U(k_{s}) \neq \emptyset$ for every
$s \geq s_{0}(U)$.
\end{claim*}

Now take for $U$ the open set in $\mathcan{G}_{N - n + 2,N}$ introduced in
Step $1$. Choose any $s \geq s_{0}(U)$, and let $\mathbf{g}_{s}
= \Gal(\bar{k} / k_{s})$. Since
$U(k_{s}) \neq \emptyset$, upon replacing $k$ by $k_{s}$, we deduce from Step $2$
a $\mathbf{g}_{s}$-equivariant isomorphism of $\QQ_{\ell}$-vector spaces:
$$
j_{X, s} : V_{\ell}(\Alb_{w} X) \isom H^{2n - 1}(\bar{X},\QQ_{\ell}(n)).
$$
This implies in particular that if $m = 2 \dim \Alb_{w} X$, then
$$
\dim H^{2n - 1}(\bar{X},\QQ_{\ell}(n)) = \dim V_{\ell}(\Alb_{w} X) = m.
$$
In each of these spaces, there is an action of $\mathbf{g} = \mathbf{g}_{1}$.
By choosing bases, we identify both of them with $\QQ_{\ell}^{m}$. Denote by
$g_{1} \in \GL_{m}(\QQ_{\ell})$ the matrix of the endomorphism
$V_{\ell}(\varphi)$, where $\varphi \in \mathbf{g}$ is the geometric
Frobenius, and by $g_{2} \in \GL_{m}(\QQ_{\ell})$ the matrix of the
Frobenius operator in  $H^{2n - 1}(\bar{X},\QQ_{\ell}(n))$. The existence of
the $\mathbf{g}_{s}$-equivariant isomorphism $j_{X, s}$ implies that
$g_{1}^{s}$ and $g_{2}^{s}$ are conjugate. In order to finish the proof when
$k$ is finite, we must show that $g_{1}$ and $g_{2}$ are conjugate.
This follows from the Conjugation Lemma below, since  $g_{1}$ is
semi-simple by \cite[p. 203]{Mumford2}.

\emph{Step $5$}. Assume now that $\resol$ holds and that $X$ is regular in codimension $1$. 
Take $\widetilde{X}$ to be a nonsingular projective variety birationally equivalent to $X$ over $k$.
Then $\Alb_{w} \widetilde{X} = \Alb_{w} X$ since the Albanese-Weil variety is a
birational invariant, and
$$
H^{2n - 1}_{+}(\widetilde{X} \otimes \bar{k}, \QQ_{\ell}(n)) =
H^{2n - 1}_{+}(X \otimes \bar{k}, \QQ_{\ell}(n)),
$$
by Proposition 8.1(ii). Now it is well known that
$H^{2n - 1}(\widetilde{X} \otimes \bar{k}, \QQ_{\ell})$ is pure, and the
same holds for $X$, by Prop. 8.7(i). Hence,
$$
H^{2n - 1}(\widetilde{X} \otimes \bar{k}, \QQ_{\ell}(n)) =
H^{2n - 1}(X \otimes \bar{k}, \QQ_{\ell}(n)).
$$
Since the conclusion is true for a \emph{nonsingular} variety, we obtain a
$\mathbf{g}$-equivariant map
$$
j_{\widetilde{X}} : V_{\ell}(\Alb_{w} \widetilde{X}) \isom H^{2n -
1}(\widetilde{X} \otimes \bar{k},\QQ_{\ell}(n)),
$$
and this gives the required $\mathbf{g}$-equivariant isomorphism.
\end{proof}

\begin{ConjLemma*}
\label{LemConjug}
Let $K$ be a field of characteristic zero, and let $g_{1}$ and $g_{2}$ be two matrices
in $\GL_{n}(K)$, with $g_{1}$ semi-simple. If $g_{2}^{s}$ is conjugate to $g_{1}^{s}$ for infinitely many prime numbers $s$, then $g_{2}$ is conjugate to $g_{1}$.
\end{ConjLemma*}

\begin{proof}
Let $g_{2} = s u$ be the multiplicative Jordan decomposition of $g_{2}$ into
its semi-simple and unipotent part. Take $a$ and $b$ prime with $g_{2}^{a}$
conjugate to $g_{1}^{a}$. Then $s^{a} u^{a}$ is conjugate to
$g_{1}^{a}$, and hence, $u^{a} = \mathbf{I}$, by the uniqueness of the Jordan
decomposition. Similarly, we find $u^{b} = \mathbf{I}$. Hence $u =
\mathbf{I}$ with the help of B\'ezout's equation, and $g_{2}$ is semisimple.

Take now two diagonal matrices $d_{1}$ and $d_{2}$ in $\GL_{n}(\bar{K})$ such
that $g_{i}$ is conjugate to $d_{i}$ in $\GL_{n}(\bar{K})$. Two conjugate
diagonal matrices are conjugate by an element of the group $W$ of permutation
matrices: if $d_{1}^{s}$ and $d_{2}^{s}$ are conjugate, then $d_{2}^{s}
= (w_{s} d_{1} w_{s}^{- 1})^{s}$ with
$w_{s} \in W$. Since $W$ is finite, one of the sets
$$
T(w) = \set{s \in \NN}{d_{2}^{s} = (w d_{1} w^{- 1})^{s}}
$$
contains infinitely many prime numbers. Take two prime numbers $a$
and $b$ in that set, then
$$
d_{2}^{a} = h_{1}^{a}, \quad d_{2}^{b} = h_{1}^{b},
\quad h_{1} = w d_{1} w^{- 1},
$$
from which we deduce $d_{2} = h_{1}$ by B\'ezout's equation. This implies
that $d_{1}$ and $d_{2}$  are conjugate in $\GL_{n}(\bar{K})$, and the same
holds for $g_{1}$ and $g_{2}$. But two elements of
$\GL_{n}(K)$ which are conjugate in $\GL_{n}(\bar{K})$ are conjugate in
$\GL_{n}(K)$.
\end{proof}

\begin{remark}
\label{TrivAlb}
Let $X$ be a complete intersection of dimension $\geq 2$ which is regular in
codimension $2$. Then Proposition \ref{AlbanToHNormal} implies that the
Albanese-Weil variety $\Alb_{w} X$ is trivial, since Proposition
\ref{Bsci}\eqref{Bsci1} implies that $H^{2n - 1}(\bar{X},\QQ_{\ell}) = 0$.
\end{remark}

The following result is a weak form of Poincar\'e Duality between the first
and the penultimate cohomology spaces of some singular varieties.

\begin{corollary}
\label{InjH}
Let $X$ be a normal projective variety of dimension $n \geq 2$ defined over
$k$ regular in codimension $2$. Then there is a $\mathbf{g}$-equivariant
injective linear map
$$
H^{1}(\bar{X},\QQ_{\ell}) \longrightarrow
\Hom(H^{2n - 1}(\bar{X},\QQ_{\ell}(n)),\QQ_{\ell}).
$$
If $\resol$ holds, the same conclusion is true if one only assumes that $X$ is regular in codimension $1$.
\end{corollary}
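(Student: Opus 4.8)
The plan is to produce the desired map as a composite of the "motivic" descriptions of the first and the penultimate cohomology spaces, bridged by the Weil pairing and by Serre's comparison homomorphism between the two Albanese varieties.

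First I would invoke Proposition~\ref{PicToH}: since $X$ is normal, it gives a $\mathbf{g}$-equivariant isomorphism
$$
h_{X} : V_{\ell}(\Pic_{s} X)(-1) \isom H^{1}(\bar{X}, \QQ_{\ell}).
$$
Next, since $X$ is regular in codimension $2$ — or merely regular in codimension $1$, which is automatic for a normal variety, in the case where $\resol$ is assumed — Proposition~\ref{AlbanToHNormal} provides a $\mathbf{g}$-equivariant isomorphism $j_{X} : V_{\ell}(\Alb_{w} X) \isom H^{2n - 1}(\bar{X},\QQ_{\ell}(n))$. Dualizing $j_{X}$ identifies the target of the map we seek with $\Hom(V_{\ell}(\Alb_{w} X),\QQ_{\ell})$. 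I would then use that $\Pic_{w} X$ is by definition the dual abelian variety of $\Alb_{w} X$, so the Weil pairing furnishes a $\mathbf{g}$-equivariant perfect pairing $V_{\ell}(\Alb_{w} X) \times V_{\ell}(\Pic_{w} X) \longrightarrow \QQ_{\ell}(1)$, hence a $\mathbf{g}$-equivariant identification $\Hom(V_{\ell}(\Alb_{w} X),\QQ_{\ell}) \cong V_{\ell}(\Pic_{w} X)(-1)$.

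It then remains to build a natural $\mathbf{g}$-equivariant injection $V_{\ell}(\Pic_{s} X) \hookrightarrow V_{\ell}(\Pic_{w} X)$ and to compose everything. For this I would dualize the comparison homomorphism $\nu : \Alb_{w} X \longrightarrow \Alb_{s} X$ of Proposition~\ref{Compar}: since $X$ is normal, $\nu$ is surjective with connected kernel, so writing $\nu$ as the quotient of $\Alb_{w} X$ by the abelian subvariety $(\ker\nu)_{\red}$ followed by a purely inseparable isogeny and taking duals shows that the dual homomorphism $\hat\nu : \Pic_{s} X \longrightarrow \Pic_{w} X$ is a closed immersion of abelian varieties composed with a purely inseparable isogeny. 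As $\ell \neq p$, the induced map $V_{\ell}(\hat\nu)$ is then injective and $\mathbf{g}$-equivariant. Composing $h_{X}^{-1}$, the $(-1)$-twist of $V_{\ell}(\hat\nu)$, the Weil-pairing identification and the dual of $j_{X}$ yields the required injective $\mathbf{g}$-equivariant map $H^{1}(\bar{X},\QQ_{\ell}) \longrightarrow \Hom(H^{2n - 1}(\bar{X},\QQ_{\ell}(n)),\QQ_{\ell})$; and when $X$ is nonsingular, $\nu$ is an isomorphism by Proposition~\ref{Compar}, so the construction degenerates to the usual Poincaré duality between $H^{1}$ and $H^{2n-1}$, which explains the word "weak" in the statement.

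I expect the one genuinely delicate point to be the behaviour of $\hat\nu$ on $\ell$-adic Tate modules in characteristic $p$: $\ker\nu$ need not be reduced, so $\hat\nu$ may fail to be an honest closed immersion, but it fails to be one only by a purely inseparable isogeny, which is invisible to $V_{\ell}$ for $\ell\neq p$; hence injectivity of $V_{\ell}(\hat\nu)$ is unaffected. Everything else is a routine bookkeeping of Tate twists and of $\mathbf{g}$-equivariance, all of it supplied directly by Propositions~\ref{PicToH}, \ref{AlbanToHNormal} and \ref{Compar} together with the standard properties of the Weil pairing and of duals of homomorphisms of abelian varieties.
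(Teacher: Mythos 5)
Your proof is correct and follows exactly the same composite as the paper's: $h_{X}^{-1}$ from Proposition~\ref{PicToH}, then the $(-1)$-twist of the injection $V_{\ell}({}^{t}\nu)$ obtained by dualizing the surjection $\nu$ of Proposition~\ref{Compar}, then the Weil-pairing identification of $V_{\ell}(\Pic_{w} X)(-1)$ with $\Hom(V_{\ell}(\Alb_{w} X),\QQ_{\ell})$, and finally the transpose of $j_{X}$ from Proposition~\ref{AlbanToHNormal}. Your extra discussion of why $V_{\ell}({}^{t}\nu)$ stays injective in characteristic $p$ simply spells out what the paper compresses into the remark that ${}^{t}\nu$ has finite kernel.
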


\begin{proof}
Proposition \ref{PicToH} furnishes an isomorphism
$$
h_{X}^{-1} : H^{1}(\bar{X}, \QQ_{\ell}) \isom V_{\ell}(\Pic_{s} X)(-1).
$$
From the surjective map
$\nu : \Alb_{w} X \longrightarrow \Alb_{s} X$
defined in Proposition \ref{Compar}, we get by duality a homomorphism with
finite kernel
${^{t}}\nu : \Pic_{s} X \longrightarrow \Pic_{w} X$
generating an injective homomorphism
$$
V_{\ell}({^{t}}\nu) :
V_{\ell}(\Pic_{s} X) \longrightarrow V_{\ell}(\Pic_{w} X).
$$
Now the Weil pairing induces an isomorphism
$$
V_{\ell}(\Pic_{w} X)(-1) \longrightarrow
\Hom(V_{\ell}(\Alb_{w} X), \QQ_{\ell})
$$
and Proposition \ref{AlbanToHNormal} gives an isomorphism
$$
\Hom(V_{\ell}(\Alb_{w} X), \QQ_{\ell}) \longrightarrow
\Hom(H^{2n - 1}(\bar{X},\QQ_{\ell}(n)),\QQ_{\ell}).
$$
The required linear map is the combination of all the preceding maps.
\end{proof}

Recall that a projective variety, regular in codimension one, which is a
local complete intersection, is normal. For such varieties, we obtain a
sharper version of Proposition \ref{Compar}\eqref{Compar2} as follows.

\begin{corollary}
\label{ResIsog}
Assume that $\resol$ holds. Let $X$ be a projective variety, regular in
codimension one, which is a local complete intersection. Then the canonical map
$$\nu : \Alb_{w} X  \longrightarrow  \Alb_{s} X$$
is an isomorphism.
\end{corollary}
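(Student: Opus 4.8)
The plan is to reduce the assertion, via Serre's comparison theorem (Proposition~\ref{Compar}), to the numerical equality $\dim \Alb_{w} X = \dim \Alb_{s} X$, and then to obtain that equality from the cohomological descriptions of the two Albanese varieties together with Poincar\'e duality for singular local complete intersections (Remark~\ref{PoincDual}).

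First I would dispose of the easy reductions. Since $X$ is a local complete intersection which is regular in codimension one, Serre's criterion (recalled just before the statement) shows that $X$ is normal, so Proposition~\ref{Compar}\eqref{Compar2} applies: $\nu$ is surjective, $\ker \nu$ is connected, and $\nu$ induces an isomorphism $(\Alb_{w} X)/\ker \nu \isom \Alb_{s} X$. Hence it suffices to prove that $\ker \nu$ is finite, i.e.\ that $\dim \Alb_{w} X = \dim \Alb_{s} X$; being connected of dimension zero, $\ker \nu$ is then trivial and $\nu$ is an isomorphism. Moreover, if $\dim X \le 1$ then ``regular in codimension one'' forces $X$ to be nonsingular, and the conclusion is already Proposition~\ref{Compar}\eqref{Compar3}; so I may assume $n := \dim X \ge 2$. (After a finite extension of $k$, which alters neither the invariants used below nor the truth of the conclusion, I may also assume $X$ has a $k$-rational smooth point, in accordance with the conventions of Section~\ref{RelationAlbanese}.)

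Now the dimension count. On the Albanese--Serre side, $\Alb_{s} X$ is dual to $\Pic_{s} X$, so $2 \dim \Alb_{s} X = 2 \dim \Pic_{s} X = b_{1,\ell}(\bar{X})$ by Proposition~\ref{PicToH}. On the Albanese--Weil side, since $\resol$ holds and $X$ is normal and regular in codimension one, Proposition~\ref{AlbanToHNormal} furnishes a $\mathbf{g}$-equivariant isomorphism $V_{\ell}(\Alb_{w} X) \isom H^{2n - 1}(\bar{X}, \QQ_{\ell}(n))$, whence $2 \dim \Alb_{w} X = b_{2n - 1,\ell}(\bar{X})$. So the whole matter comes down to the single equality $b_{1,\ell}(\bar{X}) = b_{2n - 1,\ell}(\bar{X})$, a Poincar\'e-duality statement between the first and the penultimate cohomology of $\bar{X}$. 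Writing $s = \dim \sing X$, I would extract this from Remark~\ref{PoincDual}: that remark (valid under $\resol$ for a local complete intersection possessing a proper linear section of codimension $s + 1$ over $k$ --- a condition one may arrange after a harmless finite extension of $k$, since the $\ell$-adic Betti numbers depend only on $\bar{X}$) gives $b_{i,\ell}(\bar{X}) = b_{2n - i,\ell}(\bar{X})$ for $0 \le i \le n - s - 2$, in particular for $i = 1$. Combining the three equalities yields $\dim \Alb_{w} X = \dim \Alb_{s} X$, and the Corollary follows.

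The hard part is precisely this last step. It is where the regularity of $X$ must be used in its sharp form, namely so that $n - s - 2 \ge 1$, i.e.\ $\dim \sing X \le n - 3$: otherwise Remark~\ref{PoincDual} only reaches the index $i = 0$. That this room is genuinely needed is shown by the normal projective cone over a smooth plane curve of positive genus (Examples~\ref{ExCone1} and~\ref{ExCone2}), a regular-in-codimension-one hypersurface for which $b_{1,\ell}(\bar{X}) = 0 \neq 2g = b_{2n - 1,\ell}(\bar{X})$ and for which $\nu$ is not injective. Note also that one inequality is cheap --- Corollary~\ref{InjH} already gives $b_{1,\ell}(\bar{X}) \le b_{2n - 1,\ell}(\bar{X})$, the same inequality as $\dim \Alb_{w} X \ge \dim \Alb_{s} X$ which follows from surjectivity of $\nu$ --- so the real content to be supplied by Poincar\'e duality is the reverse inequality $b_{2n - 1,\ell}(\bar{X}) \le b_{1,\ell}(\bar{X})$; once it is in hand, everything else is bookkeeping with Propositions~\ref{Compar}, \ref{PicToH} and~\ref{AlbanToHNormal}.
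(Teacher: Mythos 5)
Your argument is essentially the same as the paper's: both rest on comparing $b_{1,\ell}(\bar{X})$ with $b_{2n-1,\ell}(\bar{X})$ via the singular Poincar\'e duality of Remark~\ref{PoincDual}, and then feeding that numerical equality into Proposition~\ref{Compar}\eqref{Compar2} to conclude that the connected kernel of $\nu$ is trivial. The paper routes the bookkeeping through Corollary~\ref{InjH} and Tate's theorem, while you go directly through Propositions~\ref{PicToH} and~\ref{AlbanToHNormal}, but the mechanism is identical.

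What you have done beyond the paper, however, is the valuable part: you have caught a genuine error that the paper's proof commits silently. Remark~\ref{PoincDual} gives $b_{i,\ell}(\bar{X}) = b_{2n-i,\ell}(\bar{X})$ only for $0 \leq i \leq n - s - 2$, so the case $i = 1$ is available precisely when $\dim\sing X \leq n - 3$, i.e.\ when $X$ is regular in codimension two. The hypothesis of Corollary~\ref{ResIsog} only asks for regularity in codimension one, which permits $s = n - 2$; in that case Remark~\ref{PoincDual} reaches nothing beyond $i = 0$, and the paper's appeal to it to conclude $\dim H^{1}(\bar{X},\QQ_{\ell}) = \dim H^{2n-1}(\bar{X},\QQ_{\ell}(n))$ is unjustified. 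Your counterexample is correct and decisive: for the normal projective cone $X$ in $\PP^{3}$ over a smooth plane curve $C$ of genus $g \geq 1$ (Examples~\ref{ExCone1} and~\ref{ExCone2}), one has $n = 2$, $s = 0 = n - 2$, $\resol$ holds trivially, $X$ is a regular-in-codimension-one hypersurface, yet $\Alb_{s} X = 0$ while $\Alb_{w} X = \Jac C$ has dimension $g > 0$, and $b_{1,\ell}(\bar{X}) = 0 \neq 2g = b_{3,\ell}(\bar{X})$. So $\nu$ is not an isomorphism, and Corollary~\ref{ResIsog} is false as stated. The statement and the proof become correct if the hypothesis is strengthened to \emph{regular in codimension two}; under that hypothesis your dimension count (and equally the paper's use of Corollary~\ref{InjH} and Tate's theorem) closes without further ado.
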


\begin{proof}
Since $\resol$ holds and since $X$ is a local complete intersection, we know,
by Poincar\'e Duality (Remark
\ref{PoincDual}), that
$$
\dim H^{1}(\bar{X},\QQ_{\ell}) = \dim H^{2n - 1}(\bar{X},\QQ_{\ell}(n)),
$$
and the linear map of Corollary \ref{InjH} is bijective. Hence, the
homomorphism
$$
V_{\ell}({^{t}}\!\nu) :
V_{\ell}(\Pic_{s} X) \longrightarrow V_{\ell}(\Pic_{w} X)
$$
is also bijective. By Tate's Theorem \cite[Appendix I]{Mumford2}, this implies that $\nu$ is an isogeny.
Since the kernel of $\nu$ is connected by Prop. \ref{Compar}\eqref{Compar2}, it is trivial.
\end{proof}

\section{A Conjecture of Lang and Weil}
\label{LangWeilConj}

We assume now that $k = \Fq$ is a finite field. In order to state the
results of this section, we introduce a weak form of the resolution of
singularities for a variety $X$ of dimension $\geq 2$, which is of course
implied by $\resol$.

\medskip \noindent
$\resolTwo$
\emph{$X$ is birationally equivalent to a normal projective variety
$\widetilde{X}$ defined over $k$, which is regular in codimension $2$.}
\medskip

As a special case of Abhyankar's results \cite{Abh}, $\resolTwo$ is valid
in any characteristic $p > 0$ if $\dim X \leq 3$ (except perhaps when
$\dim X = 3$ and $p = 2, 3, 5$).

The following gives a \emph{description in purely algebraic terms} of the
birational invariant $H^{2n - 1}_{+}(\bar{X},\QQ_{\ell})$. Recall that
$H^{2n - 1}(\bar{X},\QQ_{\ell})$ is in fact pure as soon as $X$ is regular in codimension $1$.
\begin{theorem}
\label{AlbanToH}
Let $X$ be a variety of dimension $n \geq 2$ defined over $k$ satisfying
$\resolTwo$. Then there is a $\mathbf{g}$-equivariant isomorphism
$$
j_{X} : V_{\ell}(\Alb_{w} X) \isom H^{2n - 1}_{+}(\bar{X},\QQ_{\ell}(n)),
$$
and hence $b_{2n - 1}^{+}(X) = 2 \dim \Alb_{w} X$. In particular this
number is even.
\end{theorem}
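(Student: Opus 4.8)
The plan is to reduce everything to the situation already settled in Proposition \ref{AlbanToHNormal} by playing the birational invariance of the two sides against each other. By $\resolTwo$ there is a normal projective variety $\widetilde{X}$, defined over $k$ and regular in codimension $2$, which is birationally equivalent to $X$ over $k$; in particular $\dim \widetilde{X} = n$, and there is a dense open subscheme $U \subset X$, defined over $k$, which is isomorphic over $k$ to a dense open subscheme of $\widetilde{X}$. The theorem will be obtained by transporting both $V_{\ell}(\Alb_{w} X)$ and $H^{2n - 1}_{+}(\bar{X},\QQ_{\ell}(n))$ to the corresponding objects for $\widetilde{X}$, where they are known to be isomorphic.

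First I would identify the left-hand side. The Albanese--Weil variety is a birational invariant (as recalled in Section \ref{RelationAlbanese}), so $\Alb_{w} X = \Alb_{w} \widetilde{X}$ as abelian varieties over $k$, whence a canonical $\mathbf{g}$-equivariant identification $V_{\ell}(\Alb_{w} X) = V_{\ell}(\Alb_{w} \widetilde{X})$. Next I would identify the right-hand side: applying Proposition \ref{BirInv}\eqref{BirInv2} to the open immersions $U \hookrightarrow X$ and $U \hookrightarrow \widetilde{X}$ yields $\mathbf{g}$-equivariant isomorphisms $H^{2n - 1}_{+}(\bar{U}) \isom H^{2n - 1}_{+}(\bar{X})$ and $H^{2n - 1}_{+}(\bar{U}) \isom H^{2n - 1}_{+}(\widetilde{X}\otimes\bar{k})$; composing them (and carrying along the Tate twist by $n$, under which $H^{2n - 1}_{+}$ is stable) gives a $\mathbf{g}$-equivariant isomorphism $H^{2n - 1}_{+}(\bar{X},\QQ_{\ell}(n)) \isom H^{2n - 1}_{+}(\widetilde{X}\otimes\bar{k},\QQ_{\ell}(n))$.

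Then I would invoke what is known for $\widetilde{X}$. Since $\widetilde{X}$ is regular in codimension one, Proposition \ref{PolyDiv}\eqref{PolyDiv1} shows $H^{2n - 1}(\widetilde{X}\otimes\bar{k})$ is pure of weight $2n - 1$, hence $H^{2n - 1}_{+}(\widetilde{X}\otimes\bar{k}) = H^{2n - 1}(\widetilde{X}\otimes\bar{k})$. Since moreover $\widetilde{X}$ is a normal projective variety of dimension $n \geq 2$, regular in codimension $2$, Proposition \ref{AlbanToHNormal} (the branch not requiring $\resol$) provides a $\mathbf{g}$-equivariant isomorphism $j_{\widetilde{X}} : V_{\ell}(\Alb_{w} \widetilde{X}) \isom H^{2n - 1}(\widetilde{X}\otimes\bar{k},\QQ_{\ell}(n))$. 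Composing $j_{\widetilde{X}}$ with the two identifications above defines the desired $\mathbf{g}$-equivariant isomorphism $j_{X} : V_{\ell}(\Alb_{w} X) \isom H^{2n - 1}_{+}(\bar{X},\QQ_{\ell}(n))$, and comparing dimensions gives $b^{+}_{2n - 1}(X) = \dim H^{2n - 1}_{+}(\bar{X}) = \dim V_{\ell}(\Alb_{w} X) = 2 \dim \Alb_{w} X$, which is even.

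There is no serious obstacle: the theorem is essentially a repackaging of Step $5$ in the proof of Proposition \ref{AlbanToHNormal}, with the hypothesis $\resolTwo$ doing exactly the work that $\resol$ did there. The only things to check carefully are bookkeeping ones, namely that the birational model and the open set $U$ supplied by $\resolTwo$ are genuinely defined over $k$ so that all the comparison maps are $\mathbf{g}$-equivariant (this is how $\resolTwo$ is stated), and that the Tate twist by $n$ is propagated consistently through the birational-invariance isomorphism of Proposition \ref{BirInv}\eqref{BirInv2} and through $j_{\widetilde{X}}$ (which is immediate, these maps being Galois-equivariant). The genuine mathematical content is entirely contained in Propositions \ref{BirInv}, \ref{PolyDiv} and \ref{AlbanToHNormal}.
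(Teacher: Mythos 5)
Your argument is correct and matches the paper's proof almost line for line: take $\widetilde{X}$ as supplied by $\resolTwo$, identify the two sides via birational invariance of $\Alb_{w}$ and of $H^{2n-1}_{+}$ (Proposition~\ref{BirInv}\eqref{BirInv2}), observe that $H^{2n-1}(\widetilde{X}\otimes\bar{k})$ is pure, and then apply Proposition~\ref{AlbanToHNormal} to $\widetilde{X}$. The only cosmetic difference is that you obtain purity from Proposition~\ref{PolyDiv}\eqref{PolyDiv1} while the paper cites Proposition~\ref{PurityGen}\eqref{PurityGen1}; both apply to $\widetilde{X}$ and give the same conclusion, so this is a matter of bookkeeping rather than substance.
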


\begin{proof}
Take $\widetilde{X}$ birationally equivalent to $X$ as in $\resolTwo$.
Then $\Alb_{w} \widetilde{X} = \Alb_{w} X$ since the Albanese-Weil
variety is a birational invariant, and
$$
H^{2n - 1}(\widetilde{X} \otimes \bar{k}, \QQ_{\ell}) =
H^{2n - 1}_{+}(X \otimes \bar{k}, \QQ_{\ell})
$$
(equality as $\mathbf{g}$-modules) by Propositions \ref{BirInv}\eqref{BirInv2} and
\ref{PurityGen}\eqref{PurityGen1}.
Now apply Proposition \ref{AlbanToHNormal} to $\widetilde{X}$.
\end{proof}

\begin{remark}
\label{Motives}
The preceding result can be interpreted in the category of \emph{motives}
over $k$ (cf. \cite{Serre4}, \cite{Motives}): this is what we mean by a
\emph{``description in purely algebraic terms''} of a cohomological
property. Let us denote by $h(X)$ the motive of a variety $X$ of dimension
$n$ defined over $k$, and by $h^{i}(X)$ the (mixed) component of $h(X)$
corresponding to cohomology of degree $i$. One can identify the category of
abelian varieties up to isogenies with the category of pure motives of
weight $- 1$. Denote by $\LL = h^{2}(\PP^{1})$ be the \emph{Lefschetz
motive}, which is pure of weight $2$. Proposition \ref{PicToH} implies that
if $X$ is normal, then
$$h^{1}(X) = \Pic_{s}(X) \otimes \LL.$$
Now, denote by $h^{i}_{+}(X)$ the part of $h^{i}(X)$ which is pure of
weight $i$. Theorem \ref{AlbanToH} means that if $X$ satisfies $\resolTwo$,
then
$$h^{2 n - 1}_{+}(X) = \Alb_{w}(X) \otimes \LL^{n}.$$
These results are classical if $X$ is nonsingular.
\end{remark}

\begin{remark}
\label{AbelianPiece}
Similarly, Theorem \ref{AlbanToH} is in accordance with the following
conjectural statement of Grothendieck \cite[p. 343]{Groth0} in the case
$i = n$ :

\noindent
\textit{`` In odd dimensions, the piece of maximal filtration of
$H^{2i-1}(X, \ZZ_{\ell}(i))$ is also the greatest ``abelian piece'', and
corresponds to the Tate module of the intermediate Jacobian $J^i(X)$
(defined by the cycles algebraically equivalent to $0$ of codimension $i$ on
$X$). ''}

\noindent
Notice that the group of cycles algebraically equivalent to $0$ of
codimension $n$ on $X$ maps in a natural way to the Albanese variety of
$X$.
\end{remark}

\begin{remark}
\label{RemATH}
In the first part of their Proposition in \cite[p. 333]{BS}, Bombieri and
Sperber state Theorem \ref{AlbanToH} without any assumption about resolution
of singularities, but they give a proof only if $\dim X \leq 2$, in which
case $\resolTwo$ holds in any characteristic, by Abhyankar's results.
\end{remark}

For any separated scheme $X$ of finite type over $k$, the group $\mathbf{g}$
operates on $\bar{X}$ and the \emph{Frobenius morphism} of $\bar{X}$ is the
automorphism corresponding to the geometric element $\varphi \in
\mathbf{g}$. If $A$ is an abelian variety defined over $k$, then $\varphi$
is an endomorphism of $A$. It induces an endomorphism
$T_{\ell}(\varphi)$ of $T_{\ell}(A)$, and
\begin{equation*}
\label{PolCarFrob}
\deg(n.1_{A} - \varphi) = f_{c}(A, n),
\end{equation*}
where
$$f_{c}(A, T) = \det(T - T_{\ell}(\varphi)).$$
The polynomial $f_{c}(A, T)$ is a monic polynomial of degree $2g$ with
coefficients in $\ZZ$, called the \emph{characteristic polynomial} of
$A$. Moreover, if $\dim A = g$,
$$f_{c}(A, T) = \prod_{j = 1}^{2g} (T - \alpha_{j}),$$
where the \emph{characteristic roots} $\alpha_{j}$ are pure of weight one
\cite[p. 139]{Lang}, \cite[p. 203-206]{Mumford2}. The constant term of
$f_{c}(A, T)$ is equal to
\begin{equation}
\label{ProdRoots}
\deg(\varphi) = \det T_{\ell}(\varphi) = \prod_{j = 1}^{2g}\alpha_{j} =
q^{g}.
\end{equation}
The \emph{trace} of $\varphi$ is the unique rational integer $\Tr(\varphi)$
such that
$$
f_{c}(A, T) \equiv
T^{2g} - \Tr(\varphi) \, T^{2g - 1} \, (\mod T^{2g - 2}).
$$
In order to state the next results, we need to introduce some
conventions. If $X$ is a separated scheme of finite type over $k$, we call
$\indep$ the following set of conditions about $H^{i}_{c}(\bar{X},
\QQ_{\ell})$:
\begin{itemize}
{\itshape
\item
The action of the Frobenius morphism $F$ in $H^{i}_{c}(\bar{X},
\QQ_{\ell})$ is diagonalizable.
\item
The space $H^{i}_{c}(\bar{X}, \QQ_{\ell})$ is pure of weight $i$.
\item
The polynomial $P_{i}(X, T)$ has coefficients in $\ZZ$ which are
independent of $\ell$.}
\end{itemize}
Furthermore, for any polynomial $f$ of degree $d$, we write its
reciprocal
polynomial as $f^{\vee}(T) = T^{d} f(T^{-1})$.

\begin{corollary}
\label{PolPic}
Let $X$ be a normal projective variety defined over $k$. Then:
\begin{enumerate}
\item
\label{PolPic1}
We have
$$P_{1}(X, T) = f_{c}^{\vee}(\Pic_{s} X, T).$$
\item
\label{PolPic3}
If $\varphi$ is the Frobenius endomorphism of $\Pic_{s} X$, then
$$\Tr(F \mid H^{1}(\bar{X}, \QQ_{\ell})) = \Tr(\varphi).$$
\item
\label{PolPic4}
Conditions $\indep$ hold for $H^{1}(\bar{X}, \QQ_{\ell})$.
\end{enumerate}
\end{corollary}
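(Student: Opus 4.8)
The plan is to derive all three parts from the $\mathbf{g}$-equivariant isomorphism $h_{X} : V_{\ell}(\Pic_{s} X)(-1) \isom H^{1}(\bar{X}, \QQ_{\ell})$ of Proposition \ref{PicToH}, together with the standard description of the Frobenius action on the Tate module of an abelian variety over $k$. Set $A = \Pic_{s} X$ and $g = \dim A$, and let $\alpha_{1}, \dots, \alpha_{2g}$ be the characteristic roots of $A$, so that $f_{c}(A, T) = \prod_{j=1}^{2g}(T - \alpha_{j})$; recall these are pure of weight one and that $\prod_{j}\alpha_{j} = q^{g}$ by \eqref{ProdRoots}. Hence $\card{\alpha_{j}}^{2} = q$ under every complex embedding, so $q/\alpha_{j} = \overline{\alpha_{j}}$, and since $f_{c}(A, T) \in \ZZ[T]$ the multiset $\{\alpha_{j}\}_{j}$ is stable under complex conjugation; therefore $\{q/\alpha_{j}\}_{j} = \{\alpha_{j}\}_{j}$. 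Now the Frobenius endomorphism $\varphi$ of $A$ acts on $V_{\ell}(A)$ with eigenvalues $\alpha_{j}$ --- by the very definition $f_{c}(A, T) = \det(T - T_{\ell}(\varphi))$ --- the geometric Frobenius $F$ (the element of $\mathbf{g}$ inverting the arithmetic Frobenius) acts on $V_{\ell}(A)$ with eigenvalues $\alpha_{j}^{-1}$, and $F$ acts on the twist $\QQ_{\ell}(-1)$ as multiplication by $q$. Consequently $F$ acts on $V_{\ell}(A)(-1)$, and so on $H^{1}(\bar{X}, \QQ_{\ell})$, with eigenvalue multiset $\{q\,\alpha_{j}^{-1}\}_{j} = \{q/\alpha_{j}\}_{j} = \{\alpha_{j}\}_{j}$.

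Given this, \eqref{PolPic1} follows immediately:
$$
P_{1}(X, T) = \det(1 - T F \mid H^{1}(\bar{X}, \QQ_{\ell})) = \prod_{j=1}^{2g}(1 - \alpha_{j} T) = T^{2g} f_{c}(A, T^{-1}) = f_{c}^{\vee}(A, T).
$$
(The degrees on the two ends agree, both being $2g = b_{1,\ell}(\bar{X})$ by Proposition \ref{PicToH}, since all $\alpha_{j}$ are nonzero.) Then \eqref{PolPic3} is read off the same eigenvalue list: $\Tr(F \mid H^{1}(\bar{X}, \QQ_{\ell})) = \sum_{j=1}^{2g}\alpha_{j} = \Tr(\varphi)$, the last equality holding because $\Tr(\varphi)$ is by definition the negative of the coefficient of $T^{2g-1}$ in $f_{c}(A, T) = \prod_{j}(T - \alpha_{j})$.

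For \eqref{PolPic4}, the three clauses of $\indep$ are verified in turn for $H^{1}(\bar{X}, \QQ_{\ell})$. Diagonalizability of $F$ on $H^{1}(\bar{X}, \QQ_{\ell}) \cong V_{\ell}(A)(-1)$ is equivalent to diagonalizability of $\varphi$ on $V_{\ell}(A)$ (the twist being scalar), and this holds because Frobenius acts semisimply on the Tate module of an abelian variety over a finite field \cite[p. 203]{Mumford2} --- the same input used in the proof of the Conjugation Lemma. Purity of weight one is precisely the assertion that the eigenvalues $\alpha_{j}$ of $F$ are pure of weight one. Finally, $P_{1}(X, T) = f_{c}^{\vee}(A, T)$ lies in $\ZZ[T]$ and is independent of $\ell$ because the characteristic polynomial $f_{c}(A, T)$ of the Frobenius endomorphism of $A$ is a monic integral polynomial independent of $\ell$.

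I expect the argument to be essentially bookkeeping once Proposition \ref{PicToH} is available. The one step deserving real care is the Tate-twist computation inside \eqref{PolPic1}: one must check that twisting $V_{\ell}(A)$ by $\QQ_{\ell}(-1)$ and using $q/\alpha_{j} = \overline{\alpha_{j}}$ converts the eigenvalue list $\{\alpha_{j}^{-1}\}$ of $F$ on $V_{\ell}(A)$ back into the full multiset $\{\alpha_{j}\}$, so that the outcome is the reciprocal polynomial $f_{c}^{\vee}(A, T)$ and not $f_{c}(A, T)$ or its untwisted reciprocal. The integrality and $\ell$-independence of $f_{c}(A, T)$, and the semisimplicity of Frobenius on $V_{\ell}(A)$, I would quote as classical facts about abelian varieties over finite fields rather than reprove.
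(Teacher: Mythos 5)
Your proof is correct and follows essentially the same route as the paper's: use the $\mathbf{g}$-equivariant isomorphism $h_X$ of Proposition~\ref{PicToH}, observe that $\alpha\mapsto q\alpha^{-1}$ permutes the characteristic roots of $\Pic_s X$ (via $q/\alpha_j=\overline{\alpha_j}$ and the integrality of $f_c$), compute the eigenvalues of the geometric Frobenius on the twist $V_\ell(\Pic_s X)(-1)$, and read off (i), (ii), (iii) from the resulting identification $P_1(X,T)=f_c^\vee(\Pic_s X,T)$, quoting the semisimplicity of Frobenius on the Tate module for the diagonalizability clause. Your write-up is a bit more explicit than the paper's (in particular it spells out the stability of $\{\alpha_j\}$ under $\alpha\mapsto q/\alpha$ and handles each clause of $\indep$ separately), and it uses the paper's own convention that $F$ is geometric Frobenius, where the paper's proof momentarily mislabels $F$ as arithmetic; but the substance is identical.
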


\begin{proof}
Let $A$ be an abelian variety of dimension $g$ defined over $k$ with
$$f_{c}(A, T) = \prod_{j = 1}^{2g} (T - \alpha_{j}).$$
Since the arithmetic Frobenius $F$ is the inverse of $\varphi$,
$$
\det(T - V_{\ell}(F) \mid V_{\ell}(A)) =
\prod_{j = 1}^{2g} (T - \alpha_{j}^{-1}).
$$
Since the map $\alpha \mapsto q \alpha^{-1}$ is a permutation of the
characteristic roots, we have
$$
\det(T - V_{\ell}(F) \mid V_{\ell}(A)(-1)) =
\prod_{j = 1}^{2g} (T - q \alpha_{j}^{-1}) = f_{c}(A, T),
$$
which implies
$$
\det(1 - T \ V_{\ell}(F)\mid V_{\ell}(A)(-1)) = f_{c}^{\vee}(A, T).
$$
If we apply the preceding equality to $A = \Pic_{s} X$, we get
\eqref{PolPic1} with the help of Proposition \ref{PicToH}. Now
\eqref{PolPic3} is an immediate consequence of \eqref{PolPic1} by looking at
the coefficient of $T^{2g - 1}$. As stated above, the polynomial $f_{c}(A,
T)$ belongs to $\ZZ[T]$, and its roots are pure of weight $1$. By
\cite[Prop., p. 203]{Mumford2}, the automorphism $V_{\ell}(\varphi)$ of
$V_{\ell}(A)$ is diagonalizable, and hence, \eqref{PolPic4} follows again
from Proposition \ref{PicToH}.
\end{proof}

\begin{corollary}
If $X$ is a normal projective surface defined over $k$, then the
polynomials $P_{i}(X, T)$ are independent of $\ell$ for $0 \leq i \leq 4$.
\end{corollary}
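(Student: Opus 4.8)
The plan is to treat the five polynomials $P_{i}(X,T)$, $0 \le i \le 4$ (here $n = \dim X = 2$), case by case, leaving the single genuinely new case $i = 2$ for last. First I would dispose of the two extreme degrees. Since $X$ is a variety, $\bar{X}$ is geometrically connected and geometrically irreducible of dimension $2$; hence $H^{0}(\bar{X},\QQ_{\ell})$ is one-dimensional with $F$ acting trivially and $H^{4}(\bar{X},\QQ_{\ell})$ is one-dimensional with $F$ acting by $q^{2}$, so that $P_{0}(X,T) = 1 - T$ and $P_{4}(X,T) = 1 - q^{2}T$ are visibly independent of $\ell$. For $i = 1$, I would simply appeal to Corollary \ref{PolPic}: since $X$ is normal projective, conditions $\indep$ hold for $H^{1}(\bar{X},\QQ_{\ell})$, so $P_{1}(X,T)$ has coefficients in $\ZZ$ independent of $\ell$ (indeed $P_{1}(X,T) = f_{c}^{\vee}(\Pic_{s}X,T)$ by Corollary \ref{PolPic}\eqref{PolPic1}).

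Next I would handle $i = 3 = 2n - 1$. The key observation is that a normal variety is regular in codimension one by Serre's criterion, so Proposition \ref{PolyDiv}\eqref{PolyDiv1} applies and $H^{2n-1}(\bar{X},\QQ_{\ell})$ is pure of weight $2n - 1$. Consequently its maximal-weight piece $H^{2n-1}_{+}(\bar{X})$ is the whole space, and $P_{3}(X,T) = P^{+}_{2n-1}(X,T)$; by Proposition \ref{BirInv}\eqref{BirInv1} the latter has coefficients independent of $\ell$. (One could instead route this through the Albanese description of $H^{2n-1}_{+}$ in Theorem \ref{AlbanToH}, legitimate here because $\resolTwo$ holds in dimension $2$ by Abhyankar's results, but the purity argument is cleaner.)

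The remaining case $i = 2$ is the main point, and the step where I expect one has to be slightly careful. My plan is to read $P_{2}$ off from the zeta function via the factorization \eqref{FracZeta} furnished by the Grothendieck--Lefschetz Trace Formula:
$$
Z(X,T) = \frac{P_{1}(X,T)\,P_{3}(X,T)}{P_{0}(X,T)\,P_{2}(X,T)\,P_{4}(X,T)},
$$
which I would rearrange into the identity of rational functions
$$
P_{2}(X,T) = \frac{P_{1}(X,T)\,P_{3}(X,T)}{Z(X,T)\,P_{0}(X,T)\,P_{4}(X,T)}.
$$
By its very definition \eqref{DefZeta}, $Z(X,T)$ depends only on the integers $\card{X(k_{r})}$ and so carries no dependence on $\ell$, while $P_{0},P_{1},P_{3},P_{4}$ have just been shown to be $\ell$-independent. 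Hence the right-hand side is a rational function independent of $\ell$; but the left-hand side is the genuine polynomial $\det(1 - T F \mid H^{2}(\bar{X},\QQ_{\ell}))$, so its coefficients must be independent of $\ell$ as well. The one delicate point to flag is that \eqref{FracZeta} is an \emph{exact} identity of rational functions, valid irrespective of whether cancellations occur between numerator and denominator, so no coprimality hypothesis on the $P_{i}$ is needed in order to divide; with that understood the argument is complete.
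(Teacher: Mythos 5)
Your argument follows the paper's proof essentially verbatim: handle $P_0,P_4$ trivially, $P_1$ via Corollary~\ref{PolPic}, $P_3 = P_3^+$ via the purity of $H^3$ (Proposition~\ref{PolyDiv}\eqref{PolyDiv1}) combined with Proposition~\ref{BirInv}\eqref{BirInv1}, and then solve for $P_2$ in the $\ell$-independent factorization \eqref{FracZeta} of the zeta function. The only difference is that you spell out the justifications (why $H^3$ is pure, why cancellation causes no trouble) that the paper leaves implicit; no genuinely different idea is involved.
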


\begin{proof}
The polynomial $P_{3}(X, T) = P_{3}^{+}(X, T)$ is independent of $\ell$ by
Proposition \ref{BirInv}\eqref{BirInv1}. Moreover $P_{1}(X, T)$ is
independent of $\ell$ by Corollary \ref{PolPic}. Hence we have proved that
these polynomials are independent of $\ell$ for all but one value of $i$,
namely $i = 2$. Following an observation of Katz, the last one must also be
independent of $\ell$, since
$$
Z(X, T) =
\dfrac{P_{1}(X,T) P_{3}(X,T)}{P_{0}(X,T) P_{2}(X,T) P_{4}(X,T)} \, ,
$$
and the zeta function $Z(X, T)$ is independent of $\ell$.
\end{proof}

The following result gives an explicit description of the birational
invariants
$$P^{+}_{2n - 1}(X, T), \quad b_{2n - 1}^{+}(X), \quad
\Tr(F \mid H^{2n - 1}_{+}(\bar{X}, \QQ_{\ell}))
$$
in purely algebraic terms.

\begin{theorem}
\label{GenConj}
Let $X$ be a variety of dimension $n \geq 2$ defined over $k$.
\begin{enumerate}
\item
\label{CorrConj1}
If $g = \dim \Alb_{w} X$, then
$$P^{+}_{2n - 1}(X, T) = q^{- g}f_{c}(\Alb_{w} X, q^{n}T).$$
In particular, $b_{2n - 1}^{+}(X) = 2 g$.
\item
\label{CorrConj2}
If $\varphi$ is the Frobenius endomorphism of $\Alb_{w} X$, then
$$\Tr(F \mid H^{2n - 1}_{+}(\bar{X}, \QQ_{\ell})) = q^{n - 1}
\Tr(\varphi).$$
\end{enumerate}
\end{theorem}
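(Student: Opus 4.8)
The plan is to deduce both assertions from the $\mathbf{g}$-equivariant isomorphism
$$
j_{X} : V_{\ell}(\Alb_{w} X) \isom H^{2n - 1}_{+}(\bar{X},\QQ_{\ell}(n))
$$
supplied by Theorem \ref{AlbanToH}, by turning it into an identity of Frobenius characteristic polynomials in exactly the way the proof of Corollary \ref{PolPic} treats $H^{1}$. Set $A = \Alb_{w} X$ and $g = \dim A$, and write $f_{c}(A, T) = \prod_{j = 1}^{2g}(T - \alpha_{j})$, so that the $\alpha_{j}$ are pure of weight one, $\prod_{j}\alpha_{j} = q^{g}$ by \eqref{ProdRoots}, and $\Tr(\varphi) = \sum_{j}\alpha_{j}$. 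As recorded in Corollary \ref{PolPic}, the geometric Frobenius $F$ (the inverse of $\varphi$) acts on $V_{\ell}(A)$ with eigenvalues $\alpha_{j}^{-1}$, and the involution $\alpha \mapsto q\alpha^{-1}$ permutes the multiset $\{\alpha_{1}, \dots, \alpha_{2g}\}$ (the functional equation of $f_{c}(A, T)$).

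First I would transport this along $j_{X}$: since $F$ acts as $q^{-n}$ on $\QQ_{\ell}(n)$, the isomorphism $j_{X}$ forces $F$ to act on $H^{2n - 1}_{+}(\bar{X},\QQ_{\ell})$ with eigenvalues $q^{n}\alpha_{j}^{-1}$, which by the permutation property is the multiset $\{q^{n - 1}\alpha_{j}\}_{j}$ — and these do have absolute value $q^{(2n - 1)/2}$, a convenient consistency check on the weight. Hence
$$
P^{+}_{2n - 1}(X, T) = \prod_{j = 1}^{2g}(1 - q^{n - 1}\alpha_{j}T)
= \prod_{j = 1}^{2g}(1 - q^{n}\alpha_{j}^{-1}T),
$$
and the last product equals $q^{-g}\prod_{j}(q^{n}T - \alpha_{j}) = q^{-g}f_{c}(\Alb_{w} X, q^{n}T)$ once one factors out $\prod_{j}(-\alpha_{j}) = q^{g}$; this is \eqref{CorrConj1}, and taking degrees gives $b^{+}_{2n - 1}(X) = 2g$. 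For \eqref{CorrConj2} one simply sums the eigenvalues found above, $\Tr(F \mid H^{2n - 1}_{+}(\bar{X},\QQ_{\ell})) = \sum_{j}q^{n - 1}\alpha_{j} = q^{n - 1}\Tr(\varphi)$. By the functional equation of $f_{c}$ the coefficients of $q^{-g}f_{c}(\Alb_{w} X, q^{n}T)$ are integers, in agreement with Proposition \ref{BirInv}\eqref{BirInv1}.

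The only real input is Theorem \ref{AlbanToH}; once that isomorphism is in hand, everything else is the same elementary bookkeeping with the roots $\alpha_{j}$ that occurs in Corollary \ref{PolPic}, so I do not anticipate a genuine obstacle. The one point to be careful with is the order in which three normalizations are combined — the Tate twist $\QQ_{\ell}(n)$, the inversion $F = \varphi^{-1}$, and the functional equation $\{q\alpha_{j}^{-1}\} = \{\alpha_{j}\}$: getting the exponent of $q$ right in $q^{-g}f_{c}(\Alb_{w} X, q^{n}T)$ hinges on applying them in the correct order, and checking that the resulting Frobenius eigenvalues on $H^{2n - 1}_{+}(\bar{X})$ come out pure of weight $2n - 1$ is the quickest way to catch a slip.
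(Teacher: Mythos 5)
Your argument is a correct proof of Proposition \ref{CorrConj}, but it does not establish Theorem \ref{GenConj} in the generality stated. The crucial difference is a hypothesis: Theorem \ref{AlbanToH}, which supplies the isomorphism $j_{X}$ you rest everything on, assumes that $X$ satisfies $\resolTwo$, i.e., that $X$ is birationally equivalent over $k$ to a normal projective variety regular in codimension $2$. Theorem \ref{GenConj} makes no such assumption. Invoking $j_{X}$ therefore silently imports a weak resolution-of-singularities hypothesis that you would have to justify, and in positive characteristic this is not available in general for $n \geq 4$.

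The paper's proof of Theorem \ref{GenConj} is genuinely different and is designed to remove $\resolTwo$. It first proves Lemma \ref{BomSper} (the asymptotic $\card{X(k)} = \pi_{n} - q^{n-1}\Tr(\varphi) + O(q^{n-1})$) by induction on $n = \dim X$: the base case $n = 2$ uses exactly your argument since $\resolTwo$ holds unconditionally for surfaces, and the inductive step slices $X$ by a pencil of hyperplane sections $Y_u$, using Bertini (Lemma \ref{Bertini}), Proposition \ref{ChowSerreWeil}\eqref{CSW1} to control $\Alb_{w} Y_{u} \to \Alb_{w} X$, and Proposition \ref{BoundAlgSet} to control the error. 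Then Proposition \ref{RoughLangWeil} gives $\card{X(k)} = \pi_{n} + \Tr(F \mid H^{2n-1}_{+}(\bar{X},\QQ_{\ell})) + O(q^{n-1})$, and comparing this with Lemma \ref{BomSper} over all finite extensions gives $\Tr(F^{s} \mid H^{2n-1}_{+}) - q^{s(n-1)}\Tr(\varphi^{s}) = O(q^{s(n-1)})$. Since the eigenvalues of $F$ on $H^{2n-1}_{+}$ and the numbers $q^{n-1}\alpha_{j}$ are all pure of weight $2n-1$, Lemma \ref{Cauchy} forces the two multisets to coincide, which yields \eqref{CorrConj1} and \eqref{CorrConj2}. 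Your eigenvalue bookkeeping (the Tate twist by $n$, the inversion $F = \varphi^{-1}$, and the functional equation $\{q\alpha^{-1}\} = \{\alpha\}$ combining to give eigenvalues $q^{n-1}\alpha_{j}$) is all correct and is exactly what appears in the proof of Proposition \ref{CorrConj}; the missing idea is the point-counting induction that lets one dispense with $\resolTwo$.

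Incidentally, you also silently replace the isogeny category statement with actual equality: the isomorphism $j_{X}$ is a $\mathbf{g}$-equivariant isomorphism of $\QQ_{\ell}$-vector spaces, so your deduction from it is fine once it is available; the only issue is its availability without $\resolTwo$.
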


\begin{remark}
\label{CorrectAndProve}
Serge Lang and Andr\'e Weil have conjectured \cite[p. 826-827]{LangWeil}
that the equality
$$P^{+}_{2n - 1}(X, T) = q^{-g}f_{c}(\Pic_{w} X, q^{n}T)$$
holds if $X$ is a variety defined over $k$, provided $X$ is complete and
nonsingular. If $X$ is only assumed to be normal, then $\Pic_{w}
X$ and $\Alb_{w} X$ are isogenous, and hence, $f_{c}(\Alb_{w} X, T) =
f_{c}(\Pic_{w} X, T)$. Thus the Lang-Weil Conjecture is a particular case
of Theorem \ref{GenConj}\eqref{CorrConj1}.

Example \ref{ExCone2} shows that we cannot replace $\Pic_{w} X$ by
$\Pic_{s} X$ in the statement of Theorem \ref{GenConj}, even if $X$ is
projectively normal.
\end{remark}

The full proof of this theorem will be given in section \ref{LWIneq}. We
first prove:

\begin{proposition}
\label{CorrConj}
If $X$ satisfies $\resolTwo$, then the conclusions of Theorem
\ref{GenConj} hold true. Moreover
\begin{enumerate}
\setcounter{enumi}{2}
\item
\label{CorrConj3}
Conditions $\indep$ hold for $H^{2n - 1}_{+}(\bar{X}, \QQ_{\ell})$.
\end{enumerate}
\end{proposition}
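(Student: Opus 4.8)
The plan is to derive the proposition as a formal consequence of Theorem \ref{AlbanToH}: under $\resolTwo$ that theorem already supplies a $\mathbf{g}$-equivariant isomorphism $j_{X} : V_{\ell}(\Alb_{w} X) \isom H^{2n-1}_{+}(\bar{X},\QQ_{\ell}(n))$, so the only task remaining is to translate this identification of Galois modules into an identity of characteristic polynomials of the geometric Frobenius, paying close attention to the Tate twist. Thus the substantive work is already done, and what follows is bookkeeping.

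Concretely, I would write $f_{c}(\Alb_{w} X, T) = \prod_{j=1}^{2g}(T - \alpha_{j})$ with $g = \dim \Alb_{w} X$; the $\alpha_{j}$ are the characteristic roots, pure of weight $1$, and by $\prod_{j}\alpha_{j} = q^{g}$ together with the Weil pairing (both recalled just above the statement) the multiset $\{\alpha_{j}\}$ is stable under $\alpha \mapsto q/\alpha$. As recorded in the proof of Corollary \ref{PolPic}, $F = \varphi^{-1}$ acts on $V_{\ell}(\Alb_{w} X)$ with eigenvalues $\alpha_{j}^{-1}$, while $F$ acts on $\QQ_{\ell}(n)$ by the scalar $q^{-n}$; hence if $\omega_{1},\dots,\omega_{2g}$ denote the reciprocal roots of $P^{+}_{2n-1}(X,T)$, i.e.\ the eigenvalues of $F$ on $H^{2n-1}_{+}(\bar{X},\QQ_{\ell})$, the isomorphism $j_{X}$ forces $\{q^{-n}\omega_{j}\} = \{\alpha_{j}^{-1}\}$, and after applying the $\alpha \mapsto q/\alpha$ symmetry, $\{\omega_{j}\} = \{q^{n-1}\alpha_{j}\}$. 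This multiset identity is the heart of the matter: it gives at once $b^{+}_{2n-1}(X) = 2g$, and the elementary computation $\prod_{j}(q^{n}T - \alpha_{j}) = q^{g}\prod_{j}(1 - q^{n-1}\alpha_{j}T)$ — again just the substitution $\alpha_{j}\mapsto q/\alpha_{j}$ — upgrades $P^{+}_{2n-1}(X,T) = \prod_{j}(1 - q^{n-1}\alpha_{j}T)$ to the asserted form $q^{-g}f_{c}(\Alb_{w} X, q^{n}T)$, proving \eqref{CorrConj1}. Comparing the coefficients of $T$ then yields $\Tr(F \mid H^{2n-1}_{+}(\bar{X},\QQ_{\ell})) = \sum_{j}\omega_{j} = q^{n-1}\sum_{j}\alpha_{j} = q^{n-1}\Tr(\varphi)$, which is \eqref{CorrConj2}.

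For \eqref{CorrConj3}, i.e.\ that the conditions $\indep$ hold for $H^{2n-1}_{+}(\bar{X},\QQ_{\ell})$: purity of weight $2n-1$ is built into the definition of $H^{2n-1}_{+}$; diagonalizability of $F$ transports across $j_{X}$ — the Tate twist is a scalar and does not affect it — from the semisimplicity of the Frobenius on $V_{\ell}(\Alb_{w} X)$, exactly as in the proof of Corollary \ref{PolPic}\eqref{PolPic4}; and by \eqref{CorrConj1} the polynomial $P^{+}_{2n-1}(X,T)$ equals $q^{-g}f_{c}(\Alb_{w} X, q^{n}T)$, hence is independent of $\ell$ because $f_{c}(\Alb_{w} X,T)\in\ZZ[T]$ is, and lies in $\ZZ[T]$ because its coefficients are the elementary symmetric functions of the algebraic integers $q^{n-1}\alpha_{j}$ and are simultaneously rational. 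I do not expect a genuine obstacle here; the only point demanding care is pinning down the normalization of the Tate twist and the functional-equation symmetry precisely enough that the powers of $q$ come out exactly as $q^{-g}$ and $q^{n}$, rather than merely matching in absolute value.
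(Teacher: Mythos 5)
Your proposal is correct and follows essentially the same route as the paper: deduce everything from the $\mathbf{g}$-equivariant isomorphism of Theorem \ref{AlbanToH} via bookkeeping with Tate twists and the characteristic polynomial of Frobenius, then transfer diagonalizability, purity and $\ell$-independence exactly as in Corollary \ref{PolPic}\eqref{PolPic4}. The only cosmetic difference is that in establishing $P^{+}_{2n-1}(X,T) = q^{-g}f_{c}(\Alb_{w} X, q^{n}T)$ you invoke the functional-equation symmetry $\alpha \mapsto q/\alpha$, whereas the paper gets by with the single identity $\prod_{j}\alpha_{j} = q^{g}$ (rewriting $f_{c}(A,T) = q^{g}\prod_{j}(1-\alpha_{j}^{-1}T)$), and uses the symmetry only for the trace identity in \eqref{CorrConj2}.
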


\begin{proof}
Let $A$ be an abelian variety of dimension $g$ defined over $k$. Then
\begin{equation}
\label{CorrConj4}
f_{c}(A, T) = \prod_{j = 1}^{2g} (T - \alpha_{j})
= q^{g} \prod_{j = 1}^{2g} (1 - \alpha_{j}^{-1} T),
\end{equation}
the last equality coming from \eqref{ProdRoots}. Now
$$
\det(T - V_{\ell}(F) \mid V_{\ell}(A)(- n)) =
\prod_{j = 1}^{2g} (T - q^{n}\alpha_{j}^{-1}).
$$
Hence
\begin{equation}
\label{CorrConj5}
\det(1 - T \ V_{\ell}(F) \mid V_{\ell}(A)(- n)) =
\prod_{j = 1}^{2g} (1 - q^{n}\alpha_{j}^{-1} T),
\end{equation}
and we get from \eqref{CorrConj4}
$$
\det(1 - T \ V_{\ell}(F) \mid V_{\ell}(A)(- n)) = q^{- g} f_{c}(A,
q^{n}T).
$$
Hence, \eqref{CorrConj1} follows from Proposition \ref{AlbanToH}. From
\eqref{CorrConj5} we deduce that $\beta$ is an eigenvalue of $F$ in
$H^{2n - 1}(\bar{X}, \QQ_{\ell})$ if and only if $q^{n}/\beta$ is among the
characteristic roots of $\varphi$. Now since $\alpha_{j}
\overline{\alpha}_{j} = q$, we get:
$$
\Tr(F \mid H^{2n - 1}(\bar{X}, \QQ_{\ell}))
= \sum_{j = 0}^{2g} \dfrac{q^n}{\alpha_{j}}
= q^{n - 1} \sum_{j = 0}^{2g} \overline{\alpha}_{j}
= q^{n - 1} \Tr(\varphi).
$$
Finally \eqref{CorrConj3} follows from Theorem \ref{AlbanToH} as in the
proof of Corollary \ref{PolPic}.
\end{proof}

We end this section by stating a weak form of the functional equation
relating the polynomials $P_{1}(X, T)$ and $P^{+}_{2n - 1}(X, T)$ when
$X$ is nonsingular.

\begin{corollary}
\label{FuncEq}
Let $X$ be a normal projective variety of dimension $n \geq 2$, defined
over $k$. If $g = \dim \Alb_{w} X$, then
$$
q^{-g} P_{1}^{\vee}(X, q^{n}T)
\quad \text{divides} \quad
P_{2n - 1}^{+}(X, T).
$$
\end{corollary}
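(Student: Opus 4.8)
The plan is to rewrite both sides in terms of characteristic polynomials of Frobenius on abelian varieties, using the descriptions already established, and then to reduce the divisibility to a purely formal statement about Tate modules. First I would use that $X$ is normal and projective, so Corollary \ref{PolPic}\eqref{PolPic1} gives $P_{1}(X, T) = f_{c}^{\vee}(\Pic_{s} X, T)$. Since $f_{c}(\Pic_{s} X, T)$ is monic of (even) degree $2p$, where $p = \dim \Pic_{s} X$, and has nonzero constant term $\deg(\varphi) = q^{p}$ by \eqref{ProdRoots}, forming the reciprocal polynomial twice recovers the original, whence $P_{1}^{\vee}(X, T) = f_{c}(\Pic_{s} X, T)$ and therefore
$$
q^{-g}\, P_{1}^{\vee}(X, q^{n}T) = q^{-g}\, f_{c}(\Pic_{s} X, q^{n}T).
$$
On the other hand $X$ is a variety of dimension $n \geq 2$ over $k$, so Theorem \ref{GenConj}\eqref{CorrConj1} gives $P_{2n - 1}^{+}(X, T) = q^{-g} f_{c}(\Alb_{w} X, q^{n}T)$. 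Cancelling the nonzero scalar $q^{-g}$ and performing the invertible change of variable $S = q^{n}T$ (which preserves divisibility of polynomials), the statement reduces to showing that $f_{c}(\Pic_{s} X, S)$ divides $f_{c}(\Alb_{w} X, S)$.

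For the latter I would argue as follows. Recall $f_{c}(A, T) = \det(T - V_{\ell}(\varphi) \mid V_{\ell}(A))$, so $f_{c}$ is an isogeny invariant; since every abelian variety is isogenous to its dual and $\Pic_{w} X$ is the dual of $\Alb_{w} X$, we obtain $f_{c}(\Pic_{w} X, T) = f_{c}(\Alb_{w} X, T)$. Because $X$ is normal, Proposition \ref{Compar}\eqref{Compar1} and \eqref{Compar2} provide a surjective homomorphism $\nu : \Alb_{w} X \longrightarrow \Alb_{s} X$; dualizing, ${}^{t}\nu : \Pic_{s} X \longrightarrow \Pic_{w} X$ has finite kernel, so (exactly as in the proof of Corollary \ref{InjH}) the induced map $V_{\ell}({}^{t}\nu) : V_{\ell}(\Pic_{s} X) \longrightarrow V_{\ell}(\Pic_{w} X)$ is an injective $\mathbf{g}$-equivariant linear map. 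Its image is then a $V_{\ell}(\varphi)$-stable subspace of $V_{\ell}(\Pic_{w} X)$ isomorphic to $V_{\ell}(\Pic_{s} X)$ as a Frobenius module, and hence
$$
\det(T - V_{\ell}(\varphi) \mid V_{\ell}(\Pic_{s} X)) \ \text{divides}\ \det(T - V_{\ell}(\varphi) \mid V_{\ell}(\Pic_{w} X)),
$$
that is, $f_{c}(\Pic_{s} X, T) \mid f_{c}(\Pic_{w} X, T) = f_{c}(\Alb_{w} X, T)$, which is what was needed. (As a sanity check, this also recovers $b_{1,\ell}(\bar{X}) = 2p \leq 2g = b_{2n - 1}^{+}(X)$, the degrees being compatible with the divisibility.)

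I do not expect a genuine obstacle in this argument. The only points that require some care are the bookkeeping with reciprocal polynomials and the substitution $T \mapsto q^{n}T$ — both routine, once one has noted that $f_{c}(\Pic_{s} X, 0) = q^{p} \neq 0$ so that the relevant degrees are preserved — and the fact that we invoke the full Theorem \ref{GenConj}, whose proof is completed only in section \ref{LWIneq}; one should check, as is indeed the case, that that proof does not appeal to Corollary \ref{FuncEq}, so that no circularity arises.
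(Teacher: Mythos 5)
Your proof is correct and follows essentially the same route as the paper's: both use Corollary \ref{PolPic}\eqref{PolPic1} to identify $P_{1}^{\vee}(X, T)$ with $f_{c}(\Pic_{s} X, T)$, Theorem \ref{GenConj}\eqref{CorrConj1} to identify $P_{2n-1}^{+}(X, T)$ with $q^{-g} f_{c}(\Alb_{w} X, q^{n}T)$, and Proposition \ref{Compar}\eqref{Compar1} to get the divisibility $f_{c}(\Pic_{s} X, T) \mid f_{c}(\Alb_{w} X, T)$. You simply spell out the dualization and Tate-module steps that the paper leaves implicit, and your remark about non-circularity of the appeal to Theorem \ref{GenConj} is a sensible check.
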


\begin{proof}
By Corollary \ref{PolPic}\eqref{PolPic1} and Theorem
\ref{GenConj}\eqref{CorrConj1}, we know that
$$P_{1}^{\vee}(X, T) = f_{c}(\Pic_{s} X, T), \quad
P^{+}_{2n - 1}(X, T) = q^{- g} f_{c}(\Alb_{w} X, q^{n}T),$$
and Proposition \ref{Compar}\eqref{Compar1} implies that $f_{c}(\Pic_{s} X,
T)$ divides $f_{c}(\Alb_{w} X, T)$.
\end{proof}

With Corollary \ref{ResIsog}, we prove in the same way :

\begin{corollary}
Assume that $\resol$ holds. Let $X$ be a projective variety of dimension $n$ defined over $k$, regular in codimension one, which is a local complete intersection. 
If $g = \dim \Alb_{w} X$, then
$$
q^{-g} P_{1}^{\vee}(X, q^{n}T) = P_{2n - 1}(X, T).
\rlap \qedbox
$$
\end{corollary}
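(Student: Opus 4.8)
The plan is to follow the proof of Corollary \ref{FuncEq}, turning the divisibility obtained there into an equality by invoking Corollary \ref{ResIsog}. Suppose first that $n \geq 2$; the case $n = 1$ is the classical functional equation for the smooth projective curve $X$ (a curve regular in codimension one is nonsingular, hence automatically a local complete intersection), so nothing is to be done there. Since $X$ is a local complete intersection which is regular in codimension one, Serre's criterion shows that $X$ is normal. As $\resol$ is assumed to hold, Proposition \ref{AlbanToHNormal} applies in its form assuming $\resol$ and regularity in codimension one, and furnishes a $\mathbf{g}$-equivariant isomorphism $j_X : V_{\ell}(\Alb_w X) \isom H^{2n-1}(\bar X, \QQ_{\ell}(n))$.

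Next I would translate this into an identity of characteristic polynomials, exactly as in the proof of Proposition \ref{CorrConj}. Write $f_{c}(\Alb_w X, T) = \prod_{j=1}^{2g}(T - \alpha_{j})$, so that $\prod_{j}\alpha_{j} = q^{g}$ by \eqref{ProdRoots}. The geometric Frobenius $F$ acts on $V_{\ell}(\Alb_w X)$ with eigenvalues $\alpha_{j}^{-1}$, hence via $j_X$ on $H^{2n-1}(\bar X, \QQ_{\ell}(n))$ with the same eigenvalues, and therefore on the untwisted space $H^{2n-1}(\bar X, \QQ_{\ell}) = H^{2n-1}(\bar X, \QQ_{\ell}(n)) \otimes \QQ_{\ell}(-n)$ with eigenvalues $q^{n}\alpha_{j}^{-1}$. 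Consequently
\[
P_{2n-1}(X, T) = \prod_{j=1}^{2g}\bigl(1 - q^{n}\alpha_{j}^{-1}T\bigr)
= q^{-g}\prod_{j=1}^{2g}\bigl(q^{n}T - \alpha_{j}\bigr)
= q^{-g} f_{c}(\Alb_w X, q^{n}T),
\]
the middle equality using $\prod_{j}\alpha_{j} = q^{g}$.

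Finally I would bring in the Picard side. By Corollary \ref{PolPic}\eqref{PolPic1}, $P_{1}(X,T) = f_{c}^{\vee}(\Pic_s X, T)$, hence $P_{1}^{\vee}(X,T) = f_{c}(\Pic_s X, T)$. By Corollary \ref{ResIsog}, the canonical homomorphism $\nu : \Alb_w X \to \Alb_s X$ is an isomorphism; in particular $\Pic_s X$, being the dual abelian variety of $\Alb_s X$, is isogenous to $\Alb_w X$, so $f_{c}(\Pic_s X, T) = f_{c}(\Alb_w X, T)$ by the isogeny-invariance of the characteristic polynomial of Frobenius recalled in Remark \ref{CorrectAndProve}. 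Combining with the previous step,
\[
q^{-g} P_{1}^{\vee}(X, q^{n}T) = q^{-g} f_{c}(\Pic_s X, q^{n}T)
= q^{-g} f_{c}(\Alb_w X, q^{n}T) = P_{2n-1}(X, T),
\]
which is the asserted identity.

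I do not anticipate a genuine obstacle: all the substance is packaged in the already-established Corollary \ref{ResIsog} (which itself rests on $\resol$, the weak Poincar\'e Duality of Remark \ref{PoincDual}, and Tate's theorem) and in Proposition \ref{AlbanToHNormal}. The only points demanding a little care are the bookkeeping of the Tate twist when passing from $H^{2n-1}(\bar X, \QQ_{\ell}(n))$ to $P_{2n-1}(X,T)$, citing the correct (resolution-assisted, codimension-one) case of Proposition \ref{AlbanToHNormal} rather than its unconditional codimension-two version, and disposing of the trivial case $n = 1$ separately.
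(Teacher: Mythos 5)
Your proposal is correct and follows essentially the same route as the paper: the paper simply says the result is proved ``in the same way'' as Corollary~\ref{FuncEq} with Corollary~\ref{ResIsog} upgrading the divisibility to an equality, and your argument is exactly that, with the intermediate steps (the identification $P_{2n-1}(X,T)=q^{-g}f_c(\Alb_w X, q^nT)$ via Proposition~\ref{AlbanToHNormal}, the equality $f_c(\Pic_s X,T)=f_c(\Alb_w X,T)$ via Corollary~\ref{ResIsog}, and Corollary~\ref{PolPic}) unpacked explicitly. Your direct use of the $\resol$ clause of Proposition~\ref{AlbanToHNormal} for the full $H^{2n-1}$ quietly absorbs the purity step ($P_{2n-1}=P^{+}_{2n-1}$, Proposition~\ref{PolyDiv}) that the paper's route through Theorem~\ref{GenConj} would otherwise require, and the separate disposal of $n=1$ is a reasonable precaution.
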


\section{On the Lang-Weil Inequality}
\label{LWIneq}

In this section, let $k = \FF_{q}$. We now state the classical Lang-Weil
inequality, except that we give an explicit bound for the remainder.

\begin{theorem}
\label{LangWeil}
Let $X$ be a projective algebraic subvariety in $\ProjCan_{k}$, of dimension
$n$ and of degree $d$, defined over $k$. Then
$$
\abs{\card{X(k)} - \pi_{n}} \leq (d - 1)(d - 2) q^{n - (1/2)} +
C_{+}(X) \, q^{n - 1},
$$
where $C_{+}(X)$ depends only on $\Bar{X}$, and
$$
C_{+}(X) \leq 9 \times 2^{m} \times (m \delta + 3)^{N + 1},
$$
if $X$ is of type $(m, N, \mathbf{d})$, with $\mathbf{d} = (d_{1}, \dots,
d_{m})$ and $\delta = \max(d_{1}, \dots, d_{m})$. In particular,
$C_{+}(X)$ is bounded by a quantity which is independent of the field
$k$.
\end{theorem}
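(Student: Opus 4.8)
The plan is to compare $\card{X(k)}$ with $\pi_n$ via the Grothendieck--Lefschetz Trace Formula \eqref{GrTF} and Deligne's Main Theorem \eqref{DeMT}, isolating the top-weight piece of $H^{2n-1}_c(\bar{X})$ and controlling it through the Albanese variety and a generic curve section. First I would dispose of the case $n = 1$: since $X$ is an irreducible projective curve, $\card{X(k)} - \pi_1 = -\sum_j \omega_{1j,\ell}$ with the $\omega_{1j,\ell}$ the reciprocal roots of $P_{1,\ell}(X,T)$; by \eqref{DeMT} each has absolute value $\leq q^{1/2}$, and by Lemma \ref{ArithGen}\eqref{ArithGen1} there are $b_{1,\ell}(\bar{X}) \leq 2 p_a(X) \leq (d-1)(d-2)$ of them, so the inequality holds with $C_+(X) = 0$ (this is also Corollary \ref{AubryPerretIneq}). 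So I assume $n \geq 2$.

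\textbf{Step 1 (trace formula and remainder).} Since $X$ is geometrically integral, $H^0(\bar{X}) = \QQ_\ell$ and $H^{2n}(\bar{X}) = \QQ_\ell(-n)$, so their Frobenius contributions are $1$ and $q^n$, cancelling the outer terms of $\pi_n = 1 + q + \dots + q^n$. Splitting $H^{2n-1}(\bar{X})$ into its weight-$(2n-1)$ subspace $H^{2n-1}_+(\bar{X})$ and a complementary piece of weight $\leq 2n-2$, formulas \eqref{GrTF} and \eqref{DeMT} give
$$
\card{X(k)} - \pi_n = (-1)^{2n-1}\sum_{\omega}\omega + R,
$$
where $\omega$ runs over the reciprocal roots of $P_{2n-1,\ell}(X,T)$ that are pure of weight $2n-1$, and $R$ gathers all contributions of $H^i(\bar{X})$ for $1 \leq i \leq 2n-2$, of the weight-$\leq 2n-2$ part of $H^{2n-1}(\bar{X})$, and of $q, q^2, \dots, q^{n-1}$. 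Each term of $R$ has absolute value $\leq q^{n-1}$, so $\abs{R} \leq C_+(X)\, q^{n-1}$, where $C_+(X)$ is the number of such terms surviving the cancellation of the common factors of $Z(X,T)$ and $Z(\PP^n,T)$; as in the proof of Theorem \ref{MainThm} it can be taken to depend only on $\bar{X}$ (equivalently, to be independent of $q$), and
$$
C_+(X) \leq \tau(X) \leq \tau_k(m,N,\mathbf{d}) \leq 9 \times 2^m \times (m\delta + 3)^{N+1}
$$
by Proposition \ref{SchKatz}.

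\textbf{Step 2 (top-weight term).} The sum $\sum_{\omega}\omega$ has $b_{2n-1}^+(X) = \dim H^{2n-1}_+(\bar{X})$ terms, each of absolute value $q^{(2n-1)/2} = q^{n-1/2}$, hence $\bigl|\sum_\omega \omega\bigr| \leq b_{2n-1}^+(X)\, q^{n-1/2}$. It remains to show $b_{2n-1}^+(X) \leq (d-1)(d-2)$. By Theorem \ref{GenConj}\eqref{CorrConj1}, $b_{2n-1}^+(X) = 2\dim\Alb_w X$. Choosing $E \in \EuScript{U}_{n-1}(X)$ generic, so that $Y = X \cap E$ is an integral curve with $\deg Y = \deg X = d$ and $\iota_* : \Alb_w Y \longrightarrow \Alb_w X$ surjective (Proposition \ref{ChowSerreWeil}\eqref{CSW2}), and taking a nonsingular projective model $\widetilde{Y}$ of $Y$, Proposition \ref{ChowSerreWeil}\eqref{CSW3} --- which rests on Lemma \ref{ArithGen}\eqref{ArithGen1} --- yields $\dim\Alb_w X \leq g(\widetilde{Y}) \leq \tfrac{1}{2}(d-1)(d-2)$. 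Thus $b_{2n-1}^+(X) \leq (d-1)(d-2)$, and combining with Step 1 finishes the proof, the final assertion following since the bound on $C_+(X)$ is independent of $k$.

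\textbf{The main obstacle.} Everything above is bookkeeping except for the use of Theorem \ref{GenConj}\eqref{CorrConj1} in full generality --- the identity $b_{2n-1}^+(X) = 2\dim\Alb_w X$ for an arbitrary, possibly very singular, variety $X$, with no resolution of singularities to lean on. This is the heart of the matter and is carried out earlier in this section by exploiting the birational invariance of both $H^{2n-1}_+(\bar{X})$ (Proposition \ref{BirInv}\eqref{BirInv2}) and $\Alb_w X$: after passing to a finite extension $k_s/k$ over which the relevant nonempty Zariski-open subset of a Grassmannian acquires a rational point (the Claim in Step 4 of the proof of Proposition \ref{AlbanToHNormal}), one obtains from Proposition \ref{AlbanToHNormal}/Proposition \ref{CorrConj} a $\mathbf{g}_s$-equivariant isomorphism $V_\ell(\Alb_w X) \isom H^{2n-1}_+(\bar{X},\QQ_\ell(n))$, which is then descended to a $\mathbf{g}$-equivariant isomorphism over $k$ via the Conjugation Lemma, exactly as in Steps 4--5 of that proof. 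Granting this input, Theorem \ref{LangWeil} follows purely from the trace formula and Katz's Inequality.
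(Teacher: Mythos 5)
Your proof is correct and follows the paper's own route exactly: Proposition \ref{RoughLangWeil} (trace formula, Deligne's Main Theorem, and Proposition \ref{SchKatz} to produce the $C_+(X)\,q^{n-1}$ remainder), followed by Theorem \ref{GenConj}\eqref{CorrConj1} and Proposition \ref{ChowSerreWeil}\eqref{CSW3} to bound $b^+_{2n-1}(X) = 2\dim\Alb_w X \leq (d-1)(d-2)$ via a generic curve section. Your explicit treatment of $n=1$ through Corollary \ref{AubryPerretIneq} is a worthwhile clarification, since both Theorem \ref{GenConj} and Proposition \ref{ChowSerreWeil} are stated for $n \geq 2$ and the paper leaves this corner case implicit.

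One correction to your closing \textbf{main obstacle} paragraph, though it does not affect your proof: the paper's proof of Theorem \ref{GenConj} for a general (possibly very singular) variety is \emph{not} the finite-extension plus Conjugation-Lemma descent you describe. That argument is Steps 4--5 of Proposition \ref{AlbanToHNormal}, which presupposes $X$ normal and regular in codimension $2$; via Theorem \ref{AlbanToH} it handles the $\resolTwo$ case, i.e. Proposition \ref{CorrConj}. For arbitrary $X$ of dimension $n \geq 2$ the paper instead proves Lemma \ref{BomSper} (the Bombieri--Sperber estimate) by induction on $n$: the base case $n=2$ uses Proposition \ref{CorrConj} (since $\resolTwo$ holds unconditionally for surfaces by Abhyankar), and the inductive step slices $X$ by a pencil of hyperplanes through a generic codimension-two center, appealing to Lemma \ref{Bertini}, Proposition \ref{ChowSerreWeil}\eqref{CSW1} and Proposition \ref{BoundAlgSet}. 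Theorem \ref{GenConj} then follows by comparing Lemma \ref{BomSper} with Proposition \ref{RoughLangWeil} and invoking the Tauberian-type counting Lemma \ref{Cauchy} to identify the multiset of Frobenius eigenvalues on $H^{2n-1}_+(\bar{X})$ with the characteristic roots of $\Alb_w X$ multiplied by $q^{n-1}$. Since you only use Theorem \ref{GenConj} as a black box, your argument for Theorem \ref{LangWeil} stands.
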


In order to prove this, we need a preliminary result.

\begin{proposition}
\label{RoughLangWeil}
Let $X$ be a projective algebraic subvariety in $\ProjCan_{k}$ of
dimension $n$ defined over $k$. Then
$$
\abs{\card{X(k)} - \pi_{n} - \Tr(F \mid H^{2n - 1}_{+}(\bar{X},
\QQ_{\ell})) }
\leq C_{+}(X) \, q^{n - 1},
$$
where $C_{+}(X)$ is as in Theorem \ref{LangWeil}.
\end{proposition}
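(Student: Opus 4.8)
The plan is to start from the Grothendieck--Lefschetz Trace Formula \eqref{GrTF} applied to $X$, namely
$$
\card{X(k)} = \sum_{i=0}^{2n}(-1)^{i}\,\Tr(F\mid H^{i}_{c}(\bar X)),
$$
and to isolate the two ``big'' contributions: the term $i=2n$, which gives $q^{n}$ since $X$ is irreducible (so $H^{2n}_{c}(\bar X)\cong\QQ_{\ell}(-n)$ is one-dimensional), and the component $H^{2n-1}_{+}(\bar X)$ of maximal weight $2n-1$ inside $H^{2n-1}_{c}(\bar X)$, which contributes exactly $\Tr(F\mid H^{2n-1}_{+}(\bar X))$. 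Everything else must be absorbed into the error term $C_{+}(X)\,q^{n-1}$. For $i\le 2n-2$, Deligne's Main Theorem \eqref{DeMT} says the eigenvalues of $F$ on $H^{i}_{c}(\bar X)$ are pure of weight $\le i\le 2n-2$, so each such trace is $O(q^{n-1})$ with an implied constant bounded by $\dim H^{i}_{c}(\bar X)$. The remaining piece is the complement of $H^{2n-1}_{+}(\bar X)$ in $H^{2n-1}_{c}(\bar X)$: its eigenvalues are pure of weight $\le 2n-2$ by definition of the weight filtration, hence this also contributes $O(q^{n-1})$. Collecting, one gets
$$
\abs{\card{X(k)} - q^{n} - \Tr(F\mid H^{2n-1}_{+}(\bar X))} \le \Bigl(\sum_{i=0}^{2n-2} b_{i,\ell}(\bar X) + (b_{2n-1,\ell}(\bar X) - b^{+}_{2n-1}(X))\Bigr)q^{n-1}.
$$
To finish the ``main term'' bookkeeping I would note $q^{n}$ differs from $\pi_{n}=q^{n}+q^{n-1}+\dots+1$ by $q^{n-1}+\dots+1 \le (n+1)q^{n-1}/... $, which is again $O(q^{n-1})$; folding this correction (at most $\pi_{n-1}\le 2q^{n-1}$, say) into the constant gives the stated inequality with $\card{X(k)} - \pi_{n}$ in place of $\card{X(k)} - q^{n}$.

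It remains to bound the constant $C_{+}(X)$. The natural choice is
$$
C_{+}(X) = \sum_{i=0}^{2n-2} b_{i,\ell}(\bar X) + \bigl(b_{2n-1,\ell}(\bar X) - b^{+}_{2n-1}(X)\bigr) + \pi_{n-1}/q^{n-1}\text{-type terms},
$$
but the cleanest bound comes by observing that $C_{+}(X)$ is at most $\sigma_{\ell}(X)=\sum_{i=0}^{2n}b_{i,\ell}(\bar X)$ plus a small explicit additive constant coming from the difference between $q^{n}$ and $\pi_{n}$, and then invoking Katz's Inequality \eqref{KatzIneq}, or rather Proposition \ref{SchKatz}, exactly as in the proof of Theorem \ref{MainThm}: if $X$ is of type $(m,N,\mathbf d)$ then $\tau(X)\le\sigma_{\ell}(X)+n$ and $\tau(X)\le 9\times 2^{m}\times(m\delta+3)^{N+1}$. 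Since the quantity $\card{X(k)}-\pi_{n}-\Tr(F\mid H^{2n-1}_{+}(\bar X))$ is, after all cancellations in $Z(X,T)/Z(\PP^{n},T)$, a sum of at most $\tau(X)$ powers of characteristic roots of weight $\le 2n-2$ in absolute value, the same constant $\tau(X)$ (hence the same explicit bound) works. The fact that $C_{+}(X)$ depends only on $\bar X$ follows because $b_{i,\ell}(\bar X)$ and $b^{+}_{2n-1}(X)$, together with the total degree of $Z(X,T)/Z(\PP^n,T)$, are invariants of $\bar X$ (indeed the virtual Betti number and the weight filtration are, by Proposition \ref{BirInv} and the weight-independence of the Euler characteristic discussed in section \ref{TraceForm}).

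The main obstacle is not the trace-formula manipulation, which is routine, but the precise identification of which part of $H^{2n-1}_{c}(\bar X)$ is being ``kept'': one must check carefully that the complement of $H^{2n-1}_{+}(\bar X)$ inside $H^{2n-1}_{c}(\bar X)$ has all $F$-eigenvalues of weight $\le 2n-2$, so that it lands in the error term. This is exactly the definition of the maximal-weight piece of the increasing weight filtration, so it is immediate, but it is the conceptual crux. A secondary technical point is getting the constant explicit and field-independent: here one must be a little careful that $\tau(X)$ (the total degree of $Z(X,T)/Z(\PP^{n},T)$ after cancellation) genuinely dominates the number of surviving terms, which is precisely what Proposition \ref{SchKatz} and its proof provide; no further singularity hypotheses on $X$ are needed because Deligne's Main Theorem applies to arbitrary separated schemes of finite type.
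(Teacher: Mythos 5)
Your proposal follows the same route as the paper: write $\card{X(k)}-\pi_n$ via the Grothendieck--Lefschetz trace formula, split $H^{2n-1}_{c}(\bar X)$ into its top-weight piece $H^{2n-1}_{+}$ and the lower-weight complement, bound all remaining contributions (including the terms coming from $\pi_n$) by $q^{n-1}$ via Deligne's Main Theorem, and control the resulting constant by $\tau(X)$ and Katz's inequality through Proposition \ref{SchKatz}. The only cosmetic difference is that the paper folds the $\pi_n$-correction directly into the definition $C_{+}(X)=\dim H^{2n-1}_{-}(\bar X)+\sum_{i=0}^{2n-2}\bigl(b_{i,\ell}(\bar X)+\varepsilon_{i}\bigr)$ rather than treating $q^{n}$ first and adjusting afterwards, but this is the same count.
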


\begin{proof}
Denote by $H^{2n - 1}_{-}(\bar{X})$ the subspace of $H^{2n -1}(\bar{X})$
corresponding to eigenvalues of the Frobenius endomorphism of weight
strictly smaller than $2n - 1$ and define
$$
C_{+}(X) = \dim H^{2n - 1}_{-}(\bar{X}) + \
\sum_{i = 0}^{2n - 2} b_{i,l}(\bar{X}) + \varepsilon_{i}.
$$
Clearly, $C_{+}(X)$ depends only on $\Bar{X}$, and by Proposition
\ref{SchKatz},
$$
C_{+}(X) \leq \tau(X) \leq \tau_{k}(m, N, \mathbf{d}) \leq
9 \times 2^{m} \times (m \delta + 3)^{N + 1},
$$
with the notations introduced therein. Then the Trace Formula
\eqref{GrTF} and Deligne's Main Theorem \eqref{DeMT} imply the required
result.
\end{proof}

\begin{proof}[Proof of Theorem \ref{LangWeil}]
By Theorem \ref{GenConj}\eqref{CorrConj1} and Proposition
\ref{ChowSerreWeil}\eqref{CSW3},
$$
b_{2n - 1}^{+}(X) =
2 \dim \Alb_{w} X \leq g(\widetilde{Y}) \leq (d - 1)(d - 2),
$$
if $Y$ is a suitably chosen linear section of $X$ of dimension $1$, and if
$\widetilde{Y}$ is a nonsingular curve birationally equivalent to $Y$. Since
all the eigenvalues of the Frobenius automorphism in $H^{2n -
1}_{+}(\bar{X}, \QQ_{\ell})$ are pure of weight $2n - 1$ by definition, we
obtain by Proposition \ref{RoughLangWeil}:
$$
\abs{\card{X(k)} - \pi_{n}} \leq 2g(\widetilde{Y}) q^{n - (1/2)} +
C_{+}(X) \, q^{n - 1},
$$
which is better than the desired inequality.
\end{proof}

\begin{remark}
\label{AffineLangWeil}
In exactly the same way, applying \cite[Th. 1]{Katz4}, one establishes that
if $X$ is a closed algebraic subvariety in $\Aff^{N}_{k}$, of dimension
$n$, of degree $d$ and of type $(m, N, \mathbf{d})$, defined over $k$, then
$$
\abs{\card{X(k)} - q^{n}} \leq (d - 1)(d - 2) q^{n - (1/2)} +
C_{+}(X) \, q^{n - 1},
$$
where $C_{+}(X)$ depends only on $\bar{X}$ and is not greater than
$6 \times 2^{m} \times (m \delta + 3)^{N + 1}$.
\end{remark}

\begin{remark}
As a consequence of Remark \ref{AffineLangWeil}, we easily obtain the
following version of a lower bound due to W. Schmidt \cite{Schmidt} for the
number of points of affine hypersurfaces. If $f \in k[T_{1}, \dots , T_{N}]$
is an absolutely irreducible polynomial of degree $d$, and if $X$ is the
hypersurface in $\Aff^{N}_{k}$ with equation $f(T_{1}, \dots , T_{N}) = 0$,
then
$$
\card{X(k)} \geq
q^{N - 1} - (d - 1)(d - 2) q^{N - (3/2)} - 12 (d + 3)^{N + 1} q^{N - 2}.
$$
It may be noted that with the help of Schmidt's bound, one is able to
replace $12 (d + 3)^{N + 1}$ by a much better constant, namely $6d^{2}$, but
his bound is only valid for large values of $q$.
\end{remark}

Proposition \ref{RoughLangWeil} gives the second term in the asymptotic
expansion of $\card{X(\FF_{q^{s}})}$ when $s$ is large. From Theorem
\ref{GenConj}, we deduce immediately the following precise inequality, which
involves only purely algebraic terms.

\begin{corollary}
\label{TightLangWeil}
Let $X$ be a projective variety of dimension $n \geq 2$ defined over $k$,
and let $\varphi$ be the Frobenius endomorphism of $\Alb_{w} X$. Then
$$
\abs{\card{X(k)} - \pi_{n} + q^{n - 1} \Tr(\varphi) }
\leq
C_{+}(X) \, q^{n - 1},
$$
where $C_{+}(X)$ is as in Theorem \ref{LangWeil}. \hfill \qed
\end{corollary}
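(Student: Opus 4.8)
The plan is that this is essentially a one-line consequence of the two immediately preceding results, namely Proposition \ref{RoughLangWeil} and Theorem \ref{GenConj}\eqref{CorrConj2}; no fresh estimate is needed. The first of these already isolates the arithmetically significant part of the point count: it gives
$$
\abs{\card{X(k)} - \pi_{n} - \Tr(F \mid H^{2n-1}_{+}(\bar{X},\QQ_{\ell}))} \leq C_{+}(X)\,q^{n-1},
$$
with $C_{+}(X)$ the constant of Theorem \ref{LangWeil}, which depends only on $\bar{X}$ and is bounded, via Katz's Inequality and Proposition \ref{SchKatz}, by $9\times 2^{m}\times(m\delta+3)^{N+1}$ when $X$ is of type $(m,N,\mathbf{d})$. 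The second, Theorem \ref{GenConj}\eqref{CorrConj2}, identifies the trace occurring here as $q^{n-1}\Tr(\varphi)$, where $\varphi$ is the Frobenius endomorphism of the Albanese--Weil variety $\Alb_{w}X$.

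So the proof will simply substitute the identity of Theorem \ref{GenConj}\eqref{CorrConj2} into the displayed inequality from Proposition \ref{RoughLangWeil}, which produces the stated bound with the same constant $C_{+}(X)$. The only point requiring care is the sign of the term $q^{n-1}\Tr(\varphi)$, which is the one forced by the fact that $H^{2n-1}$ contributes to the Grothendieck--Lefschetz Trace Formula \eqref{GrTF} in \emph{odd} cohomological degree; this is precisely the bookkeeping already carried out in the proof of Proposition \ref{RoughLangWeil}. Alternatively, and equivalently, one can re-run that proof directly: split $H^{2n-1}(\bar{X})$ into its maximal-weight part and its complement, use Deligne's Main Theorem \eqref{DeMT} to absorb every $H^{i}(\bar{X})$ with $i\leq 2n-2$ (together with the strictly-lower-weight part of $H^{2n-1}$) into an error term of size $C_{+}(X)\,q^{n-1}$, and replace the surviving $\Tr(F\mid H^{2n-1}_{+}(\bar{X}))$ by $q^{n-1}\Tr(\varphi)$ using Theorem \ref{GenConj}\eqref{CorrConj2}.

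There is essentially no obstacle here: all the substantive content lies in Theorem \ref{GenConj} (itself resting on Proposition \ref{AlbanToHNormal} and the Weak Lefschetz Theorem) and in the weight estimates behind Proposition \ref{RoughLangWeil}. The hypothesis $n\geq 2$ enters only because Theorem \ref{GenConj} is stated under it. What the statement buys us, beyond Theorem \ref{LangWeil}, is that the secondary term is now \emph{computed} in purely algebraic terms --- it is the trace of Frobenius on the Tate module $V_{\ell}(\Alb_{w}X)$ --- rather than merely estimated by $(d-1)(d-2)\,q^{n-1/2}$; in other words \eqref{GrTF} together with Theorem \ref{GenConj} yields a genuine two-term asymptotic expansion of $\card{X(k)}$ with an explicit, field-independent error constant.
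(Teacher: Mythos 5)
Your proposal is correct and is exactly the paper's own argument: the paper's preamble to the corollary reads ``From Theorem \ref{GenConj}, we deduce immediately the following precise inequality,'' gives no further proof, and marks the statement \qed. You are also right to flag the sign as the one delicate point: the minus sign in front of $\Tr(F\mid H^{2n-1}_{+}(\bar{X},\QQ_{\ell}))$ in the displayed statement of Proposition \ref{RoughLangWeil} (and in equation \eqref{RawTF} of Lemma \ref{CritCoh}) is inconsistent with the $(-1)^{2n-1}$ factor in the Grothendieck--Lefschetz Trace Formula \eqref{GrTF} and should be a plus; with that correction, substituting $\Tr(F\mid H^{2n-1}_{+}(\bar{X},\QQ_{\ell}))=q^{n-1}\Tr(\varphi)$ from Theorem \ref{GenConj}\eqref{CorrConj2} yields precisely the displayed inequality with $+q^{n-1}\Tr(\varphi)$, as you describe.
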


\begin{corollary}
\label{CritPic}
With hypotheses as above, the following are equivalent:
\begin{enumerate}
\item
\label{CritPic1}
There is a constant $C$ such that, for every $s \geq 1$,
$$
\abs{\card{X(\FF_{q^{s}})} - q^{ns}}
\leq C q^{s(n - 1)}.
$$
\item
\label{CritPic2}
The Albanese-Weil variety of $X$ is trivial.
\end{enumerate}
\end{corollary}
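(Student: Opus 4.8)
The plan is to obtain the equivalence immediately by comparing two results already in hand: the cohomological reformulation of condition \eqref{CritPic1} furnished by Lemma \ref{CritCoh}, and the algebraic computation of the penultimate primitive cohomology in Theorem \ref{GenConj}.

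First I would observe that $X$, being a variety, is geometrically irreducible, so Lemma \ref{CritCoh} applies verbatim: condition \eqref{CritPic1} is, word for word, condition \eqref{CritCoh1} of that lemma, hence it is equivalent to $H^{2n - 1}_{+}(\bar{X}) = 0$. Next, since $n \ge 2$, Theorem \ref{GenConj}\eqref{CorrConj1} yields
\[
\dim H^{2n - 1}_{+}(\bar{X}) = b^{+}_{2n - 1}(X) = 2 \dim \Alb_{w} X .
\]
Combining the two, $H^{2n - 1}_{+}(\bar{X}) = 0$ holds if and only if $\dim \Alb_{w} X = 0$, that is, if and only if $\Alb_{w} X$ is trivial; this is exactly the equivalence \eqref{CritPic1} $\Leftrightarrow$ \eqref{CritPic2}.

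There is no real obstacle once Lemma \ref{CritCoh} and Theorem \ref{GenConj} are available: the entire substance resides in Theorem \ref{GenConj}. For completeness I would note that one may instead argue directly from the adjacent Corollary \ref{TightLangWeil}. Applied over each extension $k_{s} = \FF_{q^{s}}$ --- for which the constant $C_{+}$ is unchanged and $\Alb_{w}(X \otimes_{k} k_{s}) = (\Alb_{w} X) \otimes_{k} k_{s}$, so that the relevant Frobenius endomorphism is $\varphi^{s}$ --- it gives a bound of order $q^{s(n-1)}$, uniform in $s$, on $\bigl|\,\card{X(k_{s})} - q^{ns} + q^{(n-1)s}\Tr(\varphi^{s})\,\bigr|$. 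The implication \eqref{CritPic2} $\Rightarrow$ \eqref{CritPic1} is then immediate since $\Tr(\varphi^{s}) = 0$; for the converse one combines the bound with \eqref{CritPic1} to see that $\Tr(\varphi^{s}) = \sum_{j} \alpha_{j}^{s}$ (the $\alpha_{j}$ being the characteristic roots of $\varphi$, pure of weight one) remains bounded as $s$ grows, whence $\dim \Alb_{w} X = 0$ by Lemma \ref{Cauchy}. The only points needing care in that alternative route are the behaviour of $\Alb_{w}$ and of $C_{+}$ under base change, and the elementary comparison of $\pi_{n}$ with $q^{n}$ over each $k_{s}$.
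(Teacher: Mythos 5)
Your main argument is precisely the paper's proof: apply Lemma \ref{CritCoh} to identify condition \eqref{CritPic1} with the vanishing of $H^{2n-1}_{+}(\bar{X})$, then invoke Theorem \ref{GenConj}\eqref{CorrConj1}, which gives $b^{+}_{2n-1}(X) = 2\dim\Alb_{w}X$, to translate that vanishing into triviality of the Albanese--Weil variety. The alternative route via Corollary \ref{TightLangWeil} that you sketch is not used in the paper, but it is essentially a repackaging of the same ingredients (Theorem \ref{GenConj} plus Lemma \ref{Cauchy}), and the base-change points you flag ($\Alb_{w}$ and $C_{+}$ under extension of the ground field, $\pi_n$ versus $q^n$) are indeed the only things one would need to check.
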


\begin{proof}
By Lemma \ref{CritCoh} we know that \eqref{CritPic1} holds if and only if
$H^{2n - 1}_{+}(\bar{X}) = 0$, and this last condition is equivalent to
\eqref{CritPic2} by Theorem \ref{GenConj}.
\end{proof}

For instance, rational varieties, and, by Remark \ref{CaseR2} or
\ref{TrivAlb}, complete intersections regular in codimension $2$
satisfy the conditions of the preceding Lemma.

It remains to prove Theorem \ref{GenConj}. For that purpose, we need the
following non-effective estimate \cite[p. 333]{BS}.

\begin{lemma}[Bombieri and Sperber]
\label{BomSper}
Let $X$ be a projective variety of dimension $n \geq 2$ defined over $k$,
and let $\varphi$ as above. Then
$$
\card{X(k)} = \pi_{n} - q^{n - 1} \Tr(\varphi) + O(q^{n - 1}).
$$
\end{lemma}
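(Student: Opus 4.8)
The plan is to prove the estimate by induction on $n\geq 2$, the inductive step reducing $X$ to a generic hyperplane section of dimension $n-1\geq 2$ and the case $n=2$ being essentially already proved. It is convenient to carry through the \emph{uniform} form of the statement: for fixed $(m,N,\mathbf{d})$ there is a constant $C(m,N,\mathbf{d})$, independent of the finite field, such that every projective variety $X$ of type $(m,N,\mathbf{d})$ and dimension $n$ over a finite field $k$ satisfies $\card{X(k)}=\pi_{n}-q^{n-1}\Tr(\varphi)+\theta$ with $\abs{\theta}\leq C(m,N,\mathbf{d})\,q^{n-1}$, where $\varphi$ is the Frobenius endomorphism of $\Alb_{w}X$.

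\emph{Reductions and base case.} Since $\Alb_{w}X$ is a birational invariant and $\card{X(k)}$ changes only by $O(q^{n-1})$ when $X$ is replaced by the normalization of its projective closure, we may assume $X$ is normal, hence regular in codimension one, and, replacing $\ProjCan$ by the linear span of $X$, non-degenerate. When $n=2$ the condition $\resolTwo$ holds unconditionally (Abhyankar, since $\dim X\leq 3$), so Proposition \ref{CorrConj} applies and yields the conclusions of Theorem \ref{GenConj}; combined with Proposition \ref{RoughLangWeil} (whose hypothesis $H^{2n-1}(\bar X)=H^{2n-1}_{+}(\bar X)$ is ensured by Proposition \ref{PolyDiv}\eqref{PolyDiv1}) and with the explicit bound of Proposition \ref{SchKatz}, this gives $\card{X(k)}=\pi_{2}-q\Tr(\varphi)+\theta$ with $\abs{\theta}\leq 9\times 2^{m}\times(m\delta+3)^{N+1}q$, the required uniform estimate for $n=2$.

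\emph{Inductive step, $n\geq 3$.} Introduce the incidence correspondence $\EuScript{Y}=\set{(x,H)\in X\times\ProjDual}{x\in H}$ with its two projections: $p:\EuScript{Y}\to X$, whose fibre over any $x$ is a $\PP^{N-1}$, so that $\card{\EuScript{Y}(k)}=\pi_{N-1}\,\card{X(k)}$, and $\rho:\EuScript{Y}\to\ProjDual$, whose fibre over $H$ is $Y_{H}:=X\cap H$; hence $\pi_{N-1}\,\card{X(k)}=\sum_{H\in\ProjDual(k)}\card{Y_{H}(k)}$. By Bertini (Proposition \ref{BertiniFlag}) together with Proposition \ref{ChowSerreWeil}\eqref{CSW1}, which applies since $\dim Y_{H}=n-1\geq 2$, there is a proper Zariski-closed subset $B\subsetneq\ProjDual$, which we may take defined over $k$, such that for $H\notin B$ the section $Y_{H}$ is a geometrically irreducible projective variety of dimension $n-1$ and of the fixed type $(m+1,N,(\mathbf{d},1))$, and $\iota_{H*}:\Alb_{w}Y_{H}\to\Alb_{w}X$ is a purely inseparable isogeny; as $\ell\neq p$, the induced map $V_{\ell}(\iota_{H*})$ is an isomorphism, so $\Tr(\varphi_{Y_{H}})=\Tr(\varphi)$. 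Applying the induction hypothesis to each such $Y_{H}$ in its uniform form gives $\card{Y_{H}(k)}=\pi_{n-1}-q^{n-2}\Tr(\varphi)+\theta_{H}$ with $\abs{\theta_{H}}\leq C(m+1,N,(\mathbf{d},1))\,q^{n-2}$ for all $H\notin B$ at once. For $H\in B$ one uses only the elementary bound $\card{Y_{H}(k)}=O(q^{n-1})$ (valid because $X\not\subset H$) together with $\card{B(k)}=O(q^{N-1})$. Summing, dividing by $\pi_{N-1}$, and using $\pi_{N}/\pi_{N-1}=q+O(q^{1-N})$ and $q\,\pi_{n-1}=\pi_{n}-1$, all error terms collapse to $O(q^{n-1})$ and one obtains $\card{X(k)}=\pi_{n}-q^{n-1}\Tr(\varphi)+O(q^{n-1})$, with a constant $C(m,N,\mathbf{d})$ expressed recursively through $C(m+1,N,(\mathbf{d},1))$ and the constants of Proposition \ref{SchKatz}; this closes the induction.

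\emph{The main obstacle.} The delicate point is the \emph{uniformity} of the error along the family $\{Y_{H}\}_{H\notin B}$: the induction hypothesis a priori controls each $\card{Y_{H}(k)}$ only with a constant depending on $\bar Y_{H}$, whereas one needs a single constant valid for every member of the family and for every finite extension of $k$ (the latter being needed when this Lemma is fed into Lemma \ref{Cauchy} to prove Theorem \ref{GenConj}). This is exactly what the field-independent bounds of Proposition \ref{SchKatz}, resting on Katz's Inequality, furnish, and correctly propagating this constant through the recursion is where the real work lies. A secondary point: halting the reduction at dimension $n-1\geq 2$ rather than slicing all the way down to a curve is what makes the \emph{isogeny} statement of Proposition \ref{ChowSerreWeil}\eqref{CSW1} available instead of a mere surjection; were $Y_{H}$ a curve, the kernel of $\Jac Y_{H}\to\Alb_{w}X$ would persist, and bounding $\sum_{H}$ of the trace of its Frobenius would demand a genuine cancellation estimate -- a weight bound for the evanescent cohomology of a Lefschetz pencil -- which the two-step approach sidesteps.
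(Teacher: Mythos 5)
Your proof is correct and follows the same inductive strategy as the paper: base case $n=2$ via $\resolTwo$ together with Propositions \ref{CorrConj} and \ref{RoughLangWeil}, inductive step by generic hyperplane slicing, with Proposition \ref{ChowSerreWeil}\eqref{CSW1} ensuring $\Tr(\varphi_{Y})=\Tr(\varphi)$ for the slices. Two differences deserve comment. First, the decomposition: you sum over \emph{all} $H\in\ProjDual(k)$ via the full incidence correspondence, giving $\pi_{N-1}\card{X(k)}=\sum_{H}\card{Y_{H}(k)}$ and then dividing, whereas the paper fixes a codimension-$2$ center $E$ over $k$ and projects $X$ to $\PP^{1}$, so that $X(k)$ decomposes directly into the $q+1$ fibers $Y_{u}(k)$ plus a correction along $E$. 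Both work; the pencil avoids the division by $\pi_{N-1}$, while your version avoids the ``$q$ sufficiently large'' hypothesis the paper invokes to choose $E$ over $k$. Second, and more substantively, you make explicit the uniformity that the paper's proof leaves implicit: since Lemma \ref{BomSper} must subsequently be applied over every $k_{s}$ to feed Lemma \ref{Cauchy}, the $O(q^{n-1})$-constant in the induction hypothesis has to be uniform over the whole family $\{Y_{H}\}$ and over all base extensions. You secure this by observing that every good slice has fixed type $(m+1,N,(\mathbf{d},1))$, so the field-independent bound of Proposition \ref{SchKatz} applies uniformly --- this is a genuine refinement of the write-up. Two small slips that do not affect the outcome: Proposition \ref{RoughLangWeil} has no hypothesis $H^{2n-1}(\bar X)=H^{2n-1}_{+}(\bar X)$, so the parenthetical appeal to Proposition \ref{PolyDiv}\eqref{PolyDiv1} is unnecessary; and the preliminary reduction to $X$ normal is superfluous (the base case needs only $\resolTwo$, which holds for every surface by Abhyankar) and in fact mildly complicates the uniform formulation, since the normalization need not embed in the same $\ProjCan$ with the same type.
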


\begin{proof}
By induction on $n = \dim X$. If $\dim X = 2$, then $\resolTwo$ is
satisfied. Hence, the conclusions of Theorem \ref{GenConj} hold true by
Proposition \ref{CorrConj}, and
$$
\abs{\card{X(k)} - \pi_{2} + q \Tr(\varphi) } \leq C_{+}(X) \, q^{},
$$
by Proposition \ref{RoughLangWeil} and Theorem \ref{GenConj}\eqref{CorrConj2}.
Suppose that $n \geq 3$ and that the lemma is true for any projective variety
of dimension $n - 1$. We can assume that $X$ is a projective algebraic
subvariety in $\ProjCan_{k}$ not contained in any hyperplane. In view of Lemma
\ref{Bertini}, if $q$ is sufficiently large, there is a linear subvariety $E$
of $\ProjCan_{k}$, of codimension $2$, defined over $k$, such that $\dim X \cap
E = n - 2$. For any $u \in \PP^{1}_{k}$, the reciprocal image of $u$ by the
projection
$$\pi : \ProjCan_{k} - E \longrightarrow \PP^{1}_{k}$$ is a hyperplane
$H_{u}$ containing $E$, and $Y_{u} = X \cap H_{u}$ is a projective algebraic
set of dimension $\leq n - 1$ and of degree $\leq
\deg X$. Now we have
$$
X(k) = \bigcup_{u \in \, \PP^{1}(k)} Y_{u}(k),
$$
hence
$$
\card{X(k)} = \sum_{u \in \, \PP^{1}(k)} \, \card{Y_{u}(k)}
+ O(q^{n - 1}) \, ;
$$
in fact the error term is bounded by $(q + 1) \card{X(k) \cap E(k)}$,
and can be estimated by Proposition \ref{BoundAlgSet} below. Let $S$ be
the set of $u \in \PP^{1}_{k}$ such that $Y_{u}$ is not a subvariety of
dimension $n - 1$, or such that the canonical morphism
$$\lambda : \Alb_{w} Y_{u} \longrightarrow \Alb_{w} X$$
is not a purely inseparable isogeny. By Lemma \ref{Bertini} and
Proposition \ref{ChowSerreWeil}\eqref{CSW1}, the set $S$ is finite and
applying Proposition \ref{BoundAlgSet} again, we get
$$\sum_{u \in S} \, \card{Y_{u}(k)} = O(q^{n - 1}),$$
hence,
$$
\card{X(k)} = \sum_{u \notin S} \, \card{Y_{u}(k)} + O(q^{n - 1}).
$$
Let $\varphi_{u}$ be the Frobenius endomorphism of $\Alb_{w} Y_{u}$. By the
induction hypothesis,
$$
\card{Y_{u}(k)} = \pi_{n - 1} - q^{n - 2} \Tr(\varphi_{u}) + O(q^{n -
2}).
$$
But if $u \notin S$ then $\Tr(\varphi_{u}) = \Tr(\varphi)$ by hypothesis.
The two preceding relations then imply
$$
\card{X(k)} = (q + 1 - \card{S})(\pi_{n - 1} - q^{n - 2} \Tr(\varphi))
+ O(q^{n - 1}),
$$
and the lemma is proved.
\end{proof}

\begin{proof}[Proof of Theorem \ref{GenConj}]
From Proposition \ref{RoughLangWeil} and from Lemma \ref{BomSper} we
get
$$
\Tr(F \mid H^{2n - 1}_{+}(\bar{X}, \QQ_{\ell})) - q^{n - 1} \Tr(\varphi)
= O(q^{n - 1})
$$
But the eigenvalues of $F$ in $H^{2n - 1}_{+}(\bar{X}, \QQ_{\ell})$ and the
numbers $q^{n - 1} \alpha_{j}$, where $(\alpha_{j})$ is the family of
characteristic roots of $\Alb_{w} X$, are pure of weight $2n -1$. From Lemma
\ref{Cauchy} we deduce that these two families are identical, and
$$
\Tr(F \mid H^{2n - 1}_{+}(\bar{X}, \QQ_{\ell})) =
q^{n - 1} \Tr(\varphi).
$$
This yields the desired result.
\end{proof}

One can also improve the Lang-Weil inequality when the varieties are of
small codimension:

\begin{corollary}
\label{SmallCodimBound}
Let $X$ be a projective subvariety in $\ProjCan_{k}$, and assume
$$\dim \sing X \leq s, \qquad \codim X \leq \dim X - s - 1.$$
If $\dim X = n$, then
$$
\abs{\card{X(k)} - \, \pi_{n}} \leq C_{N + s}(\bar{X}) \, q^{(N + s)/2},
$$
where $C_{N + s}(\bar{X})$ is as in Theorem \ref{MainThm}.
\end{corollary}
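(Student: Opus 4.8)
The plan is to show that, under the small–codimension hypothesis, the \'etale cohomology spaces $H^{i}(\bar{X},\QQ_{\ell})$ for $i \geq N + s + 1$ are as trivial as those of $\ProjCan$, and then to feed this into the Grothendieck–Lefschetz Trace Formula exactly as in the proof of Theorem \ref{MainThm}. We may assume $r := \codim X = N - n \geq 1$, since otherwise $X = \ProjCan = \PP^{n}$ and there is nothing to prove; then $N \geq n + 1$. From $r \leq n - s - 1$ we get $s \leq n - 2$, so by Corollary \ref{Equiv} (after replacing $k$ by a finite extension if $k$ is finite, which affects none of the geometric quantities below) there is a nonsingular proper linear section $Y = X \cap E$ of codimension $s + 1$, where $E$ is a linear subspace $E \cong \PP^{N - s - 1}$, $\dim Y = m := n - s - 1$ and $\codim_{E} Y = r$.

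First I would analyse $H^{i}(\bar{X},\QQ_{\ell})$ for $i \geq N + s + 1$. Since $N + s + 1 \geq n + s + 2$, Theorem \ref{LefschetzSGA} (equivalently Corollary \ref{BettiNb}) applies with $r$ replaced by $s + 1$ and gives a $\mathbf{g}$-equivariant isomorphism
$$
H^{i - 2s - 2}(\bar{Y},\QQ_{\ell}(- s - 1)) \isom H^{i}(\bar{X},\QQ_{\ell})
\qquad (i \geq n + s + 2).
$$
Write $j = i - 2s - 2$, so $j \geq N - s - 1$. Because $Y$ is nonsingular and projective, Poincar\'e Duality identifies $H^{j}(\bar{Y},\QQ_{\ell})$, up to a Tate twist, with the dual of $H^{2m - j}(\bar{Y},\QQ_{\ell})$, and $2m - j \leq 2m - (N - s - 1) = 2n - N - s - 1$, which is $\geq 0$ precisely because $\codim X \leq \dim X - s - 1$. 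Now the Barth--Larsen theorem and its $\ell$-adic analogue say that the restriction map $H^{a}(\PP^{N - s - 1},\QQ_{\ell}) \to H^{a}(\bar{Y},\QQ_{\ell})$ is an isomorphism for $a \leq 2m - (N - s - 1) = 2n - N - s - 1$; hence $H^{2m - j}(\bar{Y},\QQ_{\ell})$ equals $\QQ_{\ell}(-(2m - j)/2)$ if $2m - j$ is even and vanishes otherwise. Chasing the Tate twists back through the two isomorphisms shows that, for every $i \geq N + s + 1$,
$$
H^{i}(\bar{X},\QQ_{\ell}) \cong
\begin{cases}
\QQ_{\ell}(- i/2) & \text{if $i$ is even},\\
0 & \text{if $i$ is odd}.
\end{cases}
$$
The identification is valid as $\mathbf{g}$-modules over the original field $k$: the displayed dimension count is a statement about geometric Betti numbers, while for even $i \leq 2n$ the line spanned by the $(i/2)$-th power of the hyperplane class already gives a copy of $\QQ_{\ell}(- i/2)$ inside $H^{i}(\bar{X},\QQ_{\ell})$ defined over $k$. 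Equivalently, $P_{i,\ell}(X,T) = 1 - \varepsilon_{i}\, q^{i/2} T$ for $i \geq N + s + 1$.

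Next I would run the trace-formula argument of Theorem \ref{MainThm}. For an arbitrary projective scheme, \eqref{GrTF3} holds, so
$$
\card{X(k)} - \pi_{n} = \sum_{i = 0}^{2 n}
\Bigl( - \varepsilon_{i}\, q^{i/2} + (- 1)^{i} \sum_{j = 1}^{b_{i,\ell}} \omega_{ij,\ell} \Bigr).
$$
By the previous step the $i$-th summand vanishes for every $i \geq N + s + 1$. Moreover, for each even $i \leq 2n$ the reciprocal root $q^{i/2}$ of $P_{i,\ell}(X,T)$ cancels against the corresponding factor of $Z(\PP^{n},T)$, so in lowest terms the rational function $Z(X,T)/Z(\PP^{n},T)$ has total degree $\tau(X)$ and every one of its characteristic roots has weight $\leq N + s$ (those of weight $> N + s$ came from the $P_{i,\ell}$ with $i \geq N + s + 1$ and have all cancelled). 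Deligne's Main Theorem \eqref{DeMT} then yields $\abs{\card{X(k)} - \pi_{n}} \leq \tau(X)\, q^{(N + s)/2}$. Taking $C_{N + s}(\bar{X}) = \tau(X)$, which depends only on $\bar{X}$ and is bounded by $\tau_{k}(m, N, \mathbf{d}) \leq 9 \times 2^{m} \times (m\delta + 3)^{N + 1}$ by Proposition \ref{SchKatz}, finishes the proof in exactly the shape of Theorem \ref{MainThm}.

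The main obstacle is the triviality of $H^{i}(\bar{X},\QQ_{\ell})$ for $i \geq N + s + 1$; everything else is the familiar trace-formula bookkeeping. I reduce that triviality, through the Gysin isomorphism of the Weak Lefschetz Theorem \ref{LefschetzSGA} and Poincar\'e Duality on the smooth section $Y$, to the statement that the cohomology of $Y$ in degrees $\leq 2\dim Y - \dim E$ agrees with that of the ambient $\PP^{N - s - 1}$ — a Barth--Lefschetz phenomenon, and exactly the place where the hypothesis $\codim X \leq \dim X - s - 1$ enters. A minor point to watch is that when $k$ is finite the nonsingular section $Y$, hence Theorem \ref{LefschetzSGA}, is only available over a finite extension of $k$; but as indicated this is harmless, since the conclusion involves only the geometric Betti numbers and the canonically split $\QQ_{\ell}(- i/2)$-part.
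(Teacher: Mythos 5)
Your proof is correct and takes essentially the same route as the paper: the paper's one‑line argument cites ``Barth's Theorem and Theorem \ref{LefschetzHigh}'' to get that \eqref{HomZero} holds for $i \geq N + s + 1$, and then applies the Trace Formula and Deligne's Main Theorem exactly as you do. What you have done is to make the abbreviated first step explicit — passing to the nonsingular linear section $Y = X \cap E \subset E \cong \PP^{N-s-1}$, invoking the Gysin isomorphism of Theorem \ref{LefschetzSGA} (which is the $\sigma = -1$ case of Theorem \ref{LefschetzHigh}), using Poincar\'e duality on $Y$ to move from degrees $j \geq N - s - 1$ to degrees $\leq 2n - N - s - 1$, and then quoting the $\ell$-adic Barth--Larsen theorem on $Y$; the Tate-twist bookkeeping and the remark about passing to a finite extension of $k$ are careful finishing touches that the paper leaves implicit. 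The only cosmetic divergence is that you take $C_{N+s}(\bar{X}) = \tau(X)$, whereas the paper's ``$C_{N+s}(\bar{X})$ is as in Theorem \ref{MainThm}'' really means the analogous sum of (geometric) Betti numbers plus $\varepsilon_i$'s over the surviving degrees $0 \leq i \leq N + s$ (the sum in Theorem \ref{MainThm} starts at $i = n$ only because for complete intersections the low-degree cohomology is trivial by Proposition \ref{LefschtezLow}, which is not available here); both quantities depend only on $\bar{X}$, satisfy the same Katz-type bound, and deliver the stated inequality, so this is immaterial.
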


\begin{proof}
From Barth's Theorem \cite[Thm. 6.1, p. 146]{Ha2} and Theorem
\ref{LefschetzHigh}, we deduce that \eqref{HomZero} holds if $i \geq N + s +
1$. Now apply the Trace Formula \eqref{GrTF} and Deligne's Main Theorem
\eqref{DeMT}.
\end{proof}

\section{Number of Points of Algebraic Sets}
\label{AlgSets}

In most applications, it is useful to have at one's disposal some bounds on
general algebraic sets. If $X \subset \ProjCan$ is a projective algebraic
set defined over a field $k$ we define the \emph{dimension} (resp. the
\emph{degree}) of $X$ as the maximum (resp. the sum) of the dimensions (resp.
of the degrees) of the $k$-irreducible components of $X$. From now on, let
$k = \FF_{q}$. The following statement is a quantitative version of Lemma $1$
of Lang-Weil \cite{LangWeil} and generalizes Proposition 2.3 of \cite{Lachaud}.

\begin{proposition}
\label{BoundAlgSet}
If $X \subset \ProjCan$ is a projective algebraic set defined over $k$ of
dimension $n$ and of degree $d$, then
$$\card{X(k)} \leq d \pi_{n}.$$
\end{proposition}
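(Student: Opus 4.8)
The plan is to induct on the dimension $n$ of the algebraic set $X$, reducing each step via a hyperplane section. For the base case $n = 0$, the set $X$ consists of at most $d$ points (since the degree is the sum of the degrees of the irreducible components, each of which is a single reduced point of degree one, or more precisely a zero-dimensional scheme whose length over the appropriate residue field contributes to $d$), and $\pi_0 = 1$, so $\card{X(k)} \leq d = d\pi_0$. For the inductive step, assume the bound holds for all projective algebraic sets over $k$ of dimension $< n$.

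For the inductive step the key idea is: first dispose of the components of $X$ of dimension strictly less than $n$ (their contribution is bounded by the induction hypothesis applied to their union, whose degree is at most $d$), so we may assume $X$ is equidimensional of dimension $n$, say $X = X_1 \cup \dots \cup X_t$ with each $X_j$ a $k$-irreducible component of dimension $n$ and $\deg X = \sum_j \deg X_j = d$. Now I would fix a hyperplane $H$ defined over $k$ which does not contain any of the $X_j$; such an $H$ exists because there are more than $d$ (indeed infinitely many over $\bar k$, and enough over $k$ by a standard counting argument, or one can simply choose $H$ among the finitely many coordinate hyperplanes after a linear change, noting $X_j \not\subset H$ for at least one coordinate choice) hyperplanes in the dual space, while only finitely many can contain a given $X_j$. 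Then $X(k) = (X(k) \cap H(k)) \cup (X(k) \setminus H(k))$. For the first piece, $X \cap H$ is a projective algebraic set defined over $k$ of dimension $\leq n - 1$ and of degree $\leq d$ (by Bézout, since $H$ meets each $X_j$ properly), so by the induction hypothesis $\card{(X \cap H)(k)} \leq d\,\pi_{n-1}$.

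For the second piece, the points of $X(k) \setminus H(k)$ lie in the affine space $\Aff^N = \ProjCan \setminus H$, and I would bound them by projecting or by slicing with the pencil of hyperplanes through a generic codimension-two linear subspace contained in $H$: concretely, the affine cone argument of Lang and Weil shows that $\card{(X \setminus H)(k)} \leq d\,q^n$, either by intersecting $X$ with the $q^{n}$ affine hyperplanes of a suitable parallel family and applying Bézout fiberwise, or directly by induction on $N$. Combining,
$$
\card{X(k)} \leq \card{(X\cap H)(k)} + \card{(X\setminus H)(k)} \leq d\,\pi_{n-1} + d\,q^n = d(\pi_{n-1} + q^n) = d\,\pi_n,
$$
which is the desired bound. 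The main obstacle I anticipate is making the ``$\leq d\,q^n$'' estimate for the affine part clean and uniform: one must ensure that the slicing hyperplanes meet $X$ properly (so Bézout gives degree $\leq d$ on each slice) and that the dimension drops, which is exactly where one uses that $X$ is equidimensional of dimension $n$ and that the chosen linear family is generic enough; over a finite field ``generic enough'' requires a small counting argument to guarantee a suitable family exists over $k$, or alternatively one passes to the cone in $\Aff^{N+1}$ and argues there.
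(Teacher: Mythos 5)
Your plan is a genuinely different route from the paper's, and the arithmetic in it checks out ($d\pi_{n-1} + dq^{n} = d\pi_{n}$), but the very first step of the inductive argument has a gap that is not a mere technicality: a hyperplane $H$ defined over $k = \FF_{q}$ and containing none of the top-dimensional components $X_{j}$ need not exist. The number of such components can be as large as $d$, each of them (having dimension $\geq 1$) may still be contained in as many as $\pi_{N-2}$ of the $\pi_{N}$ $k$-rational hyperplanes, and $d$ and $q$ are unrelated; so the count only works when $d$ is small compared to $q^{2}$. A concrete failure: let $X \subset \PP^{3}_{\FF_{2}}$ be the union of the five lines of a spread of $\PP^{3}(\FF_{2})$. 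Then $n = 1$, $d = 5$, and \emph{every} plane of $\PP^{3}$ defined over $\FF_{2}$ contains exactly one of the five lines, so your $H$ does not exist, even though the conclusion $\card{X(\FF_{2})} = 15 \leq 5\pi_{1} = 15$ is true (and sharp). The remark ``one can simply choose $H$ among the finitely many coordinate hyperplanes after a linear change'' does not repair this: linear changes of coordinates over $\FF_{q}$ permute only the $\pi_{N}$ rational hyperplanes. The same difficulty returns (in a worse form) in the affine half of your argument, where you need a $k$-rational center of projection of dimension $N-n-1$ disjoint from $X$; you yourself flag this as ``the main obstacle,'' but as the spread example shows, the obstacle is already fatal at the choice of $H$.

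The paper sidesteps this entirely by \emph{averaging} over all hyperplanes instead of choosing one. After reducing to $X$ $k$-irreducible and replacing $\ProjCan$ by the linear span $E \cong \PP^{m}$ of $X$, nondegeneracy guarantees that \emph{every} hyperplane $H$ of $\PP^{m}$ cuts $X$ properly, so the incidence count
$$
\pi_{m-1}\,\card{X(k)} \;=\; \sum_{H \in (\PP^{m})^{*}(k)} \card{X(k) \cap H(k)} \;\leq\; \pi_{m}\, d\,\pi_{n-1}
$$
needs no choice of a good $H$ at all, and the monotonicity $\pi_{m}/\pi_{m-1} \leq \pi_{n}/\pi_{n-1}$ for $m \geq n$ closes the induction. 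If you want to salvage your approach, you would need either to pass to a finite extension $k'$ of $k$ large enough that a good $H$ exists over $k'$ and then argue that the bound descends (which is delicate, since $\card{X(k')}$ and $\pi_{n}(q')$ both change), or else replace the single-hyperplane step by the paper's averaging step — at which point you have essentially rederived its proof.
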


\begin{proof}
By induction on $n$. Recall that an algebraic set $X \subset \ProjCan$ is
\emph{nondegenerate} in $\ProjCan$ if $X$ is not included in any hyperplane,
\idest, if the linear subvariety generated by $X$ is equal to $\ProjCan$. If
$n = 0$ then $\card{X(k)} \leq d$. Assume now that $n \geq 1$. We first
prove the desired inequality when $X$ is $k$-irreducible. Let $E$ be the
linear subvariety in $\ProjCan$, defined over $k$, generated by $X$; set $m
= \dim E \geq n$ and identify $E$ with $\PP^{m}$. Then $X$ is a
nondegenerate $k$-irreducible subset in $\PP^{m}$. Let
$$
T = \set{(x, H) \in X(k) \times (\PP^{m})^{*}(k)}{x \in X(k) \cap H(k)}.
$$
We get a diagram made up of the two projections
$$
\begin{array}{lcl}
            & T &                      \\
^{p_{1}} \swarrow &   & \searrow^{p_{2}}     \\
X(k)              &   & (\PP^{m})^{*}(k)
\end{array}
$$
If $x \in X(k)$ then $p_{1}^{-1}(x)$ is in bijection with the set of
hyperplanes $H \in (\PP^{m})^{*}(k)$ with $x \in H(k)$; hence
$\card{p_{1}^{-1}(x)} = \pi_{m - 1}$ and
\begin{equation}
\label{T1}
\card{T(k)} = \pi_{m - 1} \card{X(k)}.
\end{equation}
On the other hand, if $H \in (\PP^{m})^{*}(k)$, then $p_{2}^{-1}(H)$ is
in bijection with to $X(k) \cap H(k)$, hence
\begin{equation}
\label{T2}
\card{T(k)} = \sum_{H} \card{X(k) \cap H(k)},
\end{equation}
where $H$ runs over the whole of $(\PP^{m})^{*}(k)$. Since $X$ is a
nondegenerate $k$-irreducible subset in $\PP^{m}$, every $H \in
(\PP^{m})^{*}(k)$ properly intersects $X$ and the hyperplane section $X \cap
H$ is of dimension $\leq n - 1$. Moreover, such a hyperplane section is of
degree $d$ by B\'ezout's Theorem. Thus by the induction hypothesis,
$$\card{X(k) \cap H(k)} \leq d \pi_{n - 1},$$
and from \eqref{T1} and \eqref{T2} we deduce
$$\card{X(k)} \leq \frac{\pi_{m}}{\pi_{m - 1}} \, d \pi_{n - 1}.$$
Now it is easy to check that if $m \geq n$, then
$$
q \leq \frac{\pi_{m}}{\pi_{m - 1}} \leq
\frac{\pi_{n}}{\pi_{n - 1}} \leq q + 1,
$$
so the desired inequality is proved when $X$ is irreducible of dimension
$n$. In the general case, let
$$X = Y_{1} \cup \dots \cup Y_{s}$$
be the irredundant decomposition of $X$ in $k$-irreducible components, in
such a way that
$$
\dim Y_{i} \leq n, \quad d_{1} + \dots + d_{s} = d, \quad
(\deg Y_{i} = d_{i}).
$$
Then
$$
\card{X(k)} \leq \card{Y_{1}(k)} + \dots + \card{Y_{s}(k)} \leq
(d_{1} + \dots + d_{s}) \pi_{n} = d \pi_{n}. \quad \qed
$$
\renewcommand{\qed}{}
\end{proof}
\renewcommand{\qed}{\qedsymbol}

We take this opportunity to report the following conjecture on the
number of points of complete intersections of small codimension.

\begin{conjecture}[Lachaud]
\label{LachaudConj}
If $X \subset \ProjCan_{k}$ is a projective algebraic set defined
over $k$ of dimension $n \geq N/2$ and of degree $d \leq q + 1$ which is
a complete intersection, then
$$
\card{X(k)} \leq d \pi_{n} - (d - 1)\pi_{2n - N}
= d(\pi_{n} - \pi_{2n - N}) + \pi_{2n - N}.
$$
\end{conjecture}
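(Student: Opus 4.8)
The natural line of attack is an induction on the codimension $r = N - n$, taking as base case $r = 1$ --- i.e. $n = N-1$ --- the sharp bound of Serre for hypersurfaces: a hypersurface $X$ of degree $d \le q+1$ in $\ProjCan_k$ has $\card{X(k)} \le d\,q^{N-1} + \pi_{N-2} = d\pi_{N-1} - (d-1)\pi_{N-2}$, which is exactly the assertion of the conjecture in that range. Two preliminary remarks shape the strategy. First, for \emph{irreducible} $X$ the inequality is far from tight and is already a consequence of the Lang--Weil type estimates of Section \ref{LWIneq} (the difference $\card{X(k)} - \pi_n$ is $O(q^{n-1/2})$, much smaller than $(d-1)q^{2n-N}$ when $d$ is large), so the genuine content lies in the \emph{reducible} case; and this cannot be reduced to the irreducible components $Y_i$ of $X$, because a component of a reducible complete intersection is in general not itself a complete intersection. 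Second, one cannot hope to reduce the general case to $r=1$ by a generic linear projection onto $\PP^{n+1}$: this produces a degree-$d$ hypersurface $X'' \subset \PP^{n+1}$, but the fibres of $X \to X''$ of size $>1$ contribute only $O(q^{n-1})$ and Serre's correction $\pi_{n-1}$ then exceeds the target correction $\pi_{2n-N}$ as soon as $r \ge 2$, so the required precision is lost.

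For the inductive step one may first assume $X$ nondegenerate, i.e. that $X$ spans $\ProjCan_k$: if $X \subset H_0$ for a hyperplane $H_0 \cong \PP^{N-1}$, then $X$ is a complete intersection of codimension $r-1$ inside $H_0$ with $\dim X - \codim X$ unchanged, so passing to the linear span of $X$ only makes the hypothesis $\dim \ge \tfrac12(\text{ambient dim})$ easier. Then, exactly as in the proof of Proposition \ref{BoundAlgSet}, count the incidence set $T = \set{(x,H) \in X(k) \times \ProjDual(k)}{x \in H}$ in two ways to obtain
\begin{equation*}
\pi_{N-1}\,\card{X(k)} = \sum_{H} \card{X(k) \cap H(k)},
\end{equation*}
the sum ranging over the $\pi_N$ hyperplanes of $\ProjCan_k$. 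For a nondegenerate complete intersection, a hyperplane $H$ in ``general position'' cuts out a complete intersection $X \cap H$ of dimension $n-1$, codimension $r$ and degree $d$ in $H \cong \PP^{N-1}$, to which the induction hypothesis applies and yields $\card{X(k) \cap H(k)} \le d\pi_{n-1} - (d-1)\pi_{2n-N-1}$. A direct computation using $q\pi_j + 1 = \pi_{j+1}$ and $d \le q+1$ shows that \emph{if this bound held for every hyperplane}, summing over all $H$ and dividing by $\pi_{N-1}$ would already give $\card{X(k)} \le d\pi_n - (d-1)\pi_{2n-N}$ (indeed with a little room to spare). Thus the entire difficulty is concentrated in the ``bad'' hyperplanes: those containing an $n$-dimensional component of $X$, and those for which $X \cap H$ fails to be reduced, irreducible, of dimension $n-1$, or of degree $d$ --- the latter confined to a proper closed subset of $\ProjDual$ by Lemma \ref{Bertini}, the former in number $O(q^{\,r-1})$.

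The hard part will be precisely this last point. Replacing the generic-section bound on a bad hyperplane by the crude estimate $\card{X(k) \cap H(k)} \le d\pi_n$ (available since such a section is an algebraic set of dimension $\le n$, degree $\le d$) costs an error of order $q$ after dividing by $\pi_{N-1}$, which overwhelms the sought correction $(d-1)\pi_{2n-N} = O(q^{\,n-r})$; and one checks that this already fails in the extremal configuration, namely $d$ linear subspaces $\PP^n$ through a common $\PP^{2n-N}$. A successful argument must therefore handle the degenerate hyperplane sections with the same precision as the generic ones --- in the spirit of Serre's proof of the hypersurface bound, by showing that a hyperplane section which is \emph{not} a clean complete intersection already contains noticeably \emph{fewer} rational points, so that the bad hyperplanes help rather than hurt the count. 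A second, independent obstacle is the boundary case $2n = N$, where $\pi_{2n-N} = 1$ and the bound reads $d\pi_n - (d-1)$: here a hyperplane section has $2(n-1) = N-2 < N-1$ and no longer satisfies the dimension hypothesis, so the induction cannot be run in that range and one is forced into a direct analysis of the extremal configuration, proving that any other complete intersection of degree $d \le q+1$ has strictly fewer $k$-points. Carrying out these two steps --- the sharp treatment of non-generic sections and the separate handling of the middle-dimensional boundary case --- is, to my knowledge, exactly where the conjecture remains delicate.
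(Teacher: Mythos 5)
The statement you were asked to prove is explicitly labelled a \emph{conjecture} in the paper, and the paper contains no proof of it. The only things established are the two special cases listed in Remark \ref{LachaudConjRem}: codimension $1$ (which is Serre's inequality) and the case where $X$ is a union of linear subspaces of equal dimension. Your proposal, to its credit, is equally honest: it sketches an inductive strategy and then pinpoints exactly where it breaks, explicitly conceding that the bad-hyperplane count and the middle-dimensional boundary case $2n = N$ are unresolved. So this is not a gap in the sense of an error you overlooked --- you correctly concluded that the argument does not close.

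A few points worth flagging. First, there is a slip in the irreducible case: you compare the Lang--Weil error $O(q^{n-1/2})$ to $(d-1)q^{2n-N}$, but the available slack $d\pi_n - (d-1)\pi_{2n-N} - \pi_n = (d-1)(\pi_n - \pi_{2n-N})$ is of order $(d-1)q^{n}$, not $(d-1)q^{2n-N}$; the conclusion (Lang--Weil suffices for irreducible $X$ and $q$ large relative to the constants) survives, but the exponent you wrote is wrong. Second, the dependence of the constant $C_{+}(X)$ on $N$ and the degrees means the Lang--Weil route does not straightforwardly cover \emph{all} irreducible cases with $d \le q+1$, so even the irreducible case is not quite dismissible. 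Third, your base case $r=1$ via Serre and your identification of the extremal configuration of $d$ copies of $\PP^n$ through a common $\PP^{2n-N}$ are precisely the two cases the paper actually proves; recasting the latter as the configuration one must beat, rather than as a separate theorem, is a sensible way to organize the inductive attempt, but it should be kept in mind that these two cases exhaust what is currently known. Your diagnosis of the central obstruction --- that the $O(q^{r-1})$ degenerate hyperplanes cost an error of order $q$ per hyperplane after dividing by $\pi_{N-1}$, swamping the correction $(d-1)\pi_{2n-N}$, and that a Serre-style argument showing degenerate sections lose points is what is missing --- is consistent with the conjecture's open status and is a fair assessment.
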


\begin{remark}
\label{LachaudConjRem}
The preceding conjecture is true in the following cases:
\begin{enumerate}
\item
\label{Serre}
$X$ is of codimension $1$.
\item
\label{Linear}
$X$ is a union of linear varieties of the same dimension.
\end{enumerate}
Assertion \eqref{Serre} is \emph{Serre's inequality} \cite{Serre3}: if
$X$ is an hypersurface of dimension $n$ and of degree $d \leq q + 1$,
then
$$\card{X(k)} \leq d q^{n} + \pi_{n - 1}.$$
Now assume that $X$ is the union of $d$ linear varieties $G_{1}, \dots,
G_{d}$ of dimension $n \geq N/2$. We prove \eqref{Linear} by induction on
$d$. Write $G_{i}(k) = G_{i} \ (1 \leq i \leq d)$ for brevity. If $d = 1$
then
$$\card{G_{1}} = \pi_{n} = (\pi_{n} - \pi_{2N - n}) + \pi_{2N - n},$$
and the assertion is true. Now if $G_{1}$ and $G_{2}$ are two linear
varieties of dimension $n$, then $\dim G_{1} \cap G_{2} \geq 2n - N$. Hence
for $d > 1$,
$$
\card{G_{d} \cap (G_{1} \cup \dots \cup G_{d - 1})} \geq \pi_{2n - N}.
$$
Now note that
$$
\card{G_{1} \cup \dots \cup G_{d}} = \card{G_{1} \cup \dots \cup G_{d - 1}}
+ \card{G_{d}} - \card{G_{d} \cap (G_{1} \cup \dots \cup G_{d - 1})}.
$$
If we apply the induction hypothesis we get
\begin{eqnarray*}
\card{G_{1} \cup \dots \cup G_{d}} & \leq &
(d - 1)(\pi_{n} - \pi_{2n - N}) + \pi_{2n - N} + \pi_{n} - \pi_{2n - N}\\
& = & d(\pi_{n} - \pi_{2n - N}) + \pi_{2n - N},
\end{eqnarray*}
which proves the desired inequality.
\end{remark}

\section*{Acknowledgements}

This research was partly supported by the Indo-French Mathematical Research
Program of the Centre National de la Recherche Scientifique (CNRS) of France
and the National Board for Higher Mathematics (NBHM) of India, and we thank
these organisations for their support. We would also like to express our
warm gratitude to Jean-Pierre Serre for his interest, and to Alexe\"{\i}
Skorobogatov for his valuable suggestions. Thanks are also due to the
referees for pointing out some corrections in earlier versions of this paper
and making suggestions for improvements.

\renewcommand{\refname}{Abbreviations}

\end{document}